\newenvironment{psmallmatrix}{\big(\begin{smallmatrix}} {\end{smallmatrix}\big)}
\DeclareMathOperator{\vol}{vol}
\DeclareMathOperator{\relint}{relint}
\DeclareMathOperator{\Gr}{Gr}
\DeclareMathOperator{\aut}{Aut}
\DeclareMathOperator{\bl}{Bl}
\DeclareMathOperator{\ord}{ord}
\DeclareMathOperator{\rk}{rk}
\DeclareMathOperator{\codim}{codim}
\DeclareMathOperator{\Spec}{Spec}
\DeclareMathOperator{\Proj}{Proj}
\DeclareMathOperator{\Div}{div}
\renewcommand{\div}{\Div}
\newcommand{\bb}{\mathbb}
\newcommand{\union}{\cup}
\newcommand{\inter}{\cap}
\renewcommand{\frak}{\mathfrak}
\newcommand{\cal}{\mathcal}
\newcommand{\scr}{\mathscr}
\renewcommand{\phi}{\varphi}
\renewcommand{\epsilon}{\varepsilon}
\newcommand{\sub}{\subseteq}
\newcommand{\aff}[1]{\mathbb{A}^{#1}}
\newcommand{\proj}[1]{\mathbb{P}^{#1}}
\newcommand{\sheafhom}{\scr{H}\kern -1pt om}
\DeclareMathOperator{\diag}{diag}
\renewcommand{\sl}[1]{\text{SL}_{#1}}
\newcommand{\pgl}[1]{\text{PGL}_{#1}}
\DeclareMathOperator{\Hom}{Hom}
\newcommand{\bbk}{\Bbbk}
\DeclareMathOperator{\DF}{DF}
\newtheorem{theorem}{Theorem}[section]
\newtheorem{corollary}{Corollary}[section]
\newtheorem{lemma}{Lemma}[section]
\newtheorem{proposition}{Proposition}[section]
\theoremstyle{definition}
\newtheorem{definition}{Definition}[section]
\newtheorem*{example}{Example}
\theoremstyle{remark}
\newtheorem*{remark}{Remark}
\newdimen\LineSpace
\tikzset{
	line space/.code={\LineSpace=#1},
	line space=15pt
}
\title{$K$-polystability of smooth Fano $\sl{2}$-threefolds}
\author{Jack Rogers}
\begin{document}
\maketitle
\begin{abstract}
	We prove the $K$-polystability of all smooth complex Fano threefolds admitting an effective action of $\sl{2}$ but not of a 2-torus or 3-torus. In particular, the existence of K\"{a}hler-Einstein metrics on varieties in the families (1.10), (1.15), (1.16), (1.17), (2.21), (2.27), (2.32), (3.13), (3.17), (3.25) and (4.6) of the Mori-Mukai classification of smooth Fano threefolds is proved.
\end{abstract}

\tableofcontents

\section{Introduction}

Since the solution of the Yau-Tian Donaldson conjecture by Chen-Donaldson-Sun \cite{cds}, showing that the existence of a K\"{a}hler-Einstein metric on a Fano manifold is equivalent to its $K$-polystability, there has been much progress in the theoretical study of $K$-stability in its various forms. However, there is still much work to be done when it comes to practical methods to actually check the $K$-(semi/poly)stability of a given variety.

One of the more fruitful approaches in this direction is to consider varieties equipped with group actions, so that we can use the equivariant $K$-stability introduced by Datar-Sz\'{e}kelyhidhi \cite{ds}. Exploiting these symmetries can make verifying $K$-stability far more straightforward than it is in general via the analysis of associated combinatorial data. For example, Wang-Zhu proved that a smooth Fano toric variety is $K$-polystable if and only if the barycentre of its associated dual polytope is the origin.

One of the key invariants determining the viability of this method is called the \emph{complexity} of the group action. Suppose a reductive algebraic group $G$ acts on a normal variety $X$. Let $B \sub G$ be a Borel subgroup of $G$. The \emph{complexity} $c_G(X)$ of the action of $G$ on $X$ is the minimal codimension in $X$ of the orbits of $B$, or equivalently the transcendence degree of the field of $B$-invariant rational functions on $X$ over the base field.

Criteria for $K$-stability using combinatorial methods have been found in the toric case by Wang-Zhu \cite{wz}, in the case of complexity one $T$-varieties by Ilten-Suess \cite{is}, and for complexity zero varieties under general reductive groups (\emph{spherical varieties}) by Delcroix \cite{del}.
 
In this paper we provide the first results in a project to find a combinatorial criterion for the $K$-stability of complexity-one $G$-varieties for general reductive groups $G$. Specifically, we consider the simplest nontrivial examples of such varieties, the smooth Fano threefolds admitting effective actions of $\sl{2}$. 

Using the classification of Cheltsov-Przyjalkowski-Shramov \cite{cps} of smooth Fano threefolds with infinite automorphism groups, it is possible to identify all such threefolds admitting effective $\sl{2}$-actions. After ruling out those whose automorphism groups are non-reductive (since these are known not to be $K$-polystable by Matsushima's criterion \cite{matsu}) and those also known to be toric or to admit the effective action of a 2-torus (since the $K$-polystability of these varieties is already checkable by the previously mentioned results of Wang-Zhu and Ilten-S\"{u}{\ss}), there are seven smooth Fano $\sl{2}$-threefolds remaining. They are listed below by the numbers and descriptions given in the paper of Cheltsov-Przyjalkowski-Shramov. The cases denoted with daggers consist of families of varieties, only some of which admit effective $\sl{2}$-actions.\\

\renewcommand{\arraystretch}{1.5}
\begin{center}
	\begin{tabularx}{\textwidth}{ |l|X|} 
		\hline
		1.10$^\dagger$ & $V_{22}$, a zero locus of three sections of the rank 3 vector bundle $\bigwedge^2 \cal{Q}$, where $\cal{Q}$ is the universal quotient bundle on $\Gr{(3,7)}$  \\ 
		\hline
		1.15 & $V_5$, a section of $\Gr{(2,5)} \sub \proj{9}$ by a linear subspace of codimension 3  \\ 
		\hline
		2.21$^\dagger$ & The blow up of a quadric threefold  $Q \sub \proj{4}$ along a twisted quartic curve  \\
		\hline
		2.27 & The blow up of $\proj{3}$ along a twisted cubic curve  \\
		\hline
		3.13$^\dagger$ & The blow up of a divisor $W \sub \proj{2} \times \proj{2}$ of bidegree $(1,1)$ along a curve of bidegree $(2,2)$ which is mapped to irreducible conics by the natural projections to $\proj{2}$  \\
		\hline
		3.17 & A divisor on $\proj{1} \times \proj{1} \times \proj{2}$ of tridegree $(1,1,1)$, or a blow-up of $\proj{1} \times \proj{2}$ along a curve of bidegree $(1,1)$  \\
		\hline
		4.6 & The blow up of $\proj{3}$ along a disjoint union of three lines  \\
		\hline
	\end{tabularx}
\end{center}

\vspace{5pt}

Our main result is:

\begin{theorem}
	The smooth Fano threefolds, (1.10), (1.15), (1.16), (1.17), (2.27), (2.32), (3.17), (3.25) and (4.6) in the Mori-Mukai classification are $K$-polystable and hence admit K\"{a}hler-Einstein metrics. The families (2.21) and (3.13) each contain a $K$-polystable variety admitting a K\"{a}hler-Einstein metric.
\end{theorem}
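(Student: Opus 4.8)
The plan is to establish $K$-polystability one threefold at a time, in each case exploiting the given $\sl{2}$-action through the equivariant theory. The organising principle is the reduction of Datar--Sz\'{e}kelyhidi \cite{ds}: a smooth Fano $X$ on which a reductive group $G$ acts is $K$-polystable precisely when every nontrivial $G$-equivariant special test configuration that is not a product has strictly positive Donaldson--Futaki invariant. Here I take $G = \sl{2}$, so that $G$ is semisimple with finite centre $\{\pm 1\}$, which has the decisive consequence that there are \emph{no} nontrivial $\sl{2}$-equivariant product degenerations: the product configurations arise from one-parameter subgroups such as the maximal torus $\mathbb{G}_m \subset \sl{2}$, whose weight valuations are not $\sl{2}$-invariant and therefore do not give $\sl{2}$-equivariant configurations. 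Passing to the valuative description, the task becomes to show that
\[
\beta_X(E) \;=\; A_X(E) \;-\; \frac{1}{(-K_X)^3}\int_0^{\infty}\vol\bigl(-K_X - tE\bigr)\,dt \;>\; 0
\]
for every nontrivial $\sl{2}$-invariant divisorial valuation $E$ over $X$.

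The feature that makes this tractable is that the $\sl{2}$-action has complexity one: a Borel subgroup $B \subset \sl{2}$ has a dense orbit of codimension one, the rational quotient $X \dashrightarrow C$ is a curve, and the $\sl{2}$-invariant divisorial valuations are governed by the Luna--Vust/Timashev combinatorial data of a complexity-one $\sl{2}$-embedding. Accordingly I would first make this description explicit for each threefold: determine the quotient curve $C$, the finitely many $\sl{2}$-invariant prime divisors, and the colored data encoding invariant valuations. This organises the $\sl{2}$-invariant divisorial valuations into finitely many one-parameter chains, indexed by the special points of $C$ together with the generic slope direction, reducing the infimum over all invariant valuations to a finite collection of one-parameter sign analyses, in the spirit of the complexity-one $T$-variety criterion of Ilten--S\"{u}ss \cite{is}.

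For each invariant $E$ in these chains I would then compute $A_X(E)$ and the volume function $t \mapsto \vol(-K_X - tE)$. The log discrepancy is read off from the combinatorial data or from an explicit equivariant resolution; the volume is obtained by running an $\sl{2}$-equivariant minimal model program for $-K_X - tE$ and taking the resulting Zariski-type decomposition, which renders $t \mapsto \vol(-K_X - tE)$ piecewise polynomial. Integrating gives $S(E)$ and hence $\beta_X(E)$, and I would verify the strict inequality $\beta_X(E) > 0$ along every chain. By the reduction above this establishes $K$-polystability, and therefore, by Chen--Donaldson--Sun, the existence of a K\"{a}hler--Einstein metric. For the two dagger families (2.21) and (3.13), where only special members carry an $\sl{2}$-action, I would isolate the distinguished member realizing the action and run the same computation for it; this proves the stated existence of a $K$-polystable member in each of these families.

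The principal obstacle is the volume computation together with the birational bookkeeping it requires: for each threefold and each invariant $E$ one must track the sequence of contractions and flips of the $\sl{2}$-equivariant minimal model program as $t$ increases, since $\vol(-K_X - tE)$ changes polynomial form at each wall. A second, more delicate point concerns upgrading from semistability to the strict inequality needed for polystability: one must ensure that no invariant valuation beyond the divisorial chains --- for instance a quasi-monomial valuation centred on a one-dimensional invariant locus lying over a point of $C$ --- brings $\beta$ down to zero. I would control these via the Abban--Zhuang method of refinement along an $\sl{2}$-invariant flag, which reduces the estimate to codimension-one data over the quotient curve $C$ and so keeps the whole analysis within the one-parameter combinatorial framework set up above.
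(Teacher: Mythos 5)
Your general framework (Datar--Sz\'{e}kelyhidi equivariant reduction, the Fujita--Li valuative criterion, and Timashev's complexity-one combinatorics for organising the invariant valuations and computing volumes) coincides with the paper's. But the proposal is missing the paper's central idea and, as stated, contains a step that fails. The paper does \emph{not} verify $\beta_X(E)>0$ for all $\sl{2}$-invariant divisorial valuations $E$; it first proves (\thref{Kstablecentral}) that under one of three hypotheses --- a finite symmetry of the $B$-quotient $\proj{1}$ acting without fixed points, one interchanging two points carrying subregular colours, or subregular colours over three or more points --- every \emph{non-central} invariant divisor either fails to be invariant under an enlarged reductive group or corresponds to a test configuration with \emph{non-normal} central fibre (proved by showing the associated graded ring of the induced filtration on section rings of $\bb{Q}$-divisors on $\proj{1}$ is not integrally closed), and hence can be discarded by Li--Xu. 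This reduces the problem to the single central divisor over each threefold. Your plan omits this reduction entirely and replaces it with a direct check over ``finitely many one-parameter chains'' of invariant valuations. That description undercounts the problem: the non-central invariant divisorial valuations are indexed by a point $x$ of the quotient $\proj{1}$ (a continuum, on which $G$ does not act) together with a ray in the two-dimensional valuation cone $\cal{V}_x$, so even after treating the generic slice uniformly you face a one-parameter family of volume integrals over each special slice rather than a discrete list.

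The concrete error is your claim that semisimplicity of $\sl{2}$ forces the absence of nontrivial $\sl{2}$-equivariant product configurations, so that strict positivity $\beta_X(E)>0$ must hold for every invariant $E$. Equivariant product configurations come from one-parameter subgroups of the \emph{centraliser} of $\sl{2}$ in $\aut{X}$, not of $\sl{2}$ itself, and for several of the listed varieties this centraliser is positive-dimensional: for (1.17), $X=\bb{P}(M_2(\bb{C}))$ with $\sl{2}$ acting by left multiplication, right multiplication gives a commuting $\pgl{2}$; for (3.25) the subgroup of that $\pgl{2}$ fixing the two blown-up lines is a $\bb{G}_m$. The associated $G$-invariant, non-central divisorial valuations have $\beta=0$ (the classical Futaki invariant of a K\"{a}hler--Einstein manifold vanishes), so the strict inequality you propose to verify is simply false for these varieties, and your criterion would wrongly reject them. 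To repair this you would have to identify exactly which invariant valuations with $\beta=0$ arise from product configurations --- which is the delicate polystability issue the paper's symmetry argument (case (i) of \thref{Kstablecentral}) is designed to bypass. The final computation of $\beta$ for the unique central divisor, which the paper carries out explicitly via Timashev's volume formula, is the part of your plan that does align with what is actually done.
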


\begin{remark}
	The same result were recently obtained independently by other authors using different methods, see \cite{sc,cal}. The $K$-polystability of the Mukai-Umemura threefold in the family (1.10) was already known by Donaldson \cite{don1}, and the $K$-polystability of $V_5$ (1.15) was known by Cheltsov-Shramov \cite{cs}. 
\end{remark}

The rest of this paper is organised as follows: in Section 2 we recall the definition of $K$-polystability and its equivariant version. In Section 3 we discuss the theory of varieties with actions of complexity one and the combinatorial description of these varieties. In Section 4, we state the technical result (\thref{Kstablecentral}) which allows us to prove the main theorem stated above. The remaining sections provide a proof of \thref{Kstablecentral}, calculations of the combinatorial data of the varieties in question, and demonstrations that \thref{Kstablecentral} does indeed imply their $K$-polystability.

\section{Equivariant $K$-stability}

\subsection{Test configurations}

We begin by recalling the definitions of $K$-(semi/poly)stability and setting conventions, then describe the equivariant version due to Datar and Sz\'{e}kelyhidi.

\begin{definition}
	Let $(X,L)$ be a complex polarised variety and let $m > 0$. A \emph{test configuration} for $(X,L)$ of \emph{exponent} $m$ consists of: 
	\begin{itemize} 
		\item A flat morphism of schemes $\pi \colon \cal{X} \to \bb{A}_{\bb{C}}^1$; 
		\item A $\pi$-relatively ample line bundle $\cal{L} \to \cal{X}$; 
		\item A $\bb{C}^{\times}$ action on $\cal{X}, \cal{L}$; 
	\end{itemize}
	such that $\pi$ and the bundle map $\cal{L} \to \cal{X}$ are $\bb{C}^{\times}$-equivariant for the standard action of $\bb{C}^{\times}$ on $\bb{A}_{\bb{C}}^1$ by multiplication, and for some (and hence all by equivariance) $t \neq 0$ in $\bb{A}_{\bb{C}}^1$, the pair $(X_t,L_t):= (\pi^{-1}(t),\cal{L}\vert_{\pi^{-1}(t)})$ is isomorphic to $(X,L^{\otimes m})$. We also require that the \emph{central fibre} $X_0$ is irreducible.
	
	We call $(\cal{X},\cal{L})$ a \emph{product configuration} if $\cal{X} \cong X \times \aff{1}$ and a \emph{trivial configuration} if it is a product configuration and the $\bb{C}^{\times}$-action is trivial on $X$.
\end{definition}

Note that since $0 \in \aff{1}_{\bb{C}}$ is fixed by the standard $\bb{C}^\times$ action, the morphism $\pi$ induces a $\bb{C}^\times$ action on the central fibre $X_0$ and the line bundle $L_0$. Also note that Fano varieties are polarised by their anticanonical bundle, so the notion of test configuration makes sense in this case.

In the general case of a projective scheme $Z$ with an ample line bundle $\Lambda$, we can consider the vector spaces $H^k = H^0(Z,\Lambda^{\otimes k})$ of global sections of the tensor powers of $\Lambda$. Let $d_k :=\dim{H^k}$. For $k$ large enough that $\Lambda^{\otimes k}$ is very ample, the $d_k$ are known to be given by a Hilbert polynomial of degree $n = \dim{Z}$. Now suppose there is a $\bb{C}^{\times}$-action on the pair $(Z,\Lambda)$. This induces a $\bb{C}^{\times}$ action on each $H^k$. Let $w_k$ be the sum of the weights of this action, or equivalently the weight on the top exterior power. Then for $k$ large enough, $w_k$ is also given by a polynomial, this being of degree $n+1$ \cite{don}. Now set $F(k) = w_k/kd_k$, so that there is an expansion for large $k$ given by: \[F(k) = F_0 + F_1k^{-1} + F_2k^{-2} + \ldots.\]

\begin{definition}
	The \emph{Donaldson-Futaki invariant} of $(Z,\Lambda)$ is the coefficient $F_1$ in the above expansion. For a test configuration $(\cal{X},\cal{L})$ of a polarised variety $(X,L)$, we define $\DF{(\cal{X},\cal{L})}$ to be the Donaldson-Futaki invariant of the central fibre $(X_0,L_0)$.
\end{definition}

\begin{definition}
	A polarised variety $(X,L)$ is:\begin{itemize} \item \emph{$K$-semistable} if $\DF{(\cal{X},\cal{L})} \geq 0$ for every test configuration $(\cal{X},\cal{L})$ on $(X,L)$; \item \emph{$K$-polystable} if it is $K$-semistable and $\DF{(\cal{X},\cal{L})} = 0$ only for product configurations; \item \emph{$K$-stable} if it is $K$-semistable and $\DF{(\cal{X},\cal{L})} = 0$ only for trivial configurations; \item \emph{$K$-unstable} if it is not $K$-semistable.\end{itemize}
\end{definition}

An important result of Li-Xu immediately allows us to restrict the set of test configurations we need to check in order to verify $K$-(semi/poly)stability:

\begin{definition}
	A test configuration $(\cal{X},\cal{L})$ for a polarised variety $(X,L)$ is called \emph{special} if the central fibre $X_0$ is normal.
\end{definition}

\begin{theorem}\cite{lx}
	For a Fano variety $(X,-K_X)$, $K$-(poly/semi)stability can be verified by checking the Donaldson-Futaki invariant of only the special test configurations.
\end{theorem}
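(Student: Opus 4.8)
The strategy I would follow is the one underlying the cited result of Li--Xu: show that \emph{any} test configuration can be modified, through a sequence of birational operations that do not increase the Donaldson--Futaki invariant, into a \emph{special} one; then $K$-(semi/poly)stability, which is a sign condition on $\DF$ together with a rigidity statement in the equality case, can be read off from special configurations alone. The essential tool is the intersection-theoretic reformulation of $\DF$ (due to Wang and Odaka): after compactifying a normal test configuration $(\cal{X},\cal{L})$ to a family $(\overline{\cal{X}},\overline{\cal{L}})$ over $\proj{1}$ by gluing a trivial fibre at infinity, $\DF(\cal{X},\cal{L})$ is a positive multiple of
\[
\frac{n}{n+1}\,\overline{\cal{L}}^{\,n+1} + \overline{\cal{L}}^{\,n}\cdot K_{\overline{\cal{X}}/\proj{1}},
\]
where $n = \dim X$. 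This converts the weight-theoretic definition of $\DF$ into intersection numbers on $\overline{\cal{X}}$, which can be tracked explicitly under birational modifications.

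First I would reduce to anticanonically polarised configurations. Given an arbitrary $(\cal{X},\cal{L})$, passing to the normalisation $\cal{X}^{\nu}$ does not increase $\DF$ (the relevant weights can only drop), so we may assume $\cal{X}$ is normal. Then, using the Fano condition $L = -K_X$, one shows that replacing $\cal{L}$ by the relative anticanonical $\bb{Q}$-line bundle $-K_{\cal{X}/\aff{1}}$ can only decrease $\DF$, the difference between the two polarisations being supported on the central fibre. In the anticanonical case the expression above collapses to a constant multiple of $(-K_{\overline{\cal{X}}/\proj{1}})^{n+1}$, so the whole problem becomes: make this single intersection number as small as possible while forcing the central fibre to become a normal Fano variety.

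Next I would run the Minimal Model Program relative to $\aff{1}$, $\bb{C}^\times$-equivariantly. The plan is: first take a log resolution and construct the log canonical modification of the pair $(\cal{X},\cal{X}_0)$, producing a model on which the reduced central fibre is log canonical; second, run a $(K_{\cal{X}}+\cal{X}_{0})$-MMP with scaling of an ample divisor to contract and flip components of the central fibre until it becomes irreducible and the pair is relatively log Fano. At each divisorial contraction and flip the contracted ray is $(K_{\cal{X}}+\cal{X}_0)$-negative, and--because $\cal{L}\sim -K_{\cal{X}/\aff{1}}$--the negativity lemma forces $(-K_{\overline{\cal{X}}/\proj{1}})^{n+1}$ to decrease weakly, so $\DF$ does not increase. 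Existence and termination of these steps are supplied by the BCHM machinery for klt/dlt pairs. The anticanonical model reached at the end has klt Fano central fibre, i.e.\ it is a special test configuration.

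The main obstacle is precisely the monotonicity of $\DF$ along the MMP: one must verify, flip by flip and contraction by contraction, that the change in $\overline{\cal{L}}^{\,n+1}$ has the correct sign, which is where the intersection formula and the negativity of the relevant extremal rays must be combined carefully; the reducible-to-irreducible reduction of the central fibre (possibly requiring a finite base change followed by normalisation to make it irreducible at the outset) is the technically delicate point. Finally, for the $K$-polystable refinement one must analyse the equality case: if $\DF=0$ is preserved throughout, every MMP step must be crepant and change nothing, which forces the original configuration to be a product configuration, thereby distinguishing $K$-polystability from $K$-semistability on the level of special configurations.
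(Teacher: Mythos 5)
The paper offers no proof of this statement; it is quoted directly from Li--Xu, and your proposal is a faithful sketch of exactly the argument in that cited source (intersection-theoretic reformulation of $\DF$, reduction to the anticanonical polarisation, equivariant semistable reduction and log canonical modification, MMP with scaling with monotonicity of $\DF$ at each step, and the crepant analysis of the equality case for the polystable refinement). So your approach agrees with the proof the paper is implicitly relying on, and no discrepancy arises.
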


Interest in $K$-stability mostly stems from the solution of the Yau-Tian-Donaldson conjecture by Chen-Donaldson-Sun, i.e.:

\begin{theorem}\cite{cds}
	A smooth complex Fano variety $X$ admits a K\"{a}hler-Einstein metric if and only if $(X,K_X^{-1})$ is $K$-polystable.
\end{theorem}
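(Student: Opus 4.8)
The statement is the Yau--Tian--Donaldson conjecture, whose two implications are of very different natures, so I would treat them separately and only sketch the known arguments, since the ``if'' direction is one of the deepest results in the field.

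The ``only if'' direction, that existence of a K\"ahler--Einstein metric forces $K$-polystability, is the more tractable one, and I would prove it via energy functionals on the space of K\"ahler potentials. By the theorem of Li--Xu recalled above it suffices to treat the special test configurations, for which the Donaldson--Futaki invariant agrees with the Ding invariant. To such a configuration $(\cal{X},\cal{L})$ one associates, via the Phong--Sturm geodesic ray emanating from a fixed reference metric, a ray of potentials; the content of Berman's slope formula is that $\DF(\cal{X},\cal{L})$ equals the asymptotic slope at infinity of the Ding energy along this ray. A K\"ahler--Einstein metric is precisely a minimiser of the Ding energy, so Berndtsson's geodesic convexity forces the slope to be nonnegative, yielding $K$-semistability; the rigidity case of Berndtsson's convexity theorem shows that equality $\DF=0$ occurs only when the ray is generated by a holomorphic vector field, i.e. for product configurations, and this gives $K$-polystability.

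The ``if'' direction is the existence result of Chen--Donaldson--Sun, and I would follow their continuity method through conical metrics. Fix $\lambda$ and a smooth divisor $D \in |-\lambda K_X|$, and for $\beta \in (0,1]$ seek a conical K\"ahler--Einstein metric with cone angle $2\pi\beta$ along $D$, the case $\beta=1$ being the desired smooth metric. One first establishes solvability for small $\beta$, then runs a continuity argument in $\beta$. \emph{Openness} follows from an implicit function theorem argument in suitable conical (weighted H\"older) function spaces once the linearised operator is shown invertible. \emph{Closedness} is the crux: given solutions for $\beta_i \nearrow \beta_\infty$, one must produce a solution at $\beta_\infty$, and the obstruction is loss of compactness. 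Here one invokes Gromov--Hausdorff compactness of the family together with the Donaldson--Sun theory: the partial $C^0$ estimate shows that any limit is a normal $\bb{Q}$-Fano variety $W$ carrying a weak conical K\"ahler--Einstein metric, with the convergence realised through a common projective embedding.

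The main obstacle is exactly the step where $K$-polystability is consumed. If the limit $W$ is not isomorphic to $X$, one must extract from the degeneration an honest algebraic test configuration of $X$ with central fibre $W$ and compute its Donaldson--Futaki invariant in terms of the limiting conical geometry; the sign of this invariant then contradicts $K$-polystability unless $W \cong X$, forcing the continuity path to reach $\beta=1$. Establishing the algebraicity and normality of the Gromov--Hausdorff limit, controlling its singularities, and matching the analytic degeneration to an algebraic test configuration with the correct sign of $\DF$ constitute the hardest and most technical part of the whole argument; the equivariant refinement of Datar--Sz\'ekelyhidi exploited in the present paper is obtained by carrying out the same scheme in the presence of the group action, so that only equivariant (here $\sl{2}$-invariant) test configurations need be controlled.
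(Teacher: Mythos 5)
This statement is not proved in the paper at all: it is quoted verbatim from Chen--Donaldson--Sun as an external black box, so there is no ``paper proof'' to compare against. Your outline is a fair and essentially accurate survey of how the result is actually established in the literature: the ``only if'' direction via the Ding functional, Berman's slope formula along Phong--Sturm geodesic rays, and Berndtsson's convexity with its rigidity case (this full polystability statement is due to Berman rather than to the original Tian/Donaldson arguments, which give only semistability or stability under extra hypotheses); and the ``if'' direction via the Chen--Donaldson--Sun continuity path through conical metrics, with openness by an implicit function theorem in weighted H\"older spaces and closedness by Gromov--Hausdorff compactness, the partial $C^0$ estimate, and the construction of a test configuration with central fibre the limit $W$ whose Donaldson--Futaki invariant is nonpositive, so that $K$-polystability forces $W \cong X$. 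One should be clear, however, that what you have written is a roadmap and not a proof: every sentence in your second and third paragraphs compresses a substantial technical argument (the partial $C^0$ estimate, normality and algebraicity of the limit, the matching of the analytic degeneration to an algebraic test configuration), and none of these could be reconstructed from the sketch alone. For the purposes of this paper that is entirely appropriate, since the result is used only as an input, and the relevant refinement actually invoked later is the equivariant version of Datar--Sz\'ekelyhidi, which as you note is obtained by running the same scheme equivariantly.
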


Unfortunately, since there are generally infinitely many test configurations for a given polarised variety, it is very difficult to check $K$-(poly/semi)stability in the general case, even accounting for the Li-Xu theorem. 

The $\alpha$-invariant of Tian \cite{tian2} was for a long time one of the only practical methods to check $K$-(poly)stability. Recently, work of Abban-Zhuang \cite{az1,az2} has provided other more powerful methods of verification. Otherwise, most progress on this front has come from the equivariant perspective due to the work of Datar-Sz\'{e}kelyhidi \cite{ds}, which we summarise now.

\begin{definition}
	Let $X$ be a $G$-variety, and let $\pi \colon L \to X$ be a line bundle on $X$. We say that $L$ is \emph{$G$-linearised} if there is a $G$-action on $L$ such that $\pi$ is $G$-equivariant and the map $\pi^{-1}(x) \to \pi^{-1}(g\cdot x)$ induced on the fibres is linear for all $g \in G$ and all $x \in X$.
\end{definition}

\begin{definition}
	Let $G$ be a reductive algebraic group and let $(X,L)$ be a polarised variety with a $G$-action on $X$ such that $L$ is $G$-linearised. A test configuration $(\cal{X},\cal{L})$ of exponent $m$ is \emph{G-equivariant} if there is a $G$-action on $(\cal{X},\cal{L})$ which commutes with the $\bb{C}^{\times}$ action and such that the isomorphisms between $(X,L^{\otimes m})$ and $(X_t,L_t)$ for $t \neq 0$ are $G$-equivariant. Then $(X,L)$ is \emph{equivariantly $K$-(poly/semi)stable} if it is $K$-(poly/semi)stable with respect to $G$-equivariant special test configurations.
\end{definition}	

The main result of Datar-Sz\'{e}kelyhidi is the following:

\begin{theorem}\cite{ds}\thlabel{datszek}
	Let $G$ be a reductive algebraic group and let $X$ be a smooth complex Fano $G$-variety. Then $(X,K_X^{-1})$ is equivariantly $K$-polystable if and only if $X$ admits a K\"{a}hler-Einstein metric.
\end{theorem}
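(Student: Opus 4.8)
The plan is to deduce the statement from the theorem of Chen-Donaldson-Sun \cite{cds} quoted above, whose content is the equivalence between the existence of a K\"{a}hler-Einstein metric and $K$-polystability tested (via Li-Xu \cite{lx}) against \emph{all} special test configurations. Since every $G$-equivariant special test configuration is in particular a special test configuration, it suffices to prove that the two notions of polystability coincide, i.e.\ that $(X,K_X^{-1})$ is $K$-polystable if and only if it is equivariantly $K$-polystable. One implication is immediate: $K$-polystability is tested against a larger class of configurations, so it formally implies equivariant $K$-polystability. The whole difficulty is the converse---promoting an arbitrary destabilising (or non-product, zero-Futaki) configuration to a $G$-equivariant one---which I would attack not combinatorially but analytically, following Datar-Sz\'{e}kelyhidi \cite{ds}.

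For this direction I would argue by contraposition: assuming $X$ carries no K\"{a}hler-Einstein metric, I will produce a $G$-equivariant special test configuration witnessing the failure of equivariant $K$-polystability. Fix a maximal compact subgroup $K \sub G$ and a $K$-invariant reference metric $\omega_0 \in c_1(X)$, and run Aubin's continuity method, solving the family of complex Monge-Amp\`{e}re equations $\ric(\omega_t) = t\,\omega_t + (1-t)\,\omega_0$ for $t \in [0,1)$. Let $T \leq 1$ be the supremum of parameters for which a solution exists; taking $T=1$ would produce a K\"{a}hler-Einstein metric, contrary to assumption, so $T < 1$. By uniqueness of solutions with $K$-invariant data (or by averaging over $K$), the $\omega_t$ can be taken $K$-invariant, so the entire degeneration we extract will carry the $K$-action.

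I would then invoke the compactness theory underlying \cite{cds}: using the partial $C^0$-estimate, the metrics $\omega_t$ converge as $t \uparrow T$ in the Gromov-Hausdorff sense to a normal $\bb{Q}$-Fano variety $W$ equipped with a weak twisted K\"{a}hler-Einstein metric, and the convergence is \emph{algebraic}---realised by a uniformly bounded system of pluri-anticanonical embeddings into a fixed projective space. This exhibits $W$ as the central fibre of a special test configuration $(\cal{X},\cal{L})$ for $(X,K_X^{-1})$ whose generating $\bb{C}^\times$ lies in $\aut(W)$. The $K$-invariance of the $\omega_t$ forces $K$, and hence its complexification $G$, to act on $W$ commuting with this $\bb{C}^\times$; a standard equivariant refinement of the embedding makes $(\cal{X},\cal{L})$ genuinely $G$-equivariant and special. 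A Futaki-invariant computation then shows $\DF(\cal{X},\cal{L}) \leq 0$, with equality only if the configuration is a product and $W \cong X$ already carries a K\"{a}hler-Einstein metric---the excluded case. Hence equivariant $K$-polystability fails, as required.

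The main obstacle is this last degeneration step, not the soft functorial reductions. Everything hinges on (i) the partial $C^0$-estimate, which guarantees that the purely metric Gromov-Hausdorff limit is an honest algebraic degeneration rather than a merely metric object, and (ii) verifying that the reductive symmetry survives the limit---that the $K$-action extends to $W$ and complexifies to a $G$-action preserving the polarisation, so that the destabiliser is truly $G$-equivariant. Controlling the sign of the Donaldson-Futaki invariant of the limit, and isolating the product case so as to recover polystability (rather than only stability), is the final delicate point; it is exactly where reductivity of $G$ and the structure of $\aut(W)$ enter.
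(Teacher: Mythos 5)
The paper does not prove this theorem --- it is quoted directly from Datar--Sz\'{e}kelyhidi \cite{ds} --- so there is no internal argument to compare against. Your outline (the formal implication that $K$-polystability implies equivariant $K$-polystability in one direction; in the other, the contrapositive via the $K$-invariant continuity method, the partial $C^0$-estimate, the algebraic Gromov--Hausdorff limit, and the construction of a $G$-equivariant destabilising special test configuration with a sign computation of the Donaldson--Futaki invariant) is a faithful sketch of the strategy of the cited paper, with the genuinely hard steps --- algebraicity of the limit, choosing the degenerating one-parameter subgroup in the centraliser of $G$, and isolating the product case --- correctly identified but, as you acknowledge, deferred rather than carried out.
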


\begin{remark}
	We should mention that the result of Datar-Sz\'{e}kelyhidi has been generalised to the singular case when $G$ is finite by Liu-Zhu \cite{lz} and for general reductive groups by Zhuang \cite{zhuang}. Specifically, Zhuang uses a purely algebraic argument showing (among other things) that $K$-polystability of a log Fano pair $(X,\Delta)$ is equivalent to $G$-equivariant $K$-polystability when $G$ is reductive. 
\end{remark}

\subsection{$\beta$ invariant}

Here we discuss an invariant introduced by Fujita \cite{fuj} and Li \cite{li} which they have shown to have an intimate connection to $K$-stability. We must first include some preliminary definitions.

\begin{definition}
	Let $\delta$ be a Cartier divisor on a smooth projective variety $X$ of dimension $d$. The \emph{volume} of $\delta$ is \[\vol{\delta} = \limsup_{n \to \infty}{\frac{\dim{H^0(X,\cal{O}(\delta)^{\otimes n})}}{n^d/d!}}.\]
\end{definition}

In fact, by \cite[Ex. 11.4.7]{lazII} the lim sup is actually a limit. One can also check that if $\delta$ is ample, then $\vol{\delta} = \delta^d$. \begin{definition}
	Let $X$ be a Fano variety. If $\sigma \colon Y \to X$ is any projective birational morphism with $Y$ normal, we call a prime divisor $F \sub Y$ a \emph{prime divisor over $X$}.
\end{definition}

\begin{proposition}\thlabel{primetc}
	Let $X$ be a smooth complex Fano projective variety. There is a bijective correspondence between prime divisors over $X$ and test configurations on $X$ (excluding the trivial test configuration).
\end{proposition}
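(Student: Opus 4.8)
The plan is to construct mutually inverse maps between the two sets, using as the central device the anticanonical section ring
\[
R = \bigoplus_{m \geq 0} R_m, \qquad R_m = H^0\!\left(X, \cal{O}_X(-mK_X)\right),
\]
which, since $X$ is a smooth Fano, is a finitely generated graded $\bb{C}$-algebra with $\Proj R \cong X$ polarised by $-K_X$. Throughout I would restrict to normal test configurations, to which one reduces by normalising the total space (harmless for the purposes of $K$-polystability, and not raising the Donaldson--Futaki invariant), so that the irreducible central fibre $\cal{X}_0$ is an honest prime divisor on a normal variety.

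\emph{From a divisor to a test configuration.} Given $\sigma \colon Y \to X$ and a prime divisor $F \sub Y$, let $v = \ord_F$ be the associated divisorial valuation on $\bb{C}(X)$; pulling sections back along $\sigma$ and measuring their vanishing along $F$ equips each $R_m$ with the decreasing, multiplicative filtration $\cal{F}^p R_m = \{\, s \in R_m : \ord_F(\sigma^* s) \geq p \,\}$. I would then form the extended Rees algebra
\[
\cal{R} \ =\ \bigoplus_{m \geq 0}\ \bigoplus_{p \in \bb{Z}} \left(\cal{F}^p R_m\right) t^{-p}\ \sub\ R[t, t^{-1}],
\]
a flat $\bb{C}[t]$-algebra, and set $\cal{X} = \Proj_{\bb{C}[t]} \cal{R}$ with $\pi \colon \cal{X} \to \aff{1} = \Spec \bb{C}[t]$. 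The $m$-grading yields the relatively ample $\cal{L}$, the internal $p$-grading yields the $\bb{C}^\times$-action, localising at $t \neq 0$ recovers $R[t,t^{-1}]$ and hence $\cal{X}\vert_{t \neq 0} \cong X \times \bb{C}^\times$, while the fibre over $0$ is $\Proj$ of the associated graded ring $\bigoplus_{m,p} \cal{F}^p R_m / \cal{F}^{p+1} R_m$. This is precisely the degeneration of $X$ to the normal cone of $v$.

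\emph{From a test configuration to a divisor.} Conversely, given a normal $(\cal{X}, \cal{L})$, the $\bb{C}^\times$-equivariant trivialisation over $\aff{1} \setminus \{0\}$ supplies an inclusion $\bb{C}(X) \hookrightarrow \bb{C}(\cal{X})$, and I would set $v(f) := \ord_{\cal{X}_0}(f)$ for $f \in \bb{C}(X)$. One checks this is a $\bb{Z}$-valued valuation; the essential point is to show it is \emph{divisorial}, so that, after dividing by the multiplicity $\ord_{\cal{X}_0}(t)$ of the central fibre (equivalently, a base change $t \mapsto t^{c}$), it equals $\ord_F$ for a uniquely determined prime divisor $F$ over $X$. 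This produces the reverse assignment.

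To conclude I would check the two constructions are mutually inverse: extracting the central-fibre valuation from the Rees degeneration of $\ord_F$ returns $\ord_F$, since the $p$-grading it induces is by construction the $v$-filtration; and a normal test configuration is determined up to isomorphism by the filtration it induces on $R$, so re-running the Rees construction reproduces $(\cal{X},\cal{L})$. The trivial configuration is excluded exactly because it corresponds to the trivial filtration. The main obstacle is finite generation: for the forward map to land in genuine finite-type test configurations one needs $\cal{R}$ to be a finitely generated $\bb{C}[t]$-algebra, equivalently that $\bigoplus_{m,p}\cal{F}^p R_m/\cal{F}^{p+1}R_m$ be finitely generated --- the condition that $F$ be \emph{dreamy} --- which can fail for a general divisorial valuation and must therefore be established or built into the correspondence. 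Verifying that the valuation attached to a test configuration is genuinely divisorial (rather than merely quasi-monomial) is the companion difficulty; the remaining points (flatness, $\bb{C}^\times$-linearisation of $\cal{L}$, irreducibility of the central fibre, and the scaling bookkeeping) are routine by comparison.
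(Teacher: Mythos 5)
Your construction is essentially identical to the paper's proof (which follows Xu): the forward map is the same Rees-algebra degeneration of the anticanonical section ring along the $\nu_F$-filtration, and the inverse map restricts the valuation of the central fibre from $\bbk(X\times\aff{1})$ to $\bbk(X)$ and invokes the fact that this restriction is again geometric. The finite-generation (dreaminess) and divisoriality caveats you flag are real but are also left implicit in the paper's own argument, so your proposal matches it in both substance and level of rigour.
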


\begin{proof}\cite[Lemma 3.7]{xu}
	Let $F \sub Y \to X$ be a prime divisor over $X$ and let $-K_X$ be the anticanonical divisor of $X$. Consider the section ring $R = \bigoplus_{k \in \bb{Z}}{H^0(X,-kK_X)}$ of $-K_X$. The prime divisor $F$ induces a filtration \[\cal{F}^rR := \bigoplus_{k \in \bb{Z}}{\{f \in H^0(X,-kK_X) \mid \nu_F(f) \geq r\}}\] of $R$. Consider the Rees algebra \[\cal{A} = \bigoplus_{r \in \bb{Z}}{\cal{F}^rR\cdot z^{-r}}\] of this filtration. Setting $r = -1$, we see that the $k = 0$ piece of $\cal{F}^rR$ is $\bb{C}$. It follows that $z \in \cal{A}_{(0,-1)}$, so there is an embedding $\bb{C}[z] \hookrightarrow \cal{A}$. This induces a morphism $ \cal{X} = \Proj{\cal{A}} \to \aff{1}$, which is automatically compatible with the standard $\bb{C}^\times$ action on $\aff{1}$. Furthermore, since $R = \cal{A}/(z-1)$ we see that $X = \Proj{R}$ embeds into $\cal{X} = \Proj{\cal{A}}$ as a closed subscheme and is the preimage of $1 \in \aff{1}$ under the given morphism. Hence $(\cal{X},\cal{O}(1)_{\cal{X}})$ is indeed a test configuration on $(X,-K_X)$.
	
	Conversely, given a test configuration $(\cal{X},\cal{L})$ on $(X,-K_X)$, the special fibre $X_0 \sub \cal{X}$ is a prime divisor, having a corresponding valuation $\nu_0$ on $\bbk(\cal{X}) = \bbk(X\times \aff{1})$. We can thus restrict $\nu_0$ to $\bbk(X)$. Since the restriction of a geometric valuation is itself geometric, there exists some model $Y$ of $\bbk(X)$ with a prime divisor $F \sub Y$ such that $\nu_F = \nu_0\vert_{\bbk(X)}$, i.e. a prime divisor over $X$.
\end{proof}

\begin{definition}\thlabel{assgr}
	Let $\cal{F}^r$ be a filtration of an algebra $R$. The \emph{associated graded ring} to the filtration $\cal{F}^r$ is the ring \[\cal{B} = \bigoplus_{r \in \bb{Z}}{\cal{F}^r/\cal{F}^{r+1}}.\]
\end{definition}

\begin{remark}
	In the notation of \thref{primetc}, the associated graded ring to the filtration $\cal{F}^r$ on $R$ is $\cal{A}/(z)$. The ideal $(z) \sub \bb{C}[z]$ corresponds to $0 \in \aff{1}$, so $\cal{A}/(z)$ corresponds to the fibre over $0$ of the morphism $\Proj{\cal{A}} \to \aff{1}$, i.e. the central fibre of the corresponding test configuration.
\end{remark}

\begin{definition}
	Let $X$ be a Fano variety and let $F \sub Y \xrightarrow{\sigma} X$ be a prime divisor over $X$. The \emph{log discrepancy of $F$ over $X$} is $A_X(F) = \ord_F(K_{Y/X}) + 1$, where $K_{Y/X} = K_Y - \sigma^*(K_X)$ is the \emph{relative canonical divisor}. 
\end{definition}

The usual definition of log discrepancy is more general, but we will only need the one above. For full details see e.g. \cite[\S 2]{km}. A useful consequence of this definition is that it is not hard to see that if $\sigma$ is a sequence of $n$ nested blow-ups, of which $F$ is the final exceptional divisor, then $A_X(F) = n + 1$.

\begin{definition}
	Let $X$ be a smooth complex Fano variety of dimension $n$. Let $F \sub Y$ be a prime divisor over $X$. The \emph{$\beta$-invariant} of $F$ over $X$ is \[\beta_X(F) = A_X(F)(-K_X)^n - \int_{0}^{\infty}{\vol(-K_X-xF)dx},\] where $\vol(-K_X-xF)$ is shorthand for $\vol(\sigma^*(-K_X)-xF)$, where $\sigma \colon Y \to X$.
\end{definition}

\begin{theorem}\cite{li,fuj}
	A smooth complex Fano variety $X$ is $K$-semistable if and only if $\beta_X(F) \geq 0$ for any prime divisor $F$ over $X$, and $K$-stable if and only if the inequality is always strict.
\end{theorem}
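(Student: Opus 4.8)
The plan is to establish, via the correspondence of \thref{primetc}, that the Donaldson--Futaki invariant of the test configuration attached to a prime divisor $F$ over $X$ is a fixed positive multiple of $\beta_X(F)$; the two stated equivalences then follow by comparing signs. First I would invoke the theorem of Li--Xu (\cite{lx}) to reduce $K$-(semi/poly)stability to special test configurations, and then use \thref{primetc} in both directions: every special test configuration restricts its central valuation $\nu_0$ to a divisorial valuation $\nu_F$ on $\bbk(X)$, and conversely every prime divisor $F$ produces the Rees-algebra test configuration $\cal{X}_F$ whose central fibre is $\Proj \cal{B}$ for the associated graded ring $\cal{B}$ of \thref{assgr}. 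It therefore suffices to prove the single identity $\DF(\cal{X}_F) = c\,\beta_X(F)$ with a constant $c > 0$ depending only on $n = \dim X$ and $V := (-K_X)^n$.

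The core is a weight computation. Write $H^k = H^0(X,-kK_X)$, $d_k = \dim H^k$, and let $\sigma\colon Y \to X$ carry $F$. The $\bb{C}^\times$-action on the central fibre grades $H^0(X_0, L_0^{\otimes k}) \cong \cal{B}_k = \bigoplus_r \cal{F}^r H^k/\cal{F}^{r+1}H^k$ by the filtration index, so the total weight is
\[ w_k = \sum_r r\,\dim\!\big(\cal{F}^r H^k/\cal{F}^{r+1}H^k\big). \]
After normalizing the linearization so that the filtration begins in degree $0$, summation by parts rewrites this as $w_k = \sum_{r \ge 1}\dim \cal{F}^r H^k$, and by definition of the filtration $\dim \cal{F}^r H^k = h^0\!\big(Y,\sigma^*(-kK_X) - rF\big)$. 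Thus both $d_k$ and $w_k$ are expressed intrinsically in terms of $F$, \emph{independently of whether $X_0$ is normal}; this is the key point that lets the argument run for an arbitrary prime divisor rather than only for special configurations.

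Next I would feed in the asymptotics. Since $\vol$ governs the leading growth of these dimensions (as noted after the definition of volume, via \cite{lazII}), the substitution $x = r/k$ turns the sum over $r$ into an integral and yields
\[ w_k = \frac{k^{n+1}}{n!}\int_0^\infty \vol(-K_X - xF)\,dx + O(k^n), \qquad d_k = \frac{V}{n!}k^n + O(k^{n-1}). \]
The log discrepancy $A_X(F)$ enters precisely at the next order: the $k^n$-coefficient of $w_k$ is controlled by a Riemann--Roch computation on $Y$ in which the relative canonical divisor $K_{Y/X}$ contributes $\ord_F(K_{Y/X}) + 1 = A_X(F)$. Expanding $F(k) = w_k/(k d_k)$ and extracting the coefficient $F_1$ then assembles exactly
\[ \DF(\cal{X}_F) = (\text{const} > 0)\cdot\Big(A_X(F)(-K_X)^n - \int_0^\infty \vol(-K_X - xF)\,dx\Big) = (\text{const} > 0)\cdot\beta_X(F). \]

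The main obstacle is this second-order asymptotic analysis: isolating the $k^n$-term of $w_k$ and verifying that the contribution of $K_{Y/X}$ is exactly $A_X(F)$ requires genuine care, since one cannot work with leading-order volume asymptotics alone but must control the error terms in the expansion of $h^0(Y,\sigma^*(-kK_X)-rF)$ uniformly in $r$. Once the identity $\DF(\cal{X}_F) = c\,\beta_X(F)$ is secured, the theorem is immediate: $K$-semistability forces $\DF \ge 0$ on every such configuration and hence $\beta_X(F) \ge 0$ for all $F$, while conversely $\beta_X(F) \ge 0$ for all $F$ gives $\DF \ge 0$ on all special configurations, which suffices by Li--Xu; replacing ``$\ge$'' by ``$>$'' throughout yields the statement for $K$-stability.
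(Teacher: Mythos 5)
Your proposal follows essentially the same route the paper takes: the paper does not prove this theorem but cites Fujita and Li, and records only that the argument ``amounts to the fact that, under the correspondence of \thref{primetc}, the $\beta$-invariant of $F$ is a positive multiple of the Donaldson--Futaki invariant of the corresponding test configuration'' --- which is precisely the identity $\DF(\cal{X}_F) = c\,\beta_X(F)$ your weight computation is organised around, combined with the Li--Xu reduction. Your sketch fills in the standard filtration and asymptotic details consistently with the references, so there is nothing to compare beyond noting the agreement.
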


The proof of the above essentially amounts to the fact that, under the correspondence described in \thref{primetc}, the $\beta$-invariant of a prime divisor $F$ over $X$ is a positive multiple of the Donaldson-Futaki invariant of the corresponding test configuration. This gives a $K$-\emph{polystability} criterion as well:

\begin{corollary}
	A smooth complex Fano variety $X$ is $K$-polystable if and only if $\beta_F(X) \geq 0$ for any prime divisor $F$ over $X$, and $\beta_F(X) = 0$ only when $F$ corresponds to a product test configuration.
\end{corollary}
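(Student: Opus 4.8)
The plan is to treat this as a direct translation of the definition of $K$-polystability through the preceding theorem of Li and Fujita and the bijective correspondence of \thref{primetc}, rather than anything requiring fresh geometric input. The two facts I would rely on are, first, that $X$ is $K$-semistable exactly when $\beta_X(F) \geq 0$ for every prime divisor $F$ over $X$, and second, the statement highlighted in the paragraph above: under the correspondence of \thref{primetc}, $\beta_X(F)$ is a \emph{positive} multiple of $\DF(\cal{X},\cal{L})$ for the test configuration $(\cal{X},\cal{L})$ associated to $F$. The latter shows in particular that $\beta_X(F) = 0$ if and only if $\DF(\cal{X},\cal{L}) = 0$, and that the two quantities share a sign, so that every question about $\DF$ over test configurations can be rephrased as a question about $\beta_X$ over prime divisors.

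For the forward direction I would assume $X$ is $K$-polystable. It is then $K$-semistable, so the cited theorem gives $\beta_X(F) \geq 0$ for all $F$ at once. If $\beta_X(F) = 0$ for some $F$, I would pass to the associated configuration $(\cal{X},\cal{L})$; the positive-multiple relation forces $\DF(\cal{X},\cal{L}) = 0$, whence polystability forces $(\cal{X},\cal{L})$ to be a product configuration, which is precisely the assertion that $F$ corresponds to a product test configuration. For the reverse direction I would assume $\beta_X(F) \geq 0$ always, with equality only for divisors corresponding to product configurations; the inequality gives $K$-semistability, and to obtain polystability I would take any test configuration with $\DF = 0$, match it to a prime divisor $F$ with $\beta_X(F) = 0$, and invoke the hypothesis to conclude it is a product configuration.

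The step I would be most careful about is the \emph{trivial} test configuration, which \thref{primetc} deliberately excludes from the bijection. Because the trivial configuration is by definition a product configuration and has vanishing Donaldson-Futaki invariant, it is automatically compatible with the product-configuration conclusion and can never obstruct polystability; so its omission from the correspondence costs nothing, and in the reverse implication the subcase of a trivial $(\cal{X},\cal{L})$ with $\DF = 0$ is handled at once. Past this point the corollary is a purely formal consequence of the theorem and the correspondence, and I would not expect any further analytic obstacle.
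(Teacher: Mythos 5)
Your proposal is correct and is essentially the argument the paper intends: the paper gives no separate proof of this corollary, but the sentence immediately preceding it (that under the correspondence of \thref{primetc} the invariant $\beta_X(F)$ is a positive multiple of the Donaldson--Futaki invariant of the associated test configuration) is exactly the translation mechanism you use, combined with the Fujita--Li semistability criterion. Your explicit handling of the trivial test configuration, which the bijection excludes, is a sensible point of care but raises no issue, since the trivial configuration is a product configuration with vanishing Donaldson--Futaki invariant and so never obstructs polystability.
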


In our perspective when $X$ is equipped with a group action, we have:

\begin{corollary}
	If $X$ comes equipped with the action of a reductive group $G$, then we can verify the $K$-polystability of $X$ by checking the $\beta$-invariant for prime divisors $F \sub Y \to X$ in the case where $Y$ also has a $G$-action, the morphism $Y \to X$ is $G$-equivariant, and $F$ is $G$-invariant in $Y$.
\end{corollary}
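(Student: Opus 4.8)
The plan is to combine the non-equivariant $\beta$-criterion of the preceding corollary with the equivariant reduction of Datar--Sz\'{e}kelyhidi. Combining \thref{datszek} with the Chen--Donaldson--Sun theorem shows that for a smooth Fano $X$ the property of being $K$-polystable is equivalent to being equivariantly $K$-polystable, i.e. to having nonnegative Donaldson--Futaki invariant on all $G$-equivariant special test configurations, with vanishing only for product configurations. Since $\beta_X(F)$ is a fixed positive multiple of the Donaldson--Futaki invariant of the test configuration associated to $F$ under \thref{primetc}, the whole statement reduces to showing that the dictionary of \thref{primetc} restricts to a bijection between $G$-equivariant test configurations and $G$-invariant prime divisors $F \subset Y \to X$ of the type described, compatibly with the notion of product configuration.

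For the forward direction I would start from a $G$-equivariant test configuration $(\cal{X},\cal{L})$. Because the $G$-action commutes with the $\bb{C}^\times$-action and fixes $0 \in \aff{1}$, it preserves the central fibre $X_0 \subset \cal{X}$; hence the divisorial valuation $\nu_0$ on $\bbk(\cal{X})$ determined by $X_0$ is $G$-invariant, and so is its restriction $\nu_F = \nu_0|_{\bbk(X)}$. The task is then to realise this $G$-invariant geometric valuation by an honestly $G$-invariant prime divisor on a model $Y \to X$ that is itself a $G$-variety with equivariant structure morphism. For the converse, I would observe that $-K_X$ carries a canonical $G$-linearisation, so $G$ acts on the section ring $R = \bigoplus_k H^0(X,-kK_X)$ preserving the grading; $G$-invariance of $\nu_F$ then makes each $\cal{F}^rR$ a $G$-submodule, the Rees algebra $\cal{A}$ inherits a $G$-action commuting with the $\bb{C}^\times$-action scaling $z$, and $\Proj{\cal{A}} \to \aff{1}$ becomes a $G$-equivariant test configuration inverse to the one produced above. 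A final bookkeeping step checks that product configurations match up on the two sides, so that the polystability clause transports correctly.

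The main obstacle is the equivariant realisation in the forward direction: a $G$-invariant valuation is not automatically the order function of a $G$-invariant divisor on a $G$-variety, so one must extract the centre of $\nu_F$ equivariantly. In characteristic zero I expect this to follow from functorial ($G$-equivariant) resolution of singularities together with the functoriality of normalisation and of blowing up $G$-invariant centres; the subtlety is to carry the $\bb{C}^\times$-weight data and the normality of the central fibre through the construction so that specialness is preserved. Once this is in place, the remaining verifications --- the canonical $G$-linearisation of $R$, the $G$-stability of the filtration, and the commutativity of the $G$- and $\bb{C}^\times$-actions --- are formal.
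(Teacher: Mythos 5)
Your proposal is correct and follows the same route as the paper, whose entire proof is the single sentence ``This follows by combining the theorem of Fujita--Li with the theorem of Datar--Sz\'{e}kelyhidi.'' The equivariant dictionary you spell out --- the $G$-invariance of the restricted valuation $\nu_0|_{\bbk(X)}$, the $G$-action on the Rees algebra in the converse direction, and the need for an equivariant model realising the valuation (available in characteristic zero via equivariant resolution) --- is exactly the content the paper leaves implicit, so you have simply filled in the details it omits.
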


\begin{proof}
	This follows by combining the theorem of Fujita-Li with the theorem of Datar-Sz\'{e}kelyhidi.
\end{proof}

It is this final corollary which we will ultimately use to prove our main theorem.

\section{Actions of Complexity One}

\subsection{Notation and Conventions}

In this section we will summarise the theory of group actions of complexity one on normal varieties. This is work mainly completed by Timashev \cite{tim1,tim2} and proofs of all results in this section can be found in \cite{tim}. This theory is complicated and for reasons of space our coverage of it is quite brief.

Fix an algebraically closed field $\bbk$ of characteristic 0, a finitely generated extension $K$ of $\bbk$, a connected and reductive affine algebraic group $G$ and a Borel subgroup $B$ of $G$. Let $G$ act on $K$ such that $K$ is the function field common to a $G$-birational class of normal $G$-varieties. We call such a $G$-variety a \emph{$G$-model} of $K$ and use this theory to classify the $G$-models of $K$ up to $G$-isomorphism.

Denote by $K^B$ the set of $B$-invariants of $K$ under the $G$-action, and by $K^{(B)}$ the set of $B$-\emph{semi-invariants}, that is elements of $K$ on which $B$ acts via a character $\lambda \in \frak{X}(B)$, the character group of $B$. For fixed $\lambda$ let $K^{(B)}_\lambda$ denote the semi-invariants of weight $\lambda$. Let $\Lambda \sub \frak{X}(B)$ denote the (free abelian) subgroup consisting of $\lambda$ such that $K^{(B)}_\lambda$ is nonzero. We call $\Lambda$ the \emph{weight lattice}.

Let $\cal{D}$ denote the set of non-$G$-invariant divisors on all $G$-models of $K$ and let $\cal{D}^B$ denote the subset of $\cal{D}$ consisting of $B$-stable divisors. Elements of $\cal{D}^B$ are called \emph{colours}. Let $K_B \sub K$ denote the subalgebra of elements of $K$ with $B$-stable divisor of poles.

We call a discrete valuation $\nu$ of $K$ \emph{geometric} if it is a positive rational multiple of the valuation corresponding to a prime divisor $D$ on some model of $K$. A discrete valuation of $K$ is \emph{$G$-invariant} if $\nu(g\cdot f) = \nu(f)$ for all $g \in G$ and $f \in K$. A \emph{$G$-valuation} of $K$ is a $G$-invariant geometric valuation of $K$. Every $G$-valuation of $K$ is a positive rational multiple of a valuation corresponding to a $G$-invariant prime divisor on some $G$-model of $K$. Denote by $\cal{V}$ the set of $G$-valuations of $K$.

\subsection{Luna-Vust theory}

The Luna-Vust theory \cite{lv}, further developed by Knop \cite{knop,knop1} and Timashev \cite{tim1}, is a deep theory which allows us to classify up to isomorphism homogeneous spaces of algebraic groups and embeddings thereof in algebraic varieties. It contains within it the classification of toric varieties by fans and of spherical varieties by coloured fans as special cases. We give a very broad overview of its working here, in the notation of the previous section.

The Luna-Vust theory classifies varieties in terms of \emph{$G$-germs}, which are essentially $G$-stable subvarieties, and \emph{$B$-charts}, which are affine open $B$-stable subsets. Associated to these are sets of valuations called \emph{coloured data} corresponding to the $G$- and $B$-stable divisors containing a given $G$-germ or intersecting a given $B$-chart. The coloured data satisfy certain admissibility conditions, and ultimately determine the $G$-germs and hence the $G$-models up to isomorphism. 
 
\begin{definition}
	A \emph{$G$-germ} of $K$ is a local ring $\cal{O}_{X,Y} \sub K$ of some $G$-stable subvariety $Y$ on some $G$-model $X$ of $K$. A \emph{geometric realisation} of a $G$-germ is a $G$-model $X$ containing a $G$-subvariety $Y$ with the corresponding local ring. The \emph{support} of a $G$-germ $\cal{O}_{X,Y}$ is the set of $G$-valuations of $K$ having centre on $Y$ in any geometric realisation of $\cal{O}_{X,Y}$, i.e. $G$-valuations $\nu$ such that $\cal{O}_{\nu}$ dominates $\cal{O}_{X,Y}$.
\end{definition}

We will often conflate a $G$-germ $\cal{O}_{X,Y}$ with a $G$-subvariety $Y$ in a geometric realisation.

\begin{proposition}\cite[\S 12]{tim}
	The support of a $G$-germ is non-empty.	
\end{proposition}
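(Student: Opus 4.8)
The plan is to construct a $G$-valuation whose centre is exactly $\overline Y$ by an equivariant blow-up, the only genuinely delicate point being to ensure that the resulting divisorial valuation is $G$-invariant.

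Fix a geometric realisation of the germ, i.e. a normal $G$-model $X$ of $K$ containing a $G$-stable irreducible closed subvariety $\overline Y$ whose local ring at its generic point is $\cal{O}_{X,Y}$. It suffices to produce some $\nu \in \cal{V}$ with $\cal{O}_\nu$ dominating $\cal{O}_{X,Y}$, since any such $\nu$ then has centre $\overline Y$ on $X$ and hence lies in the support by definition. I would first reduce the problem to producing a $G$-stable prime divisor dominating $\overline Y$ on some normal $G$-model lying over $X$: if $\codim_X \overline Y = 1$ this divisor is $\overline Y$ itself, and there is nothing to do.

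In general I would blow up the reduced ideal sheaf $\cal{I}_{\overline Y}$ to obtain a birational morphism $\pi \colon \tilde X \to X$ and then normalise. Because $\overline Y$ is $G$-stable, $\cal{I}_{\overline Y}$ is a $G$-stable sheaf of ideals, so both the blow-up and the normalisation are canonically $G$-equivariant, and $\tilde X$ is again a normal $G$-model of $K$. The exceptional divisor $\pi^{-1}(\overline Y)$ is $G$-stable and surjects onto $\overline Y$, so at least one of its irreducible components $E$ dominates $\overline Y$; being a prime divisor on the normal model $\tilde X$, it defines a geometric valuation $\nu = \ord_E$ whose centre on $X$ is $\overline Y$, so that $\cal{O}_\nu$ dominates $\cal{O}_{X,Y}$.

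The remaining, and I expect principal, obstacle is to guarantee that $E$ --- and therefore $\nu$ --- is $G$-invariant rather than merely having $G$-stable total exceptional locus: a priori $G$ could permute the finitely many components of $\pi^{-1}(\overline Y)$, in which case $\ord_E$ need not be preserved. This is exactly where I would use the standing assumption that $G$ is connected. Since $G$ is connected, its action on the finite discrete set of irreducible components of $\pi^{-1}(\overline Y)$ is necessarily trivial, so every component, in particular $E$, is $G$-stable. For a $G$-stable prime divisor one then checks directly that $\ord_E(g\cdot f) = \ord_{g\cdot E}(f) = \ord_E(f)$ for all $g \in G$ and $f \in K$, so that $\nu$ is $G$-invariant and thus a $G$-valuation. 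As its centre on $X$ is $\overline Y$, it belongs to the support of the germ, which is therefore non-empty.
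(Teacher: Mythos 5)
Your argument is correct and is essentially the proof in the cited source (the paper itself gives no proof, deferring to \cite[\S 12]{tim}): one passes to the normalised equivariant blow-up of the $G$-stable ideal sheaf of $\overline{Y}$, takes a component $E$ of the exceptional (Cartier, hence pure codimension-one) locus that dominates $\overline{Y}$, and uses connectedness of $G$ to conclude that $E$, and hence $\ord_E$, is $G$-invariant. Your connectedness step is the right one and can be made precise by noting that $\overline{G\cdot E}$ is irreducible, contains $E$, and lies inside the exceptional locus, so it must equal $E$.
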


The following is a version of the valuative criterion of separation:

\begin{proposition}\cite[Thm 12.2]{tim}
	The supports of all $G$-germs realised by a fixed $G$-model $X$ are pairwise disjoint.
\end{proposition}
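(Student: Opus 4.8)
The plan is to recognise this as the uniqueness-of-centres half of the valuative criterion of separation, applied to the separated scheme $X$. The $G$-action plays no essential role here: restricting attention to $G$-valuations and $G$-stable subvarieties only shrinks the sets involved, so it suffices to prove the corresponding statement for arbitrary geometric valuations and subvarieties.

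First I would unwind the definition of support. Writing $\eta_Y$ for the generic point of a $G$-subvariety $Y \subseteq X$, we have $\cal{O}_{X,Y} = \cal{O}_{X,\eta_Y}$. By definition $\nu \in \supp(\cal{O}_{X,Y})$ means that $\cal{O}_\nu$ dominates $\cal{O}_{X,\eta_Y}$, i.e. $\cal{O}_{X,\eta_Y} \subseteq \cal{O}_\nu$ and $\frak{m}_\nu \cap \cal{O}_{X,\eta_Y} = \frak{m}_{\eta_Y}$. This is precisely the assertion that $\eta_Y$ is the centre of $\nu$ on $X$. In particular the domination condition pins the centre down to the generic point of $Y$ itself, and not to a proper specialisation: at a proper specialisation $\xi \in Y$ the ideal $\frak{m}_\nu \cap \cal{O}_{X,\xi}$ is the prime of $\cal{O}_{X,\xi}$ corresponding to $\eta_Y$, which is strictly contained in $\frak{m}_\xi$. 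Thus $\supp(\cal{O}_{X,Y})$ is exactly the set of $G$-valuations whose centre on $X$ is $\eta_Y$.

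Next I would appeal to separation. Each $G$-valuation is geometric, hence a discrete valuation of rank one, so $\cal{O}_\nu$ is a bona fide valuation ring with fraction field $K$, and the generic point of $\Spec \cal{O}_\nu$ maps to the generic point of $X$. Since $X$ is a variety over $\bbk$ it is separated, and the valuative criterion then forces $\nu$ to have at most one centre on $X$: if two points $\xi_1, \xi_2$ were both dominated by $\cal{O}_\nu$, the resulting morphisms $\Spec \cal{O}_\nu \to X$ would agree over $\Spec K$ and hence coincide, giving $\xi_1 = \xi_2$.

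Combining the two steps closes the argument. If some $G$-valuation $\nu$ lay in $\supp(\cal{O}_{X,Y_1}) \cap \supp(\cal{O}_{X,Y_2})$ for distinct $G$-subvarieties $Y_1 \neq Y_2$, then by the first step both $\eta_{Y_1}$ and $\eta_{Y_2}$ would be centres of $\nu$ on $X$, and by the second step the centre is unique, forcing $\eta_{Y_1} = \eta_{Y_2}$ and hence $Y_1 = Y_2$, a contradiction. The one point demanding care, and the step I expect to be the real (if modest) obstacle, is the identification in the first paragraph: correctly translating the abstract domination condition into the statement that the centre is the \emph{generic} point of $Y$ rather than some proper subvariety of it. Everything after that is a direct invocation of separatedness.
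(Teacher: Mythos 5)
Your argument is correct and is precisely the route the paper intends: the proposition is introduced there as ``a version of the valuative criterion of separation'' with the proof deferred to Timashev, and your two steps --- identifying the support of a germ $\cal{O}_{X,Y}$ with the set of $G$-valuations whose centre on $X$ is exactly the generic point $\eta_Y$, then invoking uniqueness of the centre on the separated scheme $X$ --- constitute exactly that argument. No gaps to report.
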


The Luna-Vust theory is capable of classifying $G$-models up to isomorphism by the following:

\begin{theorem}\cite[\S 1.1]{tim1}
	A $G$-model $X$ of $K$ is uniquely determined among $G$-models of $K$ by its set of $G$-germs.
\end{theorem}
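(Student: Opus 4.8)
The plan is to show that the canonical $G$-equivariant birational map $\phi \colon X \dashrightarrow X'$ between two $G$-models sharing the same set of $G$-germs is in fact a $G$-isomorphism; since any $G$-isomorphism inducing the identity on $K$ must coincide with $\phi$, it suffices to construct such an isomorphism by gluing local ones. The first observation is that the \emph{support} of a $G$-germ $\cal{O}_{X,Y}$ — the set of $G$-valuations $\nu$ with $\cal{O}_\nu$ dominating $\cal{O}_{X,Y}$ — is an intrinsic invariant of $\cal{O}_{X,Y}$ regarded merely as a subring of $K$, and hence is visible from the abstract set of $G$-germs alone. Consequently two models with identical sets of $G$-germs have identical supports attached to each germ. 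Combined with the separation result that the supports of the germs of a single model are pairwise disjoint, this shows that every $G$-valuation with a center determines a \emph{unique} germ, and that this assignment from $\cal{V}$ to germs is computed identically on $X$ and on $X'$.

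Next I would pass to the local situation by covering each $G$-model by its \emph{simple} $G$-stable open subsets: for each closed $G$-orbit $Y \sub X$ let $X_Y$ be the minimal $G$-stable open subset containing $Y$, so that $X = \bigcup_Y X_Y$ and $Y$ is the unique closed orbit of $X_Y$. The central claim is that a simple $G$-model is recovered from the $G$-germ of its closed orbit together with the $G$-action: concretely, the $B$-chart surrounding $Y$ is $\Spec$ of the algebra $K_B \cap \bigcap_\nu \cal{O}_\nu$, the intersection ranging over the valuations and colours recorded in the coloured data of the germ, and $X_Y$ is then $G \mathring{X}$ for this $B$-chart $\mathring{X}$. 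Because the coloured data is read off from the germ's local ring together with its support — data which coincide for $X$ and $X'$ — the simple pieces $X_Y$ and $X'_{Y'}$ attached to corresponding germs are $G$-isomorphic compatibly with their embeddings into $K$.

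Finally I would glue these local $G$-isomorphisms. A point lies in $X_Y$ precisely when $Y \sub \overline{G\cdot x}$, and an overlap $X_Y \cap X_{Y''}$ is again a $G$-stable open subset whose germs lie in both supports; the pairwise disjointness of supports guarantees that the incidence pattern of germs, and hence the combinatorial skeleton along which the simple charts are assembled, is unambiguous and identical on the two sides. Patching the local $G$-isomorphisms over this common skeleton yields the desired global $G$-isomorphism. The main obstacle is the local reconstruction step: one must verify that no geometric information beyond the germ's local ring and its support is required to rebuild the simple model. This is exactly where complexity one is delicate, since the structure transverse to a $G$-germ is governed not by a single cone, as in the spherical or toric settings, but by data distributed over the rational curve parametrising the generic $B$-orbits; care is therefore needed to confirm that the coloured data is genuinely \emph{determined} by, rather than merely compatible with, the germ.
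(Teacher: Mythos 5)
The paper offers no proof of this statement --- it is imported wholesale from Timashev via the citation \cite[\S 1.1]{tim1} --- so the only meaningful comparison is with the standard Luna--Vust argument. Against that benchmark your skeleton is the correct one: supports of germs are intrinsic to the local rings, supports within one model are pairwise disjoint, the model is covered by the simple $G$-stable open pieces $X_Y$ attached to closed orbits, each piece is $G\mathring{X}$ for a $B$-chart $\mathring{X}$, and the local identifications glue because disjointness of supports forces separatedness. The first and last paragraphs of your plan are sound.

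The gap is the one you flag yourself, but it is not merely a delicacy to be ``confirmed'': the reconstruction formula you write down fails as stated. You take $\mathring{X} = \Spec\bigl(K_B \cap \bigcap_{\nu}\cal{O}_\nu\bigr)$ with $\nu$ ranging over the coloured data $(\cal{V}_Y,\cal{D}^B_Y)$ \emph{of the germ}. That intersection formula describes the algebra of a $B$-chart in terms of the coloured data \emph{of the chart}, i.e.\ of all $G$-divisors and colours meeting it, and for a germ of type I this is strictly larger than the germ's own data: by the dichotomy recalled in the paper, a type I germ has finite coloured data and admits \emph{no} minimal $B$-chart, so there is no canonical choice. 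For instance, if $Y$ is a central $G$-divisor then $\cal{V}_Y=\{\nu_Y\}$ and $\cal{D}^B_Y=\emptyset$, and $K_B\cap\cal{O}_{\nu_Y}$ is in general not the coordinate ring of any affine $B$-stable open subset --- a single ray is not admissible coloured data for a $B$-chart. The repair is to choose an arbitrary $B$-chart $\mathring{X}\sub X$ meeting $Y$, note that its coloured data satisfies the model-independent admissibility conditions and hence determines the same affine $B$-variety abstractly, and then prove that this chart also realises the germ $Y'$ inside $X'$ and that the resulting neighbourhoods agree after shrinking; that verification, not the gluing, is the actual content of the theorem. As it stands your proposal is a faithful plan for the proof rather than a proof.
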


We keep track of $G$-germs using \emph{$B$-charts}:

\begin{definition}
	A \emph{$B$-chart} of $K$ is a $B$-stable affine open subset on some $G$-model of $K$.
\end{definition}

\begin{proposition}\cite[Lemma 1.1]{tim1}
	Any $G$-germ $\cal{O}_{X,Y}$ admits a geometric realisation $X$ such that $Y$ intersects some $B$-chart $X_0 \sub X$. We can therefore cover any $G$-model by the $G$-translates of finitely many $B$-charts.
\end{proposition}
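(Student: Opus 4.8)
The plan is to reduce to a projective situation via Sumihiro's theorem and then to exhibit the required $B$-chart as the complement of the zero locus of a single $B$-semi-invariant section, the reductivity of $G$ entering in an essential way. Concretely, since $G$ is connected and reductive (hence linear) and all $G$-models of $K$ are normal, Sumihiro's theorem guarantees that every $G$-germ $\cal{O}_{X,Y}$ has a quasi-projective geometric realisation; passing to the closure, I may take a projective realisation $X$ with a $G$-equivariant closed embedding $X \hookrightarrow \bb{P}(V)$ for a finite-dimensional $G$-module $V$. Then $\cal{L} = \cal{O}(1)|_X$ is an ample, $G$-linearised line bundle, and I take $Y \sub X$ to be the closure of the original $G$-subvariety; it has the same local ring at its generic point, hence realises the same germ, and is a $G$-stable closed subvariety.

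The heart of the argument is the construction of the section. For $n \gg 0$ the space $W = H^0(X,\cal{L}^{\otimes n})$ is a finite-dimensional $G$-module, and the restriction $r \colon W \to H^0(Y,\cal{L}^{\otimes n}|_Y)$ is a morphism of $G$-modules with $r \neq 0$, since $\cal{L}^{\otimes n}$ is very ample and so some section is nonzero at a general point of $Y$. Because $G$ is reductive and we are in characteristic $0$, complete reducibility splits $W = \ker r \oplus C$ as $G$-modules with $C \cong \im r \neq 0$. Every nonzero $G$-module contains a highest-weight vector, i.e.\ a vector fixed by the unipotent radical of $B$ and scaled by the maximal torus, equivalently a $B$-semi-invariant; choosing such an $s \in C$ yields a $B$-semi-invariant section $s$ with $r(s) \neq 0$, so $s$ does not vanish on $Y$. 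Its non-vanishing locus $X_0 := X \setminus \{s = 0\}$ is affine (the complement of the zero locus of a section of an ample bundle on a projective variety), is $B$-stable (the zero scheme of a $B$-semi-invariant is $B$-invariant, as $s$ spans a $B$-stable line), and meets $Y$; this is the desired $B$-chart.

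For the covering statement I would argue by $G$-saturation and quasi-compactness. The union $\Omega$ of all $G$-translates of all $B$-charts of a given $G$-model $X$ is a $G$-stable open subset. If $\Omega \neq X$, then $X \setminus \Omega$ is a nonempty $G$-stable closed subset and therefore contains a closed $G$-orbit $Z$ (an orbit of minimal dimension in it). Applying the construction above to the germ of $Z$ produces a $B$-chart $X_0$ with $Z \cap X_0 \neq \emptyset$; since $Z$ is a single orbit and $\Omega$ is $G$-stable, $Z = G\cdot(Z \cap X_0) \sub \Omega$, a contradiction. Hence $\Omega = X$, and as $X$ is quasi-compact, finitely many $G$-translates of $B$-charts already cover it.

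The main obstacle is precisely the step where reductivity is used. The unipotent radical of $B$ obstructs the naive search for a $B$-semi-invariant section lying outside a prescribed $B$-submodule: for $\sl{2}$ acting on $\proj{1}$ every $B$-semi-invariant section vanishes at the unique $B$-fixed point, which therefore lies in no $B$-chart at all, so the covering genuinely requires $G$-translation. What rescues the statement for a $G$-germ is that $Y$ is $G$-stable rather than merely $B$-stable, so $\im r$ is a $G$-module and complete reducibility produces a highest-weight vector restricting nontrivially to $Y$. The remaining technical point is to verify that the $B$-chart obtained on the Sumihiro realisation can be transported back to the originally given model, so that germ, chart and covering all live coherently within the Luna--Vust formalism; this is where the comparison results of \cite{tim,tim1} are invoked.
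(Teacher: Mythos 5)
The paper offers no proof of this proposition, citing it directly from Timashev, and your argument is precisely the standard one underlying that reference: Sumihiro's theorem to reduce to a projective, $G$-linearised setting, complete reducibility applied to the restriction map to produce a $B$-semi-invariant section not vanishing identically on $Y$, and affineness of the non-vanishing locus of a section of an ample bundle. The only step to tighten is the covering argument: apply Sumihiro to the given model $X$ itself to get a $G$-stable quasi-projective open neighbourhood of the closed orbit $Z$ \emph{inside} $X$ (and take the section to vanish on the boundary of that open set in its projective closure), so that the resulting $B$-chart is genuinely an open subset of $X$ and the ``transport'' between models you flag at the end is not actually needed.
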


$B$-charts are associated with the valuations of the $G$- and $B$-invariant prime divisors they intersect:

\begin{definition}
	Let $X_0$ be a $B$-chart. Let $\cal{W} \sub \cal{V}$ be the set of $G$-valuations corresponding to $G$-invariant prime divisors which intersect $X_0$ and let $\cal{R} \sub \cal{D}^B$ be the set of valuations corresponding to colours which intersect $X_0$. We call the pair $(\cal{W},\cal{R})$ the \emph{coloured data} of $X_0$.
\end{definition}

\begin{proposition}\cite[\S 13]{tim}
	$B$-charts are determined by their coloured data.
\end{proposition}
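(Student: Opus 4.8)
The plan is to recover the coordinate ring of the $B$-chart from its coloured data. A $B$-chart $X_0$ is by definition an affine $B$-stable open subset of a $G$-model of $K$, so it is determined among subschemes of the $G$-birational class by its coordinate ring $\bbk[X_0] \subseteq K$ (which has fraction field $K$). It therefore suffices to exhibit $\bbk[X_0]$ as an explicit subalgebra depending only on $(\mathcal{W},\mathcal{R})$ together with the ambient, model-independent data $K_B$, $\mathcal{V}$, $\mathcal{D}^B$. Concretely, I would prove the formula
\[
\bbk[X_0] = \{\, f \in K_B : \nu(f) \ge 0 \text{ for all } \nu \in \mathcal{W},\ \nu_D(f) \ge 0 \text{ for all } D \in \mathcal{R}\,\},
\]
whose right-hand side is manifestly a function of the coloured data; two $B$-charts with equal coloured data then have equal coordinate rings and so coincide.

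The first and main step is to verify that every regular function on $X_0$ already lies in $K_B$, i.e. that $\bbk[X_0] \subseteq K_B$. Here I would realise $X_0$ as a $B$-stable open subset of a \emph{complete} $G$-model $\bar X$ (possible by equivariant completion), noting that $X_0$, being open in a normal model, is itself normal. For $f \in \bbk[X_0]$ the polar divisor $\div_{\infty}(f)$ on $\bar X$ is effective and, since $f$ has no poles on $X_0$, is supported on the closed $B$-stable set $\bar X \setminus X_0$. Its prime components are thus among the codimension-one components of $\bar X \setminus X_0$, of which there are only finitely many; as $B$ is connected it permutes this finite set trivially, so each such component is $B$-stable. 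Hence $\div_{\infty}(f)$ is $B$-stable and $f \in K_B$. This connectedness observation is the crux: it forces regular functions on a $B$-chart to have $B$-stable poles, so that the whole argument can be carried out among $B$-stable divisors alone.

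With this in hand, both inclusions of the displayed formula are short. By normality, $f \in \bbk[X_0]$ if and only if $\nu_D(f) \ge 0$ for every prime divisor $D$ meeting $X_0$. For $f \in K_B$ the prime divisors $D$ with $\nu_D(f) < 0$ are all $B$-stable, hence each is either $G$-invariant or a colour; and by the definition of coloured data the $B$-stable prime divisors meeting $X_0$ are exactly those with valuation in $\mathcal{W}$ together with the colours in $\mathcal{R}$. Consequently $f$ has a pole meeting $X_0$ precisely when $\nu(f) < 0$ for some $\nu \in \mathcal{W}$ or $\nu_D(f) < 0$ for some $D \in \mathcal{R}$, which yields both inclusions simultaneously. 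Since $\bbk[X_0] = \Spec$-data is thereby pinned down by $(\mathcal{W},\mathcal{R})$, the proposition follows.

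I expect the genuine obstacle to be the first step, $\bbk[X_0] \subseteq K_B$: one must be careful to complete $G$-equivariantly so that the polar divisor of an \emph{arbitrary} regular function (not just a $B$-semi-invariant, whose divisor is $B$-stable for trivial reasons) is controlled by the finitely many $B$-stable boundary divisors of $\bar X$. Once regularity on $X_0$ has been re-expressed purely in terms of $B$-stable valuations, the identification with the coloured data is essentially bookkeeping.
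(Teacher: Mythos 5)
Your argument is correct and is essentially the proof given in Timashev's book, which the paper cites without reproducing: one identifies $\bbk[X_0]$ with $\{f \in K_B \mid \nu(f) \ge 0 \ \text{for all} \ \nu \in \cal{W}, \ \nu_D(f) \ge 0 \ \text{for all} \ D \in \cal{R}\}$, the inclusion $\bbk[X_0] \sub K_B$ being secured exactly as you do it, by passing to an equivariant completion and noting that the connected group $B$ must fix each of the finitely many boundary components. Nothing further is needed.
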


$G$-germs also have coloured data:

\begin{definition}
	Let $Y$ be a $G$-germ. Let $\cal{V}_Y \sub \cal{V}$ be the set of $G$-valuations corresponding to $G$-invariant prime divisors containing $Y$, and let $\cal{D}^B_Y \sub \cal{D}^B$ be the set of valuations corresponding to colours containing $Y$. We call the pair $(\cal{V}_Y,\cal{D}^B_Y)$ the \emph{coloured data} of $Y$.
\end{definition}

\begin{proposition}\cite[Prop 14.1]{tim}
	$G$-germs are determined by their coloured data.
\end{proposition}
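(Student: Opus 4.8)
The plan is to show directly that the local ring $\cal{O}_{X,Y} \sub K$ attached to the $G$-germ is recovered from the pair $(\cal{V}_Y, \cal{D}^B_Y)$; since a $G$-germ is by definition such a local ring, this determines the germ. By the preceding proposition I may choose a geometric realisation $X$ in which $Y$ meets a $B$-chart $X_0 = \Spec A$, so that $\cal{O}_{X,Y} = A_\frak{p}$ for the $B$-stable prime $\frak{p}$ cutting out $Y \cap X_0$. Because $X$ is normal, Serre's criterion gives $\cal{O}_{X,Y} = \bigcap_{D \supseteq Y} \cal{O}_{X,D}$, the intersection over the prime divisors $D$ of $X$ containing $Y$; thus it suffices to decide, for each $f \in K$, whether $\nu_D(f) \geq 0$ for all such $D$ using only the coloured data.

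The heart of the argument is the case of $B$-semi-invariants. If $f \in K^{(B)}$ then $\div(f)$, and in particular its polar part, is $B$-stable. Hence any prime divisor $D \supseteq Y$ with $\nu_D(f) < 0$ is itself $B$-stable, and is therefore either a $G$-invariant divisor, whose valuation lies in $\cal{V}_Y$, or a colour lying in $\cal{D}^B_Y$. Conversely every valuation in $\cal{V}_Y \cup \cal{D}^B_Y$ comes from a $B$-stable prime divisor containing $Y$. Consequently, for $f \in K^{(B)}$,
\[
f \in \cal{O}_{X,Y} \iff \nu(f) \geq 0 \text{ for all } \nu \in \cal{V}_Y \text{ and } \nu_D(f) \geq 0 \text{ for all } D \in \cal{D}^B_Y,
\]
so the coloured data determines precisely which $B$-semi-invariants, and (taking the trivial weight) which $B$-invariants, belong to $\cal{O}_{X,Y}$.

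It remains to pass from the semi-invariants contained in $\cal{O}_{X,Y}$ to the whole ring, equivalently to recover the $B$-stable prime $\frak{p}$, and this is the step I expect to be the main obstacle: a solvable group acts non-semisimply, so a $B$-stable subring or ideal need not be spanned by its semi-invariants, and one must control the non-$B$-stable, non-$G$-invariant prime divisors through $Y$. The mechanism I would invoke is exactly the one already underlying the statement that $B$-charts are determined by their coloured data, namely that on the normal affine $B$-variety $X_0$ the relevant $B$-stable subalgebras are determined by the semi-invariants and invariants they contain, a property which descends to the localisation $A_\frak{p} = \cal{O}_{X,Y}$; combined with the previous step this reconstructs $\cal{O}_{X,Y}$, and hence the germ, from $(\cal{V}_Y, \cal{D}^B_Y)$. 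Alternatively, one may phrase the conclusion through the support of the germ: the computation above determines which $G$-valuations dominate $\cal{O}_{X,Y}$, since a $G$-valuation is determined by its restriction to $K^{(B)}$, and the support together with the granted facts that supports of distinct germs in a fixed model are disjoint and that a $G$-model is determined by its set of $G$-germs then pins down $Y$.
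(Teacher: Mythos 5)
Your reduction to $B$-semi-invariants is correct and is the right first move: for $f \in K^{(B)}$ the polar divisor of $f$ is $B$-stable, its components are $B$-stable because $B$ is connected, and hence membership of $f$ in $\cal{O}_{X,Y}$ is tested exactly on $\cal{V}_Y \cup \cal{D}^B_Y$. The gap is where you predict it, and neither of your two proposed repairs closes it. For the first: $\cal{O}_{X,Y}$ is a $G$-stable (not merely $B$-stable) subring of $K$, but it is \emph{not} a union of finite-dimensional $G$-submodules of $K$ (already for the germ of the diagonal in $\proj{1}\times\proj{1}$ most regular functions at the generic point generate infinite-dimensional $G$-orbits), so the highest-weight-vector argument that a $G$-stable subspace of a rational module is spanned by its $B$-eigenvectors does not apply, and the assertion that ``the relevant $B$-stable subalgebras are determined by the semi-invariants they contain'' is precisely the point at issue. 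For the second: the support does not determine the germ. Separation of supports is only asserted for germs realised on a \emph{fixed} model; if $Y' $ is the exceptional divisor of the blow-up of $X$ along $Y$, every valuation dominating $\cal{O}_{X',Y'}$ also dominates $\cal{O}_{X,Y}$, so the two distinct germs have nested supports, and ``$X$ is determined by its germs'' gives you nothing here because no single model is fixed.

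The argument in the cited source runs instead through the $B$-chart result you already invoke, but localised correctly. Choose a minimal $B$-chart $X_0 = \Spec A$ meeting $Y$; its coloured data, hence $A$ itself, is determined by $(\cal{V}_Y,\cal{D}^B_Y)$ via the description of $A$ inside $K_B$. The coloured data also determines the support of $Y$ (the $G$-valuations in the relative interior of the associated coloured (hyper)cone). For any $\nu$ in that support, $\cal{O}_\nu$ dominates $\cal{O}_{X,Y} = A_{\frak{p}}$ with $\frak{p} = I(Y \cap X_0)$, whence $\frak{p} = \frak{m}_\nu \cap A = \{f \in A \mid \nu(f) > 0\}$: this recovers the prime $\frak{p}$ without any appeal to its being spanned by eigenvectors, and then $\cal{O}_{X,Y} = A_{\frak{p}}$ is reconstructed. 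Your computation of which semi-invariants lie in $\cal{O}_{X,Y}$ is consistent with this but is not by itself sufficient to pin down the local ring.
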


In short, the Luna-Vust theory describes properties which must be satisfied by subsets of $\cal{V}$ and $\cal{D}^B$ in order for them to correspond to the coloured data of some $B$-chart or $G$-germ, allows us to compute the supports of $G$-germs from their coloured data, and provides a method to construct varieties from sets of coloured data. We can then distinguish between nonisomorphic varieties by differences in the coloured data of their $G$-germs. We can also verify properties of varieties from their coloured data, for example, the valuative criterion of completeness becomes:

\begin{proposition}
	A $G$-model $X$ of $K$ is complete if and only if the supports of its $G$-germs cover $\cal{V}$.
\end{proposition}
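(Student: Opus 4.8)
The plan is to reduce the valuative criterion of completeness, which a priori must be checked against \emph{all} geometric valuations of $K$, to a statement about the $G$-valuations in $\cal{V}$, and then to exploit the existence of a $G$-equivariant completion. First I would record the following reformulation of the right-hand side: the union of the supports of the $G$-germs of $X$ is precisely the set of $\nu \in \cal{V}$ admitting a centre on $X$. Indeed, if $\nu \in \cal{V}$ has centre some subvariety $Y \sub X$, then since $\nu$ is $G$-invariant and $X$ is separated, uniqueness of centres forces $g \cdot Y = Y$ for all $g \in G$ (the pushforward $g_*\nu$ has centre $g\cdot Y$ and equals $\nu$), so $Y$ is a $G$-subvariety and $\cal{O}_{X,Y}$ is a $G$-germ whose support contains $\nu$; conversely every support consists by definition of $G$-valuations with centre on $X$. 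Thus ``the supports cover $\cal{V}$'' is equivalent to ``every $G$-valuation of $K$ has a centre on $X$,'' and it is this latter condition that I would compare with completeness.

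The forward implication is then immediate: if $X$ is complete, the valuative criterion guarantees that \emph{every} geometric valuation of $K$ has a centre on $X$, in particular every $\nu \in \cal{V}$, so the supports cover $\cal{V}$ by the reformulation above.

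For the converse I would argue by equivariant completion. By Sumihiro's theorem, $X$ admits a $G$-equivariant embedding into a projective $G$-variety; taking the closure and normalising produces a complete normal $G$-model $\bar{X}$ of $K$ containing $X$ as a $G$-stable dense open subset. Suppose, for contradiction, that $X \subsetneq \bar{X}$, so that $Z := \bar{X} \setminus X$ is a nonempty closed $G$-stable subset. Let $Y$ be an irreducible component of $Z$; since $G$ is connected it acts trivially on the finite set of components, so $Y$ is $G$-stable and $\cal{O}_{\bar{X},Y}$ is a $G$-germ of $\bar{X}$. Its support is nonempty, so there is a $G$-valuation $\nu$ whose unique centre in $\bar{X}$ is $Y \sub Z$. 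Since $\bar{X}$ is separated, centres are unique and compatible with the open immersion $X \hookrightarrow \bar{X}$; as the generic point of $Y$ lies outside $X$, the valuation $\nu$ has no centre on $X$. This contradicts the hypothesis that every $G$-valuation has a centre on $X$, so $Z = \emptyset$ and $X = \bar{X}$ is complete.

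The main obstacle is the reduction encapsulated in the first paragraph --- passing from arbitrary geometric valuations to the $G$-invariant ones --- together with the input of $G$-equivariant completion; once these are in hand the argument is a short comparison of centres. The one point demanding care is the claim that the centre of a $G$-valuation is automatically $G$-stable, which rests on the uniqueness of centres on a separated scheme (the valuative criterion of separation, reflected here in the pairwise disjointness of supports) together with the $G$-invariance $\nu(g \cdot f) = \nu(f)$.
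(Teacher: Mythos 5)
The paper does not prove this proposition: it is quoted as part of the survey of Luna--Vust theory, with the proof deferred to Timashev's book. Judged on its own merits, your argument is correct and is essentially the standard one. The reformulation in your first paragraph (supports cover $\cal{V}$ iff every $G$-valuation has a centre on $X$, using uniqueness of centres on a separated scheme to see that the centre of a $G$-invariant valuation is $G$-stable) is exactly the right reduction, and the forward direction is the valuative criterion. Your converse via Sumihiro's equivariant completion is a clean way to avoid having to test the valuative criterion against non-invariant valuations of $K$: rather than showing directly that every valuation acquires a centre, you show $X$ coincides with a complete model, using the paper's quoted fact that supports of $G$-germs are nonempty to produce a $G$-valuation centred in the complement if not. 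The only cosmetic imprecision is the phrase ``embedding into a projective $G$-variety'': for $X$ not a priori quasi-projective one should invoke Sumihiro's equivariant completion theorem directly (open equivariant embedding into a complete normal $G$-variety), which is what your argument actually uses.
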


We will show in the remainder of this section how the Luna-Vust theory is applied to give a combinatorial description of varieties of complexity one.

\subsection{Hyperspace}

Here we introduce the hyperspace, which will be home of certain collections of cones which will classify complexity one varieties. These cones will be generated from the coloured data of each variety, i.e. by $G$-valuations and valuations corresponding to colours. Hence we address some properties of these first:

\begin{proposition}\cite[Cor 19.13]{tim}
	A $G$-valuation of $K$ is uniquely determined by its restriction to $K^{(B)}$.	
\end{proposition}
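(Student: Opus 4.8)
The plan is to prove the statement in the form: if two $G$-valuations $\nu_1,\nu_2$ of $K$ restrict to the same function on $K^{(B)}$, then $\nu_1=\nu_2$ on all of $K$. First I would reduce the problem to computing values on a finite-dimensional $G$-module. Writing an arbitrary $f\in K^\times$ as a ratio $p/q$ of elements of a finitely generated $G$-stable subalgebra with fraction field $K$, and using $\nu_i(p/q)=\nu_i(p)-\nu_i(q)$, it suffices to fix an element $f$ lying in a finite-dimensional rational $G$-submodule of $K$ and to show $\nu_1(f)=\nu_2(f)$. Concretely I would set $M=\langle G\cdot f\rangle\sub K$, the (finite-dimensional, since the $G$-action is locally finite) $G$-module generated by $f$.

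The key observation is that the $\nu$-adic filtration of $M$ is $G$-stable. For $c$ in the value group put $F_i^{\ge c}=\{m\in M:\nu_i(m)\ge c\}$. Because $\nu_i$ is $G$-invariant we have $\nu_i(g\cdot m)=\nu_i(m)$, so each $F_i^{\ge c}$ is a $G$-submodule of $M$. Here the reductivity of $G$ in characteristic zero enters decisively: every finite-dimensional $G$-module is the $\bbk$-linear span of the $G$-orbits of its highest-weight vectors, i.e. of its $U$-invariants, where $U$ is the unipotent radical of $B$. Applying this to the submodule $F_i^{\ge c}$ gives that $F_i^{\ge c}$ is the $\bbk$-span of $G\cdot(F_i^{\ge c})^U$, and $(F_i^{\ge c})^U=M^U\inter F_i^{\ge c}=\{s\in M^U:\nu_i(s)\ge c\}$.

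Now $M^U$ is a subspace stable under a maximal torus $T\sub B$, so it is spanned by its $T$-weight vectors, which are precisely the $B$-semi-invariants lying in $M$, hence elements of $K^{(B)}$. Since $\nu_1$ and $\nu_2$ agree on $K^{(B)}$ by hypothesis, they agree on every such weight vector; as $(F_i^{\ge c})^U$ is $T$-stable and therefore determined by which weight vectors it contains, we get $(F_1^{\ge c})^U=(F_2^{\ge c})^U$, and hence $F_1^{\ge c}=F_2^{\ge c}$ for every $c$. Consequently $\nu_1(f)=\max\{c:f\in F_1^{\ge c}\}=\max\{c:f\in F_2^{\ge c}\}=\nu_2(f)$, which is exactly the desired equality; since $f$ was arbitrary, $\nu_1=\nu_2$.

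I expect the only genuine difficulty to be the first reduction, namely justifying that every $f\in K$ lies in a finite-dimensional $G$-module: the $G$-action is manifestly locally finite on regular functions, but on the full function field poles are transported by $G$, so one must handle this through the fraction trick above (or by realising $\nu_i$ via $G$-invariant prime divisors on suitable normal $G$-models). The core of the argument is by contrast quite clean, resting entirely on two facts: that $G$-invariance makes the $\nu$-adic filtration of each module $G$-stable, and that over a reductive group in characteristic zero a module is recovered from its highest-weight vectors, which are $B$-semi-invariants.
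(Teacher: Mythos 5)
Your core argument is correct, and it is essentially the mechanism behind the cited result of Timashev: for a finite-dimensional $G$-submodule $M \sub K$ the level sets $F^{\geq c}=\{m\in M\mid\nu(m)\geq c\}$ are $G$-submodules, and complete reducibility together with Lie--Kolchin detects them by $B$-eigenvectors, which lie in $K^{(B)}$. You could even shorten the second half: since $\nu$ is $G$-invariant and $\nu(\sum_i a_i\, g_i f)\geq\min_i\nu(g_i f)=\nu(f)$, one has $\nu(f)=\min_{\langle G f\rangle}\nu$ directly, so all that is needed is that the minimum of $\nu$ on a nonzero finite-dimensional $G$-module is attained on a $B$-eigenvector; the full filtration is more than is required.

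The genuine gap is the reduction you flag yourself, and the fix you propose does not exist in general: there need not be a finitely generated $G$-stable subalgebra of $K$ with fraction field $K$, nor need a general $f\in K$ be a ratio of elements of finite-dimensional $G$-submodules of $K$. Already for $\sl{2}$ acting on $\proj{1}$ by M\"obius transformations, any $B$-eigenvector of $\bbk(t)$ is invariant under all translations $t\mapsto t+\beta$ (because $\bb{G}_a$ has no nontrivial characters) and hence constant, so by complete reducibility the only finite-dimensional $G$-submodule of $\bbk(t)$ is $\bbk$; correspondingly the span of $G\cdot t$ contains every $1/(t+e)$ and is infinite-dimensional, and no finitely generated subalgebra can contain the $G$-orbit of a nonconstant function, since its elements have poles confined to the complement of a fixed affine model. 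The same failure occurs in complexity one (e.g.\ $\sl{2}$ acting on $\proj{1}\times C$, trivially on the curve $C$). The standard repair is to pass to the affine cone over a projective normal $G$-model with respect to a $G$-linearised ample line bundle $L$: every $f\in K$ is a ratio of two elements of the finite-dimensional $G$-module $H^0(X,L^{\otimes d})$, the valuation admits a $G\times\bbk^\times$-invariant (Gauss) extension to the function field of the cone, your module argument applies there, and the one undetermined constant (the value on a fixed $B$-eigensection of $L$) cancels between numerator and denominator. In the setting where this paper actually invokes the proposition, namely $K=\bbk(\sl{2}/H)$ with $H$ finite, your reduction is fine as written, because $G/H$ is affine and $A=\bbk[G]^{H}$ serves as the required subalgebra.
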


\begin{proposition}\cite[\S 13.1]{tim}
	There is a split exact sequence of abelian groups \[0 \to (K^B)^\times \to K^{(B)} \to \Lambda \to 0.\]
\end{proposition}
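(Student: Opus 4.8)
The plan is to realise all three terms as multiplicative abelian groups and to exhibit the middle map as the ``weight'' homomorphism whose fibres are the weight spaces $K^{(B)}_\lambda$. First I would read $K^{(B)}$ as the group (under multiplication) of \emph{nonzero} $B$-semi-invariants of $K$, and $(K^B)^\times$ as the group of units of the field $K^B$ of $B$-invariants, i.e. the nonzero $B$-invariant elements. The first map is then just the inclusion $(K^B)^\times \hookrightarrow K^{(B)}$, a nonzero invariant being precisely a semi-invariant of trivial weight, and this is manifestly injective. The restriction to nonzero elements is not cosmetic: it is exactly what makes the weight of a semi-invariant well defined, since if $\lambda(b)f = \mu(b)f$ for all $b$ with $f \neq 0$, then $\lambda = \mu$ because $K$ is a field.

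The central object is the weight map $w \colon K^{(B)} \to \frak{X}(B)$ sending a nonzero semi-invariant $f$ to the unique character $\lambda_f$ with $b \cdot f = \lambda_f(b)\, f$ for all $b \in B$. I would first check that $K^{(B)}$ is closed under products and inverses: from $b\cdot(fg) = (b\cdot f)(b\cdot g)$ one reads off $\lambda_{fg} = \lambda_f + \lambda_g$, and since $f \neq 0$ in the field $K$ we have $f^{-1} \in K$ with $\lambda_{f^{-1}} = -\lambda_f$. These same identities show that $w$ is a group homomorphism. Exactness in the middle is then immediate: $w(f) = 0$ means $b \cdot f = f$ for all $b$, i.e. $f \in (K^B)^\times$, so $\ker w$ is exactly the image of the inclusion. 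Surjectivity of $w$ onto $\Lambda$ is nothing but the \emph{definition} of $\Lambda$ as the set of characters $\lambda$ for which $K^{(B)}_\lambda \neq 0$. This already yields the exact sequence $0 \to (K^B)^\times \to K^{(B)} \xrightarrow{w} \Lambda \to 0$.

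Finally, for the splitting I would invoke the fact, recorded in the definition of $\Lambda$, that it is free abelian (being a subgroup of the character lattice $\frak{X}(B)$). A free abelian group is a projective $\bb{Z}$-module, so the identity of $\Lambda$ lifts through the surjection $w$, producing a section $s \colon \Lambda \to K^{(B)}$ that splits the sequence. Concretely, fixing a $\bb{Z}$-basis $\lambda_1, \dots, \lambda_r$ of $\Lambda$ and choosing a nonzero semi-invariant $f_i$ of each weight $\lambda_i$, the assignment $\sum_i n_i \lambda_i \mapsto \prod_i f_i^{n_i}$ is such a section. I expect no genuine obstacle here: the exactness is essentially tautological once the three groups and the weight map are set up correctly, and the only real content of the statement is the freeness of $\Lambda$, which makes the splitting a formal consequence of projectivity. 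The one point deserving care throughout is the bookkeeping of weights as elements of the additively written character group versus their multiplicative values on $B$.
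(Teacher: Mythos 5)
Your proof is correct, and it is the standard argument: the paper gives no proof of its own here (it simply cites Timashev, \S 13.1), and the cited source proceeds exactly as you do, via the weight homomorphism on nonzero semi-invariants with kernel $(K^B)^\times$ and image $\Lambda$, the splitting coming from freeness of $\Lambda$ as a subgroup of the character lattice. Your attention to excluding $0$ so that weights are well defined and $K^{(B)}$ is a group under multiplication is the right bookkeeping.
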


\begin{corollary}
	A $G$-valuation of $K$ is uniquely determined by its restriction to $K^B$ and a functional on $\Lambda$.
\end{corollary}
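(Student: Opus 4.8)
The plan is to combine the two propositions immediately preceding the statement. By the first of them, a $G$-valuation $\nu$ is uniquely determined by its restriction to the (multiplicative) group $K^{(B)}$ of nonzero $B$-semi-invariants, so it suffices to show that $\nu|_{K^{(B)}}$ is determined by the stated data. Since $\nu$ is a valuation it satisfies $\nu(fg) = \nu(f) + \nu(g)$, so $\nu|_{K^{(B)}}$ is nothing but a group homomorphism from $K^{(B)}$ to the additive group $\bb{Q}$, and the whole argument reduces to understanding such homomorphisms via the structure of $K^{(B)}$.

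First I would fix, using the splitting provided by the second proposition, a multiplicative section $s \colon \Lambda \to K^{(B)}$ of the weight map. Concretely this assigns to each weight $\lambda \in \Lambda$ a semi-invariant $f_\lambda := s(\lambda)$ of weight $\lambda$, chosen so that $f_{\lambda + \mu} = f_\lambda f_\mu$. Any $f \in K^{(B)}$ then lies in some $K^{(B)}_\lambda$ and decomposes uniquely as $f = u \cdot f_\lambda$ with $u = f/f_\lambda$; since $u$ has weight $0$ it is a nonzero $B$-invariant, i.e.\ $u \in (K^B)^\times$. This is exactly the splitting $K^{(B)} \cong (K^B)^\times \times \Lambda$ afforded by the exact sequence.

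Applying $\nu$ to this decomposition gives $\nu(f) = \nu(u) + \nu(f_\lambda)$. The first summand depends only on $\nu|_{K^B}$, while the assignment $\ell \colon \lambda \mapsto \nu(f_\lambda)$ is a group homomorphism $\Lambda \to \bb{Q}$ and hence, as $\Lambda$ is free abelian, a functional on $\Lambda$ (extending uniquely to a $\bb{Q}$-linear functional on $\Lambda \otimes_{\bb{Z}} \bb{Q}$). Thus the pair $(\nu|_{K^B}, \ell)$ determines $\nu|_{K^{(B)}}$, and by the first proposition it determines $\nu$, as required.

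I do not expect any serious obstacle, as the statement is essentially a formal consequence of the two propositions together with the elementary fact that a valuation is a homomorphism from the multiplicative group to $\bb{Q}$. The only point needing a little care is that the functional $\ell$ depends on the chosen section $s$; this causes no difficulty, since we fix $s$ once and for all and record $\ell$ relative to it. It is also worth stressing that ``functional on $\Lambda$'' must be read as a map valued in $\bb{Q}$ rather than $\bb{Z}$, reflecting that $G$-valuations are only rational multiples of divisorial valuations.
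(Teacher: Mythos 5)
Your argument is correct and is essentially identical to the paper's proof: both fix a splitting $e\colon \Lambda \to K^{(B)}$ of the exact sequence, decompose each semi-invariant as a product of a $B$-invariant and an $e_\lambda$, and conclude that $\nu$ is determined by $\nu|_{K^B}$ together with the functional $\lambda \mapsto \nu(e_\lambda)$, invoking the preceding proposition that $\nu$ is determined by its restriction to $K^{(B)}$. Your version just spells out the multiplicativity details more explicitly.
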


\begin{proof}
	Fix a map $e \colon \Lambda \to K^{(B)}$ splitting the exact sequence above by sending a weight $\lambda$ to a semi-invariant $e_\lambda \in K^{(B)}_\lambda$. Then the exact sequence tells us that $K^{(B)} = (K^B)^\times \oplus e(\Lambda)$. We know that a $G$-valuation $\nu$ is determined by its restriction to $K^{(B)}$, and this in turn is given by its restriction to $K^B$ and a functional $\ell \colon \Lambda \to \bb{Q}$ given by $\ell(\lambda) = \nu(e_\lambda)$.
\end{proof}

In complexity one, $K^B$ has transcendence degree 1 over $\bbk$ and hence is the function field common to a birational class of curves containing a unique smooth projective curve $C$. Then any geometric valuation of $K^B$ is of the form $\nu = h\nu_x$ where $h$ is a nonnegative rational number and $x \in C$.  Hence $G$-valuations of $K$ correspond to triples $(x,h,\ell)$ where $x \in C$, $h \in \bb{Q}_{\geq 0}$ and $\ell \in \Hom(\Lambda,\bb{Q}) = \Lambda^*$, and are uniquely determined by this restriction up to the equivalence relation whereby $(x,h,\ell) \sim (x^\prime,h^\prime,\ell^\prime)$ if and only if we have equality or $h = h^\prime = 0$ and $\ell = \ell^\prime$. Let \[\cal{H} = \bigcup_{x \in C}{(\{x\} \times \bb{Q}_{\geq 0} \times \Lambda^*}/\sim\] and call $\cal{H}$ the \emph{hyperspace} of $K$. 

We have an injective map $\kappa\colon \cal{V} \to \cal{H}$ by the above, and can also map $\cal{D}^B$ to $\cal{H}$ using $\kappa$, although this is not necessarily injective. We can thus view the coloured data of a $G$-model of $K$ as sitting within $\cal{H}$, and the classification of complexity one varieties comes down to classifying collections of certain types of cones within $\cal{H}$, described in a later subsection.

Denote by $\cal{H}_x$ the subset of $\cal{H}$ consisting of points with first co-ordinate $x$. We call this the \emph{slice} of hyperspace corresponding to $x$. Likewise set $\cal{V}_x = \cal{V} \inter \cal{H}_x$ etc. The subset of $\cal{H}$ consisting of points with $h = 0$ is called the \emph{central hyperplane} and denoted by $\cal{Z}$.

\subsubsection{Splitting Maps}

The choice of the map $e \colon \Lambda \to K^{(B)}$ is arbitrary, so we must keep track of what happens if a different map is chosen. If a valuation has co-ordinates $(x, h, \ell)$ under $e$, and co-ordinates $(x^\prime,h^\prime,\ell^\prime)$ under another splitting $e^\prime$, then we have $x = x^\prime$, $h = h^\prime$ since these depend only on the restriction to $K^B$, and we have \[\ell^\prime(\lambda) = \ell(\lambda) + h\nu_x(e^\prime_\lambda/e_\lambda)\] for each $\lambda \in \Lambda$. Thus a different choice of splitting introduces an `integral shift' to the $\ell$-co-ordinates of the hyperspace.

Since $e^\prime_\lambda/e_\lambda$ is a rational function on the smooth projective curve $C$, the corresponding principal divisor has degree zero, so we have $0 = \sum_{x \in C}{\nu_x(e^\prime_\lambda/e_\lambda)}$ for each $\lambda$. Therefore when we introduce these integral shifts, they must balance each other out. When $C = \proj{1}$, which we will see later is always the case for us, any such collection of balanced integral shifts corresponds to a different choice of splitting.

\subsubsection{Quasihomogeneous and One-Parameter Cases}

There are two types of complexity one varieties depending on the relationship between $K^B$ and $K^G$. In the \emph{quasihomogeneous case} when $K^G = \bbk$, any $G$-model $X$ of $K$ has an open $G$-orbit containing a one-parameter family of codimension 1 $B$-orbits. This open orbit is an embedding of some homogeneous space $G/H$, which provides a minimal model for the $G$-birational class of varieties determined by $K$. In the \emph{one-parameter case}, where $K^G = K^B$, $G$-models contain a one-parameter family of codimension 1 $G$-orbits, each of which contains a $B$-orbit as an open subset. 

In all examples of interest to us, we are in the quasihomogeneous case, so we assume from now on that $K^G = \bbk$. 

\begin{proposition}
	In the quasihomogeneous case, the smooth projective curve $C$ is in fact $\proj{1}$.
\end{proposition}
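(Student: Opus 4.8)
The plan is to realise $C$ as a \emph{unirational} curve over the algebraically closed field $\bbk$ and then invoke the (generalised) L\"{u}roth theorem to deduce that its function field $K^B$ is rational, so that $C \cong \proj{1}$.

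First I would invoke the standing hypothesis of the quasihomogeneous case: every $G$-model $X$ of $K$ contains a dense open $G$-orbit, which is an embedding of a homogeneous space $G/H$. Being dense, this orbit shares the function field of $X$, so $K = \bbk(G/H)$. Next I would record the rationality of $G$ itself. Since $G$ is connected and reductive over the algebraically closed field $\bbk$, the Bruhat decomposition provides a dense open \emph{big cell} $U^{-} \times T \times U \subseteq G$, where $U^{-}, U$ are unipotent (each isomorphic as a variety to an affine space) and $T$ is a maximal torus; each factor is rational, hence $G$ is a rational variety and $\bbk(G) \cong \bbk(t_1,\dots,t_m)$ with $m = \dim G$. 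The orbit map $G \to G/H$ is dominant, so it induces an inclusion of function fields $K = \bbk(G/H) \hookrightarrow \bbk(G) \cong \bbk(t_1,\dots,t_m)$. In particular $G/H$, and with it $K$, is unirational.

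Now comes the key step. The subfield $K^B \subseteq K$ satisfies $\bbk \subseteq K^B \subseteq \bbk(t_1,\dots,t_m)$, and by the complexity-one hypothesis $\trdeg_\bbk K^B = 1$. The generalised L\"{u}roth theorem asserts that, in characteristic $0$, any intermediate field of transcendence degree one inside a purely transcendental extension is itself purely transcendental. Geometrically, the smooth projective curve $C$ with $\bbk(C) = K^B$ is dominated by $\proj{m}$, hence unirational, which forces $g(C) = 0$; over the algebraically closed field $\bbk$ a smooth projective curve of genus $0$ is isomorphic to $\proj{1}$, giving the claim.

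The part demanding the most care is this final implication, passing from ``$K^B$ lies in a rational function field and has transcendence degree one'' to ``$K^B$ is rational.'' I would justify it via the genus argument: resolving a dominant rational map $\proj{m} \dashrightarrow C$ to a morphism from a smooth projective model, global holomorphic $1$-forms on $C$ pull back injectively, but the (rational) source carries none, so $H^0(C,\Omega^1_C) = 0$ and $g(C) = 0$. This is cleaner to cite than a purely field-theoretic manipulation, and the statement that a unirational curve over an algebraically closed field is rational is exactly what is needed. The remaining inputs --- the existence of the open orbit and the rationality of $G$ --- are standard structure theory, so they are not expected to present any real difficulty.
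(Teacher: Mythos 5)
Your proposal is correct and follows essentially the same route as the paper: both use the rationality of $G$, the open orbit $G/H$ giving $K = \bbk(G/H)$, the resulting dominant rational map $G \dashrightarrow C$ to conclude unirationality of $C$, and then L\"{u}roth's theorem. The extra detail you supply (the Bruhat big cell for rationality of $G$ and the genus-zero argument for L\"{u}roth) merely fleshes out steps the paper takes as standard.
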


\begin{proof}\cite[\S 16.2]{tim}
		Note that $G$ is a rational variety. Now since $G/H \sub X$ is an open orbit, we have $K = \bbk(G/H)$. Since $K^B = \bbk(C) \sub K$, we have a dominant rational map $G/H \dashrightarrow C$ and hence a dominant rational map $G \dashrightarrow C$ via the quotient. It follows that $C$ is unirational and hence equal to $\proj{1}$ by L\"{u}roth's theorem.
\end{proof}

\subsubsection{Regular, Subregular and Central Divisors}

A complexity one $G$-model $X$ comes with a dominant rational $B$-quotient map $\pi \colon X \dashrightarrow C$ arising from the inclusion $K^B = \bbk(C) \sub K$, which separates general $B$-orbits. There is a one-parameter family of $B$-stable prime divisors (i.e. colours) $D_x \sub \pi^*(x)$ in $X$ parameterised by points $x \in \pi(X)$, an open subset of $C$.

The behaviour of prime divisors on $X$ under $\pi$ determines to some extent the position in hyperspace of their corresponding valuations, and we introduce a typology of divisors based on this.

First, the choice of splitting $e$ marks out certain colours as being \emph{distinguished}, namely those colours lying in $\div(e_\lambda)$ for some $\lambda \in \Lambda$. They have $\ell(\lambda) = \nu_D(e_\lambda) \neq 0$ for some $\lambda$. Colours lying outside all $\div(e_\lambda)$ have $\ell = 0$, and these constitute all but finitely many colours.

Next, it will be the case that certain colours $D_x$ are exactly equal to $\pi^*(x)$, while at certain points $x \in C$ we will see that $\pi^*(x)$ is non-reduced and contains the colour $D_x$ with multiplicity greater than one. The former type of colour is called \emph{regular} and has $h$-co-ordinate in hyperspace equal to 1, while the latter colours are called \emph{subregular} and have $h$-co-ordinate greater than 1. Again, all but finitely many colours are regular, and hence have co-ordinates $(x, h,\ell) = (x, 1, 0)$ in hyperspace. We denote these points by $\epsilon_x$.

Finally, colours (or $G$-divisors) which do not appear in any $\pi^*(x)$ for $x \in C$ have $h$-co-ordinate 0 and are called \emph{central}. There are only finitely many central divisors on any $G$-model.

The existence and properties of subregular and central divisors will be of central importance to later results in this paper.

\subsection{Coloured Hyperfans}

We now describe how the Luna-Vust theory translates into a combinatorial description of varieties once the coloured data is inserted into the hyperspace. The key is that we can interpret semi-invariant functions in $K$ as `linear functionals' on $\cal{H}$. Indeed, given $f \in K^{(B)}_\lambda$ and a valuation $\nu$ corresponding to a point $(x, \ell, h) \in \cal{H}$, we can define $f(x,\ell,h) = \nu(f) = h\nu_x(f) + \ell(e_\lambda)$. It turns out that, once a sensible definition of a linear functional on $\cal{H}$ is given (see \cite[\S 2.1]{tim1}), all functionals on $\cal{H}$ are determined by semi-invariant functions.

Since the coloured data of a $G$-model $X$ consists of subsets of $\cal{V}$ and $\cal{D}^B$ corresponding to the $G$- and $B$-divisors containing its $G$-germs, and we have mapped this data into $\cal{H}$, we can use the functionals in $K^{(B)}$ to generate cones from this coloured data.

\subsubsection{$B$-charts}

Let $X_0$ be a $B$-chart with coloured data $\cal{W} \sub \cal{V}$, $\cal{R} \sub \cal{D}^B$. Let $\cal{C} = \cal{C}(\cal{W},\cal{R}) \sub \cal{H}$ be the set consisting of points $q \in \cal{H}$ such that any functional $\phi$ on $\cal{H}$ non-negative on $\cal{W}$ and $\cal{R}$ is non-negative on $q$. This is in some sense the `double dual' of the set $\cal{W} \union \cal{R}$ in $\cal{H}$. It is not a cone in the conventional sense because $\cal{H}$ is not a vector space or half-space. However, $\cal{C}_x = \cal{C} \inter \cal{H}_x$ is a cone for each $x \in \proj{1}$ and so is $\cal{K} = \cal{C} \inter \cal{Z}$. We call these objects $\cal{C}$ \emph{hypercones}, and their properties are determined by conditions on coloured data contained in the Luna-Vust theory.

First, we note that $B$-charts split into two types. Those we call \emph{type I} have nonconstant $B$-invariants in their algebra of functions. In this case, there is some $x \in \proj{1}$ such that $\cal{C}_x \sub \cal{K}$, i.e. there are no noncentral $G$- or $B$-divisors corresponding to some $x \in \proj{1}$. $B$-charts of \emph{type II} have no nonconstant $B$-invariant functions, and in this case every $x \in \proj{1}$ corresponds to at least one noncentral divisor.

The double dual set $\cal{C} \sub \cal{H}$ defined by the $B$-chart $X_0$ above determines an object called a \emph{hypercone}. Recall that $\epsilon_x$ denotes the point $(x, 1, 0) \in \cal{H}$.

\begin{definition}
	A \emph{hypercone} in $\cal{H}$ is a union $\cal{C} = \bigcup_{x \in \proj{1}}{\cal{C}_x}$ of finitely generated convex cones $\cal{C}_x = \cal{C} \inter \cal{H}_x$ such that: \begin{enumerate} \item $\cal{C}_x = \cal{K} + \bb{Q}_{\geq 0}\epsilon_x$ for all but finitely many $x$, where $\cal{K} = \cal{C} \inter \cal{Z}$; \item Either: \begin{enumerate}[label=(\Roman*)] \item there exists $x \in \proj{1}$ with $\cal{C}_x = \cal{K}$, or; \item the polytope $\cal{P} = \sum_{x \in \proj{1}}{\cal{P}_x}$ is non-empty, where the $\cal{P}_x$ are defined by $\epsilon_x + \cal{P}_x = \cal{C}_x \inter (\epsilon_x + \cal{Z})$.\end{enumerate}\end{enumerate} The hypercone $\cal{C}$ is called \emph{strictly convex} if every $\cal{C}_x$ is strictly convex and $0 \notin \cal{P}$.
\end{definition}

\begin{definition}
	A \emph{coloured hypercone} in $\cal{H}$ is a pair $(\cal{C},\cal{R})$ such that $\cal{R} \sub \cal{D}^B$, $0 \notin \cal{R}$, and $\cal{C}$ is a strictly convex hypercone in $\cal{H}$ generated by $\kappa(\cal{R})$, a finite subset $\cal{W} \sub \cal{V}$, and (if $\cal{C}$ is of type II) the polytope $\cal{P}$.
\end{definition}

\begin{theorem}\thlabel{hypercone}\cite[Thm 3.1]{tim1}
	$B$-charts of $K$ correspond bijectively to coloured hypercones in $\cal{H}$ of the corresponding type.
\end{theorem}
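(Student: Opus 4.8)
The plan is to exploit the duality between a $B$-chart and its algebra of regular functions, reducing the theorem to a Gordan-type correspondence between finitely generated $B$-stable subalgebras of $K$ and strictly convex hypercones. We already know that a $B$-chart $X_0$ is determined by its coloured data $(\cal{W},\cal{R})$, and a $B$-chart is by definition $\Spec A$ for a finitely generated $B$-stable subalgebra $A \sub K_B$. So the essential content is twofold: (i) the double-dual set $\cal{C} = \cal{C}(\cal{W},\cal{R})$ attached to the coloured data of $X_0$ is a strictly convex hypercone in the sense of the definition, so that $(\cal{C},\cal{R})$ is a coloured hypercone; and conversely (ii) every coloured hypercone $(\cal{C},\cal{R})$ arises this way from a genuine $B$-chart, with the two constructions mutually inverse.

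The bridge in both directions is the identification of semi-invariants with functionals on $\cal{H}$: a function $f \in K^{(B)}_\lambda$ determines $\phi_f(x,h,\ell) = h\nu_x(f) + \ell(\lambda)$, and $f$ is regular on $X_0$ precisely when $\nu(f) \geq 0$ for every valuation whose centre meets $X_0$, i.e. when $\phi_f$ is non-negative on $\kappa(\cal{W}) \union \kappa(\cal{R})$. Grading $A$ by $B$-weight as $A = \bigoplus_\lambda A_\lambda$, we then have $A_\lambda$ spanned by those $f \in K^{(B)}_\lambda$ (with poles along $\cal{R}$) whose functional is non-negative on the coloured data, which by definition of the double dual is the same as being non-negative on $\cal{C}$. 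This is the exact analogue of the toric identity $A = \bbk[\sigma^\vee \inter M]$, with $\sigma^\vee$ replaced by $\cal{C}$ and the lattice $M$ replaced by the family $K^{(B)}_\lambda$ fibred over the curve $C = \proj{1}$.

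For the forward direction I would verify the hypercone axioms from geometric input already recorded in the previous subsection. Condition (1) -- that $\cal{C}_x = \cal{K} + \bb{Q}_{\geq 0}\epsilon_x$ for all but finitely many $x$ -- is exactly the statement that all but finitely many colours $D_x$ are regular, hence contribute the generator $\epsilon_x = (x,1,0)$ and nothing else, so that only the finitely many subregular and central slices can deviate. The type I/II dichotomy of condition (2) matches the presence or absence of nonconstant $B$-invariants in $A$: a nonconstant $B$-invariant has zero $\ell$-component and positive order at some $x$, forcing $\cal{C}_x = \cal{K}$ there (type I), whereas its absence forces each slice to meet the affine level $\epsilon_x + \cal{Z}$ nontrivially and yields the nonempty Minkowski sum $\cal{P}$ (type II). Strict convexity corresponds to $X_0$ being a genuine affine chart with no units beyond constants, i.e. each $\cal{C}_x$ strictly convex and $0 \notin \cal{P}$.

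The reverse direction is where the work concentrates, and the hard part will be finite generation. Given $(\cal{C},\cal{R})$ I would define $A$ as the $B$-stable subalgebra generated by the semi-invariants with poles in $\cal{R}$ whose functionals are non-negative on $\cal{C}$, and must show $A$ is finitely generated, that $\Spec A$ is a $B$-chart on some $G$-model, and that its coloured data returns $(\cal{C},\cal{R})$. Finite generation is a Gordan-type lemma for $\cal{H}$, complicated by the fact that functionals on $\cal{H}$ come from rational functions on $C$ whose principal divisors have degree zero, so one cannot simply invoke finiteness for a single rational polyhedral cone. I would argue slice by slice -- using that only finitely many $\cal{C}_x$ differ from the generic $\cal{K} + \bb{Q}_{\geq 0}\epsilon_x$ -- and then glue, controlling global sections via the balancing of integral shifts under change of splitting $e$; the nonemptiness of $\cal{P}$ in the type II case is precisely what supplies enough semi-invariants to make $\Spec A$ affine with $A$ finitely generated. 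The mutual-inverse statement then follows because the double-dual $\cal{C}(\cal{W},\cal{R})$ is idempotent on the saturated coloured data arising from charts, so reconstructing the coloured data from $A$ returns $\cal{W}$, $\cal{R}$ and the generating polytope $\cal{P}$ unchanged. Throughout, the principal obstacle is reconciling the combinatorial axioms of the hypercone with the geometric conditions (affineness, $B$-stability, separatedness) on $X_0$, and in particular the finite-generation step in the presence of the curve direction.
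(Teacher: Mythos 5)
The paper offers no proof of this statement: it is quoted verbatim from Timashev (\cite[Thm 3.1]{tim1}) as part of the background on Luna--Vust theory, so there is no in-paper argument to compare yours against. Judged on its own merits, your sketch follows the broad outline of Timashev's actual proof (pass to the algebra of the chart, interpret semi-invariants as functionals on $\cal{H}$, match the hypercone axioms to geometric properties of $X_0$, and fight the finite-generation battle in the reverse direction), but it contains a genuine error and leaves the hardest step unproved.

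The error is the claim that $A = \bbk[X_0]$ decomposes as $\bigoplus_\lambda A_\lambda$ over $B$-weights. For a connected solvable group the algebra of an affine $B$-variety is \emph{not} spanned by $B$-eigenvectors (already $\bbk[x,y]$ under the standard $B\sub\sl{2}$-action fails this), and in complexity one the failure is essential: the subfield of $K$ generated by $K^{(B)}$ has transcendence degree $c+r$, which for an $\sl{2}$-threefold is $2<3$. Consequently your reverse construction --- defining $A$ as the subalgebra \emph{generated} by semi-invariants non-negative on $\cal{C}$ --- produces something whose $\Spec$ has the wrong dimension and cannot be a $B$-chart. The correct object is the intersection $A(\cal{W},\cal{R}) = \bigcap_{\nu\in\cal{W}}\cal{O}_\nu \cap \bigcap_{D\in\cal{R}}\cal{O}_{\nu_D}\cap K[\cal{D}^B\setminus\cal{R}]$ taken inside $K_B$ (allowing arbitrary poles along the colours \emph{not} in $\cal{R}$); the semi-invariants control only the coloured data, not the whole algebra, and the ``toric identity'' $A=\bbk[\sigma^\vee\cap M]$ has no direct analogue here. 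Separately, the finite-generation step --- which is precisely where the polytope $\cal{P}$, the degree-zero balancing of principal divisors on $C$, and the type I/II dichotomy enter as an honest admissibility condition rather than a formal axiom --- is only gestured at (``argue slice by slice and glue''). Since this is the crux of Timashev's Theorem 3.1, the proposal as written does not constitute a proof; if you need the statement, cite \cite[Thm 3.1]{tim1} as the paper does.
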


\subsubsection{$G$-germs}

Now let $Y$ be a $G$-germ of $K$. We say that $Y$ is of \emph{type I} if it admits a $B$-chart of type I (i.e. there exists a $B$-chart of type I intersecting $Y$) and $Y$ is of \emph{type II} if not. A $G$-germ is of type I if and only if $\cal{V}_Y \union \cal{D}^B_Y$ is finite and of type II if and only if it admits a minimal $B$-chart.

\begin{definition}
	The \emph{relative interior} of a (coloured) hypercone $\cal{C}$ of type II is the set $\bigcup_{x \in \proj{1}}{\relint{\cal{C}_x}} \union \relint\cal{K}$. We call $\cal{C}$ \emph{supported} if $\relint\cal{C} \inter \cal{V}$ is nonempty.
\end{definition}

\begin{definition}
	Let $\cal{C}$ be a hypercone in $\cal{H}$. A \emph{face} of $\cal{C}$ is a face of some $\cal{C}_p$ not intersecting $\cal{P}$. A \emph{hyperface} of $\cal{C}$ is a hypercone $\cal{C}^\prime = \cal{C} \inter \ker{\phi}$ for some functional $\phi$ on $\cal{H}$ nonnegative on $\cal{C}$. We call $\phi$ a \emph{supporting functional} for the face $\cal{C}^\prime$. 
	
	A \emph{(hyper)face} of a coloured hypercone $(\cal{C},\cal{R})$ is a coloured (hyper)cone $(\cal{C}^\prime,\cal{R}^\prime)$ where $\cal{C}^\prime$ is a (hyper)face of $\cal{C}$ and $\cal{R}^\prime = \cal{R}\inter \kappa^{-1}(\cal{C}^\prime)$.
\end{definition}

\begin{theorem}\thlabel{hypergerms}\cite[Thm 3.2]{tim1}
	$G$-germs of type I are in bijection with supported coloured cones in $\cal{H}$, and $G$-germs of type II are in bijection with supported coloured hypercones of type II in $\cal{H}$. Inclusion of $G$-germs in each other corresponds to opposite inclusions of the respective (hyper)cones as (hyper)faces of each other.
\end{theorem}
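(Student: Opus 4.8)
The plan is to bootstrap this classification from the $B$-chart correspondence of \thref{hypercone}, exploiting the fact that every $G$-germ is realised inside a $B$-chart and sits there as a distinguished face of that chart's coloured hypercone. The organising principle, familiar from the toric and spherical cases, is the order-reversing dictionary: more special $G$-germs correspond to larger (hyper)cones, and a germ should be recoverable from the set of $G$-valuations centred on it, which I expect to coincide with the relative interior of its (hyper)cone intersected with $\cal{V}$. This last identification is what makes the \emph{supported} condition the correct admissibility hypothesis.

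First I would construct the forward map, sending a $G$-germ $Y$ to a coloured (hyper)cone. The realisation result guarantees a model $X$ in which $Y$ meets a $B$-chart $X_0$, whose coloured data determines a coloured hypercone $(\cal{C},\cal{R})$ via \thref{hypercone}. The coloured data $(\cal{V}_Y,\cal{D}^B_Y)$ of $Y$ records the $G$-valuations and colours whose divisors contain $Y$; since containment of $Y$ is cut out by the vanishing of the semi-invariants playing the role of functionals on $\cal{H}$, these valuations span a face in the type I case and a hyperface in the type II case. I would then take the (hyper)cone generated by $\kappa(\cal{V}_Y\cup\cal{D}^B_Y)$, adjoining the polytope $\cal{P}$ in the type II case. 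Strict convexity is inherited from the ambient chart, the dichotomy between the two types is governed by whether $\cal{V}_Y\cup\cal{D}^B_Y$ is finite, and the non-emptiness of the support of a $G$-germ translates directly into the requirement $\relint\cal{C}\cap\cal{V}\neq\emptyset$.

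Injectivity of this map is then short: a $G$-germ is determined by its coloured data, and that data is read back off the (hyper)cone through its $\kappa$-generators, so distinct germs yield distinct (hyper)cones. Surjectivity is where the substantive work lies. Given a supported coloured (hyper)cone, I would first realise its smallest enclosing $B$-chart by \thref{hypercone}, and then exhibit within that chart the $G$-germ whose support is exactly $\relint\cal{C}\cap\cal{V}$; the pairwise-disjointness of the supports of distinct germs on a fixed model makes this germ unambiguous, and one must check that its coloured data reconstructs the prescribed (hyper)cone. Building an honest model and verifying that the $G$- and $B$-divisors through the chosen germ are precisely the prescribed ones is the crux of the argument.

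For the inclusion statement, I would use that $\cal{O}_{X,Y}\subseteq\cal{O}_{X,Y'}$ holds exactly when $Y\subseteq\overline{Y'}$, i.e. when $Y$ is the more special germ, and show this is equivalent to the (hyper)cone of $Y'$ being a (hyper)face of that of $Y$, by matching a supporting functional cutting out the face with a semi-invariant vanishing along the generisation $Y'$. The hard part throughout is the type II bookkeeping: making the hyperface structure and the Minkowski summand polytope $\cal{P}$ interact correctly with the support condition, and carrying out the surjective realisation so that the relative interior of the constructed hypercone meets $\cal{V}$ in exactly the support of the intended germ. This is precisely where the full force of the Luna-Vust machinery behind \thref{hypercone} must be invoked, since it demands simultaneous control of the geometry of the $B$-chart and of the $G$-orbit structure nested inside it.
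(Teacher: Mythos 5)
The paper does not actually prove this statement: it is imported verbatim from Timashev (cited as [Thm 3.2, tim1]), so there is no in-paper argument to compare yours against line by line. Judged on its own terms, your proposal is a faithful roadmap of the Luna--Vust/Timashev strategy — germ $\mapsto$ coloured data $\mapsto$ (hyper)face of a chart's hypercone, injectivity from the fact that germs are determined by coloured data, an order-reversing face correspondence, and surjectivity by realisation inside a $B$-chart. The shape is right.

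But as a proof it has genuine gaps, and they are exactly the ones you flag yourself and then defer to ``the full force of the Luna--Vust machinery''. Two things carry all the content of the theorem and are nowhere established in your write-up. First, the identification of the support of a $G$-germ $Y$ with $\relint\cal{C}_Y\cap\cal{V}$: you say you ``expect'' this to hold, but it is precisely this identification that makes \emph{supported} the correct admissibility condition, makes the forward map well defined independently of the chosen chart, and (via disjointness of supports on a fixed model) drives injectivity. Asserting it is assuming the theorem's main mechanism. Second, surjectivity: given a supported coloured (hyper)cone you must produce a model, locate a germ in it, and verify its coloured data regenerates the prescribed cone — including, in type II, that the Minkowski polytope $\cal{P}$ of the constructed chart agrees with the prescribed one and that the germ is minimal in that chart. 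You correctly identify this as ``the crux'' and then do not carry it out. A further small imprecision: you propose to read the colours back off the cone ``through its $\kappa$-generators'', but $\kappa$ is not injective on $\cal{D}^B$; the colours must be remembered as the set $\cal{R}$ in the pair $(\cal{C},\cal{R})$, not recovered from the geometry of $\cal{C}$. In short: the outline is the right one, but everything that distinguishes this theorem from its toric and spherical special cases is left as an appeal to \thref{hypercone} and to unproven expectations, so the proposal is a plan rather than a proof.
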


\subsubsection{$G$-models}

We know that $G$-models are determined by their $G$-germs, which lie in a finite collection of $B$-charts. The supports of the $G$-germs must be disjoint, and inclusions of $G$-germs must be kept track of. In this spirit we have the following definition:

\begin{definition}
	A \emph{coloured hyperfan} in $\cal{H}$ is a collection of supported coloured cones and supported coloured hypercones of type II in $\cal{H}$, obtained as the set of all supported (hyper)faces of a finite collection of coloured hypercones, subject to the condition that the relative interiors of these (hyper)faces are disjoint inside $\cal{V}$.
\end{definition}

The coloured hyperfan $\cal{F}_X$ of a $G$-model $X$ is the collection $\{\cal{C}_Y \mid Y \sub X\}$ where $Y$ runs over all $G$-germs of $X$. Then by \thref{hypercone} and \thref{hypergerms}, we have \cite[Thm 3.3]{tim1}:

\begin{theorem}
	$G$-models of $K$ are in bijection up to isomorphism with coloured hyperfans in $\cal{H}$.
\end{theorem}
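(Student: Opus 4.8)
The plan is to assemble the bijection from the two correspondence theorems \thref{hypercone} and \thref{hypergerms} together with the structural facts about $G$-models established earlier in this section: that a $G$-model is determined by its set of $G$-germs, that it is covered by the $G$-translates of finitely many $B$-charts, and that the supports of its $G$-germs are pairwise disjoint (the valuative criterion of separation).

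First I would construct the forward map $X \mapsto \cal{F}_X$. Given a $G$-model $X$, cover it by the $G$-translates of finitely many $B$-charts, as permitted by the covering proposition above. Each such $B$-chart yields a coloured hypercone $(\cal{C},\cal{R})$ by \thref{hypercone}, and the $G$-germs of $X$ meeting that chart correspond by \thref{hypergerms} to the supported coloured (hyper)faces of $(\cal{C},\cal{R})$. Taking the union over the finitely many charts produces the collection $\cal{F}_X = \{\cal{C}_Y \mid Y \sub X\}$ of all supported (hyper)faces of this finite set of coloured hypercones, so $\cal{F}_X$ has the required generation property. It then remains only to verify the disjointness axiom: this is exactly the content of the valuative criterion of separation, which asserts that the supports of the $G$-germs of $X$ — equivalently, by \thref{hypergerms}, the relative interiors of the associated (hyper)cones intersected with $\cal{V}$ — are pairwise disjoint. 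Hence $\cal{F}_X$ is a coloured hyperfan.

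Next I would construct the inverse map. Given a coloured hyperfan $\cal{F}$, presented as the set of supported (hyper)faces of a finite collection of coloured hypercones $(\cal{C}^{(i)},\cal{R}^{(i)})$, each generating hypercone corresponds by \thref{hypercone} to a $B$-chart $X_0^{(i)}$, and each supported (hyper)face corresponds by \thref{hypergerms} to a $G$-germ. The reversed-inclusion clause of \thref{hypergerms} shows that a common (hyper)face of $(\cal{C}^{(i)},\cal{R}^{(i)})$ and $(\cal{C}^{(j)},\cal{R}^{(j)})$ corresponds to a $G$-germ lying in both charts. I would then glue the $G$-translates $G \cdot X_0^{(i)}$ along these shared $G$-subvarieties to obtain a $G$-prevariety, and the disjointness axiom of $\cal{F}$ guarantees — running the valuative criterion of separation in reverse — that distinct $G$-germs have disjoint support, so that the glued object is in fact separated and gives a genuine $G$-model $X$.

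Finally I would check that the two constructions are mutually inverse. The $G$-germs of the variety glued from $\cal{F}$ are precisely the supported (hyper)faces comprising $\cal{F}$, and conversely $\cal{F}_X$ recovers exactly the (hyper)faces from which $X$ was glued; since a $G$-model is uniquely determined by its set of $G$-germs, matching sets of $G$-germs force the models to coincide, and the chart- and germ-level bijections of \thref{hypercone} and \thref{hypergerms} upgrade this to an isomorphism of $G$-models. The main obstacle is the gluing step in the inverse construction: one must confirm that the finitely many $B$-charts and their $G$-translates are compatible on overlaps — that the coloured data of a common hyperface really is the coloured data of the intersection of the corresponding charts — and that the result is separated rather than merely a prevariety. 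Both points rest on the disjoint-support condition, so the genuine work is verifying that the combinatorial disjointness axiom of a coloured hyperfan translates faithfully into geometric separatedness.
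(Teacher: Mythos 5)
Your proposal is correct and follows essentially the same route the paper takes: the paper records this theorem as a direct consequence of \thref{hypercone} and \thref{hypergerms} (citing Timashev), and your sketch simply fleshes out that derivation, correctly identifying the separation/gluing step — matching the disjoint-support axiom against the valuative criterion of separation — as the only genuine work beyond the two correspondence theorems.
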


\begin{corollary}
	A $G$-model is complete if and only if its coloured hyperfan covers $\cal{V}$.
\end{corollary}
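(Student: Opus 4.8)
The plan is to combine the valuative criterion of completeness stated above---that $X$ is complete if and only if the supports of its $G$-germs cover $\cal{V}$---with the bijection of \thref{hypergerms} between $G$-germs and the supported coloured (hyper)cones comprising $\cal{F}_X$. The fact I will lean on is that the support of a $G$-germ $Y$ coincides with $\relint{\cal{C}_Y} \inter \cal{V}$, where $\cal{C}_Y$ is the coloured (hyper)cone attached to $Y$. This is exactly what the Luna--Vust machinery records when it computes supports from coloured data; it is also consistent with the definition of a \emph{supported} (hyper)cone (namely that $\relint{\cal{C}} \inter \cal{V}$ be nonempty) and with the pairwise disjointness of the supports of the $G$-germs of a fixed model.

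Granting this identification, I would argue as follows. Each (hyper)cone $\cal{C} \in \cal{F}_X$ is the disjoint union of the relative interiors of its (hyper)faces, so that $\cal{C} \inter \cal{V} = \bigsqcup_{\tau}(\relint{\tau} \inter \cal{V})$, the union running over the (hyper)faces $\tau$ of $\cal{C}$. By definition a coloured hyperfan is closed under passage to \emph{supported} (hyper)faces, and any unsupported face contributes $\relint{\tau} \inter \cal{V} = \emptyset$; hence every nonempty piece $\relint{\tau}\inter\cal{V}$ is precisely the support of a $G$-germ belonging to $\cal{F}_X$. Summing over all $G$-germs of $X$ therefore yields
\[\bigcup_{Y \sub X}{\supp(Y)} = \Big(\bigcup_{\cal{C} \in \cal{F}_X}{\cal{C}}\Big) \inter \cal{V} = \cal{F}_X \inter \cal{V}.\]

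The corollary is then immediate: the left-hand side equals $\cal{V}$ if and only if $\cal{F}_X \inter \cal{V} = \cal{V}$, i.e. if and only if $\cal{V} \sub \cal{F}_X$, which is exactly the statement that the coloured hyperfan $\cal{F}_X$ covers $\cal{V}$. Feeding this equivalence into the valuative criterion of completeness gives the result. I expect the only genuine subtlety to be the decomposition of a (hyper)cone into the relative interiors of its (hyper)faces: because $\cal{H}$ is not a vector space and a type II hypercone carries the auxiliary polytope $\cal{P}$, one must verify inside the Timashev framework that the (hyper)faces actually \emph{partition} $\cal{C} \inter \cal{V}$ rather than merely cover it, which is where the pairwise disjointness of supports is doing the real work. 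Everything else is formal bookkeeping.
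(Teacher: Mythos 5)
Your proposal is correct and is exactly the derivation the paper intends: the corollary is stated without proof precisely because it follows by feeding the identification $\supp(Y) = \relint{\cal{C}_Y} \inter \cal{V}$ (implicit in the definition of \emph{supported} and in the disjointness condition on hyperfans) into the valuative criterion of completeness via the bijection of \thref{hypergerms}. The one step you rightly flag---that a (hyper)cone is partitioned, not merely covered, by the relative interiors of its (hyper)faces inside $\cal{V}$---is supplied by Timashev's framework and is the only ingredient not spelled out in the text.
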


\begin{proposition}\thlabel{typeIbirational}
	We say that a $G$-model $X$ is of \emph{type I} if all of its $G$-germs are of type I, and of \emph{type II} if it contains any $G$-germ of type II. For any $G$-model $X$, there exists a $G$-model $\check{X}$ of type I and a proper birational morphism $\phi \colon \check{X} \to X$.
\end{proposition}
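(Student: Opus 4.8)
The plan is to argue entirely on the combinatorial side. By the classification theorem, $G$-models of $K$ are determined by their coloured hyperfans, and, exactly as in the toric and spherical cases, a \emph{refinement} of a coloured hyperfan (a subdivision with the same support in $\cal{V}$) corresponds to a proper birational $G$-morphism onto the original model. It therefore suffices to refine the coloured hyperfan $\cal{F}_X$ into a coloured hyperfan $\cal{F}_{\check{X}}$ all of whose maximal members are coloured cones, i.e. strictly convex hypercones of type I; by \thref{hypergerms} this says exactly that every $G$-germ of $\check{X}$ is of type I. The geometric content is that a $G$-germ is of type II precisely when it is contained in infinitely many regular colours $\epsilon_x$, equivalently when it lies in the indeterminacy locus of the rational $B$-quotient $\pi\colon X \dashrightarrow C$, so that we are resolving $\pi$; the point of working with hyperfans is that a subdivision is automatically $G$-equivariant, even though $\pi$ itself is only $B$-equivariant.

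Since the hyperfan is finite and its type I members already satisfy the requirement, the problem is local: given one strictly convex hypercone $\cal{C}$ of type II, with central cone $\cal{K} = \cal{C} \inter \cal{Z}$ and non-empty degree polytope $\cal{P} = \sum_{x}\cal{P}_x$ (with $0 \notin \cal{P}$ by strict convexity), I must subdivide $\cal{C}$ into finitely many type I hypercones with disjoint relative interiors and the same support. The governing observation is that a hypercone is of type I precisely when one of its slices $\cal{C}_x$ degenerates to $\cal{K}$ — that is, when the corresponding $B$-chart omits the fibre of $\pi$ over some $x$ — whereas for a type II hypercone no slice does so. I would produce the subdivision by refining the slice cones $\cal{C}_x$ compatibly, through a common refinement of $\cal{K}$, so that each resulting piece acquires an empty slice at some point; since a Minkowski sum of non-empty polyhedra is non-empty, such a piece has empty degree polytope and is therefore of type I. Concretely one separates $\cal{P}$ from the origin by a supporting functional coming from a semi-invariant (available because $0 \notin \cal{P}$), uses it to cut $\cal{C}$, and inducts on the number of special points of $\cal{C}$ and the vertices of $\cal{P}$.

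Having subdivided each type II member, I would reassemble the pieces together with all their supported (hyper)faces and inherited colour sets $\cal{R}^\prime = \cal{R} \inter \kappa^{-1}(\cal{C}^\prime)$ into a collection of supported coloured cones, and check that this is a genuine coloured hyperfan: the relative interiors of distinct members are disjoint in $\cal{V}$, the faces match up, it refines $\cal{F}_X$, and it has the same support (so completeness is preserved). By the classification theorem this hyperfan is $\cal{F}_{\check{X}}$ for a unique $G$-model $\check{X}$, which is of type I by construction, and the refinement yields the required proper birational $G$-morphism $\phi\colon \check{X} \to X$.

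The main obstacle is the local subdivision lemma of the second paragraph: producing, for an arbitrary strictly convex type II hypercone, an explicit refinement into strictly convex type I hypercones with the \emph{same} support. Two features make this harder than a toric star-subdivision. First, $\cal{H}$ is not a vector space — its slices are glued along $\cal{Z}$, and a change of splitting $e$ shifts the $\ell$-coordinates by balanced integral amounts — so convexity and admissibility of each piece must be checked slice by slice while respecting this gluing. Second, one must carry the colour set along, ensuring each piece is a legitimate coloured cone with $0 \notin \cal{R}^\prime$ and that the degenerate slice it acquires genuinely corresponds to omitting a fibre of $\pi$. Verifying that the subdivision preserves the support in $\cal{V}$, so that $\phi$ is proper and not merely birational, is the last point requiring care.
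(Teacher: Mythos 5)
The paper offers no proof of this proposition --- it simply cites \cite[\S 16.6]{tim} --- so your argument has to stand on its own. Your overall strategy is the right one and matches the standard Luna--Vust treatment: pass to the coloured hyperfan, invoke the correspondence between refinements of hyperfans and proper birational $G$-morphisms, and reduce to subdividing a single supported coloured hypercone of type II into type I hypercones with disjoint relative interiors and the same support in $\cal{V}$. (Even here you are leaning on a fact the paper never states --- that a refinement of $\cal{F}_X$ yields a proper birational morphism onto $X$ --- which should at least be cited; in the paper's framework only the bijection of models with hyperfans and the completeness criterion are on record.)

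The genuine gap is exactly where you locate it: the local subdivision lemma is never proved, and the sketch you give does not obviously produce a legitimate refinement. Cutting $\cal{C}$ by a hyperplane $\{\phi = 0\}$ separating $\cal{P}$ from the origin yields pieces each of which still has every slice strictly larger than its central part, so neither piece acquires a slice equal to $\cal{K}$ and neither is of type I; degenerating a slice is not something a linear cut does. The other natural reading of your plan --- for each special point $x_0$ take the hypercone obtained by collapsing $\cal{C}_{x_0}$ to $\cal{K}$ and leaving the remaining slices untouched --- does produce type I hypercones covering the support, but two such pieces share the relative interiors of all the slices they have in common, and in the paper's own examples (e.g.\ the generic slice of $\sl{2}$, where $\relint{\cal{C}_p}$ is the open quadrant $\{\ell<0,\,h>0\}$ and visibly meets $\cal{V}_p$) this overlap happens inside $\cal{V}$, violating the disjointness condition in the definition of a coloured hyperfan. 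So neither candidate subdivision works as stated, and the construction that does work --- which must introduce new walls through central valuations in $\relint{\cal{C}} \inter \cal{V}$ and is the actual content of \cite[\S 16.6]{tim} --- is precisely what you have deferred. Until that lemma is supplied this is a plan rather than a proof; for the purposes of this paper it would suffice to prove it in the rank-one case, where $\cal{Z}$ is a line and $\cal{K} \inter \cal{V}$ is a single ray, so the combinatorics are genuinely tractable.
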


\begin{proof}
	See \cite[\S 16.6]{tim}
\end{proof}

Section 7 of this paper contains many example calculations with figures of the coloured hyperfans of  complexity one varieties, which should illuminate this complicated theory.

\subsection{Divisors on Complexity One $G$-Varieties}\label{div}

We now begin to study the properties of divisors on complexity-one $G$-varieties, following \cite{tim2}. Throughout this section $X$ is a normal but possibly singular variety unless otherwise specified. Helpfully, we can reduce everything to $B$-stable divisors:

\begin{proposition}\thlabel{Bequiv}
	Let a connected solvable algebraic group $B$ act on a normal variety $X$. Then any Weil divisor on $X$ is linearly equivalent to a $B$-stable one.
\end{proposition}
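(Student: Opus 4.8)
The plan is to reduce the statement to producing a single $B$-semi-invariant rational section of the divisorial sheaf $L = \cal{O}_X(D)$ attached to a given Weil divisor $D$. Indeed, a nonzero rational section $s$ of $L$ has a divisor $\div(s)$ with $\cal{O}_X(\div(s)) \cong L$, so that $\div(s) \sim D$; and if $s$ is a $B$-semi-invariant, meaning $g \cdot s = \chi(g)\, s$ for some character $\chi \in \frak{X}(B)$, then $g_* \div(s) = \div(g\cdot s) = \div(\chi(g)s) = \div(s)$, because the scalar $\chi(g) \in \bbk^\times$ contributes no zeros or poles. Hence $\div(s)$ is a $B$-stable divisor linearly equivalent to $D$, and exhibiting such an $s$ proves the proposition. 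Before starting, I would pass to the smooth locus $U \sub X$: it is $B$-stable with complement of codimension $\geq 2$, so restriction induces an isomorphism on class groups and on rational sections, and we may assume $L$ is a genuine line bundle.

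First I would show that $L$ is $B$-\emph{invariant}, i.e. $g^* L \cong L$ for every $g \in B$, which is where connectedness enters. Form the correspondence divisor $\cal{D} \sub B \times X$ whose fibre over $\{g\}\times X$ is $g \cdot D$, and compare it with $\mathrm{pr}_2^* D$; the two agree over the identity $e \in B$. Since $B$ is connected and solvable it is rational, in fact isomorphic as a variety to a product of copies of $\bb{G}_m$ and $\aff{1}$, whence $\mathrm{Cl}(B) = 0$ and, by homotopy invariance of the class group under $\aff{1}$ and $\bb{G}_m$, $\mathrm{Cl}(B \times X) = \mathrm{pr}_2^* \mathrm{Cl}(X)$. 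Thus $\cal{D} - \mathrm{pr}_2^* D \in \mathrm{pr}_2^*\mathrm{Cl}(X)$, and restricting this relation to the slice over an arbitrary $g$ gives $g\cdot D \sim D$, which is precisely the $B$-invariance of the class of $L$.

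With $L$ $B$-invariant, its linearizability follows from the theory of Knop--Kraft--Luna--Vust: because $\pic(B) = 0$, a $B$-invariant line bundle on the normal variety $X$ admits a $B$-linearization. Fixing one, $B$ acts $\bbk$-linearly on the space of rational sections of $L$, and this action is locally finite by the standard local finiteness of algebraic group representations. Choosing any nonzero rational section $s_0$ (here I use rational sections, not merely $H^0(X,L)$, precisely so that nonvanishing does not require $D$ to be effective), its $B$-orbit spans a finite-dimensional $B$-submodule $V$. Since $B$ is connected and solvable, the Lie--Kolchin theorem supplies a common eigenvector $s \in V$, i.e. a $B$-semi-invariant rational section, completing the argument.

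The main obstacle is the linearization step: upgrading the pointwise fact that every translate $g\cdot D$ is linearly equivalent to $D$ into a genuine $B$-equivariant structure on $L$. The a priori ambiguity is a scalar automorphism for each $g$, assembling into a cocycle whose triviality is controlled by $\pic(B)$ together with the characters of $B$; it is exactly the connected-solvable structure of $B$ that clears this obstruction, via $\pic(B)=0$ for existence of the linearization and via Lie--Kolchin for the eigenvector. A secondary point requiring care, already flagged above, is that $D$ need not be effective nor Cartier, which is why I first restrict to the smooth locus and then work with the full $\bbk(X)$-line of rational sections rather than with global sections.
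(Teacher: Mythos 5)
Your overall strategy---restrict to the smooth locus of the normal variety $X$, show that the class of $\cal{O}(D)$ is $B$-invariant via $\mathrm{Cl}(B\times X)\cong\mathrm{pr}_2^*\mathrm{Cl}(X)$ for the rational variety $B$, linearize using $\pic(B)=0$, and extract a $B$-eigensection by Lie--Kolchin---is exactly the standard route; the paper itself gives no proof but cites Timashev, whose argument runs along these lines. There is, however, one genuine gap: the claim that the $B$-action on the space of \emph{rational} sections of $L$ is locally finite. Local finiteness is a theorem about regular functions and about global sections of linearized sheaves; it fails for rational sections. For instance, take $X=\aff{1}$, $B=\bb{G}_a$ acting by translation, and $L=\cal{O}_X$ with the obvious linearization: the orbit of the rational section $1/x$ consists of the functions $1/(x-t)$, which are linearly independent for distinct $t$, so the span of the orbit is infinite-dimensional and Lie--Kolchin cannot be applied. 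Since you deliberately pass to rational sections precisely to avoid effectivity hypotheses on $D$, this is not a removable convenience but the point where the argument breaks.

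The repair is small and standard: write $D=D_+-D_-$ with $D_\pm$ effective. Each space $H^0(X,\cal{O}(D_\pm))$ is nonzero (it contains the canonical section corresponding to $1\in\bbk(X)$, as $D_\pm\geq 0$) and, being the global sections of a $B$-linearized sheaf, is a rational, hence locally finite, $B$-module. Lie--Kolchin then yields $B$-eigensections $s_\pm$, and $\div(s_+)-\div(s_-)$ is a $B$-stable Weil divisor linearly equivalent to $D_+-D_-=D$. With this substitution the remainder of your argument---the reduction to the smooth locus, the invariance of the class of $L$ under the connected group $B$, and the linearization via $\pic(B)=0$---is correct and the proof goes through.
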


\begin{proof}
	See \cite[Prop 17.1]{tim}.
\end{proof}

\subsubsection{Cartier Divisors}

Next we want to investigate conditions which guarantee that a divisor is Cartier. We will assume that the associated line bundle to any Cartier divisor is $G$-linearised (this is fine by \cite[Prop 2.4]{kklv} since $G$ is factorial).

\begin{lemma}\thlabel{globgencartier}
	Any prime divisor $D \sub X$ which does not contain a $G$-orbit is Cartier and generated by global sections.
\end{lemma}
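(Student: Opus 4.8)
The plan is to exploit the hypothesis directly through the $G$-translates of $D$, reducing everything to the elementary fact that a global section of a rank-one reflexive sheaf whose zero divisor avoids a point must trivialise the sheaf at that point. Crucially I would \emph{not} reduce $D$ to a $B$-stable divisor via \thref{Bequiv}, since linear equivalence destroys the hypothesis that $D$ contains no orbit.

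First I would show that the $G$-translates of $D$ have empty common intersection. Fix $x \in X$ and consider its orbit $Gx$. Since $D$ contains no $G$-orbit, $Gx \not\sub D$, so there is $h \in G$ with $hx \notin D$, equivalently $x \notin h^{-1}\cdot D$. As $x$ was arbitrary, $\bigcap_{g \in G}(g\cdot D) = \emptyset$ as closed subsets of $X$; that is, the family of prime divisors $\{g\cdot D\}_{g\in G}$ is ``base-point free''.

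Next I would promote these translates to honest global sections of $\cal{O}_X(D)$. Because $G$ is connected it acts trivially on the divisor class group $\mathrm{Cl}(X)$; concretely, as $G$ is factorial, pullback along the orbit and projection maps on $G\times X$ shows $g\cdot D \sim D$ for every $g$. Hence for each $g$ there is $f_g \in K^\times$ with $\div(f_g)+D = g\cdot D \geq 0$, so that $f_g \in \Gamma(X,\cal{O}_X(D))$ and the associated effective divisor of $f_g$ is exactly $g\cdot D$. Now given any $x \in X$, choose $g$ with $x \notin g\cdot D$ from the first step. In a neighbourhood of $x$ the effective divisor $g\cdot D = \div(f_g)+D$ is trivial, so $\div(f_g) = -D$ near $x$; this exhibits $f_g$ as a local generator realising $\cal{O}_X(D)\cong \cal{O}_X$ there. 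Thus $\cal{O}_X(D)$ is invertible at $x$ and generated by the global section $f_g$. Since $x$ was arbitrary, $\cal{O}_X(D)$ is a line bundle, i.e. $D$ is Cartier, and the collection $\{f_g\}$ generates it at every point.

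The delicate points, rather than any deep obstacle, are the two standard facts underpinning this: (i) that a connected group acts trivially on $\mathrm{Cl}(X)$, so the prime translates $g\cdot D$ are genuinely linearly equivalent to $D$ and give true global sections; and (ii) the reflexive subtlety that a section generates the \emph{full} stalk at $x$. The latter holds precisely because the relevant effective divisor is \emph{trivial} near $x$, not merely non-vanishing there, which is exactly what forces local freeness even at the singular points of the only-normal variety $X$ (its singular locus being $G$-stable, hence disjoint from the general point of $D$).
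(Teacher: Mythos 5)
Your argument is correct. The paper itself offers no proof of this lemma beyond a citation to Timashev (Lemma 17.3), and the cited argument is essentially the one you give: since $D$ contains no orbit, the $G$-translates $g\cdot D$ are effective divisors linearly equivalent to $D$ (by connectedness of $G$) with empty common intersection, and the corresponding rational functions are global sections of $\mathcal{O}_X(D)$ each of which, near any point missed by its translate, satisfies $\Div(f_g)=-D$ exactly and hence generates the stalk freely by normality --- giving both invertibility and global generation.
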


\begin{proof}
	See \cite[Lemma 17.3]{tim}.
\end{proof}

\begin{theorem}
	Let $\delta$ be a divisor on $X$ and assume by \thref{Bequiv} that $\delta$ is $B$-stable. Then $\delta$ is Cartier if and only if for any $G$-germ $Y$ of $X$, there exists $f_Y \in K^{(B)}$ such that each prime divisor $D$ containing $Y$ occurs in $\delta$ with multiplicity $\nu_D(f_Y)$. 
\end{theorem}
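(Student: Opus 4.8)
The plan is to reduce the global property ``$\delta$ is Cartier'' to a condition tested only at the $G$-germs, using that the non-Cartier locus is $G$-stable. Throughout write $\cal{O}_{X,Y}$ for the local ring of $X$ at the generic point $\eta_Y$ of a $G$-germ $Y$; since $X$ is normal this is a normal local domain with fraction field $K$, and the prime divisors $D$ cutting out its height-one primes are exactly the prime divisors containing $Y$. The stated condition then says precisely that $\delta$ and $\div(f_Y)$ have the same coefficient along every such $D$, i.e.\ that $\delta$ is \emph{principal in $\cal{O}_{X,Y}$}, with the extra requirement that a generator may be chosen $B$-semi-invariant. I would treat the two implications separately, since only one of them sees the semi-invariance.

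For $(\Leftarrow)$, assume each $\cal{O}_{X,Y}$ carries such an $f_Y$. Let $Z\sub X$ be the set of points $p$ at which $\delta$ is not principal in $\cal{O}_{X,p}$. This is closed, and because $\cal{O}_X(\delta)$ is $G$-linearised (by \cite[Prop 2.4]{kklv}, $\delta$ being $B$-stable and $G$ factorial) it is $G$-stable. If $Z\neq\emptyset$ it contains a closed $G$-orbit $O$, as orbits of minimal dimension inside a $G$-stable closed set are closed. But $O$ is a $G$-subvariety, hence a $G$-germ $Y$ with generic point $\eta_O\in O\sub Z$, and the hypothesis makes $\delta$ principal in $\cal{O}_{X,Y}=\cal{O}_{X,\eta_O}$, contradicting $\eta_O\in Z$. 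Thus $Z=\emptyset$ and $\delta$ is Cartier; note this direction uses only the existence of \emph{some} generator.

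For $(\Rightarrow)$, if $\delta$ is Cartier then for each $G$-germ $Y$ the stalk $\mathfrak{a}:=\cal{O}_X(\delta)_{\eta_Y}\sub K$ is a principal fractional ideal of $A:=\cal{O}_{X,Y}$, say $\mathfrak{a}=A\cdot g$; the real work is to replace $g$ by a semi-invariant. Since $\delta$ is $B$-stable and $Y$ is $G$-stable (hence $B$-stable), $\mathfrak{a}$ is $B$-stable, so for $b\in B$ the element $b\cdot g$ is again a generator and $u_b:=(b\cdot g)/g\in A^\times$. The assignment $b\mapsto u_b$ is a cocycle in $Z^1(B,A^\times)$, and a multiple $f_Y=c\,g$ is a semi-invariant of some weight $\lambda\in\frak{X}(B)$ exactly when $u_b=\lambda(b)\,c/(b\cdot c)$; so producing $f_Y$ amounts to showing $u_b$ is cohomologous, modulo a character of $B$, to the trivial cocycle. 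Here one may first invoke \thref{globgencartier} to discard the prime divisors of $\delta$ not containing a $G$-orbit, which are automatically Cartier, leaving only the divisors visible at the germs.

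This trivialisation is the main obstacle. Solvability of $B$ gives, via Lie--Kolchin, a semi-invariant inside any finite-dimensional $B$-submodule of $\mathfrak{a}$, but such a vector need not generate $\mathfrak{a}$: the unipotent action $b=\begin{psmallmatrix}1 & t\\ 0 & 1\end{psmallmatrix}$ on $\bbk^2$ has all its semi-invariants in a single proper submodule, so Lie--Kolchin alone is insufficient and the invertibility of $\mathfrak{a}$ must be exploited. The plan is to filter $A^\times$: the reduction $A^\times\to k(Y)^\times$ sends $u_b$ to a cocycle valued in the residue field which, after twisting by a character, is to be made a coboundary, while the kernel $1+\mathfrak{m}$ is controlled through the unipotent radical of $B$, using that a cocycle of a connected solvable group into a unipotent group is a coboundary. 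Combining the two layers yields $c\in A^\times$ and $\lambda\in\frak{X}(B)$ with $f_Y=c\,g\in K^{(B)}_\lambda$ generating $\mathfrak{a}$, which is exactly a $B$-semi-invariant with $\nu_D(f_Y)$ equal to the coefficient of $D$ in $\delta$ for every $D\ni Y$, as required.
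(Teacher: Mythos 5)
Your reformulation of the statement is the right one, and your backward implication is complete and correct: the non-Cartier locus is closed and $G$-stable (because $g^*\delta\sim\delta$ for $g$ in the connected group $G$, so the reflexive sheaf $\cal{O}(\delta)$ is isomorphic to all of its translates), hence if nonempty contains a closed orbit, at whose generic point the hypothesis forces $\cal{O}(\delta)$ to be principal --- a contradiction. Note also that the paper supplies no argument of its own here; it quotes the result from Timashev [Thm 17.4], so the comparison below is with the proofs in the literature (Brion in the spherical case, Timashev in general) rather than with anything in this text.

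The forward implication is where all the content lies, and your proposal does not close it. You correctly reduce it to showing that the cocycle $b\mapsto u_b=(b\cdot g)/g\in A^\times$ is, up to a character of $B$, a coboundary, and you correctly observe that Lie--Kolchin alone cannot deliver a semi-invariant \emph{generator}. But the mechanism you propose for the kernel layer --- that a cocycle of a connected solvable group valued in a unipotent group is a coboundary --- is false: already for $B=\bb{G}_a$ acting trivially on $\bb{G}_a$, a cocycle is just a homomorphism and a coboundary is zero, so $H^1(\bb{G}_a,\bb{G}_a)\neq 0$; more generally $H^1(B,V)$ for $B$ solvable and $V$ a rational $B$-module need not vanish, unlike the reductive case. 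Similarly, making the residue-field layer $b\mapsto \overline{u_b}\in k(Y)^\times$ a coboundary modulo a character is not formal; it depends on the structure of $Y$ as a $B$-variety and on knowing the $B$-semi-invariant units of a $B$-chart. The actual proofs supply exactly this geometric input: one works on a $B$-chart $X_0$ meeting $Y$, on which $\cal{O}(\delta)$ is an invertible sheaf generated by its sections, and uses the local structure of $X_0$ (equivalently, the description of $\bbk[X_0]^{\times(B)}$ as constants times characters) to trivialise $\cal{O}(\delta)|_{X_0}$ $B$-equivariantly up to a character twist, after which the eigenfunction $f_Y$ can be read off. Without an argument of this type your cocycle has no reason to split, so the forward direction remains a genuine gap rather than a routine verification.
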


\begin{proof}
	See \cite[Thm 17.4]{tim}
\end{proof}

\begin{corollary}
	A Cartier divisor $\delta$ on a $G$-model $X$ is determined by the following data: \begin{enumerate}
		\item a collection $\{f_Y\}$ of $B$-eigenfunctions for each $G$-germ $Y \sub X$ such that $\nu(f_{Y_1}) = \nu(f_{Y_2})$ and $\nu_D(f_{Y_1}) = \nu_D(f_{Y_2})$ for all $\nu \in \cal{V}_{Y_1} \inter \cal{V}_{Y_2}$ and all $D \in \cal{D}^B_{Y_1} \inter \cal{D}^B_{Y_2}$;
		\item a collection of integers $m_D$ for each $D \in \cal{D}^B\setminus{\bigcup_{Y \sub X}{\cal{D}^B_Y}}$ ($m_D$ being the multiplicity of $D$ in the divisor), only finitely many of which are nonzero.
	\end{enumerate}
\end{corollary}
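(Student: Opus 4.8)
The plan is to recognise the stated data as a bookkeeping device recording the multiplicities of $\delta$ along the $B$-stable prime divisors of $X$, sorted according to whether or not a given divisor meets a $G$-germ. By \thref{Bequiv} I may assume $\delta$ is $B$-stable, so that $\delta = \sum_D a_D\, D$ is a finite $\bb{Z}$-combination of $B$-stable prime divisors; each such $D$ is either $G$-invariant, and so corresponds to a $G$-valuation $\nu_D \in \cal{V}$, or is a colour, an element of $\cal{D}^B$. The first observation I would record is that every $G$-invariant prime divisor $D$ is itself (the local ring at the generic point of) a codimension-one $G$-germ, so that $\nu_D \in \cal{V}_D$; consequently only colours can fail to contain any $G$-germ, which is why part (2) of the statement ranges over colours in $\cal{D}^B \setminus \bigcup_{Y} \cal{D}^B_Y$.

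Next I would partition the prime divisors appearing in $\delta$ into those containing at least one $G$-germ $Y \sub X$ and the remaining central colours containing none. For the first class, the preceding theorem asserts precisely that $\delta$ is Cartier if and only if for each germ $Y$ there is an eigenfunction $f_Y \in K^{(B)}$ whose order of vanishing $\nu_D(f_Y)$ along each divisor $D$ containing $Y$ equals the multiplicity $a_D$. Specifying the Cartier divisor along these divisors is therefore the same as specifying the tuple $\{f_Y\}$. The compatibility relations in part (1) are then forced: if a single divisor, with valuation $\nu$ or colour $D$, contains two germs $Y_1, Y_2$ (so that $\nu \in \cal{V}_{Y_1} \inter \cal{V}_{Y_2}$ or $D \in \cal{D}^B_{Y_1} \inter \cal{D}^B_{Y_2}$), the two candidate multiplicities $\nu(f_{Y_1})$, $\nu(f_{Y_2})$ (resp. $\nu_D(f_{Y_1})$, $\nu_D(f_{Y_2})$) must both equal $a_D$ and hence agree; conversely, data satisfying these relations assigns a single well-defined multiplicity to every divisor meeting a germ.

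For the central colours $D$ containing no $G$-germ, \thref{globgencartier} shows they impose no Cartier constraint tying them to the $\{f_Y\}$, so their multiplicities may be prescribed freely as integers $m_D$, only finitely many nonzero since $\delta$ has finite support. Assembling the pieces, the multiplicity of $\delta$ along any $B$-stable prime divisor is recovered either as $\nu_D(f_Y)$ (for divisors meeting a germ) or as $m_D$ (for central colours), so the data $(\{f_Y\},\{m_D\})$ determines $\delta$; conversely a $B$-stable Cartier $\delta$ yields such data by the theorem. I expect the only real work to lie in the careful bookkeeping of the partition—checking that every $B$-stable prime divisor is accounted for exactly once, and that the overlaps between the coloured data $\cal{V}_Y, \cal{D}^B_Y$ of distinct germs are governed exactly by the compatibility conditions of part (1)—rather than in any substantive new geometric input, since the Cartier criterion of the preceding theorem already does the essential work.
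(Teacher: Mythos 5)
Your argument is correct and is essentially the reading the paper intends: the corollary is stated without proof as an immediate consequence of the preceding Cartier criterion, and your unpacking — partitioning the $B$-stable prime divisors into those containing a $G$-germ (multiplicities encoded by the $f_Y$, with the compatibility conditions forced by divisors containing two germs) and the remaining colours (individually Cartier by \thref{globgencartier}, hence with freely prescribed multiplicities $m_D$) — is exactly the bookkeeping required. Your observation that every $G$-stable prime divisor is itself a codimension-one $G$-germ, so that only colours can appear in part (2), is the one point worth making explicit, and you make it.
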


If $X$ is quasihomogeneous of complexity one, each $f_Y$ determines up to scalar multiples (and is up to powers determined by) a linear functional $\phi_Y$ on the coloured cone or hypercone $\cal{C}_Y$ such that $\phi_{Y_1}\vert_{\cal{C}_{Y_2}} = \phi_{Y_2}$ whenever $\cal{C}_{Y_2}$ is a face of $\cal{C}_{Y_1}$, that is, whenever $Y_2$ contains $Y_1$. Then the functionals $\phi_Y$ paste together to a piecewise linear function on $\bigcup_{Y \sub X}{\cal{C}_Y \inter \cal{V}}$, which we call a \emph{piecewise linear function} on the coloured hyperfan $\cal{F}_X = \{\cal{C}_Y \mid Y \sub X\}$ of $X$. Then Cartier divisors on $X$ correspond to these piecewise linear functions.

\subsubsection{Globally Generated and Ample Divisors}

\begin{proposition}
	Let $\delta$ be a Cartier divisor on $X$ given by $\{f_Y\}$, $\{m_D\}$ as above. Then: \begin{enumerate}
		\item $\delta$ is globally generated if and only if $f_Y$ can be chosen such that for any $G$-germ $Y \sub X$, we have: \begin{enumerate}
			\item for any other $G$-germ $Y^\prime \sub X$ and every $B$-stable divisor $D$ containing $Y^\prime$, $\nu_D(f_Y) \leq \nu_D(f_{Y^\prime})$;
			\item for any $D \in \cal{D}^B\setminus{\bigcup_{Y^\prime \sub X}{\cal{D}^B_Y}}$, $\nu_D(f_Y) \leq m_D$.
		\end{enumerate}
		\item $\delta$ is ample if and only if, after replacing $\delta$ with some positive multiple, $f_Y$ can be chosen such that for any $G$-germ $Y \sub X$, there exists a $B$-chart $X_0 \sub X$ intersecting $Y$ such that (a) and (b) hold, and the inequalities therein are strict if and only if $D \inter X_0 = \emptyset$.
	\end{enumerate}
\end{proposition}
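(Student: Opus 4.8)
The plan is to test both properties at the $G$-germs of $X$ using $B$-semi-invariant sections, and to reduce everything to one elementary divisorial computation. Two structural facts make the reduction legitimate. First, since $G$ is reductive, $H^0(X,\cal{O}(\delta)) = \{f \in K \mid \div(f)+\delta \geq 0\}$ is generated as a $G$-module by its $B$-semi-invariant vectors (every $G$-submodule, being a sum of irreducibles, contains a highest-weight vector); these vectors are exactly the $f \in K^{(B)}$ with $\div(f)+\delta \geq 0$, and the locus where $\cal{O}(\delta)$ fails to be globally generated is $G$-stable because $\cal{O}(\delta)$ is $G$-linearised. Second, the generic point $\eta_Y$ of a $G$-germ $Y$ is fixed by $G$, so a $B$-semi-invariant section generates $\cal{O}(\delta)$ at $\eta_Y$ if and only if each of its $G$-translates does. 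Together these show that $\cal{O}(\delta)$ is globally generated if and only if, for every $G$-germ $Y$, some $B$-semi-invariant section generates at $\eta_Y$, i.e. $s\,f_Y \in \cal{O}_{X,Y}^\times$ (recall $\cal{O}(\delta)$ is locally $f_Y^{-1}\cal{O}$ near $Y$). Everything then rests on the identity
\[
\div(f_Y^{-1}) + \delta \;=\; \sum_D \bigl(\mathrm{mult}_D(\delta) - \nu_D(f_Y)\bigr)\,D ,
\]
whose effectivity, using $\mathrm{mult}_D(\delta) = \nu_D(f_{Y'})$ for $D$ containing a germ $Y'$ and $\mathrm{mult}_D(\delta)=m_D$ otherwise, is precisely the conjunction of (a) and (b).

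For part (1) I would argue as follows. The candidate section is $f_Y^{-1}$, which is a $B$-semi-invariant (as $f_Y \in K^{(B)}$) and, whenever it lies in $H^0$, generates at $\eta_Y$ since $f_Y^{-1}f_Y = 1 \in \cal{O}_{X,Y}^\times$. By the displayed identity, $f_Y^{-1} \in H^0$ is equivalent to (a) and (b). For the backward implication, (a) and (b) make $f_Y^{-1}$ a generating section at every $G$-germ $Y$; since the non-generation locus is $G$-stable closed, were it nonempty it would contain an irreducible $G$-stable subvariety, i.e.\ a $G$-germ, contradicting generation there, so it is empty. For the forward implication I would choose good representatives: if $\delta$ is globally generated then by the reduction above some $B$-semi-invariant $g$ generates at $\eta_Y$, whence $g^{-1}$ is a valid local equation for $\delta$ near $Y$ and we may replace $f_Y$ by $g^{-1}$; then $f_Y^{-1}=g \in H^0$ and (a), (b) hold.

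For part (2) I would use the affine-cover criterion for ampleness on the complete variety $X$: $\delta$ is ample if and only if some positive multiple $n\delta$ carries global sections whose non-vanishing loci are affine and cover $X$. By the same reduction (using $X_{g\cdot s} = g\cdot X_s$ and the $G$-fixedness of $\eta_Y$), it suffices to produce, for each $G$-germ $Y$, a $B$-semi-invariant section of a suitable $n\delta$ whose non-vanishing locus is an affine $B$-chart containing $Y$. Taking $s=f_Y^{-1}$ for the chosen representative of $n\delta$, its non-vanishing locus is $X \setminus \supp(\div(f_Y^{-1})+\delta)$, i.e.\ the complement of exactly those $D$ for which the inequality in (a) or (b) is strict. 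The ampleness condition is then the identification of this complement with a $B$-chart $X_0 \ni Y$: by the characterisation of $B$-charts through their coloured data (\thref{hypercone} and the preceding results of Timashev), a $B$-chart is cut out from $X$ by removing precisely the $B$-stable prime divisors that miss it, so $X_s = X_0$ holds exactly when the strict inequalities occur for the $D$ with $D \inter X_0 = \emptyset$. Conversely, if $\delta$ is ample then after passing to a very ample multiple the affine opens $X_s$ form a base; selecting a $B$-semi-invariant $s$ for each germ realises $X_s$ as such an affine $B$-chart and reads off the strict-inequality condition, the passage to a multiple being what forces the supporting functional $\phi_Y$ to be integral and strictly supporting.

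The main obstacle is the ampleness direction, and specifically the claim that $X_{f_Y^{-1}}$ is not merely a $B$-stable open set but is \emph{exactly} the affine $B$-chart $X_0$, together with the precise correspondence between strictness of the inequalities and the condition $D \inter X_0 = \emptyset$. This is where the divisorial bookkeeping must be married to the Luna-Vust combinatorics: one needs that $B$-charts are obtained from $X$ by deleting the $B$-stable prime divisors not meeting them, that every point of a complete $G$-model lies in one of the finitely many covering $B$-charts, and that replacing $\delta$ by a positive multiple renders the relevant functionals integral. All three ingredients come from Timashev's description of $B$-charts and $G$-germs in \cite{tim2,tim} and form the technical heart of the argument; by comparison the global-generation statement (1) is essentially formal once the reduction of the first paragraph is in place.
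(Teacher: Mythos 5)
The paper does not actually prove this proposition: it defers entirely to \cite[Thm 17.18]{tim}, so there is no in-paper argument to compare against. Your reconstruction is correct and is essentially the standard (and, as far as I can tell, the cited) argument: reduce both properties to $B$-semi-invariant sections tested at the $G$-fixed generic points $\eta_Y$ of the $G$-germs --- using complete reducibility to extract a highest-weight vector outside the kernel of evaluation at $\eta_Y$, and the $G$-stability of the non-generation locus (which, if nonempty, contains a closed orbit and hence a $G$-germ) to globalise --- and then everything does collapse to the effectivity of $\div(f_Y^{-1})+\delta = \sum_D(\mathrm{mult}_D(\delta)-\nu_D(f_Y))D$, which is exactly (a) together with (b) once one notes that every $B$-stable prime divisor either contains a $G$-germ (in particular every $G$-stable one is itself a germ) or falls under (b). Two ingredients are used silently and would need a sentence each in a written-out version: first, that the complement of a nonempty affine open subset of the normal variety $X$ is pure of codimension one, which is what justifies the identification $X_{f_Y^{-1}}=X_0$ and hence the translation between affineness of $X_{f_Y^{-1}}$ and the strictness pattern relative to $D\inter X_0=\emptyset$ (this, rather than \thref{hypercone} itself, is the fact you actually need about $B$-charts); second, that a $G$-model has only finitely many $G$-germs, so that in part (2) a single multiple $n\delta$ serves all germs at once. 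Neither is a gap --- both are standard and implicit in the Luna--Vust setup --- and your identification of the affineness step as the technical heart of the ampleness direction is accurate.
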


\begin{proof}
	See \cite[Thm 17.18]{tim}
\end{proof}

\subsubsection{Global Sections}\label{globsec}

Let $\cal{B}(X)$ be the set of all $B$-stable prime divisors on $X$, including the $G$-stable ones. Let $\delta = \sum_{D\in \cal{B}(X)}{m_D D}$ be a $B$-stable Cartier divisor, and let $\eta_\delta \in H^0(X,\cal{O}(\delta))^{(B)}$ be the respective rational $B$-eigensection (i.e. $\Div{\eta_\delta} = \delta$). We have \[H^0(X,\cal{O}(\delta))^{(B)} = \{f\eta_\delta \mid f \in K^{(B)}, \Div{f} + \delta \geq 0\}.\] The $B$-weight of an arbitrary $B$-eigensection $\sigma = f\eta_\delta$ is $\lambda + \lambda_\delta$, where $\lambda$ is the weight of $f$ and $\lambda_\delta$ is the weight of $\eta_\delta$. The latter is determined up to a character of $G$ and can be calculated as follows: let $Y$ be a $G$-orbit intersecting $\delta$ and pull $Y \inter \delta$ back to $G$ under the orbit map, giving a divisor $\tilde{\delta}$ on $G$. Since we assume $G$ to be factorial, $\tilde{\delta}$ is principal, defined by a rational function $F \in \bbk(G)^{(B)}$. Then $\lambda_\delta$ is the $B$-weight of $F$. 

It follows that \[H^0(X,\cal{O}(\delta))^{(B)}_{\lambda + \lambda_\delta} \cong \{f \in K^{(B)}_\lambda \mid \Div{f} + \delta \geq 0\} \cong \{f \in K^B \mid \Div{f} + \Div{e_\lambda} + \delta \geq 0\}.\]

We want to calculate the calculate the dimension of the space $H^0(X,\cal{O}(\delta))$ of global sections of $\delta$. Note that, setting $m_\lambda(\delta) =\dim{H^0(X,\cal{O}(\delta))^{(B)}_{\lambda+\lambda_\delta}}$ for brevity, using the Weyl dimension formula \cite[\S 24.3]{hum1} for modules and counting multiplicities of simple submodules, we have \[\dim{H^0(X,\cal{O}(\delta))} = \sum_{\lambda \in \Lambda}{m_\lambda(\delta)\prod_{\alpha^\vee \in \Delta^\vee_+}{\left(1 + \frac{(\lambda,\alpha^\vee)}{(\rho,\alpha^\vee)}\right)}}.\]

We can calculate $m_\lambda(\delta)$ using the notion of a pseudodivisor:

\begin{definition}
	Let $C$ be a smooth projective curve. A \emph{pseudodivisor} $\mu$ on $C$ is a formal linear combination $\mu = \sum_{p \in C}{m_p\cdot p}$ where $m_p \in \bb{R} \union \{\pm \infty\}$ and all but finitely many $m_p$ are 0. Let $H^0(C,\mu) = \{f \in \bbk(C) \mid \Div{f} + \mu \geq 0\}$ where for all $x \in \bb{R}$, we set $x + (\pm \infty) = \pm\infty$.
	
	If there is $p \in C$ with $m_p = -\infty$, then $H^0(C,\mu) = 0$. Otherwise, $H^0(C,\mu)$ is the space of global sections of the divisor $\lfloor\mu\rfloor = \sum_{p}{\lfloor m_p\rfloor\cdot p}$ on $C\setminus\{p \in C\mid m_p = +\infty\}$, where $\lfloor m_p\rfloor$ represents the floor of $m_p$.
\end{definition}

Now let $\delta $ be as above. Note that $H^0(X,\cal{O}(\delta))^{(B)}$ is isomorphic to \[\left\{f_0e_\lambda \mid f_0 \in K^B, \lambda \in \Lambda, \sum_{D \in \cal{B}(X)}{[h_D\nu_{p_D}(f_0)+\langle \lambda, \ell_D\rangle + m_D]D} \geq 0\right\}.\] Hence fix $\lambda \in \Lambda$ and consider the pseudodivisor \[H_\lambda = H_\lambda(\delta) = \sum_{p \in \proj{1}}{\left(\min_{p_D = p}{\frac{\langle \lambda,\ell_D\rangle + m_D}{h_D}}\right)p},\] where we assume $\frac{x}{0} = +\infty$ for $x \geq 0$ and $\frac{x}{0} = -\infty$ for $x < 0$. It is clear from the above description of $H^0(X,\cal{O}(\delta))^{(B)}$ that $m_\lambda(\delta) = \dim{H^0(\proj{1},H_\lambda(\delta))} := h^0(\delta,\lambda)$.

We know that $h^0(\proj{1},H_\lambda) = 0$ if any of its coefficients are $-\infty$. This is the case exactly when there is $p \in \proj{1}$ and $D \in \cal{B}(X)$ with $p_D = p$ satisfying $h_D = 0$ and $\langle \lambda, \ell_D\rangle < -m_D$. Hence we define a polyhedral domain \[\cal{P}(\delta) = \{\lambda \in \Lambda \otimes \bb{R} \mid \langle \lambda,\ell_D\rangle \geq -m_D \ \text{for all}\ D \ \text{with} \ h_D = 0\}.\] Then $H^0(\proj{1},H_\lambda) = 0$ for all $\lambda \notin \cal{P}(\delta)$. Conversely, a coefficient of $H_\lambda$ is $+\infty$ if and only if there is $p \in \proj{1}$ such that no divisor $D \in \cal{B}(X)$ with $p_D = p$ satisfies $h_D > 0$. This is the case e.g. if $X$ is a $B$-chart of type I. Then $H^0(\proj{1},H_\lambda)$ is the space of global sections of $\lfloor H_\lambda \rfloor$ on the affine curve $\proj{1}\setminus{\{p \mid m_p = +\infty\}}$ and hence $h^0(\delta,\lambda) = \infty$ for all $\lambda \in \cal{P}(\delta)$.

Otherwise, $H_\lambda$ is a `standard' Weil divisor on $\proj{1}$, so by Riemann-Roch we have $h^0(\delta,\lambda) = \deg{\lfloor H_\lambda \rfloor} + 1 + h^1(\delta,\lambda)$, where $h^1(\delta,\lambda) := \dim{H^1(\proj{1},\lfloor H_\lambda\rfloor)}$. If we define \[A(\delta,\lambda) = \sum_{p \in \proj{1}}{\min_{p_D=p}{\frac{\langle \lambda,\ell_D\rangle + m_D}{h_D}}},\] i.e. $A(\delta,\lambda) = \deg{H_\lambda}$, then $\deg{\lfloor H_\lambda(\delta) \rfloor}$ differs from $A(\delta,\lambda)$ by some bounded non-negative function $\sigma(\delta,\lambda)$ for all $\delta,\lambda$. We then have $h^0(\delta,\lambda) = A(\delta,\lambda) - \sigma(\delta,\lambda) + h^1(\delta,\lambda) + 1$. 

\begin{proposition}\thlabel{largen}
	If $A(\delta,\lambda) < 0$, then $h^0(\delta,\lambda) = 0$. Otherwise, for large $n$, $h^0(n\delta,n\lambda) \sim nA(\delta,\lambda)$.
\end{proposition}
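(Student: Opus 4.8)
The plan is to read off everything from the Riemann--Roch expression $h^0(\delta,\lambda) = A(\delta,\lambda) - \sigma(\delta,\lambda) + h^1(\delta,\lambda) + 1$ established just above, and to track how each of the three terms $A$, $\sigma$ and $h^1$ scales when $(\delta,\lambda)$ is replaced by $(n\delta,n\lambda)$. The essential observation underpinning everything is the scaling identity $A(n\delta,n\lambda) = nA(\delta,\lambda)$. This is immediate from the definition of $A$: the data $p_D$, $h_D$, $\ell_D$ attached to each $D \in \cal{B}(X)$ depend only on $D$ and not on $\delta$ or $\lambda$, while passing to $n\delta$ replaces each multiplicity $m_D$ by $nm_D$ and $\langle n\lambda,\ell_D\rangle = n\langle\lambda,\ell_D\rangle$; the inner minima and the outer sum then pull out a common factor of $n$.

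First I would dispose of the case $A(\delta,\lambda) < 0$. If $A = -\infty$ then $H_\lambda$ has a coefficient equal to $-\infty$ and $h^0(\delta,\lambda) = 0$ directly from the definition of the pseudodivisor. Otherwise $A$ is finite and negative, so $H_\lambda$ is a genuine Weil divisor with $\deg H_\lambda = A(\delta,\lambda)$; since flooring only decreases coefficients, $\deg\lfloor H_\lambda\rfloor \le A(\delta,\lambda) < 0$, and being an integer it is at most $-1$. A divisor of negative degree on $\proj{1}$ has no nonzero global sections, whence $h^0(\delta,\lambda) = 0$.

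For the asymptotic claim I would assume $A(\delta,\lambda) \ge 0$ and finite, so that we are in the standard Weil divisor regime. The two terms to control are $\sigma$ and $h^1$. Note that $\sigma(\delta,\lambda) = A(\delta,\lambda) - \deg\lfloor H_\lambda\rfloor$ is exactly the sum of the fractional parts of the coefficients of $H_\lambda(\delta)$; since those coefficients are supported on the fixed finite set $\{p_D : D \in \cal{B}(X)\}$ and each fractional part lies in $[0,1)$, we get $0 \le \sigma(n\delta,n\lambda) < N$ with $N = |\{p_D\}|$, a bound independent of $n$. Hence $\deg\lfloor H_{n\lambda}(n\delta)\rfloor = A(n\delta,n\lambda) - \sigma(n\delta,n\lambda) = nA(\delta,\lambda) - O(1)$. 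Using $h^1(\proj{1},E) = \max(0,-\deg E - 1)$, one splits into two subcases: if $A(\delta,\lambda) > 0$ this degree tends to $+\infty$, so $h^1(n\delta,n\lambda) = 0$ for large $n$ and Riemann--Roch gives $h^0(n\delta,n\lambda) = nA(\delta,\lambda) + O(1)$; if $A(\delta,\lambda) = 0$ the degree stays bounded, so $h^0(n\delta,n\lambda) = O(1) = o(n)$. In both cases $h^0(n\delta,n\lambda)/n \to A(\delta,\lambda)$, which is the asserted asymptotic $h^0(n\delta,n\lambda) \sim nA(\delta,\lambda)$.

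The point requiring most care is the correct reading of $\sim$ in the borderline case $A=0$: there the leading coefficient vanishes, so the clean formulation of the conclusion is $\lim_{n\to\infty} h^0(n\delta,n\lambda)/n = A(\delta,\lambda)$, which uniformly covers both $A>0$ (where in fact $h^0 = nA + O(1)$) and $A=0$ (where $h^0 = O(1)$). I would also check at the outset that the cases $A<0$ and $A\ge0$ finite genuinely exhaust the relevant situations, since a $+\infty$ coefficient would force $A = +\infty$ and the $h^0 = \infty$ regime is already excluded by the standing assumption that $H_\lambda$ is a standard Weil divisor. Everything else is routine bookkeeping with the boundedness of $\sigma$ and the vanishing of $h^1$ on $\proj{1}$ in large degree.
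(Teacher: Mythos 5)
Your argument is correct and is essentially the intended one: the paper itself defers the proof to Timashev, but the surrounding text has already set up exactly the decomposition $h^0(\delta,\lambda) = A(\delta,\lambda) - \sigma(\delta,\lambda) + h^1(\delta,\lambda) + 1$ that you exploit, and your three ingredients (homogeneity $A(n\delta,n\lambda)=nA(\delta,\lambda)$, boundedness of $\sigma$, vanishing of $h^1$ on $\proj{1}$ in large degree) together with the careful reading of $\sim$ when $A=0$ complete it. The only nitpick is that the coefficients of $H_\lambda$ are supported not on $\{p_D : D \in \cal{B}(X)\}$ (which is all of $\proj{1}$) but on the finitely many points carrying a divisor with $m_D \neq 0$, $\ell_D \neq 0$ or $h_D \neq 1$; this fixed finite set is what makes $\sigma$ bounded independently of $n$, so the conclusion stands.
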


\begin{proof}
	See \cite[\S 17.4]{tim}
\end{proof}

Inspired by the first part of the above Proposition, define the polyhedral domain \[\cal{P}_+(\delta) = \{\lambda \in \cal{P}(\delta) \mid A(\delta,\lambda) \geq 0\}.\] Then by the above and the definition of $\cal{P}(\delta)$, we have $h^0(\delta,\lambda) = 0$ for all $\lambda \notin \cal{P}_+(\delta)$.

\subsubsection{Volume of Divisors}

Now assume that $X$ is a smooth projective $G$-model. Let $\Delta$ be the root system of $G$, $\Delta_+$ the positive roots determined by $B$, $\Delta_+^\vee$ the corresponding set of positive coroots and $\rho = \frac{1}{2}\sum_{\alpha \in \Delta_+}{\alpha}$. The following formula of Timashev \cite{tim2} allows us to compute the volume of a Cartier divisor on a complexity one variety.

\begin{theorem}
	Let $\delta$ be a $B$-stable Cartier divisor of weight $\lambda_\delta$ on a normal projective quasihomogeneous $G$-variety $X$ of dimension $d$, complexity $c = 1$ and rank $r$. Then, in the notation of the previous subsection: \[d = c + r + |\Delta^\vee_+\setminus{(\Lambda + \bb{Z}\lambda_\delta)^\perp}|,\] and \[\vol{\delta} = d!\int\limits_{\lambda_\delta + \cal{P}_+(\delta)}{A(\delta,\lambda-\lambda_\delta)\prod_{\alpha^\vee \in \Delta^\vee_+\setminus{(\Lambda + \bb{Z}\lambda_\delta)^\perp}}{\frac{\langle \lambda,\alpha^\vee\rangle}{\langle\rho,\alpha^\vee\rangle}}\, \mathrm{d}\lambda}\] where the Lebesgue measure on $\Lambda \otimes \bb{R}$ is normalised such that a fundamental parallelepiped of $\Lambda$ has volume 1.
\end{theorem}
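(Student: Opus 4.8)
The plan is to read off $\vol\delta$ from its definition as the normalised leading coefficient of $n \mapsto \dim H^0(X,\cal{O}(n\delta))$, using the machinery assembled in \S\ref{globsec}. Applying the Weyl dimension formula to the module generated by each $B$-eigensection (a $B$-eigensection of weight $\mu = \lambda + n\lambda_\delta$ spans the highest weight line of an irreducible $G$-module of dimension $\prod_{\alpha^\vee}(1+\langle \mu,\alpha^\vee\rangle/\langle\rho,\alpha^\vee\rangle)$), one obtains
\[\dim H^0(X,\cal{O}(n\delta)) = \sum_{\lambda \in \Lambda}{m_\lambda(n\delta)\prod_{\alpha^\vee \in \Delta^\vee_+}\Bigl(1 + \frac{\langle \lambda + n\lambda_\delta,\alpha^\vee\rangle}{\langle\rho,\alpha^\vee\rangle}\Bigr)}.\]
Since $m_\lambda(n\delta) = h^0(n\delta,\lambda)$ vanishes off $\cal{P}_+(n\delta) = n\cal{P}_+(\delta)$, I would rescale by setting $\lambda = n\lambda'$ with $\lambda' \in \tfrac1n\Lambda \inter \cal{P}_+(\delta)$, turning the sum into a Riemann sum over the fixed polytope $\cal{P}_+(\delta)$ as the lattice mesh tends to $0$.

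Next I would extract the asymptotics of the two factors. By \thref{largen} and the homogeneity $A(n\delta,n\lambda') = nA(\delta,\lambda')$, the multiplicity is $m_{n\lambda'}(n\delta) = nA(\delta,\lambda') + O(1)$ uniformly on $\cal{P}_+(\delta)$, the bounded correction $\sigma$ and the term $h^1$ (which vanishes once the relevant degree exceeds $-1$) being negligible. For the Weyl product the crucial observation is that every weight occurring lies in $\Lambda + \bb{Z}\lambda_\delta$; hence each coroot $\alpha^\vee \in (\Lambda + \bb{Z}\lambda_\delta)^\perp$ contributes the factor $1$ identically, while each $\alpha^\vee \notin (\Lambda + \bb{Z}\lambda_\delta)^\perp$ contributes $n\langle \lambda'+\lambda_\delta,\alpha^\vee\rangle/\langle\rho,\alpha^\vee\rangle + O(1)$. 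Writing $k = |\Delta^\vee_+\setminus(\Lambda + \bb{Z}\lambda_\delta)^\perp|$, the product is thus $n^{k}\prod_{\alpha^\vee \notin (\Lambda+\bb{Z}\lambda_\delta)^\perp}\langle \lambda'+\lambda_\delta,\alpha^\vee\rangle/\langle\rho,\alpha^\vee\rangle + O(n^{k-1})$.

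Assembling these, each summand is $\sim n^{k+1}A(\delta,\lambda')\prod_{\alpha^\vee}\langle \lambda'+\lambda_\delta,\alpha^\vee\rangle/\langle\rho,\alpha^\vee\rangle$, and since $\tfrac1n\Lambda$ has covolume $n^{-r}$ the Riemann sum converges:
\[\dim H^0(X,\cal{O}(n\delta)) \sim n^{\,1+k+r}\int_{\cal{P}_+(\delta)}{A(\delta,\lambda')\prod_{\alpha^\vee \notin (\Lambda+\bb{Z}\lambda_\delta)^\perp}{\frac{\langle \lambda'+\lambda_\delta,\alpha^\vee\rangle}{\langle\rho,\alpha^\vee\rangle}}\,\mathrm{d}\lambda'}.\]
The substitution $\lambda = \lambda' + \lambda_\delta$ moves the domain to $\lambda_\delta + \cal{P}_+(\delta)$ and replaces the product by $\prod \langle\lambda,\alpha^\vee\rangle/\langle\rho,\alpha^\vee\rangle$, giving exactly the integral in the statement. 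Comparing with $\vol\delta = \lim_n d!\,n^{-d}\dim H^0(X,\cal{O}(n\delta))$ forces $1 + k + r = d$, i.e. the dimension formula $d = c+r+k$ with $c=1$; for non-big $\delta$ this identity persists because $k$ is independent of $\delta$, as $\lambda_\delta \in \Lambda \otimes \bb{Q}$ gives $(\Lambda+\bb{Z}\lambda_\delta)^\perp = \Lambda^\perp$.

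The main obstacle will be the uniform error control underpinning the Riemann-sum convergence: one must check that the $O(1)$ errors in the multiplicities and the $O(n^{k-1})$ errors in the Weyl products, summed over the $\sim n^r$ relevant lattice points, total only $O(n^{k+r}) = o(n^d)$. This requires the bound on $\sigma(\delta,\lambda)$ and the vanishing of $h^1$ to be uniform in $n$, and the region $\cal{P}_+(\delta)$ to be a bounded polytope so that the lattice-point count of its dilations has the expected leading term $n^r\vol$, with boundary contributions negligible. A secondary point is confirming that the occurring weights are dominant, so the domain sits in the closed positive Weyl chamber where the integrand is non-negative and Riemann-integrable, and that the degree identification legitimising the dimension formula survives the case $\vol\delta = 0$.
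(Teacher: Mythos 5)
The paper does not actually prove this theorem: its ``proof'' is the single line ``See \cite[Thm 18.8]{tim}'', so there is no internal argument to compare yours against. Your sketch reconstructs the standard route --- expand $\dim H^0(X,\cal{O}(n\delta))$ over $B$-eigenweights via the Weyl dimension formula, use \thref{largen} together with the homogeneity $A(n\delta,n\lambda)=nA(\delta,\lambda)$ and $\cal{P}_+(n\delta)=n\cal{P}_+(\delta)$ for the multiplicity asymptotics, note that coroots orthogonal to $\Lambda+\bb{Z}\lambda_\delta$ contribute the constant factor $1$ while the remaining $k$ coroots each contribute a factor linear in $n$, and pass to a Riemann sum over $\tfrac1n\Lambda\cap\cal{P}_+(\delta)$ --- and this is in substance how the result is established in Timashev's book, so the approach is the right one and the outline is sound. (Your insertion of $\lambda+n\lambda_\delta$ into the Weyl product is in fact more careful than the paper's own display in the global-sections subsection, which writes only $\lambda$ there; your version is the one consistent with the final integral over $\lambda_\delta+\cal{P}_+(\delta)$.) You have also correctly isolated where the real work lies: uniform control of the $O(1)$ corrections and of the boundary lattice-point count. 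Two small points you should nail down rather than merely flag: first, the boundedness of $h^1(n\delta,n\lambda')$ on $\cal{P}_+(\delta)$ is not automatic from ``degree exceeds $-1$'' near the locus $A=0$, but it does follow because $A\geq 0$ forces $\deg\lfloor H_{n\lambda'}\rfloor\geq-\sigma$ with $\sigma$ bounded, and a divisor on $\proj{1}$ of degree bounded below has bounded $h^1$; second, your closing claim that $\lambda_\delta\in\Lambda\otimes\bb{Q}$ (used to make the dimension formula independent of $\delta$) is asserted without justification and deserves an argument, since a priori $\lambda_\delta$ is only a character of $B$ determined up to a character of $G$.
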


\begin{proof}
	See \cite[Thm 18.8]{tim}
\end{proof}

\begin{example}
	Let $G = \sl{2}$ so that $\Delta = \{\pm\alpha\}$, $\Delta_+ = \{\alpha\}$, $\Delta^\vee_+ = \left\{\alpha^\vee = \frac{2\alpha}{(\alpha,\alpha)}\right\}$ and $\rho = \frac{1}{2}\alpha$. Any quasihomogeneous complexity one $\sl{2}$-threefold $X$ has $d = 3$, $c = 1$, $r = 1$, so $|\Delta^\vee_+\setminus{(\Lambda + \bb{Z}\lambda_\delta)^\perp}| = 1$, i.e. this set is just $\Delta^\vee_+ = \{\alpha^\vee\}$. If we identify $\Lambda = \bb{Z}\alpha$ with $\bb{Z}$, and hence $\alpha$ with $1$, we have \[\prod_{\alpha^\vee \in \Delta^\vee_+\setminus{(\Lambda + \bb{Z}\lambda_\delta)^\perp}}{\frac{\langle \lambda,\alpha^\vee\rangle}{\langle\rho,\alpha^\vee\rangle}} = \frac{\langle \lambda,\alpha^\vee\rangle}{\langle \frac{\alpha}{2},\alpha^\vee\rangle} = 2\lambda,\] and the volume of a Cartier divisor $\delta$ on $X$ is given by \[\vol{\delta} = 6\int\limits_{\lambda_\delta + \cal{P}_+(\delta)}{2\lambda A(\delta,\lambda-\lambda_\delta)\, \mathrm{d}\lambda}.\]
\end{example}

We will put this formula to great use later on to calculate $\beta$-invariants of prime divisors over $\sl{2}$-threefolds. 

\section{Main Results}

The main results of this paper are the following:

\begin{theorem}\thlabel{Kstablecentral}
	Let $X$ be a smooth Fano $\sl{2}$-threefold. If any of the three conditions below holds, then $X$ is $K$-polystable if $\beta_F(X) > 0$ for all \emph{central} $G$-stable prime divisors over $X$: \begin{enumerate}[label=(\roman*)] \item A finite subgroup $A \sub \aut{X}$ acts on $\proj{1}$ with no fixed points, such that the rational $B$-quotient $X \dashrightarrow \proj{1}$ is $A$-equivariant \item A finite subgroup $A \sub \aut{X}$ acts on $\proj{1}$, interchanging two points in $\proj{1}$ corresponding to subregular colours of $X$, and the rational $B$-quotient $X \dashrightarrow \proj{1}$ is $A$-equivariant \item $X$ has subregular colours lying over three or more distinct points of $\proj{1}$.\end{enumerate}
\end{theorem}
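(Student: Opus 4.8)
The plan is to verify $K$-polystability through the equivariant $\beta$-criterion of the final corollary of Section~2: it suffices to check that $\beta_F(X) \geq 0$ for every prime divisor $F$ over $X$ coming from a $G$-equivariant model, with equality only for product configurations. Since the finite group $A$ normalises the identity component $G = \aut{X}^\circ$, the subgroup $\tilde{G} = \langle G, A \rangle \subseteq \aut{X}$ has reductive identity component $G$ and finite component group; applying the disconnected-reductive version of the Datar--Sz\'{e}kelyhidi criterion (Zhuang, see the remark after \thref{datszek}) lets me restrict further to $\tilde{G}$-invariant divisors, that is, prime divisors over $X$ that are simultaneously $G$-invariant and $A$-invariant. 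By the theory of Section~3 each such $F$ is a $G$-valuation, and so corresponds to a point of $\cal{V} \subseteq \cal{H}$ with hyperspace coordinates $(x,h,\ell)$.

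The central divisors, those with $h=0$, lie in the central hyperplane $\cal{Z}$ and are precisely the central $G$-stable prime divisors over $X$; for these the hypothesis gives $\beta_F(X)>0$ outright, with no appeal to $A$. The whole problem therefore reduces to the non-central divisors, where $h>0$. Here the key structural fact from the construction of $\cal{H}$ is that a non-central valuation is concentrated over a single point of the base: its coordinate $x\in\proj{1}$ is well-defined, since the equivalence $\sim$ only collapses points with $h=0$. Consequently $A$-invariance of $F$ forces $x$ to be a fixed point of the induced action of $A$ on $\proj{1}$, and this is exactly the leverage supplied by the three hypotheses.

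Under (i) the action of $A$ on $\proj{1}$ is fixed-point-free, so no non-central divisor can be $\tilde{G}$-invariant and the central case already closes the argument. Under (ii) the two points carrying subregular colours are interchanged, hence not individually fixed, so the potentially destabilising subregular vertical divisors are not $A$-invariant and drop out; any surviving $A$-fixed point is a regular point of the base, over which the combinatorial data is the generic reduced-fibre data, uniform in $x$, so that $\beta_F(X)$ reduces to a single fixed integral which I would evaluate via the $\sl{2}$-specialisation of Timashev's volume formula and show to be strictly positive. Case (iii) carries no finite symmetry and must be handled globally: the subregular contributions to $\beta$ over the various base points are linked by the degree-zero balancing of integral shifts on $\proj{1}$, and having at least three subregular points constrains this balancing enough that positivity of $\beta$ on the central divisors propagates to strict positivity of $\beta_F(X)$ for every individual subregular vertical divisor.

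In all three cases the volumes $\vol(-K_X - xF)$ are computed by specialising Timashev's formula to $\sl{2}$, turning $\beta_F(X)$ into an explicit integral of $2\lambda A(-K_X-xF,\lambda)$ over the polyhedral domains $\cal{P}_+(-K_X-xF)$ and then over $x$. The equality clause needed for polystability is then immediate: by the strict inequalities just established, the only $\tilde{G}$-invariant divisors with $\beta_F(X)=0$ are the product configurations induced by one-parameter subgroups in the centraliser of $G$. The hard part will be case (iii): unlike (i) and (ii), it cannot be reduced by symmetry to the vanishing or to a generic evaluation of the vertical contribution, so one must extract the exact shape of the $\beta$-integral at a subregular point and draw positivity from the three-point balancing condition --- this is the most delicate estimate in the proof.
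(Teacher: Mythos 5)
Your treatment of case (i) matches the paper: extend $G$ by $A$ to a reductive group, observe that a non-central invariant divisor determines a well-defined point $P_F\in\proj{1}$ which would have to be $A$-fixed, and conclude that only central divisors survive. That part is fine.

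For cases (ii) and (iii), however, there is a genuine gap: you are missing the mechanism the theorem actually rests on. After the symmetry reduction in (ii), non-central $A$-invariant divisors over the $A$-fixed points of $\proj{1}$ do remain, and in (iii) there is no auxiliary group at all, so \emph{every} non-central $G$-invariant prime divisor over $X$ remains in play --- an infinite family, parametrised by a point of $\proj{1}$ together with the coordinates $(h,\ell)$ in the hyperspace, not ``a single fixed integral'' per base point. Your proposal to dispose of these by directly evaluating $\beta_F(X)$ and ``propagating positivity'' via the degree-zero balancing of integral shifts does not work: the balancing condition governs changes of the splitting $e\colon\Lambda\to K^{(B)}$ and says nothing about the sign of $\beta$ for a vertical divisor. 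The paper instead shows that these divisors need not be checked at all: by Li--Xu one may restrict to \emph{special} test configurations, and the core technical result (\thref{nonnormal} together with \thref{nonint} and \thref{noncentnorm}) shows that whenever the divisor $H_\lambda$ on $\proj{1}$ induced by $-K_X$ has non-integral coefficients at two points other than $P_F$ --- which is guaranteed precisely by the presence of subregular colours over two points distinct from $P_F$ --- the associated graded ring of the induced filtration is not integrally closed, so the central fibre of the test configuration attached to $F$ is non-normal and the configuration is not special. Hypotheses (ii) and (iii) are exactly what ensure two such subregular points exist for every admissible $P_F$. Without this non-normality argument (or a genuine substitute), your proof of (ii) and (iii) does not go through.
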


\begin{remark}
	We expect but have not proved that \thref{Kstablecentral} applies with only minor alterations to smooth Fano $G$-varieties of complexity one in general, rather than just to $\sl{2}$-threefolds.
\end{remark}

\begin{remark}
	We note the similarity of this result to \cite[Thm 1.1]{suess1}
\end{remark}

This result is essential since there are in general infinitely many prime divisors over $X$, even $G$-invariant ones, but there are only finitely many central ones.

\begin{theorem}\thlabel{threefoldsKstable}
	The smooth Fano threefolds, (1.16), (1.17), (2.27), (2.32), (3.17), (3.25) and (4.6) in the Mori-Mukai classification are $K$-polystable. The families (2.21) and (3.13) each contain a $K$-polystable variety.
\end{theorem}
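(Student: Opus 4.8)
The strategy is to verify, for each threefold in the list, one of the hypotheses of \thref{Kstablecentral} together with the strict positivity $\beta_F(X) > 0$ on the finitely many central $G$-stable prime divisors, which jointly force $K$-polystability. Fixing one such $X$, I would first exhibit an effective $\sl{2}$-action and confirm that $X$ is quasihomogeneous of complexity one, so that the formalism of Section 3 applies with $G = \sl{2}$, base curve $C = \proj{1}$, and weight lattice $\Lambda \cong \bb{Z}$. The central preliminary task is to compute the coloured hyperfan $\cal{F}_X$ explicitly from the concrete model of $X$ (a blow-up of $\proj{3}$ or of a quadric, a linear section of a Grassmannian, or a divisor in a product of projective spaces): one locates the open orbit $\sl{2}/H$, the rational $B$-quotient $\pi \colon X \dashrightarrow \proj{1}$, and the positions in hyperspace of all $G$-valuations and colours, thereby classifying the regular, subregular and central divisors by reading off their $(x,h,\ell)$-coordinates.

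With $\cal{F}_X$ in hand, I would check which of the conditions (i)--(iii) of \thref{Kstablecentral} holds. Condition (iii) can be read off immediately by counting the points of $\proj{1}$ beneath which subregular colours sit; when there are fewer than three, I would instead produce a finite subgroup $A \sub \aut{X}$ descending to $\proj{1}$ either without fixed points, giving (i), or interchanging two subregular points, giving (ii). Such an $A$ is found among the residual symmetries of the explicit model (for instance permutations of the blown-up centre or of the projective factors), and its compatibility with $\pi$ follows from $G$-equivariance of the quotient.

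It then remains to establish $\beta_F(X) > 0$ for each central divisor $F$, which is the computational core. Using the combinatorial description of Cartier divisors from Section 3, I would write $\sigma^*(-K_X) - xF$ as a $B$-stable Cartier divisor and apply the $\sl{2}$ volume formula \[\vol(\delta) = 6\int_{\lambda_\delta + \cal{P}_+(\delta)} 2\lambda\, A(\delta,\lambda-\lambda_\delta)\, \mathrm{d}\lambda\] to obtain $\vol(-K_X - xF)$ as an explicit function of $x$; together with the log discrepancy $A_X(F)$, read off as a blow-up count, and the anticanonical degree $(-K_X)^3$, this yields $\beta_F(X) = A_X(F)(-K_X)^3 - \int_0^\infty \vol(-K_X-xF)\,\mathrm{d}x$, which I would evaluate and show to be strictly positive. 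The main obstacle I anticipate lies precisely here: as $x$ increases, the polyhedral domain $\cal{P}_+(\delta)$ and the breakpoints of the piecewise-linear $A(\delta,\cdot)$ migrate, so $\vol(-K_X - xF)$ is only piecewise-polynomial, and one must carefully track every chamber, including the floor corrections $\sigma(\delta,\lambda)$ implicit in \thref{largen}, across the full range of $x$ to integrate correctly.

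Finally, for the families (2.21) and (3.13), only special members admit the $\sl{2}$-action; I would single out such a member---its extra symmetry places the blown-up curve in a distinguished position compatible with the group action---and carry out the same hyperfan and $\beta$-invariant computation for that member alone, which suffices to produce the claimed $K$-polystable representative.
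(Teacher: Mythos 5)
Your plan follows the paper's proof essentially step for step: compute the coloured hyperfan of each threefold from its explicit model, verify one of conditions (i)--(iii) of \thref{Kstablecentral} (the paper uses (i) for (1.17), (3.25), (4.6), (ii) for (3.17), and (iii) for the remaining cases), and then evaluate $\beta$ on the central divisors via the piecewise-polynomial volume integral. Two points, however, are not merely computational and are missing from your outline. First, the quantifier ``for each central $G$-stable prime divisor $F$'' is not operational until you know what that set is: the paper devotes a subsection (\thref{uniquecentral} and the lemmas around it) to proving that for a complete rank-one $\sl{2}$-threefold the central ray $\cal{V}\inter\cal{Z}$ is one-dimensional, hence there is \emph{exactly one} central $G$-divisor over $X$, realised on $X$ itself when $X$ is of type I and otherwise on the type I model of \thref{typeIbirational}, reached by an explicit chain of equivariant blow-ups. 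Without this uniqueness and location argument you cannot reduce to a single $\beta$-computation per variety, nor compute $A_X(F)$ and $\sigma^*(-K_X)$, since both require knowing the birational model on which $F$ lives.

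Second, conditions (ii) and (iii) of \thref{Kstablecentral} only discard non-central divisors because the corresponding test configurations have non-normal central fibre, and that mechanism (\thref{noncentnorm}, resting on \thref{nonint}) carries its own hypothesis: one must exhibit, for each variety, a weight $\lambda\in\cal{P}_+(-K_X)$ such that the divisor $H_\lambda$ on $\proj{1}$ has non-integral coefficients at two points away from $P_F$. The paper checks this case by case in \Cref{apply}; it does not follow formally from the presence of subregular colours and must be verified. A minor remark: the floor corrections $\sigma(\delta,\lambda)$ you worry about do not enter the volume formula, since $\vol$ is defined by the leading term of $h^0(n\delta)$ and these bounded corrections vanish in the limit, so the integrand is exactly $2\lambda A(\delta,\lambda)$.
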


\begin{remark}
	The same result were recently obtained independently by other authors using different methods, see \cite{sc,cal}. The $K$-polystability of the Mukai-Umemura threefold in the family (1.10) was already known by Donaldson \cite{don1}, and the $K$-polystability of $V_5$ (1.15) was known by Cheltsov-Shramov \cite{cs}. 
\end{remark}

We will prove \thref{Kstablecentral} in the next section for each of the three cases. In the following two sections, we present the coloured data for each complexity one homogeneous space of $\sl{2}$ and calculate the coloured hyperfans of the varieties listed in \thref{threefoldsKstable}. We then use this data to demonstrate that for each of these, one of the three conditions of \thref{Kstablecentral} holds. In the final section, we show that there can only be one central prime divisor over any of these varieties. We then calculate the $\beta$-invariant of this divisor explicitly in each case, completing the proof of \thref{threefoldsKstable}.

\section{Proof of \thref{Kstablecentral}}

\subsection{Action Without Fixed Points}

\begin{proof}[Proof of \thref{Kstablecentral}(i)]
	If $A$ acts on $\proj{1}$ and the $B$-quotient $X \dashrightarrow \proj{1}$ is $A$-equivariant, we have an overall action on $X$ of an extension $G^\prime$ of $G$ by $A$ which descends to the quotient. Suppose $F$ is a non-central $G$-invariant prime divisor over $X$. Since $F$ is non-central, it must lie over some point $P_F \in \proj{1}$. If $A$ acts with no fixed points on $\proj{1}$, then in particular $P_F$ is not fixed, so $F$ cannot be $G^\prime$-invariant. It follows that only central divisors can be $G^\prime$-invariant. Since $A$ is finite, $G^\prime$ is reductive, so by the theorems of Fujita-Li and Datar-Sz\'{e}kelyhidi, $X$ is $K$-polystable if $\beta(F) \geq 0$ for all $G^\prime$-invariant prime divisors over $X$, i.e. for all central $G$-divisors over $X$, and is only 0 for divisors corresponding to product configurations.
\end{proof}

\subsection{Non-Normality}

The proof of \thref{Kstablecentral} parts (ii) and (iii) will be by showing that, under these conditions, the test configurations corresponding to non-central prime divisors over $X$ have non-normal central fibre, and hence are not \emph{special} test configurations, and therefore we do not need to calculate their Donaldson-Futaki invariant (or, equivalently, the $\beta$ invariant of $F$). We will use the correspondence described in \Cref{div} between $B$-semi-invariant sections of prime divisors on over $X$ and sections of divisors on the $B$-quotient. We show that the filtrations defined by the resulting divisors on $\proj{1}$ give non-integrally closed rings which correspond to the central fibres of the given test configuration.

\subsubsection{Divisors on $\proj{1}$}

\begin{theorem}\thlabel{nonnormal}
	Let $H = \sum_{i=1}^{m}{a_iQ_i} = \sum_{i=1}^{m}{\frac{b_i}{c_i}Q_i}$ be a $\bb{Q}$-divisor of positive degree on $\proj{1}$, and let $P \in \proj{1}$. Let \[\cal{A} = \bigoplus_{k \in \bb{Z}}{H^0(kH)}\] be the section ring of $H$. Fix $q \in \bb{Z}$ and consider the filtration on $\cal{A}$ over $r \in \bb{Z}$ given by \[\cal{F}^q_r = \bigoplus_{k \in \bb{Z}}{\left\{f \in H^0(kH) \mid \ord_P(f) \geq \frac{r}{q}\right\}}.\] Then take the associated graded ring \[\cal{B}^q = \bigoplus_{r \in \bb{Z}}{\cal{F}^q_r/\cal{F}^q_{r+1}}.\]
	
	If at least two $Q_i$, both distinct from $P$, have non-integral coefficients $a_i \notin \bb{Z}$ in $H$, then for each $q \in \bb{Z}_{\geq 1}$, the ring $\cal{B}^q$ is not integrally closed.
\end{theorem}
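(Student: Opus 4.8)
The plan is to reduce the claim to a statement about an affine semigroup and then to a short equidistribution argument. \emph{Reduction to $q=1$ and to a semigroup ring.} Since $\ord_P(f)\in\mathbb Z$ for every nonzero $f\in\mathcal A$, the condition $\ord_P(f)\ge r/q$ reads $\ord_P(f)\ge\lceil r/q\rceil$, so $\mathcal F^q_r=\bigoplus_k H^0(kH-\lceil r/q\rceil P)$ and the graded piece $\mathcal F^q_r/\mathcal F^q_{r+1}$ vanishes unless $q\mid r$. Writing $r=qm$, this piece is $\bigoplus_k H^0(kH-mP)/H^0(kH-(m+1)P)$, independent of $q$; hence $\mathcal B^q\cong\mathcal B^1$ as bigraded rings and it suffices to treat $\mathcal B:=\mathcal B^1$. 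For fixed $k$, filtering $H^0(kH)$ by $\ord_P$ gives graded pieces $H^0(kH-mP)/H^0(kH-(m+1)P)$ of dimension at most $1$, which by Riemann-Roch on $\mathbb P^1$ is exactly $1$ for $(k,m)$ in
\[
S=\{(k,m)\in\mathbb Z^2:\ k\ge 0,\ -\lfloor k a_P\rfloor\le m\le d_k\},\qquad d_k=\sum_i\lfloor k a_i\rfloor,
\]
where $a_P$ is the coefficient of $P$ in $H$. Because $\mathcal A$ is a domain and $\ord_P$ is a valuation, the leading-term map is multiplicative, so these one-dimensional pieces multiply isomorphically, $S$ is closed under addition (superadditivity of the floor), and $\mathcal B\cong\mathbb C[S]$. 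As $\mathcal B$ is the associated graded of the finitely generated algebra $\mathcal A$ for a good filtration it is Noetherian, so $S$ is a finitely generated monoid and $\mathbb C[S]$ is integrally closed if and only if $S$ is saturated, i.e. $S=\mathrm{gp}(S)\cap\cone(S)$.

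\emph{Locating a gap.} The cone $\cone(S)$ has lower edge of slope $-a_P$ and upper edge of slope $D=\sum_i a_i=\deg H>0$, and one checks $\mathrm{gp}(S)=\mathbb Z^2$. Hence $S$ fails to be saturated as soon as, for some $k\ge 1$, there is an integer in the interval $(d_k,Dk]$ on the upper edge; since $\lfloor Dk\rfloor-d_k=\lfloor\sum_i\{k a_i\}\rfloor$, this happens exactly when $\sum_i\{k a_i\}\ge 1$ for some $k$. It therefore suffices to produce such a $k$, and here the hypothesis enters: writing $\theta_1,\theta_2\in(0,1)$ for the fractional parts of the two non-integral coefficients (both supported away from $P$, so entering $d_k$ cleanly), it is enough to find $k\ge 1$ with $\{k\theta_1\}+\{k\theta_2\}\ge 1$.

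\emph{The number-theoretic lemma.} I would prove this on the torus $\mathbb T^2=\mathbb R^2/\mathbb Z^2$. Put $g=(\theta_1,\theta_2)$ and let $\Gamma=\overline{\langle g\rangle}$ be the closed subgroup it generates; the forward orbit $\{kg:k\ge 1\}$ is dense in $\Gamma$ (a closed sub-semigroup of a compact group is a subgroup), and the open triangle $U=\{(s,t)\in(0,1)^2:s+t>1\}$ is an open subset of $\mathbb T^2$. If $\theta_1+\theta_2>1$ then $g\in U$; if $\theta_1+\theta_2=1$ then $k=1$ already gives the sum $1$; and if $\theta_1+\theta_2<1$ then $-g\in\Gamma$ has representative $(1-\theta_1,1-\theta_2)\in U$. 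In every case $\Gamma\cap U\ne\emptyset$, so by density some $kg\in U$, whence $\sum_i\{k a_i\}\ge\{k\theta_1\}+\{k\theta_2\}>1$. Thus $S$ is not saturated and $\mathcal B=\mathbb C[S]$ is not integrally closed.

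The main obstacle is this last lemma. The reduction to ``$\sum_i\{k a_i\}\ge 1$ for some $k$'' is a clean consequence of the monoid picture, but the fact that \emph{two} non-integral coefficients force such a $k$ is sharp --- a single non-integral coefficient gives $\sum_i\{k a_i\}<1$ for all $k$, so the ring stays normal --- and a priori the two fractional parts could be anticorrelated. The torus argument, turning on the observation that $-g$ lands in the good region precisely when $g$ does not, is what resolves this; I would take care over the boundary and rationality cases, as well as the mild book-keeping ($\mathrm{gp}(S)=\mathbb Z^2$, and the case $P\in\supp(H)$) flagged above.
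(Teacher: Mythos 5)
Your proposal is correct in its essential mechanism and is, at bottom, the same proof as the paper's in different packaging. Both arguments come down to: (i) identify $\cal{B}$ as a bigraded domain with graded pieces of dimension $\leq 1$, supported on the staircase set $S=\{(k,m): -\lfloor ka_P\rfloor\le m\le d_k\}$; (ii) exhibit a bidegree where the piece vanishes but some multiple of that bidegree supports a nonzero piece, which yields a monic equation $T^n-f^n$ with root outside $\cal{B}$; (iii) reduce (ii) to finding $k\ge 1$ with $\sum_i\{ka_i\}\ge 1$, which is where the hypothesis of two non-integral coefficients enters. Your semigroup-ring formulation replaces the paper's explicit construction of the integral element by an appeal to the saturation criterion for normality (which is the same monic-polynomial argument, so you should either cite it for possibly-twisted $\bb{Z}^2$-graded domains with one-dimensional pieces or note that $\mathrm{Ext}(\bb{Z}^2,\bb{C}^\times)=0$ kills the twist), and your torus-density argument for step (iii) is the paper's choice $k\equiv -1\pmod{\prod_i c_i}$ in disguise: since the $a_i$ are rational, $-g$ lies in the finite forward orbit of $g$, and $\{-\theta_1\}+\{-\theta_2\}=2-\theta_1-\theta_2>1$. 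Your version has the modest advantage of working verbatim for irrational coefficients.

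There is one concrete overlooked condition. Your claim that $S$ fails to be saturated \emph{as soon as} some integer lies in $(d_k, D'k]$ is not literally true: the witness point $(k,j)$ must also lie in $\cone(S)$, i.e.\ above the lower edge, $j\ge -a_Pk$. This can fail for small $k$ when $a_P$ is very negative and $\deg H$ is small; for instance $H=\tfrac12Q_1+\tfrac12Q_2+\tfrac12Q_3-\tfrac{23}{16}P$ has $\deg H=\tfrac1{16}>0$ and $\sum_i\{a_i\}=\tfrac32\ge 1$ at $k=1$, but the candidate point $(1,1)$ sits below the lower edge at $\tfrac{23}{16}$. This is precisely the issue the paper's proof handles with its ``requirement (2)'' that $\deg\lfloor kH\rfloor\ge 0$, arranged by replacing $k$ with $k+\ell\prod_i c_i$ for $\ell\gg 0$, which preserves all fractional parts while pushing the point above the lower edge. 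The same fix works in your setup (take $k$ large in the relevant arithmetic progression, or equivalently use that infinitely many forward iterates land in $U$), so the gap is minor, but as stated your saturation criterion is incomplete and the argument does not close without it.
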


We will prove this theorem in a number of steps, beginning with:

\begin{proposition}\thlabel{nkr}
	With all notation as in \thref{nonnormal}, for any $q \geq 1$ there exist integers $k, r$ and $n$, with $k$ and $n$ positive, such that $\cal{B}^q_{(k,r)} = 0$ and $\cal{B}^q_{(nk,nr)} \neq 0$.	
\end{proposition}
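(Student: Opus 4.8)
The plan is to compute the bigraded pieces $\cal{B}^q_{(k,r)}$ explicitly and then produce the required degrees by a fractional-parts computation on $\proj{1}$. First I would fix $k$, write $V_k = H^0(kH) = H^0(\lfloor kH\rfloor)$, and filter $V_k$ by order of vanishing at $P$, setting $V_k^{\geq j} = \{f \in V_k \mid \ord_P(f) \geq j\}$ for $j \in \bb{Z}$. Since $\ord_P$ is integer-valued, the condition $\ord_P(f) \geq r/q$ is equivalent to $\ord_P(f) \geq \lceil r/q\rceil$, so $\cal{F}^q_r \inter V_k = V_k^{\geq\lceil r/q\rceil}$ and hence $\cal{B}^q_{(k,r)} = V_k^{\geq\lceil r/q\rceil}/V_k^{\geq\lceil (r+1)/q\rceil}$. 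As $\lceil (r+1)/q\rceil$ exceeds $\lceil r/q\rceil$ exactly when $q \mid r$, writing $r = qj$ gives $\cal{B}^q_{(k,qj)} = V_k^{\geq j}/V_k^{\geq j+1}$, whereas $\cal{B}^q_{(k,r)} = 0$ whenever $q \nmid r$.

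The next step is to decide when $V_k^{\geq j}/V_k^{\geq j+1} \neq 0$. On $\proj{1}$ this graded piece is at most one-dimensional, and it is nonzero precisely when $j$ is an order of vanishing at $P$ actually attained by a nonzero section of $\lfloor kH\rfloor$. Writing $a_P$ for the coefficient of $P$ in $H$ and $E_k = \lfloor kH\rfloor - \lfloor k a_P\rfloor P$ for the part of $\lfloor kH \rfloor$ supported away from $P$, a degree count on $\proj{1}$ shows the attained orders are exactly the integers $j$ with $-\lfloor k a_P\rfloor \leq j \leq \deg E_k$. Thus $\cal{B}^q_{(k,qj)} \neq 0$ if and only if $-\lfloor k a_P\rfloor \leq j \leq \deg E_k = \sum_{Q_i \neq P}\lfloor k a_i\rfloor$.

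This reduces the proposition to a purely combinatorial claim: find positive integers $k, n$ and an integer $j$ with $j > \deg E_k$ (which forces $\cal{B}^q_{(k,qj)} = 0$) yet $-\lfloor nk a_P\rfloor \leq nj \leq \deg E_{nk}$ (which forces $\cal{B}^q_{(nk,nqj)} \neq 0$); then $r := qj$ is the required degree, and the dependence on $q$ disappears entirely. I would take $j = \deg E_k + 1$ and choose $n$ divisible by every denominator $c_i$, so that $\lfloor nk a_i\rfloor = nk a_i$ and $\deg E_{nk} = nk\sum_{Q_i\neq P}a_i = n(\deg E_k + \Sigma(k))$, where $\Sigma(k) = \sum_{Q_i\neq P}\{k a_i\}$ is the total fractional part. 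The upper bound $nj \leq \deg E_{nk}$ then holds precisely when $\Sigma(k) \geq 1$, and the lower bound (taking $n$ divisible also by the denominator of $a_P$) reduces to $k\deg H \geq \Sigma(k) - 1$, which holds for all large $k$ because $\deg H > 0$ while $\Sigma(k)$ stays bounded.

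The crux — and the only place the hypothesis that two coefficients away from $P$ are non-integral is used — is producing $k$ with $\Sigma(k) \geq 1$. If $a_1, a_2$ are the two non-integral coefficients at points other than $P$, then for any $k$ with $k a_1, k a_2 \notin \bb{Z}$ the identity $\{x\} + \{-x\} = 1$ gives $(\{k a_1\} + \{k a_2\}) + (\{-k a_1\} + \{-k a_2\}) = 2$, so one of $\pm k$ achieves $\{k a_1\} + \{k a_2\} \geq 1$, whence $\Sigma(k) \geq 1$. Since $\Sigma$ is invariant under adding a multiple of $\lcm(c_i)$, this $k$ may be taken positive and as large as needed to satisfy the lower bound above. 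I expect this fractional-part argument to be the main obstacle, the earlier steps being essentially bookkeeping about linear systems on $\proj{1}$; the role of the two non-integral coefficients is exactly to guarantee that the fractional parts can be made to sum past $1$.
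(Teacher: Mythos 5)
Your proposal is correct and follows essentially the same route as the paper: reduce to $q=1$, characterise the non-vanishing of $\cal{B}_{(k,r)}$ by a degree count on $\proj{1}$ (your interval $-\lfloor ka_P\rfloor \leq r \leq \deg E_k$ is exactly the paper's condition $\deg\lfloor H_{(k,r)}\rfloor \geq 0$), and force $\sum_{Q_i\neq P}\{ka_i\}\geq 1$ via the identity $\{x\}+\{-x\}=1$ applied to the two non-integral coefficients. The only differences are cosmetic: you take $j=\deg E_k+1$ where the paper takes $r=\lfloor\sum_i ka_i\rfloor$, and you toggle the sign of $k$ rather than choosing $k=\prod_i c_i\pm 1$, but these amount to the same computation.
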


\begin{lemma}
	In the above proposition and theorem, we can assume without loss of generality that $q = 1$.
\end{lemma}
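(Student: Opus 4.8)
The claim is that we may assume $q=1$ in \thref{nonnormal}. The plan is to reduce the general-$q$ statement to the $q=1$ statement by finding a new $\bb{Q}$-divisor $H'$ on $\proj{1}$ whose section ring, equipped with the $q=1$ filtration at $P$, reproduces the ring $\cal{B}^q$ built from $H$ with the order-$r/q$ filtration. The natural candidate is $H' = qH$, since scaling the divisor by $q$ rescales the grading in $k$, and the condition $\ord_P(f) \geq r/q$ for $f \in H^0(kH)$ should match, after reindexing, the condition $\ord_P(f) \geq r$ for sections of multiples of $H'$.

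First I would make this reindexing precise. The Rees-type double grading on $\cal{A}$ is by $(k,r)$, and I would exhibit an isomorphism of bigraded rings between $\bigoplus_{k,r} \cal{F}^q_r(H) / \cal{F}^q_{r+1}(H)$ and the analogous $q=1$ associated graded ring for $H' = qH$, carrying $\cal{B}^q(H)$ to $\cal{B}^1(H')$. Concretely, a homogeneous piece $H^0(kH)$ of $\cal{A}$ is literally $H^0(k q H \cdot q^{-1})$; since $H^0(kH) = H^0(\lfloor kH\rfloor)$ and $H' = qH$ has $\lfloor k H'\rfloor = \lfloor kq H\rfloor$, one checks that the section spaces entering the two filtrations agree up to the bookkeeping of which integer $k$ labels them. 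The map on valuations is the identity on rational functions, so integral closure of one associated graded ring is equivalent to integral closure of the other.

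Second I would verify that the hypothesis of \thref{nonnormal} is preserved under $H \mapsto qH$, so that applying the (as-yet-unproved) $q=1$ case to $H'$ yields the conclusion for $H$ at level $q$. The coefficient of $Q_i$ in $H'$ is $q a_i = q b_i / c_i$, and I must confirm that if at least two $Q_i$ distinct from $P$ have $a_i \notin \bb{Z}$, then at least two of the same $Q_i$ have $q a_i \notin \bb{Z}$. This is the one point requiring care: multiplying by $q$ could clear a denominator and make $q a_i$ an integer. The cleanest fix is to note that the theorem asserts non-integral-closedness for \emph{every} $q \geq 1$, so the reduction is not literally ``replace $H$ by $qH$ and invoke $q=1$''; rather, I would reformulate the $q=1$ case to be stated for an arbitrary $\bb{Q}$-divisor and then choose the auxiliary divisor so that the relevant coefficients remain non-integral.

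The hard part will therefore be arranging the reduction so that the genericity hypothesis on two non-integral coefficients survives. I expect the correct formulation is that the pair $(H,q)$ is equivalent, via the identity on $\bbk(\proj{1})$ and the rescaling of the grading, to the pair $(H'', 1)$ where $H'' = qH$ but with the filtration order measured in units of $1$; the condition ``$a_i \notin \bb{Z}$'' at level $q$ corresponds to ``$a_i$ has denominator not dividing $q$,'' and after passing to $H''$ this becomes the condition that the order jumps are non-trivial, which is exactly what the $q=1$ proof (via \thref{nkr}) exploits. Once the dictionary between the $(k,r)$-grading of $\cal{B}^q(H)$ and the $(k,r)$-grading of $\cal{B}^1(H'')$ is written out, integral closure transfers verbatim, and the lemma follows. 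I would keep this bookkeeping lemma short, since the substantive non-normality argument lives in \thref{nkr} and the subsequent steps.
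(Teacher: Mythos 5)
Your reduction goes through the divisor rather than through the filtration index, and that is where it breaks. Replacing $H$ by $H' = qH$ changes the coefficients $a_i$ to $qa_i$, and, as you yourself observe, this can destroy the hypothesis of \thref{nonnormal}: if $a_1 = a_2 = \tfrac{1}{2}$ and $q = 2$, the theorem still asserts that $\cal{B}^2(H)$ is not integrally closed, but $qH$ has integral coefficients at $Q_1, Q_2$, so the $q=1$ case applied to $qH$ says nothing (and $\cal{B}^1(2H)$ may in fact be normal). Your proposed repair --- restating the hypothesis as ``$a_i$ has denominator not dividing $q$'' --- changes the statement being proved: the hypothesis of \thref{nonnormal} is $a_i \notin \bb{Z}$, for every $q$, independently of $q$. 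There is also a secondary mismatch: $\bigoplus_k H^0(k\cdot qH)$ is the $q$-th Veronese (in $k$) of $\bigoplus_k H^0(kH)$, not the same bigraded ring, so the claimed identification of $\cal{B}^q(H)$ with $\cal{B}^1(qH)$ does not hold as stated; at best non-normality of the Veronese subring would imply non-normality of the ambient ring via an argument like \thref{intcl}, but you cannot get that Veronese statement off the ground because of the hypothesis problem above.

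The intended reduction is much softer and leaves $H$ alone: since $\ord_P(f)$ is an integer, the condition $\ord_P(f) \geq r/q$ only changes as $r$ crosses a multiple of $q$, so $\cal{F}^q_{qr} = \cal{F}^1_{r}$ and $\cal{F}^q_{qr+1} = \cdots = \cal{F}^q_{q(r+1)} = \cal{F}^1_{r+1}$. Hence $\cal{B}^q_{(k,r)} = 0$ unless $q \mid r$, and $\cal{B}^q_{(k,qr)} = \cal{B}^1_{(k,r)}$: the ring $\cal{B}^q$ is just $\cal{B}^1$ with the $r$-grading stretched by a factor of $q$. Consequently a triple $(k,r,n)$ witnessing \thref{nkr} for $q=1$ yields the triple $(k,qr,n)$ for general $q$, and non-integral-closedness transfers verbatim, with the hypothesis on $H$ untouched.
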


\begin{proof}
	Let $q \geq 1$. We have $\cal{B}^1_{(k,r)} = \cal{B}^q_{(k,qr)}$, so if we find $k, r$ and $n$ with $\cal{B}^1_{(k,r)} = 0$ and $\cal{B}^1_{(nk,nr)} \neq 0$, then $k, qr$ and $n$ give the required result for $q > 1$. Hence we set $q = 1$ going forward, and we drop the corresponding superscript.
\end{proof}

\begin{proof}[Proof of \thref{nkr}]	
	By the lemma, we assume that $q = 1$, and we want to find $n, k$ and $r$ with $\cal{B}_{(k,r)} = 0$ and $\cal{B}_{(nk,nr)} \neq 0$. We will rewrite $H = \sum_{i=1}^{m}{a_iQ_i} + a_PP$, with $a_P = \frac{b_P}{c_P}$, and assuming all $Q_i \neq P$. Sums and products indexed over $i$ should be understood as running from $i=1$ to $i=m$ and \emph{excluding} $P, a_P$ etc. unless otherwise specified.\\
	
	Before we choose particular values of $k$, $r$ and $n$, we will demonstrate an alternative description of $\cal{B}_{(k,r)}$. Denote by $\cal{F}_{(k,r)}$ the degree-$k$ part of $\cal{F}_r$, i.e. $\cal{F}_{(k,r)} = \{f \in H^0(kH) \mid \ord_P(f) \geq r\}$. Then $\cal{B}_{(k,r)} = \cal{F}_{(k,r)}/\cal{F}_{(k,r+1)}$. Define \[H_{(k,r)} = \begin{cases} kH & -r \geq \lfloor ka_P \rfloor \\ kH-ka_PP-rP & -r \leq \lfloor ka_P\rfloor.\end{cases}\] We will show that $\cal{F}_{(k,r)} = H^0(H_{(k,r)})$, so $\cal{B}_{(k,r)} = H^0(H_{(k,r)})/H^0(H_{(k,r+1)})$. 
	
	Indeed, when $-r \geq \lfloor ka_P\rfloor$, it suffices to show that any $f \in H^0(kH)$ automatically has order at least $r$ at $P$. Any such $f$ must satisfy $\ord_P(f) + \lfloor ka_P\rfloor \geq 0$, and if $0 \geq r + \lfloor ka_P\rfloor$ then the result follows.
	
	On the other hand, it is clear that $\cal{F}_{(k,r)} \sub H^0(kH-ka_PP-rP)$. Now suppose $f \in H^0(kH-ka_PP-rP)$ and $-r < \lfloor ka_P\rfloor$. We have $\ord_P(f) \geq r$ since the coefficient at $P$ of $(f)+\lfloor kH-ka_PP-rP\rfloor$ is $\ord_P(f) - r$, so it remains to show that $f \in H^0(kH)$. Since $kH$ only differs from $kH-ka_PP-rP$ at $P$, it is sufficient to note that $\ord_P(f)+\lfloor ka_P\rfloor \geq r + \lfloor ka_P\rfloor > 0$.
	
	Hence to show $\cal{B}_{(k,r)} = 0$ it is sufficient either that $\deg{\lfloor H_{(k,r)}\rfloor} < 0$ or $H^0(H_{(k,r)}) = H^0(H_{(k,r+1)})$. 
	
	Likewise, for $\cal{B}_{(k,r)} \neq 0$ we must show that $\deg{\lfloor H_{(k,r)}\rfloor} \geq 0$ and $H^0(H_{(k,r)}) \neq H^0(H_{(k,r+1)})$. Note that when the first of these conditions holds and $-r \leq \lfloor ka_P\rfloor$, the second one also holds by definition of $H_{(k,r)}$. If $-r \geq \lfloor ka_P\rfloor$ and $\deg{\lfloor kH\rfloor} \geq 0$ we have \[0 \leq \deg{\lfloor kH\rfloor} = \sum_{i}{\lfloor ka_i\rfloor} + \lfloor ka_i\rfloor \leq r + \lfloor ka_i\rfloor \leq 0,\] so $-(r+1) < \lfloor ka_i\rfloor$ and $H^0(H_{(k,r)}) \neq H^0(H_{(k,r+1)})$ as well. Thus $\cal{B}_{(k,r)} \neq 0$ if and only if $\deg{\lfloor H_{(k,r)}\rfloor} \geq 0$.\\
	
	\noindent\underline{Choice of $k$}:\\
	
	Our choice of $k$ is motivated by two requirements, the reasons for which will be seen later, these being: \[(1) \ \sum_{i}{\{ka_i\}} \geq 1\] where $\{x\} = x - \lfloor x\rfloor$ is the fractional part of a real number $x$, and \[(2) \ \deg{\lfloor kH\rfloor} \geq 0.\] With that in mind, consider
	\[k = \begin{cases} \prod_{i}{c_i}+1 & \sum_{i}{\{a_i\}} \geq 1 \\ \prod_{i}{c_i}-1 & \sum_{i}{\{a_i\}} < 1.\end{cases}\] This choice satisfies requirement (1): in the first case we have $\{ka_i\} = \{a_i\}$ for each $i$ (by the fact that $\{x + n\} = \{x\}$ for all integers $n$ and real $x$), so $\sum_{i}{\{ka_i\}} = \sum_{i}{\{a_i\}}\geq 1$ by assumption. If $\sum_{i}{\{a_i\}} < 1$ we have \[\begin{split} \sum_{i}{\{ka_i\}} & = \sum_{i}{\{-a_i\}} \\ & = |\{i \mid a_i \notin \bb{Z}\}| - \sum_{i}{\{a_i\}} \\ & \geq 2 - \sum_{i}{\{a_i\}} > 1\end{split}\] since $\{-x\}$ is $1-\{x\}$ whenever $x \notin \bb{Z}$ and 0 when $x \in \bb{Z}$. Note that this is where we use our assumption that at least two $a_i$ are non-integral, and it is essential.
	
	However, this choice of $k$ may not satisfy requirement (2). We have \[\deg{\lfloor kH\rfloor} = \deg{(kH)} - \sum_{i}{\{ka_i\}} - \{ka_P\} > \deg{(kH)} - (m+1),\] since $0 \leq \{x\} < 1$ for all $x$. Since $\deg{H} > 0$, for $k \gg 0$ we will have $\deg{(kH)} \geq m+1$ and requirement (2) will be satisfied. Hence replace our initial choice of $k$ with $k + \ell \prod_{i}{c_i}$ for $\ell$ large enough to give $\deg(kH) \geq m+1$ - this choice will still satisfy requirement (1) since $\left\{\left(k +\ell \prod_{i}{c_i}\right)a_i\right\} = \{ka_i\}$ for each $i$ in either case.\\
	
	\noindent\underline{Choice of $r$, $\cal{B}_{(k,r)} = 0$}:\\
	
	Let $r = \lfloor \deg{(kH-ka_PP)}\rfloor = \left\lfloor \sum_{i}{ka_i}\right\rfloor$. Then \begin{align*} r + \lfloor ka_P\rfloor & = \left\lfloor \sum_{i}{ka_i}\right\rfloor + \lfloor ka_P\rfloor \\ & \geq \sum_{i}{\lfloor ka_i\rfloor} + \lfloor ka_P\rfloor \\ & = \deg{\lfloor kH\rfloor} \geq 0\end{align*} by requirement (2) of our choice of $k$, so $-r \leq \lfloor ka_P\rfloor$ and $\cal{F}_{(k,r)} = H^0(H_{(k,r)}) = H^0(kH-ka_PP-rP)$. 
	
	We have \[\begin{split}\deg{\lfloor kH-ka_PP-rP\rfloor} & = \sum_{i}{\lfloor ka_i\rfloor} + \lfloor -r\rfloor \\ & = \sum_{i}{\lfloor ka_i\rfloor} - \left\lfloor\sum_{i}{ka_i}\right\rfloor \\ & = \left\{\sum_{i}{ka_i}\right\} - \sum_{i}{\{ka_i\}} \\ & < 1 - \sum_{i}{\{ka_i\}} \leq 0 \end{split}\] since $\{x\} < 1$ for all $x$ and $\sum_{i}{\{ka_i\}} \geq 1$ by requirement (1) of our choice of $k$. It follows that $\cal{B}_{(k,r)} = \cal{F}_{(k,r)}/\cal{F}_{(k,r+1)} = 0$.\\
	
	\noindent\underline{Choice of $n$, $\cal{B}_{(nk,nr)} \neq 0$}:\\
	
	Next, choose $n = c_P\prod_{i}{c_i}$, so that $nkH$ is an integral divisor. To show that $\cal{B}_{(nk,nr)} \neq 0$, recall that it suffices to show that $\deg{\lfloor H_{(nk,nr)}\rfloor} \geq 0$. We have \begin{align*}nr + \lfloor nka_P\rfloor & = n\left(\left\lfloor\sum_{i}{ka_i}\right\rfloor+ka_P\right) \\ & = n\left(\deg{(kH)}-\left\{\sum_{i}{ka_i}\right\}\right) >  0,\end{align*} since we chose $k$ satisfying $\deg{(kH)} \geq m+1 > 1$. Hence $-nr \leq nka_P$, meaning $H_{(nk,nr)} = nkH-nka_P-nrP$. This divisor has degree \begin{align*}\deg{H_{(nk,nr)}} = & \deg{(nkH)} - nka_P - nr \\ & = n\left(\deg{(kH)}-ka_P -\left\lfloor\sum_{i}{ka_i}\right\rfloor\right) \\ & = n\left\{\sum_{i}{ka_i}\right\}\geq 0\end{align*} since $\{x\} \geq 0$ for any $x$. Hence $\cal{B}_{(nk,nr)} \neq 0$, and we are done.
\end{proof}

Our goal is to show that for each $H$ as above, the ring $\cal{B}$ is not integrally closed. We now know that there exist $k, r$ and $n$ with $\cal{B}_{(k,r)} = 0$ and $\cal{B}_{(nk,nr)} \neq 0$. Let $K$ be the algebra consisting of fractions of homogeneous elements of $\cal{B}$. If there exists $f \in K_{(k,r)}$ with $f^n \in \cal{B}_{(nk,nr)}$ then the monic polynomial $x^n-f^n \in \cal{B}[x]$ has a root in $K$ which does not lie in $\cal{B}$ (since such a root would lie in $\cal{B}_{(k,r)}$), which would prove non-normality. We now show the existence of such an element.

\begin{proposition}
	Let $H, k, r$ and $n$ be as above. There exist integers $(k^\prime,r^\prime)$ such that $\cal{B}_{(k^\prime,r^\prime)}$ and $\cal{B}_{(k^\prime+k,r^\prime + r)}$ are both nonzero. Hence there exists a nonzero function in $K_{(k,r)}$.
\end{proposition}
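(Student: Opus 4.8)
The plan is to reduce the statement to a combinatorial claim about the support
\[
S = \{(k,r) \in \bb{Z}^2 \mid \cal{B}_{(k,r)} \neq 0\}
\]
of the bigraded ring $\cal{B}$. Since $\cal{B}$ is the associated graded ring of the domain $\cal{A}$ with respect to the filtration induced by the valuation $\ord_P$, and $\ord_P$ is multiplicative, $\cal{B}$ is again a domain; in particular $S$ is closed under addition. Once we produce $(k^\prime,r^\prime) \in S$ with $(k^\prime+k,r^\prime+r) \in S$ as well, the final assertion is immediate: choosing nonzero $b \in \cal{B}_{(k^\prime,r^\prime)}$ and $a \in \cal{B}_{(k^\prime+k,r^\prime+r)}$, the quotient $a/b$ is a nonzero homogeneous element of $K$ of bidegree $(k,r)$. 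Thus everything comes down to finding the pair $(k^\prime,r^\prime)$.

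To this end I would use the description of $S$ extracted from the proof of \thref{nkr}. In the range $-r \leq \lfloor ka_P\rfloor$ we have $H_{(k,r)} = kH - ka_PP - rP$, so $\deg\lfloor H_{(k,r)}\rfloor = \sum_i \lfloor ka_i\rfloor - r$, and hence $(k,r) \in S$ if and only if $\sum_i \lfloor ka_i\rfloor \geq r$. Geometrically, $S$ is the set of lattice points lying between the lower edge $r = -\lfloor ka_P\rfloor$ and the upper edge $r = \sum_i\lfloor ka_i\rfloor$, a region whose vertical width grows like $k\deg H$ thanks to the positive-degree hypothesis on $H$. The given point $(k,r)$, with $r = \lfloor\sum_i ka_i\rfloor$, sits a bounded distance $\epsilon := r - \sum_i\lfloor ka_i\rfloor \geq 1$ \emph{above} the upper edge, the bound $\epsilon \geq 1$ being exactly the strict inequality $\deg\lfloor kH - ka_PP - rP\rfloor < 0$ established in the proof of \thref{nkr}. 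Since $S$ widens linearly, translating $(k,r)$ by a sufficiently large point near the upper edge of $S$ ought to carry it back inside.

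I would make this precise by exploiting divisibility to kill all rounding. Take $k^\prime = nt$ for an integer $t$ large enough that $k^\prime\deg H \geq \epsilon$; because $n = c_P\prod_i c_i$ clears all denominators, $k^\prime a_i$ and $k^\prime a_P$ are integers and every floor at $k^\prime$ (and at $k^\prime + k$) splits additively. Now set $r^\prime = k^\prime\sum_i a_i - \epsilon$. On the one hand $(k^\prime,r^\prime) \in S$: the upper bound $r^\prime \leq \sum_i\lfloor k^\prime a_i\rfloor = k^\prime \sum_i a_i$ holds since $\epsilon \geq 0$, and the lower bound $r^\prime \geq -\lfloor k^\prime a_P\rfloor = -k^\prime a_P$ is precisely $k^\prime\deg H \geq \epsilon$. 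On the other hand $r^\prime + r = k^\prime\sum_i a_i + \sum_i\lfloor ka_i\rfloor$, which equals the upper-edge value $\sum_i\lfloor (k^\prime+k)a_i\rfloor$ exactly, so the upper constraint for $(k^\prime+k,r^\prime+r)$ holds with equality, while its lower constraint reduces to $k^\prime\deg H + \deg\lfloor kH\rfloor \geq 0$, true since both summands are nonnegative. Hence $(k^\prime+k, r^\prime+r) \in S$ as well, completing the construction.

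The only real subtlety — and the place I would be most careful — is the interaction of the floor functions with the two boundary constraints, since the naive estimate loses track of the $O(1)$ rounding terms. Choosing $k^\prime$ divisible by $n$ removes this difficulty entirely, and also guarantees that both test points $(k^\prime,r^\prime)$ and $(k^\prime+k,r^\prime+r)$ lie in the regime $-r \leq \lfloor ka_P\rfloor$ where the clean characterization of $S$ is valid, so that no separate analysis of the lower regime is needed.
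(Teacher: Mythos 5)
Your proof is correct and follows essentially the same strategy as the paper: both reduce nonvanishing of $\cal{B}_{(k',r')}$ to the criterion $\deg{\lfloor H_{(k',r')}\rfloor} \geq 0$, exploit the additivity $H_{(k+k',r+r')} = H_{(k,r)} + H_{(k',r')}$ in the regime $-r \leq \lfloor ka_P\rfloor$, and translate the bad point $(k,r)$ by a point $(k',r')$ taken deep enough in the support, with $k'$ large relative to the fixed deficit. The only difference is bookkeeping: the paper places $(k',r')$ on the lower edge and closes the estimate via superadditivity of $\deg\lfloor\cdot\rfloor$ together with enlarging $k'$, whereas you take $n \mid k'$ to make all floors split and compute the translate exactly on the upper edge; both are valid.
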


\begin{proof}
	Recall from above that $\cal{B}_{(k,r)} \neq 0$ if and only if $\deg{\lfloor H_{(k,r)}\rfloor} \geq 0$.

	We have $\deg{\lfloor kH\rfloor} = \deg{(kH)} - \sum_{i}{\{ka_i\}} - \{ka_P\} > \deg{(kH)} - t$, where $t$ is the number of terms in $kH$ with non-integral coefficients (possibly including $ka_P$). Hence choose $k^\prime$ such that $\deg{(k^\prime H)} > t$ and choose $r^\prime = -\lfloor k^\prime a_P\rfloor$. Then $H_{(k^\prime,r^\prime)} = k^\prime H = k^\prime H - k^\prime a_PP -r^\prime P$ and $\deg{\lfloor k^\prime H\rfloor} > 0$, so $\cal{B}_{(k^\prime, r^\prime)} \neq 0$.
	
	Now let $k$ and $r$ be as in the proof of \thref{nkr} and recall that for these values we have $-r\leq \lfloor ka_P\rfloor$. Hence \[H_{(k+k^\prime,r+r^\prime)} = (k+k^\prime)H-(k+k^\prime)a_PP - (r+r^\prime)P = H_{(k,r)} + H_{(k^\prime,r^\prime)}.\] Then $\deg{\lfloor H_{(k+k^\prime,r+r^\prime)}\rfloor} \geq \deg{\lfloor H_{(k,r)}\rfloor} + \deg{\lfloor H_{(k^\prime,r^\prime)}\rfloor}$. Since $\deg{\lfloor H_{(k,r)}\rfloor}$ is fixed, we may increase $k^\prime$ (and thus increase $\deg{\lfloor H_{(k^\prime,r^\prime)}\rfloor}$) to ensure that $\deg{\lfloor H_{(k+k^\prime,r+r^\prime)}\rfloor} \geq 0$, if necessary. 
	
	It follows that $\cal{B}_{(k+k^\prime,r+r^\prime)} \neq 0$ as required, and since $\cal{B}_{(k^\prime,r^\prime)} \neq 0$ as well, taking the quotient of a nonzero element of the former by a nonzero element of the latter gives a nonzero element in $K_{(k,r)}$.
\end{proof}

We can now prove the theorem:

\begin{proof}[Proof of \thref{nonnormal}]
	As always we are free to assume that $q = 1$ and drop the superscript.
	
	By some previous remarks, it suffices to find a nonzero element $f \in K_{(k,r)}$ with $f^n \in \cal{B}_{(nk,nr)}$. The result above shows that $K_{(k,r)} \neq 0$, so we may choose $f \in K_{(k,r)}$ to be nonzero, so $f^n \in K_{(nk,nr)} \neq 0$. We will show that $K_{(nk,nr)}$ is a line, and since it contains $\cal{B}_{(nk,nr)} \neq 0$, it follows that the two are equal, giving the result.
	
	Let $k^\prime$, $r^\prime$ be arbitrary integers, and suppose $g, h \in K_{(k^\prime,r^\prime)}$ are nonzero. Their quotient must lie in $K_{(0,0)}$, which consists of fractions of homogeneous elements of $\cal{B}$ of equal degree. Since $\cal{B}_{(k^\prime,r^\prime)} = H^0(H_{(k,r)})/H^0(H_{(k^\prime,r^\prime+1)})$ by the previous proof, it is clear that $\dim{\cal{B}_{(k^\prime,r^\prime)}} \leq 1$. Therefore any fraction of elements of $\cal{B}$ of equal degree is constant, so $K_{(0,0)} = \bbk$. Then $\dim{K_{(k^\prime,r^\prime)}} \leq 1$ for any $(k^\prime,r^\prime)$, and since $K_{(nk,nr)} \neq 0$, it follows that $\dim{K_{(nk,nr)}} = 1$ as required.
\end{proof}

We will now work through an example to demonstrate this theorem.

\begin{example}
	With all notation as in the rest of this section, let \[H = \frac{1}{2}Q_1 + \frac{1}{3}Q_2 - \frac{1}{2}P.\] We will show that the ring $\cal{B}$ arising from $H$ as described in \thref{nonnormal} is not integrally closed, since $H$ has two non-integral coefficients at $Q_1$ and $Q_2$.
	
	First, we find $k, r$ and $n$. Since $\sum_i{\{a_i\}} = \frac{1}{2}+\frac{1}{3} = \frac{5}{6} < 1$, we set $k = a_1\cdot a_2 - 1 = 2\cdot 3 - 1 = 5.$ Now $r = \left\lfloor\sum_{i}{ka_i}\right\rfloor = \left\lfloor \frac{5}{2}+\frac{5}{3}\right\rfloor = \left\lfloor \frac{25}{6}\right\rfloor = 4$. Finally, $n = c_P\cdot c_1\cdot c_2 = 2\cdot 2\cdot 3 = 12$.
	
	This gives $H_{(k,r)} = \frac{5}{2}Q_1 + \frac{5}{3}Q_2 - 4P$. Then $\lfloor H_{(k,r)} \rfloor= 2Q_1+Q_2-4P$. This divisor has no global sections because it has negative degree, so we see that $\cal{B}_{(k,r)} = H^0(H_{(k,r)})/H^0(H_{(k,r+1)}) = 0$ as required.
	
	On the other hand, we have $H_{(nk,nr)} = 30Q_1 + 20Q_2 - 48P$, which has positive degree, so $\cal{B}_{(nk,nr)} \neq 0$. Indeed we have \[\cal{B}_{(nk,nr)} = H^0(30Q_1+20Q_2-48P)/H^0(30Q_1+20Q_2-49P).\] Let $\proj{1}$ have co-ordinates $x$ and $y$ and suppose $P = [0:1]$, $Q_1 = [1:0]$ and $Q_2 = [1:1]$. Then $\cal{B}_{(nk,nr)}$ is generated by the rational functions \[\frac{x^{48}}{y^{30}(x-y)^{18}}, \frac{x^{48}}{y^{29}(x-y)^{19}}, \frac{x^{48}}{y^{28}(x-y)^{20}}.\] However, note that because we quotient by $H^0(30Q_1+20Q_2-49P)$, we can show that: \[\frac{x^{48}}{y^{30}(x-y)^{18}} + \frac{x^{48}}{y^{29}(x-y)^{19}} = \frac{x^{49}}{y^{30}(x-y)^{20}} = 0,\] since the result is a section of that divisor. It follows that in $\cal{B}_{(nk,nr)}$ we have $\frac{y}{x-y} = -1$, and we will use this fact later.
	
	Now we choose $k^\prime$ and $r^\prime$. We can pick $k^\prime = 4$ since that gives $k^\prime H = 2Q_1 + \frac{4}{3}Q_2 - 2P$ which has degree $\frac{4}{3}$ and only one non-integral coefficient, so $\deg{\lfloor k^\prime H\rfloor} > 0$. Then $r^\prime = -\lfloor k^\prime a_P\rfloor = 2$. We have \[\cal{B}_{(k^\prime,r^\prime)} = H^0(2Q_1+Q_2-2P)/H^0(2Q_1+Q_2-3P)\] and \[\cal{B}_{(k+k^\prime,r+r^\prime)} = H^0(4Q_1+3Q_2-6P)/H^0(4Q_1+3Q_2-7P).\] Hence take $f = \frac{x^2}{y(x-y)} \in \cal{B}_{(k^\prime,r^\prime)}$ and $g = \frac{x^6}{y^4(x-y)^2} \in \cal{B}_{(k+k^\prime,r+r^\prime)}$ and let \[h = \frac{f}{g} = \frac{x^4}{y^3(x-y)} \in K_{(k,r)}.\] We know that $h \notin \cal{B}$ since $\cal{B}_{(k,r)} = 0$. However, we have \[h^n = \frac{x^{48}}{y^{36}(x-y)^{12}} = \frac{(x-y)^6}{y^6}\cdot \frac{x^{48}}{y^{30}(x-y)^{18}} = \frac{x^{48}}{y^{30}(x-y)^{18}} \in \cal{B}_{(nk,nr)},\] since $\frac{(x-y)}{y} = -1$ in $\cal{B}$ as seen above.
	
	It follows that $h$ is a root in $K$, the field of fractions of $\cal{B}$, of the monic polynomial \[z^n - \frac{x^{48}}{y^{30}(x-y)^{12}} \in \cal{B}[z].\] Hence $\cal{B}$ is not integrally closed.
\end{example}

\subsubsection{Divisor Correspondence}

Now we show that prime divisors over $X$ give rise to divisors on $\proj{1}$, and demonstrate how, subject to some conditions, \thref{nonnormal} can be applied to show that the test configurations corresponding to these prime divisors have non-normal central fibres. We refer back to the notation of \thref{primetc} and \thref{assgr}.

\begin{proposition}
	Let $X$ be a smooth $G$-variety of complexity one. The test configuration corresponding to a prime divisor $F$ over $X$ is special if and only if the graded ring $\cal{B}$ associated to the filtration on the section ring of $-K_X$ induced by $F$ is integrally closed.
\end{proposition}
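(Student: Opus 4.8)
The plan is to reduce the statement to a purely ring-theoretic comparison between the graded domain $\cal{B}$ and the homogeneous coordinate ring of $X_0$, and then to treat the two implications separately. By the remark following \thref{assgr}, the central fibre of the test configuration attached to $F$ is exactly $X_0 = \Proj{\cal{B}}$, where $\cal{B} = \cal{A}/(z)$ is the graded ring associated to the filtration $\cal{F}^rR$ induced by $\nu_F$ on the anticanonical section ring $R = \bigoplus_{k}{H^0(X,-kK_X)}$. First I would record two structural facts. The ring $\cal{B}$ is a domain, being the associated graded of a valuation filtration: if $\bar a,\bar b$ are the leading terms of $a,b$, then $\nu_F(ab) = \nu_F(a)+\nu_F(b)$ forces $\overline{ab} = \bar a\bar b \neq 0$. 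Moreover $\cal{B}$ is finitely generated over $\bbk$, because $\nu_F$ is divisorial. Thus $X_0 = \Proj{\cal{B}}$ is an integral projective scheme of finite type, and being \emph{special} means precisely that it is normal.

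For the implication that $\cal{B}$ integrally closed $\Rightarrow X_0$ normal, I would argue locally. Cover $X_0$ by affine opens $\Spec{\cal{B}_{(f)}}$ for $f$ homogeneous of positive degree in the grading defining $\Proj$, where $\cal{B}_{(f)}$ is the degree-zero part of the localization $\cal{B}[1/f]$. If $\cal{B}$ is an integrally closed domain then so is its localization $\cal{B}[1/f]$, and the degree-zero part of a $\bb{Z}$-graded integrally closed domain is again integrally closed: an element of the fraction field integral over the degree-zero part is integral over the whole ring, hence lies in it, and being of degree zero it lies in the degree-zero part. Hence every $\cal{B}_{(f)}$ is normal and $X_0$ is normal.

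The substance of the proof is the converse, $X_0$ normal $\Rightarrow \cal{B}$ integrally closed. Let $\bar{\cal{B}}$ denote the integral closure of $\cal{B}$ in its fraction field; since $\cal{B} \sub \bar{\cal{B}}$ is a finite extension, $\Proj{\bar{\cal{B}}} \to \Proj{\cal{B}} = X_0$ is the normalisation, which is an isomorphism precisely when $X_0$ is normal. The difficulty is that an isomorphism of $\Proj$'s only forces $\cal{B}$ and $\bar{\cal{B}}$ to agree in all sufficiently large degrees: a priori $\cal{B}$ could be a non-saturated subring of its integral closure, failing to be integrally closed in low degrees even though $\Proj{\cal{B}}$ is normal. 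Closing this saturation gap is the main obstacle, and I would close it using flatness of the test configuration together with vanishing. Flatness of $\cal{A}$ over $\bbk[z]$ makes each graded piece a free $\bbk[z]$-module, giving $\dim_{\bbk}{\cal{B}_k} = \dim_{\bbk}{R_k} = h^0(X,-kK_X)$ for every $k$; meanwhile normality of $X_0$ makes it a klt $\bb{Q}$-Fano variety, as the normal central fibre of a special test configuration of a Fano, so Kawamata--Viehweg vanishing forces $h^0(X_0,-kK_{X_0})$ to equal $h^0(X,-kK_X)$ for all $k$ with no jumping. Since $\bar{\cal{B}}$ is the section ring $\bigoplus_{k}{H^0(X_0,-kK_{X_0})}$ of the normal variety $X_0$, comparing dimensions degree by degree yields $\cal{B}_k = \bar{\cal{B}}_k$ for all $k$, i.e. $\cal{B} = \bar{\cal{B}}$ is integrally closed. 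The one external input I would need to justify carefully, or simply cite alongside \cite{lx}, is that a normal central fibre of a Fano degeneration is automatically klt, which is what licenses the vanishing step.
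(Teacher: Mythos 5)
Your proposal follows the same basic route as the paper's proof --- identify $X_0$ with $\Proj{\cal{B}}$ and translate normality of $X_0$ into integral closedness of $\cal{B}$ via the section ring --- but you have correctly isolated the point that the paper's one-line argument hides. The paper simply asserts that ``$\cal{B}$ is the section ring of $(X_0,L_0)$'', and that assertion is precisely the saturation statement you identify as the substance of the converse direction: a priori $\cal{B}$ is only a subring of $\bigoplus_k H^0(X_0,L_0^{\otimes k})$ agreeing with it in large degree, and $\Proj$ cannot see the low-degree discrepancy. Your forward direction (localising and passing to degree-zero parts), the flatness computation $\dim_{\bbk}\cal{B}_k=\dim_{\bbk}R_k$, and the identification of $\bar{\cal{B}}$ with the full section ring when $X_0$ is normal are all sound.

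The genuine gap is the external input you flag at the end, and you are right to be uneasy about it: it is \emph{not} true that a normal central fibre of a Fano degeneration is automatically klt. The standard counterexample is the degeneration of $\proj{2}$ to the projective cone over an elliptic normal curve of degree $9$ (deformation to the cone over a hyperplane section of the $3$-Veronese): the central fibre is normal with ample anticanonical class but only log canonical at the vertex. This is exactly why Li--Xu \cite{lx} build the klt condition into their definition of a special test configuration rather than deducing it from normality, whereas the present paper defines ``special'' as normality of $X_0$ alone. Without klt you cannot invoke Kawamata--Viehweg to kill the higher cohomology of $L_0^{\otimes k}$, so the no-jumping statement $h^0(X_0,L_0^{\otimes k})=h^0(X,-kK_X)$, and hence $\cal{B}=\bar{\cal{B}}$, does not follow from your argument as written. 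The gap matters for the paper, which uses exactly this direction in contrapositive form (non-integrally-closed $\cal{B}$ forces non-normal $X_0$): if $\cal{B}$ could fail to be integrally closed only in low degrees, $\Proj{\cal{B}}$ could still be normal. To close it one must either establish the required vanishing (or the equality $\dim\cal{B}_k=h^0(X_0,L_0^{\otimes k})$ directly) for the particular central fibres arising here, or restate the proposition with ``special'' in the Li--Xu sense where the klt hypothesis is available. The paper's own proof does not address any of this, so your proposal, though incomplete at this one step, is the more honest account of what needs to be shown.
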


\begin{proof}
	The test configuration is special if and only if the central fibre $X_0$ is normal. In this case we have $X_0 = \pi^{-1}(0) = \Proj{\cal{A}/(z)}$, where $\cal{A}/(z) = \bigoplus_{r \in \bb{Z}}{\cal{F}^rR/\cal{F}^{r+1}R} = \cal{B}$. Since $X_0 = \Proj{\cal{B}}$, and in fact $\cal{B}$ is the section ring of $(X_0,L_0)$, it follows that $X_0$ is normal, and the test configuration special, if and only if $\cal{B}$ is integrally closed.
\end{proof}

Now suppose $X$ is a smooth Fano $G$-variety of complexity one. Let $F$ be a non-central $G$-divisor over $X$. We know that the test configuration corresponding to $X$ is special if and only if the bigraded ring $\cal{B}$ defined above is integrally closed. We will use the $B$-quotient to allow ourselves to check this using divisors on $\proj{1}$. Recall that $\cal{B} = \bigoplus_{r \in \bb{Z}}{\cal{F}^rR/\cal{F}^{r+1}R}$, where $R = \bigoplus_{k \in \bb{Z}}{H^0(X,-kK_X)}$ and $\cal{F}^rR = \bigoplus_{k \in \bb{Z}}{\{f \in H^0(X,-kK_X) \mid \nu_F(f) \geq r\}}$.

We know that we can find a $B$-invariant representative of the class $-K_X$, and given this, the section rings $H^0(X,-kK_X)$ gain a $G$-module structure. The $B$-semi-invariants of weight $\lambda$ in this $G$-module are of the form $f_0e_\lambda$ where $f_0 \in K^B = \bbk(\proj{1})$. If $B(X)$ is the set of $B$-invariant divisors of $X$ and we have $-K_X = \sum_{D\in B(X)}{m_DD}$, then \[H^0(X,-K_X)^{(B)}_{\lambda} = \{f_0e_\lambda \mid f_0 \in K^B, \sum_{D \in B(X)}{[h_D\nu_{P_D}(f_0) + \langle \lambda, \ell_D\rangle + m_D]}D \geq 0\}.\] For a fixed weight $\lambda \in \Lambda$ we have $H^0(X,-K_X)^{(B)}_\lambda \cong H^0(\proj{1},H_\lambda)$, where \[H_\lambda = \sum_{P \in \proj{1}}{\left(\min_{P_D = P}{\frac{\langle \lambda, \ell_D\rangle + m_D}{h_D}}\right)P}.\] 

This is a well defined divisor on $\proj{1}$ (i.e. has no coefficients $\pm\infty)$ provided that (a): for any $P \in \proj{1}$ there exists a $B$-divisor $D$ on $X$ with $P_D = P$ and $h_D > 0$, and (b): $\lambda$ lies in the polyhedral domain \[\cal{P}(-K_X) = \{\lambda \in \Lambda \mid \langle \lambda,\ell_D\rangle \geq -m_D \ \text{for all} \ D \ \text{with} \ h_D = 0\}.\] Condition (a) holds by completeness of $X$, but we need to be careful about condition (b).

Recall the function \[A(-K_X,\lambda) = \sum_{P \in \proj{1}}{\left(\min_{P_D = P}{\frac{\langle \lambda,\ell_D\rangle +m_D}{h_D}}\right)}\] which computes the degree of $H_\lambda$, and the polyhedral domain \[\cal{P}_+(-K_X) = \{\lambda \in \cal{P} \mid A(-K_X,\lambda) \geq 0\}.\] In this notation, $H_\lambda$ is well-defined and has positive degree exactly when $\lambda$ lies in the relative interior of $\cal{P}_+(-K_X)$.

\begin{lemma}\thlabel{0poly}
	We may assume that $0$ is in the relative interior of $\cal{P}_+(-K_X)$.
\end{lemma}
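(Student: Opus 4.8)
The quantities we ultimately care about --- whether the central fibre of the test configuration attached to a prime divisor $F$ is normal, and the value of $\beta_X(F)$ --- depend only on $X$ and $F$, and not on the auxiliary choices of splitting $e\colon \Lambda \to K^{(B)}$ nor on the particular $B$-stable representative we pick for the class $-K_X$. The plan is to exploit this freedom to translate the polyhedral domain $\cal{P}_+(-K_X)$ until the origin lands in its relative interior.

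First I would record how the two available freedoms act on $\cal{P}_+(-K_X)$. A change of splitting $e \to e'$ alters the coordinates by the balanced integral shifts $\ell_D \mapsto \ell_D + h_D s_{P_D}$ with $\sum_{P} s_P = 0$; since the defining inequalities of $\cal{P}(-K_X)$ involve only the central divisors (those with $h_D = 0$, whose $\ell_D$ are unshifted), and since $A(-K_X,\lambda)$ picks up only $\sum_P \langle \lambda, s_P\rangle = \langle \lambda, \sum_P s_P\rangle = 0$, the domain $\cal{P}_+(-K_X)$ is \emph{invariant} under reparametrisation of the splitting. Thus the splitting is not the useful degree of freedom. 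Instead I would change the $B$-representative, replacing $\delta = -K_X$ by $\delta' = \delta + \div(e_\mu)$ for a weight $\mu \in \Lambda$ (such $e_\mu \in K^{(B)}_\mu$ exists by the definition of $\Lambda$). This sends $m_D \mapsto m_D + \langle \mu, \ell_D\rangle$, whence a direct substitution gives $\cal{P}(\delta') = \cal{P}(\delta) - \mu$ and $A(\delta',\lambda) = A(\delta,\lambda + \mu)$, so that $\cal{P}_+(\delta') = \cal{P}_+(-K_X) - \mu$. Hence choosing the representative translates the domain by any prescribed lattice vector.

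It therefore suffices to produce a lattice point $\mu \in \Lambda$ lying in $\relint \cal{P}_+(-K_X)$, and then to pass to $\delta' = -K_X + \div(e_\mu)$. Here I would use that $\cal{P}_+(-K_X)$ is full-dimensional in $\Lambda \otimes \bb{R}$: this is forced by $\vol(-K_X) > 0$, since the volume formula integrates a strictly positive integrand over $\cal{P}_+(-K_X)$, so the domain must have positive measure and hence nonempty relative interior. If this relative interior already contains a lattice point we are done; in general I would first replace $-K_X$ by a positive multiple $-mK_X$, which replaces $\cal{P}_+(-K_X)$ by its dilate $m\,\cal{P}_+(-K_X)$ and so produces an interior lattice point once $m$ is large. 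Since passing to the Veronese subring leaves $\Proj$ of the section ring (and hence the normality of every central fibre) unchanged and merely rescales the $\beta$-invariant and volume homogeneously, working with $-mK_X$ in place of $-K_X$ is harmless for all subsequent arguments.

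The only real point requiring care --- and the step I expect to be the main obstacle --- is the invariance claim underpinning the whole reduction: one must confirm that translating $\cal{P}_+$ by altering the representative genuinely leaves the test configuration of $F$, its central fibre, and $\beta_X(F)$ unchanged, so that the normalisation ``$0 \in \relint \cal{P}_+(-K_X)$'' may be imposed with no loss of generality. Granting this, the lemma follows, and in the sequel it lets us take the weight $\lambda = 0$ when producing the divisor $H_0$ on $\proj{1}$ of positive degree to which \thref{nonnormal} is applied.
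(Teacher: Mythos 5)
Your proposal is correct and follows essentially the same route as the paper: use positivity of $\vol(-K_X)$ and the integral formula to see that $\cal{P}_+(-K_X)$ has nonempty interior, then translate it to the origin by replacing $-K_X$ with $-K_X+\div(e_\mu)$, which shifts $\cal{P}(\cdot)$ and $A(\cdot,\lambda)$ exactly as you compute. Your additional care about finding a \emph{lattice} point $\mu\in\Lambda$ in the interior (via passing to a multiple of $-K_X$ if necessary) addresses a point the paper's proof glosses over, and is a welcome refinement rather than a departure.
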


\begin{proof}
	Since $-K_X$ is ample, we have seen that $-K_X^n = \vol(-K_X)$, and this quantity must be positive. We have also seen that $\vol(-K_X)$ can be expressed as an integral over $\lambda_{-K_X} + \cal{P}_+(-K_X)$. It follows that $\cal{P}_+(-K_X)$ has non-empty interior. Let $\lambda$ lie in the interior of $\cal{P}_+(-K_X)$, and replace $-K_X$ with the equivalent divisor $-K_X^\prime = -K_X +\div{e_\lambda}$.
	
	Suppose $h_D = 0$ for some $D \in B(X)$. We know that $\langle \lambda, \ell_D\rangle \geq -m_D$. The former is defined to be $\nu_D(e_\lambda)$, so we have $m_D + \nu_D(e_\lambda) = m^\prime_D \geq 0$, or equivalently $0 = \langle 0, \ell_D\rangle \geq -m_D^\prime$, i.e. $0 \in \cal{P}(-K_X^\prime)$.
	
	Likewise, since $\lambda \in \relint{\cal{P}_+(-K_X)}$, we have $A(-K_X,\lambda) > 0$. But \begin{align*}A(-K_X,\lambda) & = \sum_{P \in \proj{1}}{\left(\min_{P_D = P}{\frac{\langle \lambda,\ell_D\rangle +m_D}{h_D}}\right)} \\ & = \sum_{P \in \proj{1}}{\left(\min_{P_D = P}{\frac{m^\prime_D}{h_D}}\right)} = A(-K_X^\prime,0).\end{align*} So $A(-K_X^\prime,0) > 0$ and the result follows.
\end{proof}

Now looking back to $\cal{B}$, we have \[(\cal{F}^r)^{(B)} = \bigoplus_{k \in \bb{Z}}{\left\{f \in \bigoplus_{\lambda \in \Lambda}{H^0(X,-kK_X)^{(B)}_\lambda} \mid \nu_F(f) \geq r\right\}}.\] By discussions above, we can rewrite this as \[(\cal{F}^r)^{(B)} = \bigoplus_{\lambda \in \Lambda}{\bigoplus_{k \in \bb{Z}}{\{f \in H^0(\proj{1},kH_\lambda) \mid \nu_F(f) \geq r\}}}.\] Since $\nu_F(f) = h_F\ord_{P_F}(f) + \langle \lambda,\ell_F\rangle$, we have \[(\cal{F}^r)^{(B)}_{(\lambda,k)} = \left\{f \in H^0(kH_\lambda) \mid \ord_P(f) \geq \frac{r-\langle \lambda,\ell_F\rangle}{h_F}\right\}.\]

We have shown that if $\deg{H_\lambda} > 0$ and $H_\lambda$ has non-integral coefficients at two points other than $P_F$, the ring $\cal{B}^{h_F}(H_\lambda)$ is not integrally closed. The latter is the sum over $(k, r) \in \bb{Z} \oplus \bb{Z}$ of \[\cal{B}^{h_F}(H_\lambda)_{(k,r)} = \frac{\left\{f \in H^0(kH_\lambda) \mid \ord_{P_F}(f) \geq \frac{r}{h_F}\right\}}{\left\{f \in H^0(kH_\lambda) \mid \ord_{P_F}(f) \geq \frac{r+1}{h_F}\right\}}.\] Note that $\cal{B}^{h_F}(H_\lambda)_{(k,r-\langle \lambda,\ell_F\rangle)} = (\cal{F}^r)^{(B)}_{(\lambda,k)}/(\cal{F}^{r+1})^{(B)}_{(\lambda,k)}$ as defined above. The shift of degrees by $\langle \lambda,\ell_F\rangle$ can be ignored as we sum over $\bb{Z}$ either way.

We can now write \begin{align*} \cal{B}^{(B)} = \bigoplus_{\lambda \in \Lambda}{\bigoplus_{r \in \bb{Z}}{\bigoplus_{k \in \bb{Z}}{\cal{B}^{(B)}_{(\lambda,k,r)}}}} & = \bigoplus_{\lambda \in \Lambda}{\bigoplus_{r \in \bb{Z}}{\bigoplus_{k \in \bb{Z}}{\cal{B}^{h_F}(H_\lambda)_{(k,r)}}}} \\ & = \bigoplus_{\lambda \in \Lambda}{\cal{B}^{h_F}(H_\lambda)}.\end{align*}

\begin{lemma}\thlabel{intcl}
	Let $A$ be an integral domain with field of fractions $K$, and let $K^\prime$ be a subfield of $K$. Let $B = A \inter K^\prime$. If $B$ is not integrally closed in $K^\prime$, then $A$ is not integrally closed in $K$.
\end{lemma}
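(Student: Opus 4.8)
The plan is to lift the obstruction to integral closure directly. Since $B$ is not integrally closed in $K^\prime$, there exists by definition an element $\alpha \in K^\prime$ that is integral over $B$ yet satisfies $\alpha \notin B$. I claim that this same $\alpha$, regarded inside $K$ via the inclusion $K^\prime \sub K$, witnesses that $A$ fails to be integrally closed in $K$. So the whole argument reduces to checking two things about $\alpha$: that it is integral over $A$, and that it does not lie in $A$. Before that, one should note that $B = A \inter K^\prime$ is genuinely a subring of $A$ (it is the intersection of the ring $A$ with the subfield $K^\prime$, both sitting inside $K$), so the phrase ``integral over $B$'' makes sense and $B \sub A$.

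For the first point, integrality transports upward trivially. By hypothesis there is a monic relation $\alpha^n + b_{n-1}\alpha^{n-1} + \cdots + b_0 = 0$ with every $b_i \in B$. Because $B \sub A$, each coefficient $b_i$ already lies in $A$, so this is a monic integral dependence relation of $\alpha$ over $A$; hence $\alpha$ is integral over $A$.

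For the second point, the precise definition $B = A \inter K^\prime$ does the work. If we had $\alpha \in A$, then combining this with $\alpha \in K^\prime$ would give $\alpha \in A \inter K^\prime = B$, contradicting the choice $\alpha \notin B$. Therefore $\alpha \in K \setminus A$ is integral over $A$, which is exactly the assertion that $A$ is not integrally closed in $K$. There is essentially no obstacle here: the proof is a direct element chase, and the only place requiring care is ensuring non-membership is tested against $B = A \inter K^\prime$ itself rather than some larger subring of $K^\prime$ over which $\alpha$ might already lie.
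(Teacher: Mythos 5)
Your argument is correct and follows the same route as the paper: take a witness $\alpha \in K^\prime \setminus B$ integral over $B$, observe that the monic relation has coefficients in $B \sub A$, and note that $\alpha \in A$ would force $\alpha \in A \inter K^\prime = B$. Nothing further is needed.
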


\begin{proof}
	If $B$ is not integrally closed in $K^\prime$, there exists a monic polynomial in $B[x]$ with a root $f \in K^\prime$ which does not lie in $B$. Since $K^\prime \sub K$ and $B[x] \sub A[x]$, we can also view $f$ as a root in $K$ of a monic polynomial in $A[x]$. If $f \in A$, then since $f \in K^\prime$, we have $f \in A \inter K^\prime = B$, a contradiction. Hence $f \notin A$ and $A$ is therefore not integrally closed in $K$.
\end{proof}

\begin{theorem}
	If there exists $\lambda \in \cal{P}_+(-K_X)$ such that $H_\lambda$ has two non-integral coefficients at points other than $P_F$, then $\cal{B}$ is not integrally closed.
\end{theorem}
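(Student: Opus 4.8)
The plan is to read off the failure of integral closure for $\cal{B}$ from the failure for a single slice of the decomposition $\cal{B}^{(B)} = \bigoplus_{\lambda \in \Lambda}{\cal{B}^{h_F}(H_\lambda)}$ established just above, in which the $\lambda$-summand is exactly the associated graded ring produced by \thref{nonnormal} from the divisor $H_\lambda$ on $\proj{1}$, with distinguished point $P = P_F$ and parameter $q = h_F$. The first observation I would record is that each $\cal{B}^{h_F}(H_\lambda)$ is a genuine graded subring of $\cal{B}^{(B)} \sub \cal{B}$: the part of $\cal{B}^{h_F}(H_\lambda)$ in bidegree $(k,r)$ carries $B$-weight $k\lambda$, so the ``slope'' (weight divided by $k$) is constant equal to $\lambda$ and is preserved by the weight-additive multiplication of $\cal{B}^{(B)}$. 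Thus the strategy is: produce a witness to non-integral-closedness inside the slice $\cal{B}^{h_F}(H_\lambda)$ via \thref{nonnormal}, then use homogeneity to show the same witness already escapes $\cal{B}$.

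First I would fix a suitable $\lambda$. To invoke \thref{nonnormal} I need $H_\lambda$ to have \emph{positive} degree in addition to its two non-integral coefficients away from $P_F$, whereas the hypothesis $\lambda \in \cal{P}_+(-K_X)$ only gives $\deg H_\lambda = A(-K_X,\lambda) \geq 0$. I would therefore normalise: by \thref{0poly} the relative interior of $\cal{P}_+(-K_X)$ is nonempty and is precisely where $A(-K_X,\cdot) > 0$, and since non-integrality of a given coefficient of $H_\lambda$ is an open and generic condition on $\lambda$ (each coefficient is piecewise affine with rational gradient), I can choose a lattice point $\lambda$ in the relative interior retaining two non-integral coefficients at points distinct from $P_F$. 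For such a $\lambda$, \thref{nonnormal} applied with $q = h_F$ yields integers $(k,r)$ and $n$ and a homogeneous element $h$ of bidegree $(k,r)$ in the field of fractions of $\cal{B}^{h_F}(H_\lambda)$ with $h^n \in \cal{B}^{h_F}(H_\lambda)_{(nk,nr)}$ but $h \notin \cal{B}^{h_F}(H_\lambda)$.

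The transfer to $\cal{B}$ is then immediate from homogeneity. Being a ratio of two homogeneous $B$-semi-invariants of $\cal{B}$ of slope $\lambda$, the element $h$ is itself a $B$-semi-invariant of weight $k\lambda$ and bidegree $(k,r)$; and the monic polynomial $z^n - h^n \in \cal{B}[z]$ exhibits $h$ as integral over $\cal{B}$. If $h$ lay in $\cal{B}$ it would lie in $\cal{B}^{(B)}$ and, being homogeneous of bidegree $(k,r)$ and weight $k\lambda$, in the graded piece $\cal{B}^{(B)}_{(\lambda,k,r)} = \cal{B}^{h_F}(H_\lambda)_{(k,r)} \sub \cal{B}^{h_F}(H_\lambda)$, contradicting $h \notin \cal{B}^{h_F}(H_\lambda)$. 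Hence $h \notin \cal{B}$ and $\cal{B}$ fails to be integrally closed. The same conclusion can be packaged through \thref{intcl}, taking $A = \cal{B}$ and $K'$ the field of fractions of $\cal{B}^{h_F}(H_\lambda)$, the substance being the identity $\cal{B} \inter K' = \cal{B}^{h_F}(H_\lambda)$ of which the instance recorded above is the relevant case.

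I expect the main obstacle to be the grading bookkeeping rather than any one deep step, since the genuinely hard analytic content already sits in \thref{nonnormal}. Concretely, the delicate points are (i) verifying that the abstract ring of \thref{nonnormal} is correctly realised as the \emph{saturated} slope-$\lambda$ subring of $\cal{B}$, so that the witness $h$ produced there really does fail to belong to $\cal{B}$ and not merely to the slice; and (ii) the normalisation above, i.e.\ passing to a lattice point in $\relint \cal{P}_+(-K_X)$ while keeping two non-integral coefficients off $P_F$, which is what secures the positive-degree hypothesis of \thref{nonnormal}.
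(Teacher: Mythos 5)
Your argument is correct and takes essentially the same route as the paper: the same decomposition $\cal{B}^{(B)} = \bigoplus_{\lambda}\cal{B}^{h_F}(H_\lambda)$, the same appeal to \thref{nonnormal} in a single slice, and the same transfer to $\cal{B}$ via homogeneity and \thref{intcl} --- the only cosmetic difference is that the paper first replaces $-K_X$ by $-K_X + \div{e_\lambda}$ so that the relevant slice becomes $\cal{B}\inter K^B = \cal{B}^{h_F}(H_0)$ and \thref{intcl} is applied with $K^\prime = K^B$, whereas you stay in the slope-$\lambda$ subring. The one place you are more scrupulous than the paper, the positivity of $\deg H_\lambda$ when $\lambda$ sits on the boundary of $\cal{P}_+(-K_X)$, is not fully patched by your perturbation, since non-integrality of a coefficient of $H_\lambda$ is a congruence condition on the lattice rather than an open one; but the paper's own combination of \thref{0poly} with the shift $H_0^\prime = H_\lambda$ glosses over the same boundary case, and in every application the chosen $\lambda$ has $\deg H_\lambda > 0$ explicitly.
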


\begin{proof}
	By \thref{intcl} it suffices to show that $\cal{B}^{(B)}_0 = \cal{B} \inter K^B$ is not integrally closed. We have shown that $\cal{B}^{(B)} = \bigoplus_{\lambda \in \Lambda}{\cal{B}^{h_F}(H_\lambda)}$, so in particular $\cal{B}^{(B)}_0 = \cal{B}^{h_F}(H_0)$. Hence if $\cal{B}^{h_F}(H_0)$ is not integrally closed, then neither is $\cal{B}$. We have proved already that $\cal{B}^{h_F}(H_0)$ is not integrally closed when $H_0$ has positive degree and two non-integral points distinct from $P_F$. We may assume that $H_0$ has positive degree by \thref{0poly}.
	
	If $H_\lambda$ has non-integral coefficients at two points other than $P_F$, then replace $-K_X$ with $-K_X^\prime = -K_X + \div{e_\lambda}$. Then $H_0^\prime = H_\lambda$ and the result follows using $H_0^\prime$ instead of $H_0$.
\end{proof}

To summarise the results of this subsection, we have:

\begin{corollary}\thlabel{nonint}
	Let $X$ be a smooth Fano $G$-variety of complexity one with anticanonical divisor $-K_X$. Let $F$ be a non-central prime divisor over $X$ corresponding to a point $P_F \in \proj{1}$ on the $B$-quotient. If there exists $\lambda \in \cal{P}_+(-K_X)$ such that $H_\lambda$ has two non-integral coefficients at points other than $P_F$, then the test configuration corresponding to $F$ has non-normal special fibre.
\end{corollary}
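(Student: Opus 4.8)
The plan is to obtain this statement as an immediate consequence of the two results established just above, so the proof amounts to matching up the objects and chaining together the preceding biconditional and implication. First I would recall the Proposition proved earlier in this subsection, which translates the geometric condition into an algebraic one: the test configuration associated to $F$ is special---equivalently, its special (central) fibre $X_0$ is normal---if and only if the graded ring $\cal{B}$ associated to the $\nu_F$-induced filtration on the anticanonical section ring $R = \bigoplus_k H^0(X,-kK_X)$ is integrally closed. The content behind this is that $X_0 = \Proj \cal{B}$ with $\cal{B}$ itself serving as the section ring of $(X_0,L_0)$, so normality of $X_0$ is precisely integral closedness of $\cal{B}$.

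Next I would invoke the Theorem immediately preceding this corollary, whose hypothesis is exactly the one assumed here: the existence of some $\lambda \in \cal{P}_+(-K_X)$ for which the divisor $H_\lambda$ on $\proj{1}$ carries two non-integral coefficients at points distinct from $P_F$. That Theorem concludes that $\cal{B}$ is not integrally closed. Combining the two, $\cal{B}$ fails to be integrally closed, and therefore by the Proposition the test configuration corresponding to $F$ cannot be special; that is, its special fibre $X_0$ is non-normal, which is the assertion.

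I do not expect any genuine obstacle, since this is a packaging statement recording the conclusion of the subsection rather than introducing new content; the work has all been done in establishing \thref{nonnormal} and the two subsequent results. The only points needing care are already discharged upstream, and I would simply flag that they licence the final appeal: the reduction via \thref{intcl} of integral closedness of $\cal{B}$ to that of its $B$-invariant part $\cal{B} \cap K^B = \cal{B}^{h_F}(H_0)$, and the normalisation supplied by \thref{0poly}, which allows one to assume $0 \in \relint \cal{P}_+(-K_X)$ and hence to transport the two-non-integral-points hypothesis to the slice $H_0$ by the replacement $-K_X \mapsto -K_X + \div e_\lambda$ before applying \thref{nonnormal}. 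With these in hand the corollary follows in a single line.
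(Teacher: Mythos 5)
Your proposal is correct and matches the paper exactly: the corollary is stated as a summary of the subsection, following immediately by chaining the proposition (the test configuration is special iff $\cal{B}$ is integrally closed) with the preceding theorem (the two-non-integral-coefficients hypothesis forces $\cal{B}$ to be non-integrally closed). The supporting points you flag — the reduction to $\cal{B}\cap K^B$ via \thref{intcl} and the normalisation via \thref{0poly} — are precisely where the paper discharges the remaining work.
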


In \Cref{apply} we will show that the hypotheses of \thref{nonint} hold for all smooth Fano $\sl{2}$ threefolds. This will allow us to conclude the following:

\begin{theorem}\thlabel{noncentnorm}
	Let $X$ be one of the $\sl{2}$-threefolds mentioned in \thref{threefoldsKstable}. Let $F$ be a non-central prime divisor over $X$ corresponding to a point $P_F \in \proj{1}$. If $X$ has subregular colours lying over two points in $\proj{1}$ distinct from $P_F$, then the test configuration corresponding to $F$ has non-normal central fibre.
\end{theorem}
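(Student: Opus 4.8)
The plan is to deduce this from \thref{nonint}: it suffices to exhibit a single weight $\lambda \in \cal{P}_+(-K_X)$ for which the associated divisor $H_\lambda$ on $\proj{1}$ acquires non-integral coefficients at (at least) two points other than $P_F$. By hypothesis $X$ carries subregular colours over two distinct points $Q_1, Q_2 \in \proj{1}$, both different from $P_F$, and the entire argument rests on the observation that subregular colours are precisely the source of such non-integrality: a subregular colour $D$ has hyperspace coordinate $h_D \geq 2$, so its contribution $\frac{\langle \lambda, \ell_D\rangle + m_D}{h_D}$ to the coefficient of $H_\lambda$ at $P_D$ fails to be an integer unless $h_D$ divides $\langle \lambda, \ell_D\rangle + m_D$.

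First I would fix the freedom available in $\lambda$. By \thref{0poly} we may assume $0 \in \relint \cal{P}_+(-K_X)$, and since $-K_X$ is ample its volume is positive, so $\cal{P}_+(-K_X)$ is full-dimensional and contains an open neighbourhood $U$ of $0$. For $i = 1, 2$ write $c_i(\lambda) = \min_{P_D = Q_i} \frac{\langle \lambda, \ell_D\rangle + m_D}{h_D}$ for the coefficient of $H_\lambda$ at $Q_i$; this is a concave piecewise-linear function of $\lambda$. On the region of $U$ where the subregular colour $D_i$ over $Q_i$ realises the minimum, $c_i$ agrees with an affine functional taking values in $\frac{1}{h_{D_i}}\bb{Z}$ with $h_{D_i} \geq 2$, so the locus $\{\lambda \mid c_i(\lambda) \in \bb{Z}\}$ is contained in a union of affine hyperplanes and is in particular nowhere dense. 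Intersecting $U$ with the complements of these two loci leaves a nonempty (indeed full-measure) set of admissible $\lambda$, any of which makes both $c_1(\lambda)$ and $c_2(\lambda)$ non-integral, at which point \thref{nonint} applies.

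The step requiring genuine input, rather than the soft genericity argument above, is verifying that near $\lambda = 0$ the minimum defining $c_i$ is actually attained by the subregular colour with its non-integral value. A priori a competing $B$-stable divisor over $Q_i$ — for instance a non-central $G$-divisor in the slice $\cal{H}_{Q_i}$, whose $h$-coordinate need not be at least $2$ — could realise a smaller, integral value and wash out the non-integrality. Central divisors over $Q_i$ cause no trouble, since for $\lambda \in \cal{P}(-K_X)$ they contribute $+\infty$ and never attain the minimum; the real content is ruling out interference from the remaining divisors. This is exactly where the explicit coloured hyperfans computed in \Cref{apply} enter: for each of the threefolds of \thref{threefoldsKstable} one reads off the coloured data over $Q_1$ and $Q_2$ and checks directly that the subregular colour controls the coefficient of $H_\lambda$ there and yields a non-integral value for suitable $\lambda \in \cal{P}_+(-K_X)$. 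I expect this case-by-case verification, together with pinning down $\cal{P}_+(-K_X)$ explicitly enough to keep the chosen $\lambda$ inside it, to be the main obstacle; the reduction to \thref{nonint} and the genericity of non-integrality are then essentially formal.
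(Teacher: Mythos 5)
Your proposal takes essentially the same route as the paper: the paper's proof of this theorem consists precisely of the reduction to \thref{nonint} followed by the case-by-case verification in \Cref{apply}, where for each threefold an explicit $B$-stable representative of $-K_X$ is fixed, $H_\lambda$ is written out, and a specific weight (e.g.\ $\lambda=-5,-3,-2$) is exhibited with non-integral coefficients at the subregular points and $\deg H_\lambda>0$. One caution about the part of your argument that is not deferred to that check: the genericity step is phrased for real $\lambda$ in an open neighbourhood of $0$, but $H_\lambda$ only carries meaning for lattice weights $\lambda\in\Lambda$, and for such $\lambda$ the integrality of $(\langle\lambda,\ell_D\rangle+m_D)/h_D$ depends on $\lambda$ modulo $h_D$ rather than on avoiding a nowhere dense union of hyperplanes (if $h_D$ divided both $\ell_D$ and $m_D$ the coefficient would be integral for every lattice $\lambda$). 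So the ``full-measure set of admissible $\lambda$'' does not by itself produce a usable weight, and the explicit computation you correctly identify as the main content is genuinely indispensable rather than merely a convenience.
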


This result now allows us to prove parts (ii) and (iii) of \thref{Kstablecentral}.

\subsection{Action Interchanging Two Points}

\begin{proof}[Proof of \thref{Kstablecentral}(ii)]
	Suppose a finite subgroup $A \sub \aut{X}$ acts on $\proj{1}$, interchanging two points $P$ and $Q$ corresponding to subregular colours of $X$ and that the $B$-quotient is equivariant with respect to the $A$-action. We have an action on $X$ of an extension $G^\prime$ of $G$ by $A$. Any non-central $G$-invariant prime divisor $F$ over $X$ can only be $G^\prime$-invariant if its corresponding point $P_F \in \proj{1}$ is fixed by $A$. Since $P$ and $Q$ are not fixed by $A$, they are distinct from $P_F$, and since each has a subregular colour lying over it, \thref{noncentnorm} applies, and the test configuration corresponding to $F$ is not special. Therefore we may show that $X$ is $K$-polystable by checking only central divisors.
\end{proof}

\subsection{Three or More Subregular Colours}

\begin{proof}[Proof of \thref{Kstablecentral}(iii)]
	If $X$ has subregular colours lying over three or more distinct points of $\proj{1}$, then for any non-central prime divisor $F$ over $X$ corresponding to a point $P_F$ in $\proj{1}$, there always exist at least two subregular colours lying over two points distinct from $P_F$ and from each other. Then \thref{noncentnorm} applies and we need only check the $\beta$-invariant of central divisors.
\end{proof}

\section{Homogeneous Spaces of $\sl{2}$}\label{sl2}

In this subsection we will describe the coloured data of the complexity one homogeneous spaces of $\sl{2}$, as calculated by Timashev \cite{tim1}. We will show the calculation in full for $\sl{2}$ itself, while simply presenting the pictures for the remaining homogeneous spaces. Note that a complexity one homogeneous space for $\sl{2}$ must be of the form $\sl{2}/H$ for $H$ finite, and that the finite subgroups of $\sl{2}$ are as follows: the cyclic groups $\bb{Z}_n$ for $n \in \bb{N}$, the binary dihedral groups $\tilde{D}_n$ for $n \in \bb{N}$, the binary tetrahedral group $\tilde{T}$, the binary cubic group $\tilde{C}$ and the binary icosahedral group $\tilde{I}$.

\subsection{$\normalfont{SL}_2$}

Consider the action of $G = \sl{2}$ on itself by left multiplication of matrices, let $K = \bbk(G)$ and choose subgroups $B, U$ and $T$ of $G$ consisting of upper triangular, upper unitriangular and diagonal matrices, respectively. The action gives the homogeneous space $G/H = \sl{2}/\{e\}$. The $B$-orbit of the identity is $B$ itself, which is a maximal orbit of codimension 1, so this is a complexity one homogeneous space. Note that $\frak{X}(B) = \bb{Z}\alpha$ where $\alpha$ is the character $\begin{psmallmatrix} a & b \\ 0 & 1/a\end{psmallmatrix} \mapsto a$. For $g = \begin{psmallmatrix} x & y \\ z & w\end{psmallmatrix}$, the functions $g \mapsto z$ and $g \mapsto w$ are semi-invariant of weight $\alpha$, so $\Lambda = \bb{Z}\alpha$ as well. These semi-invariants generate the space $M = \bbk[G]_{\alpha}^{(B)}$, and $K^B =\bbk(z/w)$. Fix a splitting $e \colon \Lambda \to K^{(B)}$ given by $e_{\alpha} = z$. Then all semi-invariants are of the form $fe^k_{\alpha}$ where $f \in K^B$ and $k \in \bb{Z}$.

We are interested in $G$-valuations and valuations of colours of $K$, which are determined by their restrictions to $K^{(B)}$, which are in turn determined by their restrictions to $K^B$ and a functional $\ell \colon \Lambda \to \bb{Q}$.

The rational $B$-quotient map is determined by the invariant $z/w$ and thus looks like $\pi \colon \sl{2} \to \bb{P}^1$, $g \mapsto [z:w]$. The regular semi-invariants thus lie in the space $M$ generated by $z$ and $w$. Since no other semi-invariants can divide $z$ or $w$, there are no subregular semi-invariants. Likewise there are also no central colours. The fibre of a point $p = [\alpha \colon \beta] \in \proj{1}$ is the regular $B$-divisor $D_p = \cal{Z}(\beta z - \alpha w)$, and all $B$-stable divisors are of this form. The chosen splitting $e$ marks out the point $\infty = [0:1]$ with $D_\infty = \cal{Z}(e_\alpha)$ as distinguished. Non-distinguished regular colours $D_p$ for $p\neq \infty$ sit at $(p,\ell,h) = (p,0,1) \in \cal{H}_p$, and $D_\infty$ has $\ell = \nu_\infty(e_\alpha) = 1$, so sits at $(\infty,1,1) \in \cal{H}_\infty$. 

We can calculate $\cal{V}$ using the method of formal curves (see \cite[\S 24]{tim}): fix $m \in \bb{Z}$ and $u(t) \in \bbk(\!(t)\!)$ and let \[x(t) = \begin{pmatrix} t^m & u(t) \\ 0 & t^{-m} \end{pmatrix} \in \frak{X}^*(T)\cdot U(\bbk(\!(t)\!)) \] where $\ord_tu(t) = n \leq -m$. Then any non-central $G$-valuation is proportional to $\nu_{x(t)}$, where $\nu_{x(t)}(f) = \ord_t(f(g\cdot x(t)))$ for any $f \in K^{(B)}$ and generic $g \in G$. Let $p = [\alpha:\beta]$ and \[d_p = \nu_{x(t)}(\beta z - \alpha w) = \ord_t((\beta t^m-\alpha u(t))z - \alpha t^{-m}w).\]

The value of $d_p$ is constant along $\bb{P}^1$ except at the distinguished point, where it jumps by some $h \in \bb{Q}_{\geq 0}$, so that $\nu$ is represented in hyperspace by $(x,\ell,h)$, where $\ell = \nu_{x(t)}(e_\alpha)$. 

Note that for any $p$, $d_p \in [m,-m]$. Now suppose that $m \leq n$. We have $d_p \geq \min{\{\ord_t(\beta t^m-\alpha u(t)),-m\}} = \ord_t(\beta t^m-\alpha u(t)) \geq \min{\{m,n\}} = m$. Now if $d_p > m$, we have $d_p \in (m,-m]$. Otherwise $d_p = m$. Since $h$ is the difference between the maximum possible value of $d_p$ (which is $-m$) and the minimum, which we see lies in the interval $[m,-m)$, we have $h \in (0,-2m]$. Finally, $\ell$ is given by the value of $d_p$, which at non-distinguished points is $m$ and at the distinguished point is $m + h$.

In the case $m > n$, we have $d_p = n$ when $\alpha \neq 0$, and when $\alpha = 0$ (at the distinguished point), we have $d_p = m$. Hence $h = m - n$, $\ell = n$ for nondistinguished points and $\ell = n + h$ for $\infty$.

In either case we have $h > 0$ and the possible $(\ell,h)$ are defined for $p \neq \infty$ by the inequality $2\ell + h \leq 0$, and for $p = \infty$ by $2\ell-h \leq 0$. Re-including the central valuations allows $h \geq 0$. Thus we have valuation cones $\cal{V}_p = \{(\ell,h) \in \cal{H}_p \mid 2\ell + h \leq 0, h \geq 0\}$ for $p \neq \infty$, and $\cal{V}_{\infty} = \{(\ell,h) \in \cal{H}_{\infty}\mid 2\ell-h\leq 0, h \geq 0\}$. The picture of the hyperspace is thus:

\begin{center}
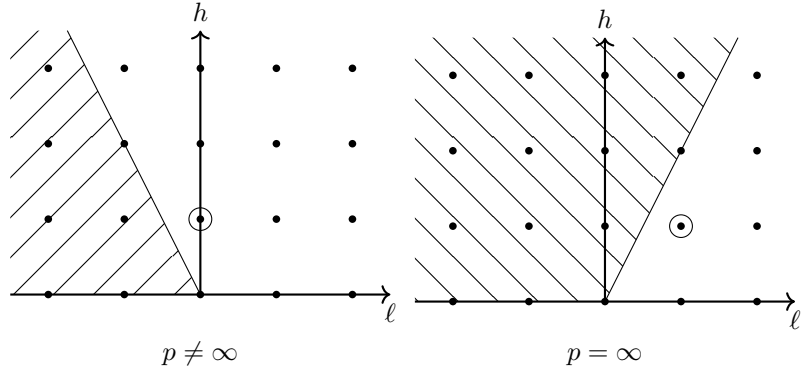
\begin{tikzpicture}
	\foreach \x in {-2,-1,0,1,2} {
		\foreach \y in {0,1,2,3} {
			\fill[color=black] (\x,\y) circle (0.05);
		}
	};
	\draw[thick][->] (-2.5,0) -- (2.5,0);
	\draw[thick][->] (0,0) -- (0,3.5);
	\draw[domain = -1.75:0] plot (\x, {-2*\x}); 
	\node[below] at (2.5,0) {$\ell$};
	\node[above] at (0,3.5) {$h$};
	\node[below] at (0,-0.5) {$p \neq \infty$};
	\draw (0,1) circle [radius=0.15];
	\begin{scope}[on background layer]
	\fill[pattern=my north east lines](-2.5,3.5) to (-1.75,3.5) to (0,0) to (-2.5,0);
	\end{scope}
	\end{tikzpicture}
	\begin{tikzpicture}
	\foreach \x in {-2,-1,0,1,2} {
		\foreach \y in {0,1,2,3} {
			\fill[color=black] (\x,\y) circle (0.05);
		}
	};
	\draw[thick][->] (-2.5,0) -- (2.5,0);
	\draw[thick][->] (0,0) -- (0,3.5);
	\draw[domain = 0:1.75] plot (\x, {2*\x}); 
	\node[below] at (2.5,0) {$\ell$};
	\node[above] at (0,3.5) {$h$};
	\node[below] at (0,-0.5) {$p = \infty$};
	\draw (1,1) circle [radius=0.15];
	\begin{scope}[on background layer]
	\fill[pattern=my north west lines](-2.5,0) to (-2.5,3.5) to (1.75,3.5) to (0,0);
	\end{scope}
	\end{tikzpicture}\end{center}
\begingroup
\captionof{figure}{Coloured data of $\sl{2}$}
\endgroup

\noindent where dashed areas denote the valuation cones and colours are denoted by unfilled circles.

\subsection{$\normalfont{SL}_2/\bb{Z}_m$}\label{SL2Zm}

The coloured data of $\sl{2}/\bb{Z}_m$ has a distinguished point as above, but the two points $0, \infty \in \proj{1}$ are also set apart since they are fixed by the $\bb{Z}_m$ action on $\proj{1}$ obtained via the $B$-quotient.

\begin{center}\resizebox{\linewidth}{!}{
		\begin{tikzpicture}
		\draw[thick][->] (-2.5,0) -- (2.5,0);
		\draw[thick][->] (0,0) -- (0,3.5);
		\draw[domain = -1.75:0] plot (\x, {-2*\x}); 
		\draw[fill=black] (0,3) circle [color=black,radius=0.05];
		\node[right] at (0,3) {$\overline{m}$};
		\draw[fill=black] (-1,0) circle [color=black,radius=0.05];
		\node[below] at (-0.75,0) {$-\frac{\overline{m}-1}{2}$};
		\draw[fill=black] (-1.5,0) circle [color=black,radius=0.05];
		\draw[fill=black] (-1.5,3) circle [color=black,radius=0.05];
		\node[below] at (-1.75,0) {$-\frac{\overline{m}}{2}$};
		\node[below] at (2.5,0) {$\ell$};
		\node[above] at (0,3.5) {$h$};
		\node[below] at (0,-0.5) {$p = 0,\infty$};
		\draw (-1,3) circle [radius=0.15];
		\begin{scope}[on background layer]
		\fill[pattern=my north east lines](-2.5,3.5) to (-1.75,3.5) to (0,0) to (-2.5,0);
		\end{scope}
		\end{tikzpicture}
		\begin{tikzpicture}
		\draw[thick][->] (-2.5,0) -- (2.5,0);
		\draw[thick][->] (0,0) -- (0,3.5);
		\draw[domain = -1.75:0] plot (\x, {-2*\x}); 
		\node[below] at (2.5,0) {$\ell$};
		\node[above] at (0,3.5) {$h$};
		\draw[fill=black] (0,2) circle [color=black,radius=0.05];
		\node[right] at (0,2) {$2$};
		\draw[fill=black] (-1,2) circle [color=black,radius=0.05];
		\draw[fill=black] (-1,0) circle [color=black,radius=0.05];
		\node[below] at (-1,0) {$-1$};
		\node[below] at (0,-0.5) {$p$ general};
		\draw (0,1) circle [radius=0.15];
		\begin{scope}[on background layer]
		\fill[pattern=my north east lines](-2.5,3.5) to (-1.75,3.5) to (0,0) to (-2.5,0);
		\end{scope}
		\end{tikzpicture}
		\begin{tikzpicture}
		\draw[thick][->] (-2.5,0) -- (2.5,0);
		\draw[thick][->] (0,0) -- (0,3.5);
		\draw[domain = 0:1.75] plot (\x, {2*\x}); 
		\node[below] at (2.5,0) {$\ell$};
		\node[above] at (0,3.5) {$h$};
		\draw[fill=black] (0,2) circle [color=black,radius=0.05];
		\node[right] at (0,2) {$2$};
		\draw[fill=black] (1,2) circle [color=black,radius=0.05];
		\draw[fill=black] (1,0) circle [color=black,radius=0.05];
		\node[below] at (1,0) {$1$};
		\node[below] at (0,-0.5) {$p$ distinguished};
		\draw (1,1) circle [radius=0.15];
		\begin{scope}[on background layer]
		\fill[pattern=my north west lines](-2.5,0) to (-2.5,3.5) to (1.75,3.5) to (0,0);
		\end{scope}
		\end{tikzpicture}}\end{center}
\begingroup
\captionof{figure}{Coloured data of $\sl{2}/\bb{Z}_m$}
\endgroup

\noindent where $\cal{Q} = \Lambda^*$, the $\ell$-axis, is given by $\bb{Z}$ if $m$ is odd and $\frac{1}{2}\bb{Z}$ if $m$ is even.

\subsection{$\normalfont{SL}_2/\tilde{D}_n$}\label{dihedral}

The binary dihedral and polyhedral groups have distinguished points corresponding to the faces, edges and vertices of their corresponding polygons.

\begin{center}\resizebox{\linewidth}{!}{\begin{tikzpicture}
		
		\draw[thick][->] (-2.5,0) -- (2.5,0);
		\draw[thick][->] (0,0) -- (0,3.5);
		\draw[domain = -2.5:0] plot (\x, {-\x}); 
		\draw[fill=black] (0,2) circle [color=black,radius=0.05];
		\node[right] at (0,2) {$n$};
		\draw[fill=black] (-1,0) circle [color=black,radius=0.05];
		\node[below] at (-1,0) {$1-n$};
		\draw[fill=black] (-2,0) circle [color=black,radius=0.05];
		\node[below] at (-2,0) {$-n$};
		\node[below] at (2.5,0) {$\ell$};
		\node[above] at (0,3.5) {$h$};
		\node[below] at (0,-0.5) {$p=p_f$};
		\draw (-1,2) circle [radius=0.15];
		\begin{scope}[on background layer]
		\fill[pattern=my north east lines](-2.5,2.5) to (0,0) to (-2.5,0);
		\end{scope}
		\end{tikzpicture}
		\begin{tikzpicture}
		\draw[thick][->] (-2.5,0) -- (2.5,0);
		\draw[thick][->] (0,0) -- (0,3.5);
		\node[below] at (2.5,0) {$\ell$};
		\node[above] at (0,3.5) {$h$};
		\draw[fill=black] (0,2) circle [color=black,radius=0.05];
		\node[right] at (0,2) {$2$};
		\draw[fill=black] (1,0) circle [color=black,radius=0.05];
		\node[below] at (1,0) {$1$};
		\node[below] at (0,-0.5) {$p=p_e,p_v$};
		\draw (1,2) circle [radius=0.15];
		\begin{scope}[on background layer]
		\fill[pattern=my north west lines](-2.5,0) to (-2.5,3.5) to (0,3.5) to (0,0);
		\end{scope}
		\end{tikzpicture}
		\begin{tikzpicture}
		\draw[thick][->] (-2.5,0) -- (2.5,0);
		\draw[thick][->] (0,0) -- (0,3.5);
		\draw[domain = -2.5:0] plot (\x, {-\x}); 
		\node[below] at (2.5,0) {$\ell$};
		\node[above] at (0,3.5) {$h$};
		\draw[fill=black] (0,1) circle [color=black,radius=0.05];
		\node[right] at (0.2,1) {$1$};
		\draw[fill=black] (-1,0) circle [color=black,radius=0.05];
		\node[below] at (-1,0) {$-1$};
		\node[below] at (0,-0.5) {$p \neq p_f,p_v,p_e$};
		\draw (0,1) circle [radius=0.15];
		\begin{scope}[on background layer]
		\fill[pattern=my north east lines](-2.5,2.5) to (0,0) to (-2.5,0);
		\end{scope}
		\end{tikzpicture}}\end{center}
\begingroup

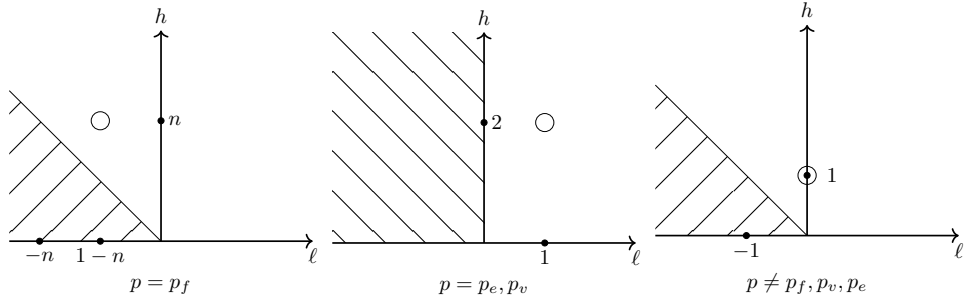
\captionof{figure}{Coloured data of $\sl{2}/\tilde{D}_n$}
\endgroup

\subsection{$\normalfont{SL}_2/\tilde{T}$}\label{tetrahedral}

\begin{center}\resizebox{\linewidth}{!}{\begin{tikzpicture}
		\draw[thick][->] (-2.5,0) -- (2.5,0);
		\draw[thick][->] (0,0) -- (0,3.5);
		\draw[domain = -2.5:0] plot (\x, {-\x}); 
		\draw[fill=black] (0,2) circle [color=black,radius=0.05];
		\node[right] at (0,2) {$2$};
		\draw[fill=black] (-1,0) circle [color=black,radius=0.05];
		\node[below] at (-1,0) {$-1$};
		\draw[fill=black] (-1,0) circle [color=black,radius=0.05];
		\node[below] at (2.5,0) {$\ell$};
		\node[above] at (0,3.5) {$h$};
		\node[below] at (0,-0.5) {$p = p_e$};
		\draw (-1,2) circle [radius=0.15];
		\begin{scope}[on background layer]
		\fill[pattern=my north east lines](-2.5,0) to (-2.5,2.5) to (0,0);
		\end{scope}
		\end{tikzpicture}
		\begin{tikzpicture}
		\draw[thick][->] (-2.5,0) -- (2.5,0);
		\draw[thick][->] (0,0) -- (0,3.5);
		\node[below] at (2.5,0) {$\ell$};
		\node[above] at (0,3.5) {$h$};
		\draw[fill=black] (1,0) circle [color=black,radius=0.05];
		\node[below] at (1,0) {$1$};
		\draw[fill=black] (0,3) circle [color=black,radius=0.05];
		\node[right] at (0,3) {$3$};
		\node[below] at (0,-0.5) {$p = p_v,p_f$};
		\draw (1,3) circle [radius=0.15];
		\begin{scope}[on background layer]
		\fill[pattern=my north west lines](-2.5,0) to (-2.5,3.5) to (0,3.5) to (0,0);
		\end{scope}
		\end{tikzpicture}
		\begin{tikzpicture}
		\draw[thick][->] (-2.5,0) -- (2.5,0);
		\draw[thick][->] (0,0) -- (0,3.5);
		\draw[domain = -2.5:0] plot (\x, {-\x}); 
		\node[below] at (2.5,0) {$\ell$};
		\node[above] at (0,3.5) {$h$};
		\draw[fill=black] (0,1) circle [color=black,radius=0.05];
		\node[right] at (0.2,1) {$1$};
		\draw[fill=black] (-1,0) circle [color=black,radius=0.05];
		\node[below] at (-1,0) {$-1$};
		\node[below] at (0,-0.5) {$p \neq p_f,p_v,p_e$};
		\draw (0,1) circle [radius=0.15];
		\begin{scope}[on background layer]
		\fill[pattern=my north east lines](-2.5,2.5) to (-2.5,0) to (0,0);
		\end{scope}
		\end{tikzpicture}}\end{center}
\begingroup
\captionof{figure}{Coloured data of $\sl{2}/\tilde{T}$}
\endgroup

\subsection{$\normalfont{SL}_2/\tilde{C}$}\label{cubic}

\begin{center}\begin{tikzpicture}
	\draw[thick][->] (-2.5,0) -- (2.5,0);
	\draw[thick][->] (0,0) -- (0,3.5);
	\draw[domain = -2.5:0] plot (\x, {-\x}); 
	\draw[fill=black] (0,2) circle [color=black,radius=0.05];
	\node[right] at (0,2) {$2$};
	\draw[fill=black] (-1,0) circle [color=black,radius=0.05];
	\node[below] at (-1,0) {$-1$};
	\draw[fill=black] (-1,0) circle [color=black,radius=0.05];
	\node[below] at (2.5,0) {$\ell$};
	\node[above] at (0,3.5) {$h$};
	\node[below] at (0,-0.5) {$p = p_e$};
	\draw (-1,2) circle [radius=0.15];
	\begin{scope}[on background layer]
	\fill[pattern=my north east lines](-2.5,0) to (-2.5,2.5) to (0,0);
	\end{scope}
	\end{tikzpicture}
	\begin{tikzpicture}
	\draw[thick][->] (-2.5,0) -- (2.5,0);
	\draw[thick][->] (0,0) -- (0,3.5);
	\node[below] at (2.5,0) {$\ell$};
	\node[above] at (0,3.5) {$h$};
	\draw[fill=black] (1,0) circle [color=black,radius=0.05];
	\node[below] at (1,0) {$1$};
	\draw[fill=black] (0,3) circle [color=black,radius=0.05];
	\node[right] at (0,3) {$4$};
	\node[below] at (0,-0.5) {$p = p_f$};
	\draw (1,3) circle [radius=0.15];
	\begin{scope}[on background layer]
	\fill[pattern=my north west lines](-2.5,0) to (-2.5,3.5) to (0,3.5) to (0,0);
	\end{scope}
	\end{tikzpicture}\end{center}

\begin{center}\begin{tikzpicture}
	\draw[thick][->] (-2.5,0) -- (2.5,0);
	\draw[thick][->] (0,0) -- (0,3.5);
	\node[below] at (2.5,0) {$\ell$};
	\node[above] at (0,3.5) {$h$};
	\draw[fill=black] (1,0) circle [color=black,radius=0.05];
	\node[below] at (1,0) {$1$};
	\draw[fill=black] (0,3) circle [color=black,radius=0.05];
	\node[right] at (0,3) {$3$};
	\node[below] at (0,-0.5) {$p = p_v$};
	\draw (1,3) circle [radius=0.15];
	\begin{scope}[on background layer]
	\fill[pattern=my north west lines](-2.5,0) to (-2.5,3.5) to (0,3.5) to (0,0);
	\end{scope}
	\end{tikzpicture}
	\begin{tikzpicture}
	\draw[thick][->] (-2.5,0) -- (2.5,0);
	\draw[thick][->] (0,0) -- (0,3.5);
	\draw[domain = -2.5:0] plot (\x, {-\x}); 
	\node[below] at (2.5,0) {$\ell$};
	\node[above] at (0,3.5) {$h$};
	\draw[fill=black] (0,1) circle [color=black,radius=0.05];
	\node[right] at (0.2,1) {$1$};
	\draw[fill=black] (-1,0) circle [color=black,radius=0.05];
	\node[below] at (-1,0) {$-1$};
	\node[below] at (0,-0.5) {$p \neq p_f,p_v,p_e$};
	\draw (0,1) circle [radius=0.15];
	\begin{scope}[on background layer]
	\fill[pattern=my north east lines](-2.5,2.5) to (-2.5,0) to (0,0);
	\end{scope}
	\end{tikzpicture}\end{center}
\begingroup
\captionof{figure}{Coloured data of $\sl{2}/\tilde{C}$}
\endgroup

\subsection{$\normalfont{SL}_2/\tilde{I}$}\label{icosahedral}

\begin{center}\begin{tikzpicture}
	\draw[thick][->] (-2.5,0) -- (2.5,0);
	\draw[thick][->] (0,0) -- (0,3.5);
	\draw[domain = -2.5:0] plot (\x, {-\x}); 
	\draw[fill=black] (0,2) circle [color=black,radius=0.05];
	\node[right] at (0,2) {$2$};
	\draw[fill=black] (-1,0) circle [color=black,radius=0.05];
	\node[below] at (-1,0) {$-1$};
	\draw[fill=black] (-1,0) circle [color=black,radius=0.05];
	\node[below] at (2.5,0) {$\ell$};
	\node[above] at (0,3.5) {$h$};
	\node[below] at (0,-0.5) {$p = p_e$};
	\draw (-1,2) circle [radius=0.15];
	\begin{scope}[on background layer]
	\fill[pattern=my north east lines](-2.5,0) to (-2.5,2.5) to (0,0);
	\end{scope}
	\end{tikzpicture}
	\begin{tikzpicture}
	\draw[thick][->] (-2.5,0) -- (2.5,0);
	\draw[thick][->] (0,0) -- (0,3.5);
	\node[below] at (2.5,0) {$\ell$};
	\node[above] at (0,3.5) {$h$};
	\draw[fill=black] (1,0) circle [color=black,radius=0.05];
	\node[below] at (1,0) {$1$};
	\draw[fill=black] (0,3) circle [color=black,radius=0.05];
	\node[right] at (0,3) {$3$};
	\node[below] at (0,-0.5) {$p = p_f$};
	\draw (1,3) circle [radius=0.15];
	\begin{scope}[on background layer]
	\fill[pattern=my north west lines](-2.5,0) to (-2.5,3.5) to (0,3.5) to (0,0);
	\end{scope}
	\end{tikzpicture}\end{center}

\begin{center}
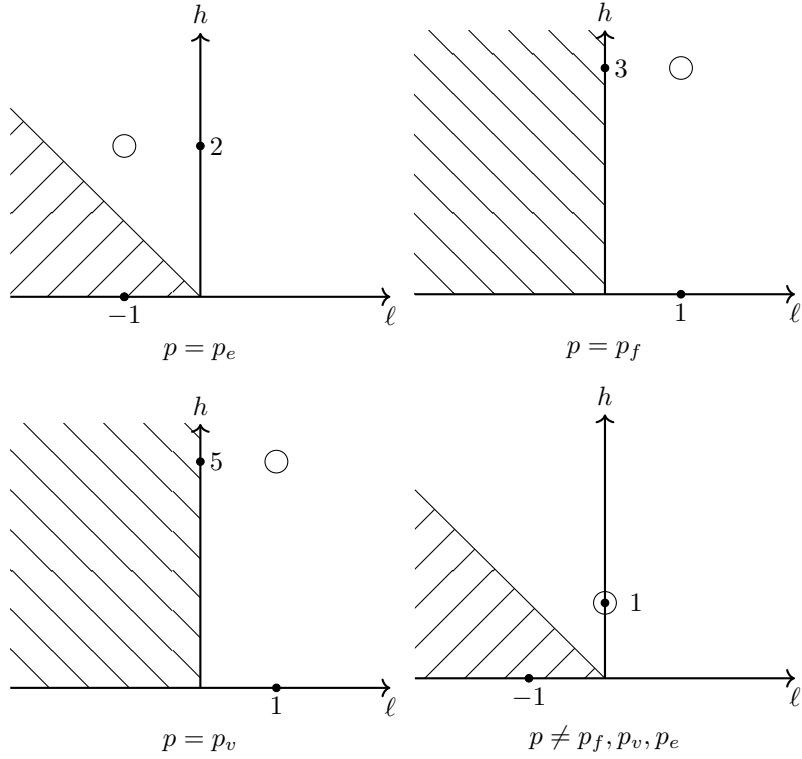
\begin{tikzpicture}
	\draw[thick][->] (-2.5,0) -- (2.5,0);
	\draw[thick][->] (0,0) -- (0,3.5);
	\node[below] at (2.5,0) {$\ell$};
	\node[above] at (0,3.5) {$h$};
	\draw[fill=black] (1,0) circle [color=black,radius=0.05];
	\node[below] at (1,0) {$1$};
	\draw[fill=black] (0,3) circle [color=black,radius=0.05];
	\node[right] at (0,3) {$5$};
	\node[below] at (0,-0.5) {$p = p_v$};
	\draw (1,3) circle [radius=0.15];
	\begin{scope}[on background layer]
	\fill[pattern=my north west lines](-2.5,0) to (-2.5,3.5) to (0,3.5) to (0,0);
	\end{scope}
	\end{tikzpicture}
	\begin{tikzpicture}
	\draw[thick][->] (-2.5,0) -- (2.5,0);
	\draw[thick][->] (0,0) -- (0,3.5);
	\draw[domain = -2.5:0] plot (\x, {-\x}); 
	\node[below] at (2.5,0) {$\ell$};
	\node[above] at (0,3.5) {$h$};
	\draw[fill=black] (0,1) circle [color=black,radius=0.05];
	\node[right] at (0.2,1) {$1$};
	\draw[fill=black] (-1,0) circle [color=black,radius=0.05];
	\node[below] at (-1,0) {$-1$};
	\node[below] at (0,-0.5) {$p \neq p_f,p_v,p_e$};
	\draw (0,1) circle [radius=0.15];
	\begin{scope}[on background layer]
	\fill[pattern=my north east lines](-2.5,2.5) to (-2.5,0) to (0,0);
	\end{scope}
	\end{tikzpicture}\end{center}
\begingroup
\captionof{figure}{Coloured data of $\sl{2}/\tilde{I}$}
\endgroup

\section{Coloured Hyperfans of Smooth Fano $\sl{2}$-Threefolds}\label{combo}

Here we calculate the coloured hyperfans of the smooth Fano $\sl{2}$-threefolds we are interested in.

\subsection{$\sl{2}$-Actions on Symmetric Powers}\label{symaction}

Many of the $\sl{2}$-actions on threefolds to be considered here will be induced by an $\sl{2}$-action on $\proj{n}$ for some $n$, often in the case where $\proj{n}$ is realised as the projectivisation of a symmetric power of $\bbk^2$. We describe these actions and some of their properties here, and will use the results throughout the remainder of this section.

\begin{proposition}\thlabel{simpleSL2}
	Let $G = \sl{2}$. Fix a Borel subgroup $B$ consisting of the upper triangular matrices in $G$. Then $\frak{X}(B) = \bb{Z}\alpha$, where $\alpha$ is the character $\begin{psmallmatrix}a & b \\ 0 & 1/a\end{psmallmatrix} \mapsto a$. The dominant weights are the non-negative integer multiples of $\alpha$, and the simple $G$-module of highest weight $n\alpha$ can be realised as the space $S^n\bbk^2 = \bbk[x,y]_n$ of homogeneous degree $n$ polynomials in 2 variables, where $G$ acts by linear change of variables.
\end{proposition}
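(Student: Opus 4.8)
The statement collects three standard facts about the representation theory of $G = \sl{2}$, and the plan is to establish them in turn, combining the structure theory of reductive groups with the explicit $\frak{sl}_2$-action on polynomials. First I would pin down $\frak{X}(B)$: because the target $\bb{G}_m$ is abelian, every character of $B$ annihilates the commutator subgroup $[B,B]$, which for the upper-triangular group is precisely the unipotent radical $U$ of upper-unitriangular matrices. Hence each character factors through $B/U$, and the diagonal torus $T$ gives a complement to $U$, so that $B/U \cong T \cong \bb{G}_m$ via $\diag(a,a^{-1}) \leftrightarrow a$. Therefore $\frak{X}(B) \cong \frak{X}(T) \cong \bb{Z}$, with generator the character $\alpha$ of the statement, yielding $\frak{X}(B) = \bb{Z}\alpha$.

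For the dominant weights I would use the Weyl chamber cut out by $B$. The positive root is the one whose root space lies in $\mathrm{Lie}(U)$, and the associated positive coroot is $\alpha^\vee\colon t \mapsto \diag(t,t^{-1})$; pairing shows $\langle n\alpha,\alpha^\vee\rangle \geq 0$ if and only if $n \geq 0$. Equivalently, the nontrivial Weyl element, represented by $\begin{psmallmatrix}0 & -1 \\ 1 & 0\end{psmallmatrix}$, acts on $\frak{X}(T)$ by $\alpha \mapsto -\alpha$, so the dominant ray is $\bb{Z}_{\geq 0}\alpha$. Thus the dominant weights are exactly the $n\alpha$ with $n \geq 0$.

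Finally, for the realisation I would invoke the theorem of the highest weight (see \cite{hum1}): since $G$ is semisimple and simply connected, its finite-dimensional irreducible modules are classified by dominant highest weights, giving a unique irreducible $V(n\alpha)$ for each $n \geq 0$. To identify $\bbk[x,y]_n$ with $V(n\alpha)$ I would let $G$ act by linear change of variables on the space of linear forms $\langle x,y\rangle$ and extend multiplicatively to $\bbk[x,y]_n = S^n\langle x,y\rangle$, then differentiate to obtain the induced $\frak{sl}_2$-action by first-order differential operators; up to the standard convention the raising, lowering, and Cartan generators act as $x\partial_y$, $y\partial_x$, and $x\partial_x - y\partial_y$. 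The monomial $x^n$ is then killed by the raising operator and is a Cartan eigenvector of eigenvalue $n$, hence a highest-weight vector of weight $n\alpha$, while repeated application of the lowering operator carries $x^n$ to every monomial $x^iy^{n-i}$. Since the extremal weight space is one-dimensional, any nonzero submodule must contain $x^n$ and hence all of $\bbk[x,y]_n$, so the module is irreducible and therefore equals $V(n\alpha)$.

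The character and Weyl-chamber computations are routine; the step needing genuine care is the realisation. One must track the sign convention in the induced action, since linear change of variables naturally produces the dual of the defining representation and so the weight of $x^n$ could a priori emerge as $-n\alpha$. I would resolve this by noting that the two-dimensional module is self-dual for $\sl{2}$ via its determinant pairing, so that $S^n$ of it and $S^n$ of its dual coincide and the highest weight is unambiguously $+n\alpha$; the irreducibility is then delivered cleanly by the one-dimensionality of the extremal weight space together with the transitivity of the lowering operator, avoiding any brute-force search through submodules.
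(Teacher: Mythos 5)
Your argument is correct, and on the central point---irreducibility of $\bbk[x,y]_n$---it rests on the same computation as the paper's: the vectors annihilated by the unipotent radical $U$ (equivalently, by the raising operator) form a single line, spanned by the extremal monomial. The packaging differs. The paper invokes complete reducibility: a nontrivial decomposition would give complementary summands each containing a nonzero $U$-invariant, which is impossible when the $U$-invariants form one line; it then checks that $B$ acts on that line with weight $n\alpha$ (in the paper's convention the maximal vector is $y^n$ rather than $x^n$, but your self-duality remark correctly disposes of the sign ambiguity). You instead argue directly that any nonzero submodule contains the extremal monomial and that the lowering operator sweeps it out to the whole space; this avoids complete reducibility but contains one step you elide: one-dimensionality of the \emph{extremal} weight space does not by itself force a nonzero submodule to contain $x^n$ (a summand of smaller highest weight in some ambient module would be a counterexample to that implication in general). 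What actually does the work is that \emph{every} weight space of $\bbk[x,y]_n$ is one-dimensional, so a nonzero $T$-stable submodule contains some monomial $x^iy^{n-i}$, and repeated application of the raising operator $x\partial_y$ carries it with nonzero coefficients (characteristic $0$) up to $x^n$ --- which is precisely the paper's observation that the only $U$-invariants are multiples of the extremal monomial, in Lie-algebra clothing. You also supply proofs of the claims about $\frak{X}(B)$ and the dominant weights, which the paper treats as standard and does not argue; those parts are routine and correct.
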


\begin{proof}
	The $G$-module $\bbk[x,y]_n$ is indeed simple: if not, it decomposes as a direct sum of $G$-submodules by complete reduciblity of modules for reductive groups. Each of these $G$-submodules must contain a nonzero $U$-invariant (see \cite[Thm 17.5]{hum}). But the only $U$-invariants in $\bbk[x,y]_n$ are scalar multiples of $y^n$, and complementary submodules cannot both nontrivially intersect a single line. A simple check shows that $B$ acts on $\bbk y^n$ with weight $n\alpha$. 
\end{proof}

\begin{proposition}\thlabel{rnc}
	By the above, $G$ acts on $\proj{n} = \bb{P}(S^n\bbk^2)$. Under this action, the rational normal curve $Z \sub \proj{n}$ defined as the image of $\proj{1} = \bb{P}(\bbk^2)$ under the degree $n$ Veronese map, is a $G$-orbit.	
\end{proposition}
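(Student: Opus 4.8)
The plan is to realise $Z$ as the image of the single $G$-orbit $\proj{1}$ under a $G$-equivariant morphism; it then follows formally that $Z$ is itself one $G$-orbit. First I would make the Veronese map explicit in the coordinates of \thref{simpleSL2}. Under the identification $S^n\bbk^2 = \bbk[x,y]_n$ we have $\bbk^2 = S^1\bbk^2 = \bbk[x,y]_1$, so a point of $\proj{1} = \bb{P}(\bbk^2) = \bb{P}(\bbk[x,y]_1)$ is the line spanned by a nonzero linear form $\ell$, and the degree $n$ Veronese map is $\nu_n \colon [\ell] \mapsto [\ell^n]$. Thus $Z = \nu_n(\proj{1})$ is precisely the locus of $n$-th powers of linear forms, taken up to scalar.

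The key step is to check that $\nu_n$ is $G$-equivariant. Here the crucial observation is that the $G$-action on $\bbk[x,y]_n$ by linear change of variables is the restriction of the $G$-action on the entire polynomial ring $\bbk[x,y]$, and this latter action is by \emph{graded algebra automorphisms}: each $g \in G$ acts by substitution and therefore respects products. Consequently $g \cdot (\ell^n) = (g \cdot \ell)^n$ for every $\ell \in \bbk[x,y]_1$, and projectivising gives $g \cdot \nu_n([\ell]) = [g\cdot(\ell^n)] = [(g\cdot \ell)^n] = \nu_n(g\cdot[\ell])$, which is exactly the equivariance required.

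Finally I would invoke transitivity: $G = \sl{2}$ acts transitively on $\proj{1} = \bb{P}(\bbk^2)$, since any nonzero vector extends to a basis which can be rescaled to have determinant $1$, so $G$ is already transitive on $\bbk^2 \setminus \{0\}$ and hence on its lines. The image of a single orbit under an equivariant map is a single orbit: fixing a base point $z_0 = \nu_n(p_0) \in Z$, any other $z = \nu_n(p) \in Z$ satisfies $p = g \cdot p_0$ for some $g \in G$, whence $z = g \cdot z_0$; thus $Z = G \cdot z_0$. The argument is essentially formal once equivariance is in hand, so I do not anticipate a genuine obstacle. The only point requiring care is the bookkeeping of conventions from \thref{simpleSL2} — specifically confirming that the action there identifies $g \cdot v^n$ with $(g\cdot v)^n$ on the nose, rather than introducing a twist by a character or by passage to the dual representation, since it is this exact compatibility that makes $\nu_n$ equivariant.
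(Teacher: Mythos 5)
Your proposal is correct and follows essentially the same route as the paper: both reduce to the transitivity of $\sl{2}$ on $\proj{1}$ together with the $G$-equivariance of $\nu_n$, the latter coming from the fact that the action by linear substitution respects products, so $g\cdot(\ell^n)=(g\cdot\ell)^n$. Your extra remark about checking there is no twist by a character is a reasonable precaution but is immediate from the definition in \thref{simpleSL2}.
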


\begin{proof}
	Since $G$ acts transitively on $\proj{1}$ under the standard linear action, it suffices to show that $\nu_n$ is $G$-equivariant. But $\nu_n$ maps $[x:y]$ to $[x^n:x^{n-1}y:\ldots:xy^{n-1}:y^n]$ and since the $G$-action on $\proj{n}$ is defined by the same linear changes of the variables $x, y$ as the action on $\proj{1}$, equivariance follows immediately.	
\end{proof}

\begin{proposition}\thlabel{semi}
	Let $\proj{n} = \bb{P}(S^n\bbk^2)$ $(n \geq 2)$ have homogeneous co-ordinates $z_k$, $0 \leq k \leq n$. Then (where they are regular) $z_n$ is a $B$-semi-invariant of weight $n\alpha$ and $z_{n-2}z_n - z_{n-1}^2$ is a semi-invariant of weight $(2n-4)\alpha$.
\end{proposition}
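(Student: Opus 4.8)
The plan is to verify $B$-semi-invariance directly on the affine cone, where the homogeneous coordinates $z_k$ are honest linear forms and ``semi-invariant of weight $\mu$'' means literally $b\cdot z=\mu(b)z$ for all $b\in B$. Recall from \thref{simpleSL2} that $G=\sl{2}$ acts on $S^n\bbk^2=\bbk[x,y]_n$ by linear substitution in the variables, with basis monomials $x^{n-k}y^k$ for $0\le k\le n$, and that $y^n$ is the highest-weight vector of weight $n\alpha$ on which the unipotent subgroup $U$ of $B$ acts trivially. I take $z_k$ to be the coordinate attached to the monomial $x^{n-k}y^k$, so that $z_n$ corresponds to the highest-weight direction $y^n$. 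The proof then has three steps: (i) read off the torus weight of each $z_k$; (ii) compute the action of $U$ on the $z_k$; (iii) combine these to check both claims.

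First I would compute weights under the maximal torus $T=\{\diag(a,a^{-1})\}$. Substituting into $x^{n-k}y^k$ shows that each $z_k$ is a $T$-eigenvector of weight $(2k-n)\alpha$. In particular $z_n$ has weight $n\alpha$, and $z_{n-2},z_{n-1},z_n$ have weights $(n-4)\alpha,(n-2)\alpha,n\alpha$ respectively, so that $z_{n-2}z_n$ and $z_{n-1}^2$ both have weight $(2n-4)\alpha$. This already fixes the weights claimed in the statement; the remaining content is invariance under $U$, on which every character is trivial.

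Next I would pin down the $U$-action. Writing $u_t=\begin{psmallmatrix}1&t\\0&1\end{psmallmatrix}$, the change of variables on the monomial basis gives, after expanding and re-collecting, a unipotent formula of the shape $u_t\cdot z_k=\sum_{j\ge 0}\binom{n-k}{j}(-t)^j z_{k+j}$, which raises the index toward the highest weight and in particular fixes $z_n$. This immediately yields the first claim: $z_n$ is $U$-invariant and a $T$-eigenvector of weight $n\alpha$, hence a $B$-semi-invariant of weight $n\alpha$. For the quadratic I would substitute $u_t\cdot z_n=z_n$, $u_t\cdot z_{n-1}=z_{n-1}-tz_n$ and $u_t\cdot z_{n-2}=z_{n-2}-2tz_{n-1}+t^2z_n$ into $z_{n-2}z_n-z_{n-1}^2$; the terms linear and quadratic in $t$ cancel, leaving the expression unchanged, so it too is $U$-invariant and therefore a $B$-semi-invariant of weight $(2n-4)\alpha$. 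Passing from the cone back to $\proj{n}$, where $z_n$ is a section of $\cal{O}(1)$ and $z_{n-2}z_n-z_{n-1}^2$ a section of $\cal{O}(2)$, gives the statement where these sections are regular.

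The only real obstacle is obtaining the explicit $U$-action on the coordinates and keeping the sign and index conventions consistent, so that $z_n$ genuinely corresponds to the highest-weight vector $y^n$ rather than to $x^n$. Once the formula for $u_t\cdot z_k$ is in hand, both verifications reduce to the short binomial cancellation indicated above, and the torus weights are immediate.
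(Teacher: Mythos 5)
Your proof is correct and takes essentially the same approach as the paper: identify $z_n$ with the highest-weight direction $y^n$ and verify the claims by writing out the explicit action of $B$ on the coordinates $z_k$. Your only variation is to factor $B=TU$ and check the torus weight and the unipotent invariance separately, which is the same computation packaged slightly differently.
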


\begin{proof}
	Since $z_n$ corresponds to $y^n \in S^n\bbk^2$, it is a semi-invariant of weight $n\alpha$ by \thref{simpleSL2}. For $z_{n-2}z_n - z_{n-1}^2$, note that $\begin{psmallmatrix}a & b \\ 0 & 1/a\end{psmallmatrix} \in B$ maps the co-ordinate function $z_k$ to the linear polynomial given by making the replacement $x^{n-m}y^m \mapsto z_m$ in the expression $(ax+by)^{n-m}y^m/a^m$. It is then straightforward to check the claim.
\end{proof}

\begin{proposition}\thlabel{Tweights}
	Each co-ordinate function $z_k$ on $\proj{n} = S^n\bbk^2$ is a $T$-eigenvector of weight $(2k-n)\alpha$, where $T \sub B$ is a maximal torus. It follows that a homogeneous $B$-eigenfunction of degree $d$ must be a linear combination of monomials $z_{k_1}\cdots z_{k_d}$ with $\sum_{i=1}^{d}{k_i} = \frac{1}{2}(m + dn)$. In particular, a $G$-invariant divisor of degree $d$ in $\proj{n}$ must be defined by a linear combination of such monomials with $\sum_{i=1}^{d}{k_i} = \frac{dn}{2}$.
\end{proposition}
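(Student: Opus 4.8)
The plan is to reduce everything to a computation of torus weights: determine the $T$-weight of each coordinate function $z_k$, deduce the weight of an arbitrary degree-$d$ monomial in the $z_k$, and then read off the constraints on $\sum_i k_i$ from the hypothesis that the function is a $B$-eigenfunction, respectively that the divisor is $G$-invariant.

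First I would determine the weight of $z_k$. By \thref{simpleSL2} the basis of $S^n\bbk^2 = \bbk[x,y]_n$ dual to the coordinate functions is $\{x^{n-k}y^k\}_{0 \le k \le n}$, with $z_k$ extracting the coefficient of $x^{n-k}y^k$. Restricting the $B$-action described in the proof of \thref{semi} to the maximal torus $T = \{\diag(a,a^{-1})\} \subseteq B$ (that is, setting $b = 0$), the substitution $x \mapsto ax$, $y \mapsto a^{-1}y$ scales the basis monomial $x^{n-k}y^k$ by $a^{n-2k}$, so the dual coordinate function $z_k$ is scaled by $a^{2k-n}$. Hence $z_k$ is a $T$-eigenvector of weight $(2k-n)\alpha$. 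As a consistency check against \thref{semi}: at $k=n$ this gives weight $n\alpha$, matching the stated weight of $z_n$.

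Next I would assemble the weight of a monomial. Since $T$ acts diagonally and multiplicatively on the monomial basis of the homogeneous coordinate ring, a monomial $z_{k_1}\cdots z_{k_d}$ of degree $d$ is a $T$-eigenvector of weight
\[
\sum_{i=1}^{d}(2k_i - n)\alpha = \Bigl(2\sum_{i=1}^{d}k_i - dn\Bigr)\alpha.
\]
A homogeneous $B$-eigenfunction of degree $d$ and weight $m\alpha$ is in particular a $T$-eigenvector, and the distinct monomials of degree $d$ are eigenvectors for (possibly repeated) weights, so such a function must be a linear combination of those degree-$d$ monomials whose weight equals $m\alpha$. Equating weights gives $2\sum_i k_i - dn = m$, i.e. $\sum_{i} k_i = \tfrac12(m + dn)$, as claimed. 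Finally, for a $G$-invariant divisor of degree $d$, written as the zero locus of an $f \in \bbk[z_0,\dots,z_n]_d$ determined up to scalar, $G$-invariance forces $g \cdot f = \chi(g)f$ for some $\chi \in \frak{X}(G)$; since $G = \sl{2}$ is semisimple its character group is trivial, so $f$ is genuinely $G$-invariant, hence $B$-invariant of weight $m\alpha = 0$, and substituting $m=0$ yields $\sum_i k_i = \tfrac{dn}{2}$.

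The step needing the most care is the duality bookkeeping: the $z_k$ are coordinate functions, so they lie in the dual of $S^n\bbk^2$ and their weights are the negatives of the weights of the basis monomials — this sign is precisely what makes the weight of $z_k$ equal $(2k-n)\alpha$ rather than $(n-2k)\alpha$, and it is what keeps the computation consistent with the weight of $z_n$ fixed in \thref{semi}. The only genuinely conceptual (rather than computational) point is the passage from "$G$-semi-invariant" to "weight $0$", which relies on $\frak{X}(\sl{2})$ being trivial; the remaining steps are routine weight arithmetic.
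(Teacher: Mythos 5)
Your proof is correct and follows essentially the same route as the paper's: compute the $T$-weight of each $z_k$ by a direct torus calculation, equate the $T$-weight of a $B$-eigenfunction with that of each of its monomials, and use the triviality of $\frak{X}(\sl{2})$ to reduce $G$-semi-invariance to $B$-weight zero. Your explicit duality bookkeeping for the sign of the weight of $z_k$ is slightly more careful than the paper's, but the argument is the same.
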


\begin{proof}
	An element $\begin{psmallmatrix}a & 0 \\ 0 & 1/a\end{psmallmatrix} \in T$ acts on $z_k$ by $z_k \mapsto (ax)^{n-k}(y/a)^k \mapsto a^{n-2k}z_k$, so $z_k$ has weight $(2k-n)\alpha$. The $B$-weight of a $B$-eigenfunction must equal the $T$-weight of the same function, which must in turn equal the weight of any of its individual terms. Hence for a homogeneous polynomial of degree $d$, constructed from monomials $z_{k_1}\cdots z_{k_d}$, to be a $B$-eigenfunction of weight $m\alpha$ we must have $\sum_{i=1}^{d}{(2k_i-n)} = m$, or $\sum_{i=1}^{d}{k_i} = \frac{1}{2}(m+dn)$. Finally, a $G$-invariant divisor must be defined by a $G$-semi-invariant homogeneous polynomial. Since $G$ (being a perfect group) has no nontrivial characters, such a polynomial must in particular have $B$-weight 0, from which the final claim follows.
\end{proof}

\subsection{Blow-up of $\proj{3}$ Along Three Lines (4.6)}\label{46}

\subsubsection*{Hyperfan of $\proj{3}$}

Let $G = \sl{2}$ act on $X = \bb{P}(M_2(\bbk)) \cong \bb{P}^3$ by left multiplication of matrices. Then the orbit $B \cdot \left( \begin{smallmatrix} 1 & 0 \\ 0 & 1\end{smallmatrix}\right)$ is $\big\{\left( \begin{smallmatrix} x & y \\ 0 & 1\end{smallmatrix}\right)\in \proj{3}\big\}$. This point has stabiliser $\bb{Z}_2$ and the orbit therefore has dimension $\dim{B} = 2$. This must be a maximal orbit and so this action has complexity one. The $G$-orbit of the same point is $\big\{\left( \begin{smallmatrix} x & y \\ z & w \end{smallmatrix}\right) \in \proj{3} \mid xw-yz \neq 0\big\} = \pgl{2}$. This is an open subset of $\bb{P}^3$, so we are in the quasihomogeneous case, and $\bb{P}^3$ is an embedding of $\pgl{2} = \sl{2}/\bb{Z}_2 = G/H$. The degenerate matrices constitute a $G$-stable prime divisor $D = \cal{Z}(xw-yz)$ of $\proj{3}$, which contains all of its closed orbits. 

There is a family of colours parameterised by points in $\proj{1}$: namely, for $p = [\alpha \colon \beta] \in \proj{1}$, the divisor $D_p = \cal{Z}(\beta z - \alpha w)$ is a colour.

\subsubsection*{Coloured Hyperspace}

We know from \ref{SL2Zm} that the weight lattice $\Lambda$ of $\sl{2}/\bb{Z}_2$ is $\bb{Z}\cdot 2\alpha$, so we identify its dual $\cal{Q}$ with $\frac{1}{2}\bb{Z}$, and the field of $B$-invariants can be generated by $z/w$. In the case of $\sl{2}/\bb{Z}_2$, to keep the convention of our analysis of $\sl{2}$, we would choose a distinguished semi-invariant $e_{2\alpha} = z^2 \in \bbk[G/H]^{(B)}_{2\alpha}$. This corresponds to the rational function $F = z^2/(xw-yz)$ on $\proj{3}$. It is still a semi-invariant of weight $2\alpha$ and we will set $e_{2\alpha} = F$. 

In the hyperspace, the choice of $e_{2\alpha}$ means that $\infty = [0:1]$ is distinguished, since its pullback under the $B$-quotient map is the divisor of $F$. For all other points, the valuation cone is defined by $\ell + h \leq 0$ (not $2\ell + h$ as before since we identify $\cal{Q}$ with $\frac{1}{2}\bb{Z}$), and the colour $D_p$ lies at $(0,1)$ in $\cal{H}_p$. For $p = \infty$, the valuation cone is $\ell - h \leq 0$, and the colour $D_p$ lies at $(2,1)$. Indeed, we have $\ell_{D_\infty} = \nu_{D_\infty}(e_{2\alpha}) = \nu_{z}(z^2/(xw-yz)) = 2$.

Let $D := \cal{Z}(xw-yz) \sub \proj{3}$ be the divisor of degenerate matrices, with associated valuation $\nu_D$. This valuation is $G$-invariant and geometric, so lies in $\cal{V}$. To locate $\nu_D$ in the hyperspace $\cal{H}$, first note that $\nu_D(z/w) = 0$, so $\nu_D$ has trivial restriction to $K^B$ and is thus central. We also have $\nu_D(e_{2\alpha}) = \nu_D(z^2/(xw-yz)) = -1$, so $\nu_D$ sits at $(-1,0)$ in the centre of the hyperspace. Since any positive rational multiple of $\nu_D$ is also a $G$-valuation, the cone of central valuations $\cal{K} = \cal{V} \inter \cal{Z}$ is $\bb{Q}_{\leq 0}$.

\subsubsection*{Coloured Data}

As has been noted, all closed $G$-orbits in $X$ lie in the divisor of degenerate matrices $D$. Since we are projectivising $2 \times 2$ matrices, $D$ must consist exclusively of (projectivisations of) rank 1 matrices. It is not difficult to check that each closed orbit consists of matrices with a given kernel, so they are parameterised by $\proj{1}$. For $p \in \proj{1}$ we write $Y_p$ for the closed orbit of matrices whose kernel is the line in $\bbk^2$ represented by $p$. Each colour $D_p$ contains the closed orbit $Y_p$ and this orbit is contained in no other $B$-divisor, so the coloured data of the $G$-germs in $X$ are as follows: $\cal{V}_{Y_p} = \{\nu_D\}$, $\cal{D}^B_{Y_p} = \{D_p\}$; $\cal{V}_D = \{\nu_D\}, \cal{D}^B_{D} = \emptyset$.

Thus for $p \neq \infty$, the minimal $G$-germ $Y_p$ corresponds to the coloured cone in $\cal{H}_p$ spanned by the colour $D_p$ at $(0,1)$ and the $G$-divisor $D$ at $(-1,0)$, i.e. it is the upper-left quarter-plane. Similarly for $p = \infty$ the coloured cone is spanned by $D_\infty = (2,1)$ and $D$. We can see that for any $p$, the coloured cones spanned by the minimal $G$-germs cover $\cal{V}_p$ entirely, in accordance with completeness of $\proj{3}$. Hence the coloured hyperfan of $\proj{3}$ looks as follows:

\begin{center}\begin{tikzpicture}
	\draw[ultra thick](-2.5,0) -- (0,0);
	\draw[semithick][->](0,0) -- (2.5,0);
	\draw[ultra thick][->] (0,0) -- (0,3.5);
	\draw[dashed][domain = -2.5:0] plot (\x, {-\x}); 
	\node[below] at (2.5,0) {$\ell$};
	\node[above] at (0,3.5) {$h$};
	\draw[fill=black] (-1,0) circle [color=black,radius=0.15];
	\node[below] at (-1,-0.25) {$D$};
	\node[below] at (0,-1) {$p \neq \infty$};
	\draw (0,1) circle [radius=0.15];
	\node[right] at (0.1,1) {$D_p$};
	\fill[pattern=my north east lines](-2.5,0) to (0,0) to (0,3.5) to (-2.5,3.5);
	\end{tikzpicture}
	\begin{tikzpicture}
	\draw[ultra thick](-2.5,0) -- (0,0);
	\draw[semithick][->](0,0) -- (2.5,0);
	\draw[semithick][->] (0,0) -- (0,3.5);
	\draw[dashed][domain = 0:2.5] plot (\x, {\x}); 
	\draw[ultra thick](0,0) -- (2.5,1.25);
	\node[below] at (2.5,0) {$\ell$};
	\node[above] at (0,3.5) {$h$};
	\draw[fill=black] (-1,0) circle [color=black,radius=0.15];
	\node[below] at (-1,-0.25) {$D$};
	\node[below] at (0,-1) {$p = \infty$};
	\draw (2,1) circle [radius=0.15];
	\node[right] at (2.15,0.9) {$D_p$};
	\fill[pattern=my north west lines](-2.5,0) to (-2.5,3.5) to (2.5,3.5) to (2.5,1.25) to (0,0);
	\end{tikzpicture}\end{center}
\begingroup
\captionof{figure}{Coloured hyperfan of $\proj{3}$ (linear action)}
\endgroup

\noindent where filled circles represent $G$-divisors, unfilled circles represent colours, thick lines indicate rays spanned by $G$-germs and $B$-divisors, hatched areas show the coloured cones generated by minimal $G$-germs, and dashed lines show the boundaries of the valuation cones.

\subsubsection*{Blow-up of One Line}

The closed $G$-orbits $Y_p \sub \proj{3}$, where $p = [\alpha \colon \beta] \in \proj{1}$, consist of matrices whose kernel is the line in $\bbk^2$ represented by $p$. That is, $Y_p = \cal{Z}(\beta x - \alpha y, \beta z - \alpha w)$. Each of these closed orbits is a $G$-stable line in $\proj{3}$, and they are mutually disjoint. We will obtain our example by blowing up three of these lines, which can be chosen arbitrarily. First we investigate what happens to the coloured data and hyperspace after one blow-up, and the rest follows easily.

Let $0 = [1:0] \in \proj{1}$ and consider $Y_0 = \cal{Z}(y,w) \sub \proj{3}$. Let $X := \bl_{Y_0}(\proj{3}) = \cal{Z}(yv-wu) \sub \proj{1} \times \proj{3}$. Note that under the blow-up, the colours $D_p$ and closed orbits $Y_p$ of $\proj{3}$ where $p \neq 0$ pull back isomorphically to $X$, and since $Y_0$ is $G$-stable, the blow-up is equivariant. 

The exceptional divisor of this blow-up is $E_0=\cal{Z}(y,w) \sub \proj{1} \times \proj{3}$ and the strict transform of the divisor of degenerate matrices is $\tilde{D}=\cal{Z}(xw-yz,uz-vx,yv-uw) \sub \proj{1} \times \proj{3}$. These are the only $G$-stable prime divisors in $X$, and their intersection is the curve $\cal{Z}(uz-vx,y,w)$. Together $\tilde{D}, E_0$, their intersection and the closed orbits $Y_p$ $(p\neq 0)$ constitute all $G$-germs of $X$. Meanwhile, the colours of $X$ are the colours $D_p$ $(p\neq 0)$ of $\proj{3}$ and the strict transform $\tilde{D}_0 = \cal{Z}(w,v)$. 

Hence the coloured data of the $G$-germs of $X$ are as follows: for $p \neq 0$ we have $\cal{V}_{Y_p} = \{\nu_{\tilde{D}}\}$, $\cal{D}^B_{Y_p} = \{\tilde{D}_p\}$. Then also $\cal{V}_{\tilde{D} \inter E_0} = \{\nu_{\tilde{D}},\nu_{E_0}\}$, $\cal{D}^B_{\tilde{D} \inter E_0} = \emptyset$, $\cal{V}_{\tilde{D}} = \{\nu_{\tilde{D}}\}$, $\cal{D}^B_{\tilde{D}} = \emptyset$, $\cal{V}_{E_0} = \{\nu_{E_0}\}$, $\cal{D}^B_{E_0} = \emptyset$.

The set-up of the hyperspace is unchanged from the example of $\proj{3}$: $\Lambda$ is generated by $2\alpha$, $\cal{Q}$ is identified with $\frac{1}{2}\bb{Z}$, we choose the splitting $e_{2\alpha} = z^2/(xw-yz)$ (marking $\infty$) as the distinguished point), and the valuation cones are defined by $\ell + h \leq 0$ for $p \neq \infty$ and $\ell - h \leq 0$ for $p = \infty$. It remains to locate the colours and $G$-divisors. 

The central divisor $\tilde{D}$ still sits at $(-1,0)$ in every section of the hyperspace, as before.  For $p = 0$, the colour $\tilde{D}_{0}$ sits at $(0,1)$ and the $G$-divisor $E_0$ sits at $(-1,1)$. The coloured cone defined by the minimal $G$-germ $\tilde{D} \inter E_0$ spans the rays defined by $\tilde{D}$ and $E_0$. For $p \neq 0$, the colour $\tilde{D}_p$ sits at $(0,1)$ as before and the cone defined by $\tilde{Y}_p$ spans the rays defined by $D$ and $\tilde{D}_p$. Again we see that the coloured cones defined by the various $G$-germs all cover the valuation cone in each slice of the hyperspace, as required by completeness of $X$. Thus the coloured hyperfan looks like:

\begin{center}\resizebox{\linewidth}{!}{\begin{tikzpicture}
		\draw[ultra thick](-2.5,0) -- (0,0);
		\draw[semithick][->](0,0) -- (2.5,0);
		\draw[ultra thick][->] (0,0) -- (0,3.5);
		\draw[dashed][domain = -2.5:0] plot (\x, {-\x}); 
		\node[below] at (2.5,0) {$\ell$};
		\node[above] at (0,3.5) {$h$};
		\draw[fill=black] (-1,0) circle [color=black,radius=0.15];
		\node[below] at (-1,-0.25) {$\tilde{D}$};
		\node[below] at (0,-1) {$p \neq 0, \infty$};
		\draw (0,1) circle [radius=0.15];
		\node[right] at (0.1,1) {$\tilde{D}_p$};
		\fill[pattern=my north east lines](-2.5,0) to (0,0) to (0,3.5) to (-2.5,3.5);
		\end{tikzpicture}
		\begin{tikzpicture}
		\draw[ultra thick](-2.5,0) -- (0,0);
		\draw[semithick][->](0,0) -- (2.5,0);
		\draw[semithick][->] (0,0) -- (0,3.5);
		\draw[ultra thick][domain = -2.5:0] plot (\x, {-\x}); 
		\node[below] at (2.5,0) {$\ell$};
		\node[above] at (0,3.5) {$h$};
		\draw[fill=black] (-1,0) circle [color=black,radius=0.15];
		\node[below] at (-1,-0.25) {$\tilde{D}$};
		\node[below] at (0,-1) {$p = 0$};
		\draw (0,1) circle [radius=0.15];
		\node[right] at (0.1,1) {$\tilde{D}_0$};
		\draw[fill=black] (-1,1) circle  [color=black,radius=0.15];
		\node[above] at (-1,1.1) {$E_0$};
		\fill[pattern=my north east lines](-2.5,0) to (-2.5,2.5) to (0,0);
		\end{tikzpicture}
		\begin{tikzpicture}
		\draw[ultra thick](-2.5,0) -- (0,0);
		\draw[semithick][->](0,0) -- (2.5,0);
		\draw[semithick][->] (0,0) -- (0,3.5);
		\draw[dashed][domain = 0:2.5] plot (\x, {\x}); 
		\draw[ultra thick](0,0) -- (2.5,1.25);
		\node[below] at (2.5,0) {$\ell$};
		\node[above] at (0,3.5) {$h$};
		\draw[fill=black] (-1,0) circle [color=black,radius=0.15];
		\node[below] at (-1,-0.25) {$\tilde{D}$};
		\node[below] at (0,-1) {$p = \infty$};
		\draw (2,1) circle [radius=0.15];
		\node[right] at (2.15,0.9) {$\tilde{D}_p$};
		\fill[pattern=my north west lines](-2.5,0) to (-2.5,3.5) to (2.5,3.5) to (2.5,1.25) to (0,0);
		\end{tikzpicture}}\end{center}
\begingroup

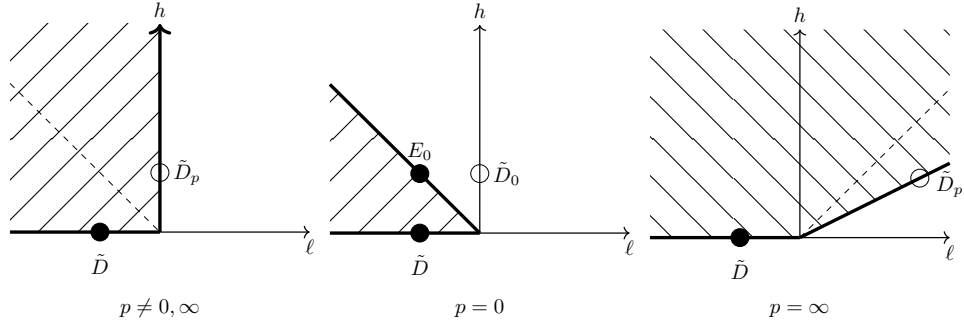
\captionof{figure}{Coloured hyperfan of the blow-up of $\proj{3}$ along one line}
\endgroup

\subsubsection*{Blow-up of Three Lines}

Now we can go back to $\proj{3}$, choose three arbitrary non-distinguished points (say $q, r, s \in \proj{1}\setminus{\{\infty}\})$ and successively blow up their corresponding $G$-orbits, in this case $Y_q$, $Y_r$ and $Y_s$. From the calculations above for the first blow up it is clear what happens as far as $G$-germs and the hyperspace are concerned: the slices of hyperspace corresponding to all points $p \neq q, r, s$ will be unchanged from their description above, and in each of the slices corresponding to $q, r, s$ there will be a new $G$-divisor (the exceptional divisor of the blow-up) sitting at $(-1,1)$, while the corresponding colour does not move from $(0,1)$. The minimal $G$-germs are $Y_p$ for $p \neq q, r, s$ and $\tilde{D} \inter E_p$ for $p=q, r, s$. The former define coloured cones bounded by $\tilde{D}$ and $\tilde{D}_p$ in $\cal{H}_p$ ($p \neq q, r, s)$ and the latter define coloured cones bounded by $\tilde{D}$ and $E_p$ in $\cal{H}_p$ ($p = q, r, s)$. Hence we get the following coloured hyperfan:

\begin{center}\resizebox{\linewidth}{!}{\begin{tikzpicture}
		\draw[ultra thick](-2.5,0) -- (0,0);
		\draw[semithick][->](0,0) -- (2.5,0);
		\draw[ultra thick][->] (0,0) -- (0,3.5);
		\draw[dashed][domain = -2.5:0] plot (\x, {-\x}); 
		\node[below] at (2.5,0) {$\ell$};
		\node[above] at (0,3.5) {$h$};
		\draw[fill=black] (-1,0) circle [color=black,radius=0.15];
		\node[below] at (-1,-0.25) {$\tilde{D}$};
		\node[below] at (0,-1) {$p \neq q,r,s, \infty$};
		\draw (0,1) circle [radius=0.15];
		\node[right] at (0.1,1) {$\tilde{D}_p$};
		\fill[pattern=my north east lines](-2.5,0) to (0,0) to (0,3.5) to (-2.5,3.5);
		\end{tikzpicture}
		\begin{tikzpicture}
		\draw[ultra thick](-2.5,0) -- (0,0);
		\draw[semithick][->](0,0) -- (2.5,0);
		\draw[semithick][->] (0,0) -- (0,3.5);
		\draw[ultra thick][domain = -2.5:0] plot (\x, {-\x}); 
		\node[below] at (2.5,0) {$\ell$};
		\node[above] at (0,3.5) {$h$};
		\draw[fill=black] (-1,0) circle [color=black,radius=0.15];
		\node[below] at (-1,-0.25) {$\tilde{D}$};
		\node[below] at (0,-1) {$p = q,r,s$};
		\draw (0,1) circle [radius=0.15];
		\node[right] at (0.1,1) {$\tilde{D}_p$};
		\draw[fill=black] (-1,1) circle  [color=black,radius=0.15];
		\node[above] at (-1,1.1) {$E_p$};
		\fill[pattern=my north east lines](-2.5,0) to (-2.5,2.5) to (0,0);
		\end{tikzpicture}
		\begin{tikzpicture}
		\draw[ultra thick](-2.5,0) -- (0,0);
		\draw[semithick][->](0,0) -- (2.5,0);
		\draw[semithick][->] (0,0) -- (0,3.5);
		\draw[dashed][domain = 0:2.5] plot (\x, {\x}); 
		\draw[ultra thick](0,0) -- (2.5,1.25);
		\node[below] at (2.5,0) {$\ell$};
		\node[above] at (0,3.5) {$h$};
		\draw[fill=black] (-1,0) circle [color=black,radius=0.15];
		\node[below] at (-1,-0.25) {$\tilde{D}$};
		\node[below] at (0,-1) {$p = \infty$};
		\draw (2,1) circle [radius=0.15];
		\node[right] at (2.1,0.9) {$\tilde{D}_p$};
		\fill[pattern=my north west lines](-2.5,0) to (-2.5,3.5) to (2.5,3.5) to (2.5,1.25) to (0,0);
		\end{tikzpicture}}\end{center}
\begingroup

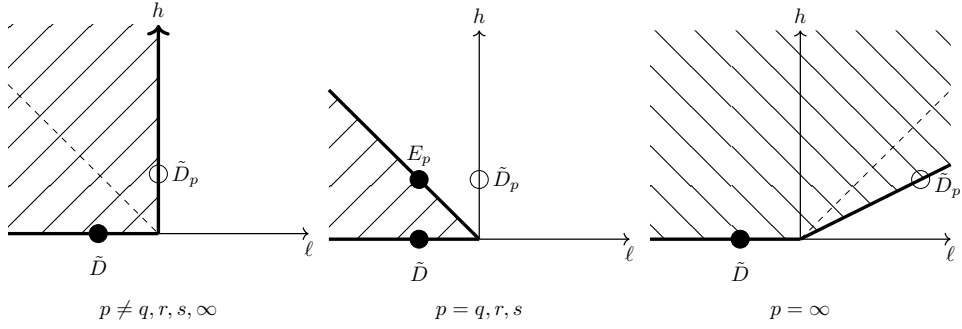
\captionof{figure}{Coloured hyperfan of the blow-up of $\proj{3}$ along three lines}
\endgroup

\subsection{Blow-up of $\proj{1} \times \proj{2}$ (3.17)}\label{317}

\subsubsection*{Orbits and $G$-germs Before the Blow-up}

Let $G = \sl{2}$ act on $\proj{1} \times \proj{2}$, linearly on the first factor and quadratically on the second. The $G$-orbit of the point $P = ([1:0],[1:0:1])$ is \[\{([a:c],[a^2+b^2:ac+bd:c^2+d^2]) \mid ad-bc=1\}.\] The stabiliser $G_P$ is $\bb{Z}_4$, so this orbit is open, and the $B$-orbit is easily checked to be 2 dimensional. Hence $\proj{1} \times \proj{2}$ is a complexity one $G$-variety and an embedding of $G/H = \sl{2}/\bb{Z}_4$. 

The divisors $\Delta = \cal{Z}(x_0^2z_2 + x_1^2z_0 - 2x_0x_1z_1)$ and $F= \cal{Z}(z_0z_2-z_1^2)$ are $G$-stable. Consider the $G$-stable curve $C = \cal{Z}(x_1z_0-x_0z_1,x_1z_1-x_0z_2) \sub \proj{1} \times \proj{2}$. Note that $C = F \inter \Delta$. The orbits on $\proj{1} \times \proj{2}$ are as follows: $C$ is itself a closed orbit, then $\Delta\setminus{C}$ and $F\setminus{C}$ are orbits, and the open orbit described above is $\proj{1}\times \proj{2}\setminus{(F \union \Delta)}$. This is shown in detail by calculations after the blow-up in a later subsection. Hence the proper $G$-germs of $\proj{1} \times \proj{2}$ are $C$, $F$ and $\Delta$.

\subsubsection*{(Semi-) Invariant Functions}

Recall that for $\sl{2}/\bb{Z}_4$, the weight lattice $\Lambda$ is generated by $2\alpha$, $\cal{Q} = \Lambda^*$ is identified with $\frac{1}{2}\bb{Z}$, the field of invariants is generated by $z^2/w^2$ and we chose a semi-invariant regular function $F$ from the module $M = \bbk[G]^{(B \times H)}_{(2\alpha,\epsilon^2)}$ spanned by $z^2$ and $w^2$ to give a splitting $e_{2\alpha} = F^2/(zw)$. 

On $\proj{1} \times \proj{2}$, the function $f_0 = g_0/h_0 = x_1^2(z_0z_2-z_1^2)/(x_0z_2-x_1z_1)^2$ is $B$-invariant and under the isomorphism $G/H \cong \proj{1} \times \proj{2}\setminus{(F \union \Delta)}$ corresponds to the invariant $z^2/w^2$. Hence this function defines a rational $B$-quotient map $\pi \colon \proj{1} \times \proj{2} \dashrightarrow \proj{1}$, $P \mapsto [g_0(P):h_0(P)]$. 

For $p = [\alpha : \beta] \in \proj{1}$, the pullbacks $\pi^*(p) = \cal{Z}(\beta g_0 - \alpha h_0)$ define a family of regular colours $D_p = \pi^*(p)$ except at three points: \[p = \infty = [0:1]: \pi^*(p) = \cal{Z}(x_1^2(z_0z_2-z_1^2)) = F \union \cal{Z}(x_1^2) = F \union D_\infty,\]\[p = -1 = [1:-1]: \pi^*(p) = \cal{Z}(z_2(x_0^2z_2+x_1^2z_0-2x_0x_1z_1)) = \Delta \union \cal{Z}(z_2) = \Delta \union D_{-1},\]\[p = 0 = [1:0]: \pi^*(p) = \cal{Z}((x_0z_2-x_1z_1)^2) = D_0.\] Note that $D_\infty$ and $D_0$ correspond to points in $\proj{1}$ of multiplicity 2, i.e. they are subregular colours and thus have $h$-coordinate 2 in hyperspace, in accordance with the calculation of the hyperspace of $\sl{2}/\bb{Z}_4$ in \Cref{SL2Zm}.

Now we choose as the splitting semi-invariant \[e_{2\alpha} = \frac{z_2^2(x_0^2z_2+x_1^2z_0-2x_0x_1z_1)}{x_1(z_0z_2-z_1^2)(x_0z_2-x_1z_1)}.\] This corresponds in the homogeneous space to the function $(z^2+w^2)^2/(zw)$, and hence to the choice $F = z^2 + w^2 \in M$ and thus marks out the point $-1 \in \proj{1}$ as distinguished.

\subsubsection*{Coloured Data and Hyperfan}

We first note here that the curve $C$ is contained in the $G$-divisors $\Delta$ and $F$, and in every colour $D_p$ for $p \neq -1,\infty$, i.e. in colours lying over points in $\proj{1}$ whose pullback does not contain $\Delta$ or $F$. Hence the coloured data of $C$ is: $\cal{V}_C = \{\nu_\Delta,\nu_F\}$, $\cal{D}^B_C = \{D_p \mid p\neq -1,\infty\}$, and the remaining coloured data is $\cal{V}_\Delta = \{\nu_\Delta\}, \cal{V}_F = \{\nu_F\}$, $\cal{D}^B_\Delta = \cal{D}^B_F = \emptyset$. We see from this that $C$ defines a coloured hypercone of type II in $\cal{H}$.

In accordance with \Cref{SL2Zm}, the subregular colours $D_0, D_\infty$ sit at $(-1,2)$ in their respective slices of hyperspace, the colour $D_{-1}$ sitting over the distinguished point sits at $(2,1)$, and the non-distinguished regular colours $D_p$ for $p \neq 0,-1,\infty$ lie at $(0,1)$. Finally, the $G$-divisors $F$ and $\Delta$ go to $(p,\ell,h) = (\infty,-1,1)$ and $(p,\ell,h) = (-1,1,1)$ respectively. Then the polytope defined by $C$ is $\cal{P} = \cal{P}_0 + \cal{P}_\infty + \cal{P}_{-1} = \{-1/2\}+\{-1\}+\{1\} = \{-1/2\}$. Hence the coloured hyperfan of $\proj{1}\times \proj{2}$ is as follows:

\begin{center}\begin{tikzpicture}
	
	\draw[ultra thick] (-2.5,0) to (0,0);
	\draw[semithick][->] (0,0) -- (2.5,0);
	\draw[semithick][->] (0,0) -- (0,3.5);
	\draw[dashed][domain = -2.5:0] plot (\x, {-\x});
	\draw[ultra thick] (0,0) -- (-1.75,3.5); 
	\draw[fill=black] (0,2) circle [color=black,radius=0.05];
	\node[right] at (0,2) {$2$};
	\draw[fill=black] (-1,0) circle [color=black,radius=0.05];
	\node[below] at (-1,0) {$-1$};
	\draw[fill=black] (-1,0) circle [color=black,radius=0.05];
	\node[below] at (2.5,0) {$\ell$};
	\node[above] at (0,3.5) {$h$};
	\node[below] at (0,-0.5) {$p = 0$};
	\draw (-1,2) circle [radius=0.15];
	\node[above] at (-1,2.1) {$D_0$};
	\begin{scope}[on background layer]
	\fill[pattern=my north east lines](-2.5,0) to (-2.5,3.5) to (-1.75,3.5) to (0,0);
	\end{scope}
	\end{tikzpicture}
	\begin{tikzpicture}
	\draw[semithick][->] (0,0) -- (2.5,0);
	\draw[ultra thick] (-2.5,0) -- (0,0);
	\draw[semithick][->] (0,0) -- (0,3.5);
	\draw[ultra thick][domain = -2.5:0] plot (\x, {-\x}); 
	\draw[fill=black] (0,2) circle [color=black,radius=0.05];
	\node[right] at (0,2) {$2$};
	\draw[fill=black] (-1,0) circle [color=black,radius=0.05];
	\node[below] at (-1,0) {$-1$};
	\draw[fill=black] (-1,0) circle [color=black,radius=0.05];
	\node[below] at (2.5,0) {$\ell$};
	\node[above] at (0,3.5) {$h$};
	\node[below] at (0,-0.5) {$p = \infty$};
	\draw (-1,2) circle [radius=0.15];
	\node[above] at (-1,2.1) {$D_\infty$};
	\draw[fill=black] (-1,1) circle [color=black,radius=0.15];
	\node[right] at (-0.9,1) {$F$};		
	\begin{scope}[on background layer]
	\fill[pattern=my north east lines](-2.5,0) to (-2.5,2.5) to (0,0);
	\end{scope}
	\end{tikzpicture}\end{center}

\begin{center}\begin{tikzpicture}
	\draw[ultra thick] (-2.5,0) to (0,0);
	\draw[semithick][->] (0,0) -- (2.5,0);
	\draw[semithick][->] (0,0) -- (0,3.5);
	\draw[ultra thick][domain = 0:2.5] plot (\x, {\x}); 
	\node[below] at (2.5,0) {$\ell$};
	\node[above] at (0,3.5) {$h$};
	\draw[fill=black] (0,1) circle [color=black,radius=0.05];
	\node[right] at (0,1) {$1$};
	\draw[fill=black] (2,0) circle [color=black,radius=0.05];
	\node[below] at (2,0) {$2$};
	\node[below] at (0,-0.5) {$p = -1$};
	\draw[fill=black] (1,1) circle [color=black,radius=0.15];
	\node[below] at (1.1,0.9) {$\Delta$};
	\draw (2,1) circle [radius=0.15];
	\node[below] at (2,0.9) {$D_{-1}$};
	\begin{scope}[on background layer]
	\fill[pattern=my north west lines](-2.5,0) to (-2.5,3.5) to (2.5,3.5) to (2.5,2.5) to (0,0);
	\end{scope}
	\end{tikzpicture}
	\begin{tikzpicture}
	\draw[ultra thick] (-2.5,0) to (0,0);
	\draw[semithick][->] (0,0) -- (2.5,0);
	\draw[ultra thick][->] (0,0) -- (0,3.5);
	\draw[dashed][domain = -2.5:0] plot (\x, {-\x}); 
	\node[below] at (2.5,0) {$\ell$};
	\node[above] at (0,3.5) {$h$};
	\draw[fill=black] (-1,0) circle [color=black,radius=0.05];
	\node[below] at (-1,0) {$-1$};
	\node[below] at (0,-0.5) {$p \neq 0,-1,\infty$};
	\draw (0,1) circle [radius=0.15];
	\node[right] at (0.1,1) {$D_p$};
	\begin{scope}[on background layer]
	\fill[pattern=my north east lines](-2.5,0) to (-2.5,3.5) to (0,3.5) to (0,0);
	\end{scope}
	\end{tikzpicture}\end{center}
\begingroup
\captionof{figure}{Coloured hyperfan of $\proj{1} \times \proj{2}$}
\endgroup

\subsubsection*{Blow-up}

We now blow up $C$ to obtain the variety $X = \cal{Z}(x_0y_0z_2+x_1y_1z_0-x_0y_1z_1-x_1y_0z_1) \sub \proj{1} \times \proj{1} \times \proj{2}$. Then $X$ contains the $G$-stable divisors $\tilde{\Delta} = \cal{Z}(x_0y_1-x_1y_0)$, the strict transform of the divisor $\Delta$ defined above, $E = \cal{Z}(x_1z_0-x_0z_1,x_1z_1-x_0z_2)$, the exceptional divisor of the blow-up, and $\tilde{F} = \cal{Z}(y_1z_0-y_0z_1,y_0z_2-y_1z_1)$, the strict transform of the above divisor $\tilde{F}$. Let $D = E \union \tilde{F}$.

\subsubsection*{Orbits and $G$-germs After the Blow-up}

Claim: the $G$-orbits on $X$ are $X\setminus{(D\union \tilde{\Delta})}$, which is open, $\tilde{\Delta}\setminus{(D \inter \tilde{\Delta})}$, $E\setminus{(E \inter \tilde{\Delta})}$, $\tilde{F}\setminus{(\tilde{F} \inter \tilde{\Delta})}$ and $D \inter \tilde{\Delta}$, which is closed. We note that $D \inter \tilde{\Delta} = E\inter \tilde{\Delta} = \tilde{F}\inter \tilde{\Delta} = E\inter \tilde{F}$. Thus the $G$-germs of $X$ are $\tilde{\Delta}$, $E$, $\tilde{F}$ and $D \inter \tilde{\Delta}$, with the latter being minimal.

\begin{proposition}
	The open $\sl{2}$-orbit on $X$ is $X\setminus{(D \union \tilde{\Delta})}$.
\end{proposition}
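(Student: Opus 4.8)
The plan is to deduce the statement from the already-established structure of $\proj{1}\times\proj{2}$ as a $G$-variety, pushing that information through the blow-up rather than recomputing orbits from scratch. Write $\sigma\colon X = \bl_C(\proj{1}\times\proj{2}) \to \proj{1}\times\proj{2}$ for the blow-up morphism. Since the centre $C$ is $G$-stable, $\sigma$ is $G$-equivariant, so it suffices to track the open orbit downstairs and its preimage upstairs.

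First I would record that the open $G$-orbit of $\proj{1}\times\proj{2}$, namely $U = (\proj{1}\times\proj{2})\setminus(F\union\Delta)$, is disjoint from the centre $C$: indeed $C = F\inter\Delta \sub F\union\Delta$, so $U$ misses $C$ entirely. A blow-up restricts to an isomorphism away from its centre, so $\sigma$ restricts to a $G$-equivariant isomorphism $\sigma^{-1}(U)\xrightarrow{\sim} U$. In particular $\sigma^{-1}(U)$ is a single $G$-orbit in $X$, open because $U$ is open, and of dimension $3$.

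It remains to identify $\sigma^{-1}(U)$ with $X\setminus(D\union\tilde\Delta)$, i.e.\ to compute the total transform $\sigma^{-1}(F\union\Delta)$. Since $E = \sigma^{-1}(C)$ and $C\sub F$, we have $E\sub\sigma^{-1}(F)$, and hence $\sigma^{-1}(F) = \tilde F\union E$ set-theoretically (the strict transform together with the exceptional locus over $C$); likewise $\sigma^{-1}(\Delta) = \tilde\Delta\union E$. Therefore
$$\sigma^{-1}(F\union\Delta) = \tilde F\union\tilde\Delta\union E = (E\union\tilde F)\union\tilde\Delta = D\union\tilde\Delta,$$
using $D = E\union\tilde F$. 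Consequently $X\setminus(D\union\tilde\Delta) = \sigma^{-1}(U)$, which by the previous paragraph is an open, dense $G$-orbit of the irreducible variety $X$, hence its open orbit.

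The only step needing genuine care — and the mild obstacle here — is the set-theoretic preimage computation $\sigma^{-1}(F) = \tilde F\union E$ and $\sigma^{-1}(\Delta) = \tilde\Delta\union E$; one must check that no extra component of either total transform is hidden inside $E$ in a way that would spoil the identification. This can be confirmed directly from the explicit defining equations $\tilde\Delta = \cal Z(x_0y_1-x_1y_0)$ and $\tilde F = \cal Z(y_1z_0-y_0z_1,\,y_0z_2-y_1z_1)$ against those of $\Delta$ and $F$. Everything else is formal, resting only on the $G$-equivariance of $\sigma$ and the prior determination of the open orbit on $\proj{1}\times\proj{2}$.
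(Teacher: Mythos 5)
Your route is genuinely different from the paper's: the paper proves transitivity on $X\setminus(D\cup\tilde\Delta)$ directly, first showing the torus acts transitively on the fibre $X_{0,\infty}\setminus D_{0,\infty}$ and then moving an arbitrary point of the complement into that fibre, whereas you pull the open orbit back from $\proj{1}\times\proj{2}$ through $\sigma$. The upstairs part of your argument is correct: $U\inter C=\emptyset$ because $C=F\inter\Delta$; $\sigma$ is a $G$-equivariant isomorphism over $U$; and since $C\sub F$ and $C\sub\Delta$, the total transforms are $\sigma^{-1}(F)=\tilde F\union E$ and $\sigma^{-1}(\Delta)=\tilde\Delta\union E$, so $\sigma^{-1}(F\union\Delta)=D\union\tilde\Delta$.

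The gap is in your input. You treat it as already established that the open orbit of $\proj{1}\times\proj{2}$ is exactly $(\proj{1}\times\proj{2})\setminus(F\union\Delta)$. What the paper establishes beforehand is only that the orbit of $([1:0],[1:0:1])$ is open (its stabiliser is finite); since $F$ and $\Delta$ are $G$-stable divisors this gives the inclusion of the open orbit into the complement, not the equality. The equality --- transitivity on the entire complement --- is precisely what the paper says ``is shown in detail by calculations after the blow-up in a later subsection,'' i.e.\ by the proposition you are proving and its companions. As written, your proof is therefore circular relative to the paper's development. To repair it you would have to prove transitivity of $G$ on $(\proj{1}\times\proj{2})\setminus(F\union\Delta)$ directly (move the $\proj{1}$-coordinate to a standard point and check that its stabiliser acts transitively on the fibre minus the divisors), which is essentially the same fibrewise computation the paper performs upstairs, so the reduction through $\sigma$ would then save nothing.
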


\begin{proof}
	For $(p,q) \in \proj{1} \times \proj{1}$, let $X_{p,q} = X \inter (\{(p,q)\}\times \proj{2})$. Since the torus $\bbk^* \sub \sl{2}$ fixes $0:=[0:1], \infty:=[1:0] \in \proj{1}$, it must also leave $X_{0,\infty}$ stable. We claim that $\bbk^*$ acts transitively on $X_{0,\infty}\setminus{D_{0,\infty}}$:
	
	Indeed, suppose $Q = (0,\infty,[q_0:q_1:q_2]) \in X_{0,\infty}\setminus{D_{0,\infty}}$. The equation for $X$ demands that $q_1 = 0$, and this means that the equations for $E, \tilde{F}$ reduce to $q_0 = 0$ and $q_2 = 0$, respectively. Hence we must have $Q = (0,\infty,[q_0:0:q_2])$, with $q_0q_2 \neq 0$. Now consider $P = (0,\infty,[1:0:1])$, whose image under $A = \begin{psmallmatrix}a & 0 \\ 0 & 1/a\end{psmallmatrix} \in \bbk^*$ is $(0,\infty,[a^2:0:1/a^2]) = (0,\infty,[a^4:0:1])$. By setting $a$ to be any fourth root of $q_0/q_2$, we thus have that $Q = A\cdot P$, proving the claim.
	
	Now let $S = (p,q,r) \in X\setminus{(D \union \tilde{\Delta})}$. Since $p, q \in \proj{1}$ are distinct, there exists $M \in \sl{2}$ with $M\cdot S = (0,\infty,M\cdot r) \in X_{0,\infty}\setminus{D_{0,\infty}}$. Now by the above there exists $A \in \bbk^*$ with $A\cdot(M\cdot S) = P$, hence $\sl{2} \cdot P = X\setminus{(D \union \tilde{\Delta})}$ as promised.
\end{proof}

\begin{proposition}
	$\tilde{\Delta}\setminus{(D \inter \tilde{\Delta})}$ is an $\sl{2}$-orbit on $X$.
\end{proposition}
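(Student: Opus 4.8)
The plan is to imitate the proof of the preceding proposition, reducing the statement to a single fibre of the projection $X \to \proj{1} \times \proj{1}$ lying over the diagonal. Since $\tilde{\Delta} = \cal{Z}(x_0 y_1 - x_1 y_0) \inter X$ cuts out exactly the diagonal in $\proj{1} \times \proj{1}$, I would write $\tilde{\Delta} = \{(p,p,r) : r \in X_{p,p}\}$, where $X_{p,p} = X \inter (\{(p,p)\} \times \proj{2})$. First I would observe that $\sl{2}$ acts transitively on this diagonal (it is the diagonal of the linear action on $\proj{1}$) and that both $\tilde{\Delta}$ and $D$ are $G$-stable; hence every point of $\tilde{\Delta}\setminus(D \inter \tilde{\Delta})$ can be carried by some $M \in \sl{2}$ into the fibre over the fixed point $(\infty,\infty)$, with $\infty = [1:0]$, landing in $X_{\infty,\infty}\setminus D$. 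It therefore suffices to show that the stabiliser of $(\infty,\infty)$ acts transitively on $X_{\infty,\infty}\setminus D$.

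Next I would compute this single fibre explicitly. Setting $x_0 = y_0 = 1$, $x_1 = y_1 = 0$ in the defining equation of $X$ collapses it to $z_2 = 0$, so $X_{\infty,\infty} = \{[z_0:z_1:0]\} \cong \proj{1}$. Substituting the same values into the equations for $E$ and for $\tilde{F}$ forces $z_1 = z_2 = 0$ in both cases, so $D \inter X_{\infty,\infty} = \{[1:0:0]\}$ is a single point and $X_{\infty,\infty}\setminus D = \{[z_0:1:0]\}$ is an affine line. The stabiliser of $\infty = [1:0]$ in $\sl{2}$ is the Borel $B$, which fixes $(\infty,\infty)$ and hence preserves $X_{\infty,\infty}$.

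The key computation is the action of the unipotent radical $U = \{u_b : u_b = \begin{psmallmatrix}1 & b \\ 0 & 1\end{psmallmatrix}\}$ of $B$ on $\proj{2} = \bb{P}(S^2\bbk^2)$. Identifying $[z_0:z_1:z_2]$ with the symmetric matrix $\begin{psmallmatrix}z_0 & z_1 \\ z_1 & z_2\end{psmallmatrix}$ and the $\sl{2}$-action with $S \mapsto gSg^T$ — the convention already forced by the open-orbit formula $[a^2+b^2 : ac+bd : c^2+d^2]$ of the previous proposition — a direct multiplication gives $u_b \cdot [z_0:z_1:z_2] = [z_0 + 2bz_1 + b^2 z_2 : z_1 + bz_2 : z_2]$. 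Restricting to $X_{\infty,\infty}$, where $z_2 = 0$, this becomes $[z_0:z_1:0] \mapsto [z_0 + 2bz_1 : z_1 : 0]$, which (in characteristic $0$) is transitive on the locus $z_1 \neq 0$, i.e. on all of $X_{\infty,\infty}\setminus D$. Combined with the diagonal transitivity, this shows that $\tilde{\Delta}\setminus(D \inter \tilde{\Delta})$ is a single $\sl{2}$-orbit.

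I expect the main obstacle to be purely computational rather than conceptual: correctly pinning down the representation of $\sl{2}$ on $\proj{2}$ (the factor-of-two subtlety between the monomial and symmetric-matrix conventions must match the open-orbit formula of the previous proposition), and verifying that $D$ meets the diagonal fibre in exactly one point so that its complement is a single affine line. As a consistency check, the stabiliser of $(\infty,\infty,[0:1:0])$ contains the maximal torus (which fixes $[0:1:0]$), giving a two-dimensional orbit, matching the dimension of $\tilde{\Delta}\setminus(D \inter \tilde{\Delta})$.
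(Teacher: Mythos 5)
Your proposal is correct and follows essentially the same route as the paper: reduce to the fibre $X_{\infty,\infty}$ over the diagonal point $(\infty,\infty)$, observe that the equations of $X$ and $D$ there collapse to $z_2=0$ and $z_1=0$ respectively, and check that the Borel (in fact its unipotent radical, via $[z_0:z_1:0]\mapsto[z_0+2bz_1:z_1:0]$) acts transitively on the complement of the single point $[1:0:0]$. Your computation matches the paper's $\begin{psmallmatrix} a & b \\ 0 & 1/a\end{psmallmatrix}\cdot[0:1:0]=[2ab:1:0]$ exactly.
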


\begin{proof}
	First, note that the Borel subgroup $B \sub \sl{2}$ fixes $\infty \in \proj{1}$, which we use to show that $B$ acts transitively on $X_{\infty,\infty}\setminus{D_{\infty,\infty}}$. The equations for $X$ and $D$ here reduce to $z_2 = 0$, $z_1 = 0$ respectively. Let $P = (\infty,\infty,[0:1:0]) \in X_{\infty,\infty}\setminus{D_{\infty,\infty}}$, so that $A= \begin{psmallmatrix} a & b \\ 0 & 1/a\end{psmallmatrix}\cdot P = (\infty,\infty,[2ab:1:0])$. If $r=[r_0:r_1:0] \in X_{\infty,\infty}\setminus{D_{\infty,\infty}}$, then setting $a = 1, b = r_0/2r_1$ gives $A\cdot P = (\infty,\infty,r)$, so we are done. Now for any $Q = (p,p,q) \in \tilde{\Delta}\setminus{(D \inter \tilde{\Delta})}$ there exists $M \in \sl{2}$ with $M\cdot Q = (\infty,\infty,M\cdot q)$, so there exists $A \in B$ such that $A\cdot M\cdot Q = P$ as required.
\end{proof}

\begin{proposition}
	$E\setminus{(E \inter \tilde{\Delta})}$ and $\tilde{F}\setminus{(\tilde{F} \inter \tilde{\Delta})}$ are $\sl{2}$-orbits on $X$.
\end{proposition}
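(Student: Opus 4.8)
The plan is to identify both $E$ and $\tilde F$ explicitly as copies of $\proj 1 \times \proj 1$ on which $G = \sl 2$ acts diagonally by the standard linear action, and then to recognise each of the two sets in the statement as the complement of the diagonal, which is a single orbit. First I would solve the defining equations. On $E = \cal{Z}(x_1 z_0 - x_0 z_1,\, x_1 z_1 - x_0 z_2)$ the $\proj 2$-coordinate is forced to be $[z_0 : z_1 : z_2] = [x_0^2 : x_0 x_1 : x_1^2] = \nu_2(x)$, the point of the rational normal conic determined by the first factor $x = [x_0 : x_1]$; one checks directly that every such triple already satisfies the equation of $X$, so $y$ is unconstrained. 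Hence projection to the first two factors gives an isomorphism $E \cong \proj 1_x \times \proj 1_y$. A symmetric computation on $\tilde F = \cal{Z}(y_1 z_0 - y_0 z_1,\, y_0 z_2 - y_1 z_1)$ shows that there the $\proj 2$-coordinate equals $\nu_2(y)$, giving likewise $\tilde F \cong \proj 1_x \times \proj 1_y$, now with the $\proj 2$-coordinate determined by the second factor.

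Next I would pin down the $G$-action on the second $\proj 1$-factor, the one introduced by the blow-up. On $\tilde F$ the $z$-coordinate equals $\nu_2(y)$, and since $\tilde F$ is $G$-stable and $G$ acts on $\proj 2 = \bb{P}(S^2\bbk^2)$ by $S^2$ of the standard representation (Proposition \thref{rnc}), stability forces $S^2(g)\nu_2(y) = \nu_2(g \star y)$, where $g \star y$ denotes the a priori unknown action on the second factor. As $\nu_2$ is $G$-equivariant and injective, comparison with the equivariance $S^2(g)\nu_2(y) = \nu_2(g\cdot y)$ for the standard action identifies $g \star y = g \cdot y$, so the second-factor action is the standard linear one. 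Hence $G$ acts diagonally by the standard linear action on $E \cong \proj 1_x \times \proj 1_y$ and on $\tilde F \cong \proj 1_x \times \proj 1_y$. Since $\tilde\Delta = \cal{Z}(x_0 y_1 - x_1 y_0)$ is exactly the diagonal $\{x = y\}$, both $E \cap \tilde\Delta$ and $\tilde F \cap \tilde\Delta$ are identified with this diagonal, so that $E \setminus (E \cap \tilde\Delta)$ and $\tilde F \setminus (\tilde F \cap \tilde\Delta)$ are the loci of ordered pairs of distinct points of $\proj 1$.

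It then remains only to invoke the fact that $\sl 2$ acts transitively on ordered pairs of distinct points of $\proj 1$ (its image $\pgl 2$ is sharply $3$-transitive), which shows at once that each of the two complements of the diagonal is a single $G$-orbit. Alternatively, to match the subgroup-sweeping arguments of the two preceding propositions, I would fix the basepoint $(\infty, 0, \nu_2(\infty))$ of $E$, use that the Borel $B$ fixes $\infty$ in the first factor (and hence fixes $z = \nu_2(\infty)$) to move the second coordinate transitively over $\proj 1 \setminus \{\infty\}$, and then apply $\sl 2$-transitivity on the first factor to bring an arbitrary point of $E \setminus (E \cap \tilde\Delta)$ into the fibre over $x = \infty$; the same argument with the roles of $x$ and $y$ exchanged handles $\tilde F$.

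I expect the only genuine subtlety to be the determination of the action on the second $\proj 1$-factor: one must be sure that the factor created by the blow-up carries the standard linear action rather than some twist. This is what the equivariance and injectivity of $\nu_2$ on $\tilde F$ guarantees, but it deserves the explicit check indicated above; everything else reduces to elementary transitivity on $\proj 1$ once $E$ and $\tilde F$ have been identified with $\proj 1 \times \proj 1$.
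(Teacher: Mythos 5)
Your proof is correct and rests on the same key fact as the paper's, namely the $2$-transitivity of $\sl{2}$ on $\proj{1}$ applied to the pair of $\proj{1}$-coordinates, which are distinct precisely off $\tilde{\Delta}$; your global identification of $E$ and $\tilde{F}$ with $\proj{1}\times\proj{1}$ minus the diagonal is just the paper's observation that the fibre of $E$ over $(0,\infty)$ is a singleton, stated for all fibres at once. Your explicit verification that the blow-up's new $\proj{1}$-factor carries the standard linear action is a point the paper uses implicitly without comment, so including it is a mild refinement rather than a divergence.
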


\begin{proof}
	Let $Q = (p,q,r) \in E\setminus{(E \inter \tilde{\Delta})}$. As above, there exists $M \in \sl{2}$ with $M\cdot Q = (0,\infty,M\cdot r) \in D_{0,\infty}$. Now since $E$ is $\sl{2}$ stable, we have in fact that $M\cdot Q \in (E)_{0,\infty}$, which is a singleton. Hence $E\setminus(E \inter \tilde{\Delta})$ is an $\sl{2}$-orbit, and the case for $\tilde{F}$ is symmetric. 
\end{proof}

\begin{proposition}
	The final $\sl{2}$-orbit on $X$ is $\tilde{\Delta} \inter D$. Hence the $G$-orbits on $X$ are $D \union \tilde{\Delta}$, $\tilde{\Delta}\setminus{(D \inter \tilde{\Delta})}$, $E\setminus{(E \inter \tilde{\Delta})}$, $\tilde{F}\setminus{(\tilde{F} \inter \tilde{\Delta})}$ and $D \inter \tilde{\Delta}$
\end{proposition}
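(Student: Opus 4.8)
The plan is to first observe that $\tilde{\Delta} \inter D$ is $G$-stable, being the intersection of the $G$-stable divisors $\tilde{\Delta}$ and $D = E \union \tilde{F}$, and closed in $X$. The three preceding propositions, together with the identification of the open orbit, exhibit four pairwise disjoint $G$-orbits: the open orbit $X\setminus(D\union\tilde{\Delta})$, the orbit $\tilde{\Delta}\setminus(D\inter\tilde{\Delta})$, and the orbits $E\setminus(E\inter\tilde{\Delta})$ and $\tilde{F}\setminus(\tilde{F}\inter\tilde{\Delta})$. I would first check that these four sets together with $\tilde{\Delta}\inter D$ partition $X$. Writing $D=E\union\tilde{F}$ and using the chain of equalities $E\inter\tilde{F} = E\inter\tilde{\Delta} = \tilde{F}\inter\tilde{\Delta} = D\inter\tilde{\Delta}$ noted above, the set $D\union\tilde{\Delta}$ decomposes as the disjoint union of $\tilde{\Delta}\setminus D$, $E\setminus\tilde{\Delta}$, $\tilde{F}\setminus\tilde{\Delta}$ and $D\inter\tilde{\Delta}$; in particular the disjointness of $E\setminus\tilde{\Delta}$ and $\tilde{F}\setminus\tilde{\Delta}$ is exactly the inclusion $E\inter\tilde{F}\sub\tilde{\Delta}$.

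Since $X$ is thus the disjoint union of the four known orbits and the closed $G$-stable set $\tilde{\Delta}\inter D$, the latter is a union of $G$-orbits, and it remains only to show it is a single orbit. For this I would compute $\tilde{\Delta}\inter D$ explicitly via $\tilde{\Delta}\inter E$: the equations $x_1z_0-x_0z_1 = x_1z_1-x_0z_2 = 0$ defining $E$ force $[z_0:z_1:z_2]$ to be the degree-two Veronese image $\nu_2([x_0:x_1])$, while $\tilde{\Delta}=\cal{Z}(x_0y_1-x_1y_0)$ forces $[y_0:y_1]=[x_0:x_1]$. Hence every point of $\tilde{\Delta}\inter D$ has the form $([x_0:x_1],[x_0:x_1],[x_0^2:x_0x_1:x_1^2])$, and a direct substitution confirms such points lie on $X$. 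Thus $\tilde{\Delta}\inter D$ is the image of the $G$-equivariant map $\proj{1}\to X$, $p\mapsto(p,p,\nu_2(p))$, where equivariance uses that the first factor carries the standard linear $\sl{2}$-action as in \thref{rnc}. Since $G$ acts transitively on $\proj{1}$, this exhibits $\tilde{\Delta}\inter D$ as a single (closed) orbit, completing the orbit list.

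The main obstacle is not the transitivity computation, which is routine once the parameterization is found, but establishing \emph{completeness} of the orbit list, i.e.\ that no orbit has been missed. This is secured entirely by the partition argument, which in turn rests on the incidence identity $E\inter\tilde{F} = E\inter\tilde{\Delta} = \tilde{F}\inter\tilde{\Delta}$. Verifying this identity carefully, so that the three boundary pieces $E\setminus\tilde{\Delta}$, $\tilde{F}\setminus\tilde{\Delta}$ and $\tilde{\Delta}\setminus D$ are genuinely disjoint and their union with $\tilde{\Delta}\inter D$ is exactly $D\union\tilde{\Delta}$, is the one place where the defining equations of $X$, $E$, $\tilde{F}$ and $\tilde{\Delta}$ must be used directly; the key point there is that $\nu_2$ is injective, so that $E\inter\tilde{F}$ (where $z$ is simultaneously the Veronese image of $[x_0:x_1]$ and of $[y_0:y_1]$) already forces the diagonal $[x_0:x_1]=[y_0:y_1]$ and hence lands in $\tilde{\Delta}$.
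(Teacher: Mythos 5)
Your proof is correct and follows essentially the same route as the paper: both arguments rest on the facts that the listed sets partition $X$ (via $E\inter\tilde{F}=E\inter\tilde{\Delta}=\tilde{F}\inter\tilde{\Delta}$) and that $\tilde{\Delta}\inter D$ meets each diagonal fibre in a single point, so transitivity of $\sl{2}$ on $\proj{1}$ makes it one orbit. The only difference is presentational -- you exhibit the explicit equivariant parameterization $p\mapsto(p,p,\nu_2(p))$, while the paper moves an arbitrary point into the singleton fibre $D_{\infty,\infty}$; your treatment of the covering step is somewhat more careful than the paper's one-line dismissal.
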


\begin{proof}
	Let $Q = (p,p,q) \in \tilde{\Delta} \inter D$, noting that assuming the equations for $\tilde{\Delta}$, the equations for $E$ and $\tilde{F}$ become the same, i.e. $\tilde{\Delta} \inter E = \tilde{\Delta} \inter \tilde{F}$. It follows that $D_{\infty,\infty}$ is a singleton, say $P$. We can as before choose $M \in \sl{2}$ such that $M\cdot Q \in D_{\infty,\infty}$, showing that $\tilde{\Delta} \inter D = \sl{2}\cdot P$. 
	
	Now it is clear that the orbits described so far cover $X$, so they must constitute an exhaustive list.
\end{proof}

\subsubsection*{Hyperfan of $X$}

As in previous examples, blowing up the curve $C$ does not change the position of any divisor in hyperspace, but it adds the new $G$-divisor $E$. Taking the same invariant and semi-invariant rational functions used above (i.e. $f_0$ and $e_{2\alpha}$) we see that $E$ sits over $0 \in \proj{1}$ and lies at $(p, \ell, h) = (0,-1,1) \in \cal{H}$. The colour $\tilde{D}_0$ no longer contains the minimal $G$-germ $E \inter \tilde{\Delta}$, which now has coloured data $\cal{V}_{E\inter \tilde{\Delta}} = \{\nu_E,\nu_{\tilde{F}},\nu_{\tilde{\Delta}}\}$, $\cal{D}^B_{E \inter \tilde{\Delta}} = \{\tilde{D}_p \mid p \neq -1, 0, \infty\}$.

We thus have $\cal{P} = \cal{P}_0 + \cal{P}_\infty = \cal{P}_{-1} = \{-1\} + \{-1\} + \{1\} = \{-1\}$. Hence all things considered the coloured hyperfan for $X$ looks like:

\begin{center}\begin{tikzpicture}
	
	\draw[ultra thick] (-2.5,0) to (0,0);
	\draw[semithick][->] (0,0) -- (2.5,0);
	\draw[semithick][->] (0,0) -- (0,3.5);
	\draw[ultra thick][domain = -2.5:0] plot (\x, {-\x});
	\draw[fill=black] (0,2) circle [color=black,radius=0.05];
	\node[right] at (0,2) {$2$};
	\draw[fill=black] (-1,0) circle [color=black,radius=0.05];
	\node[below] at (-1,0) {$-1$};
	\draw[fill=black] (-1,0) circle [color=black,radius=0.05];
	\node[below] at (2.5,0) {$\ell$};
	\node[above] at (0,3.5) {$h$};
	\node[below] at (0,-0.5) {$p = 0$};
	\draw (-1,2) circle [radius=0.15];
	\node[above] at (-1,2.1) {$\tilde{D}_0$};
	\node[right] at (-0.9,1) {$E$};
	\draw[fill=black] (-1,1) circle [color=black,radius=0.15];
	\begin{scope}[on background layer]
	\fill[pattern=my north east lines](-2.5,0) to (-2.5,2.5) to (0,0);
	\end{scope}
	\end{tikzpicture}
	\begin{tikzpicture}
	\draw[semithick][->] (0,0) -- (2.5,0);
	\draw[ultra thick] (-2.5,0) -- (0,0);
	\draw[semithick][->] (0,0) -- (0,3.5);
	\draw[ultra thick][domain = -2.5:0] plot (\x, {-\x}); 
	\draw[fill=black] (0,2) circle [color=black,radius=0.05];
	\node[right] at (0,2) {$2$};
	\draw[fill=black] (-1,0) circle [color=black,radius=0.05];
	\node[below] at (-1,0) {$-1$};
	\draw[fill=black] (-1,0) circle [color=black,radius=0.05];
	\node[below] at (2.5,0) {$\ell$};
	\node[above] at (0,3.5) {$h$};
	\node[below] at (0,-0.5) {$p = \infty$};
	\draw (-1,2) circle [radius=0.15];
	\node[above] at (-1,2.1) {$\tilde{D}_\infty$};
	\draw[fill=black] (-1,1) circle [color=black,radius=0.15];
	\node[right] at (-0.9,1) {$\tilde{F}$};		
	\begin{scope}[on background layer]
	\fill[pattern=my north east lines](-2.5,0) to (-2.5,2.5) to (0,0);
	\end{scope}
	\end{tikzpicture}\end{center}

\begin{center}
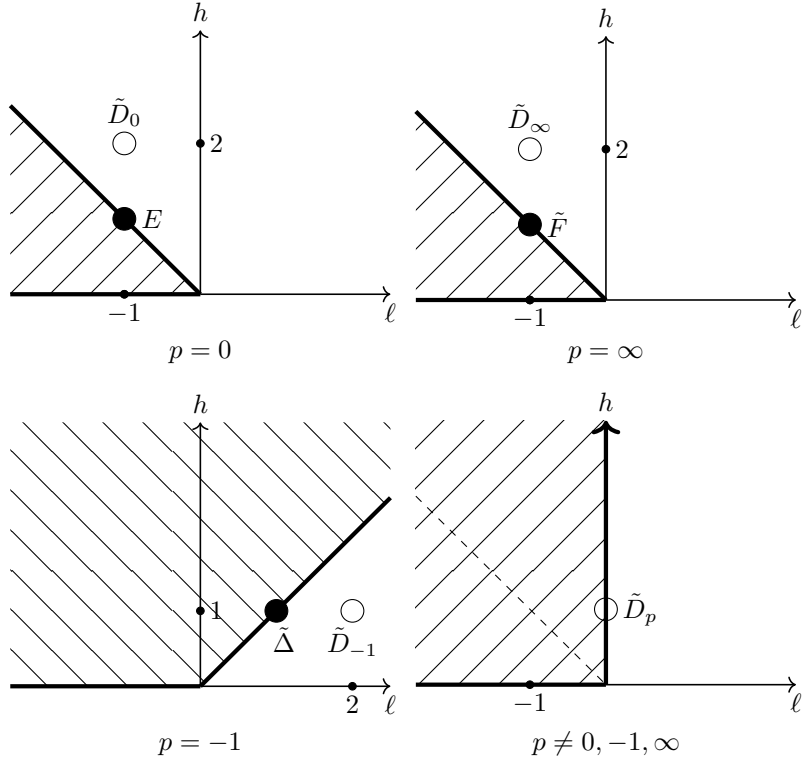
\begin{tikzpicture}
	\draw[ultra thick] (-2.5,0) to (0,0);
	\draw[semithick][->] (0,0) -- (2.5,0);
	\draw[semithick][->] (0,0) -- (0,3.5);
	\draw[ultra thick][domain = 0:2.5] plot (\x, {\x}); 
	\node[below] at (2.5,0) {$\ell$};
	\node[above] at (0,3.5) {$h$};
	\draw[fill=black] (0,1) circle [color=black,radius=0.05];
	\node[right] at (0,1) {$1$};
	\draw[fill=black] (2,0) circle [color=black,radius=0.05];
	\node[below] at (2,0) {$2$};
	\node[below] at (0,-0.5) {$p = -1$};
	\draw[fill=black] (1,1) circle [color=black,radius=0.15];
	\node[below] at (1.1,0.9) {$\tilde{\Delta}$};
	\draw (2,1) circle [radius=0.15];
	\node[below] at (2,0.9) {$\tilde{D}_{-1}$};
	\begin{scope}[on background layer]
	\fill[pattern=my north west lines](-2.5,0) to (-2.5,3.5) to (2.5,3.5) to (2.5,2.5) to (0,0);
	\end{scope}
	\end{tikzpicture}
	\begin{tikzpicture}
	\draw[ultra thick] (-2.5,0) to (0,0);
	\draw[semithick][->] (0,0) -- (2.5,0);
	\draw[ultra thick][->] (0,0) -- (0,3.5);
	\draw[dashed][domain = -2.5:0] plot (\x, {-\x}); 
	\node[below] at (2.5,0) {$\ell$};
	\node[above] at (0,3.5) {$h$};
	\draw[fill=black] (-1,0) circle [color=black,radius=0.05];
	\node[below] at (-1,0) {$-1$};
	\node[below] at (0,-0.5) {$p \neq 0,-1,\infty$};
	\draw (0,1) circle [radius=0.15];
	\node[right] at (0.1,1) {$\tilde{D}_p$};
	\begin{scope}[on background layer]
	\fill[pattern=my north east lines](-2.5,0) to (-2.5,3.5) to (0,3.5) to (0,0);
	\end{scope}
	\end{tikzpicture}\end{center}

\begingroup
\captionof{figure}{Coloured hyperfan of the blow-up of $\proj{1} \times \proj{2}$ along $C$}
\endgroup

\subsection{Blow-up of the Divisor $W$ in $\proj{2} \times \proj{2}$ (3.13)}\label{313}

\subsubsection*{Definition and Structure of $W$}

Let $G=\sl{2}$ act diagonally on $\proj{2} \times \proj{2}$ with the action on each factor $\proj{2} = \bb{P}(S^2\bbk^2)$ described in \Cref{symaction}. Then the divisor $W = \cal{Z}(x_0y_2-2x_1y_1+x_2y_0)$ is $G$-stable. The point $P = ([1:0:1],[0:1:0]) \in W$ has a 2 dimensional $B$-orbit, and a $G$-stabiliser of order 8 generated by $\begin{psmallmatrix} i & 0 \\ 0 & -i\end{psmallmatrix}$ and $\begin{psmallmatrix} 0 & i \\ i & 0\end{psmallmatrix}$, so let $H = G_P = \tilde{D}_2$, the binary dihedral group of order 8. Then we see that $W$ is a quasihomogeneous complexity-one $G$-variety containing the homogeneous space $G/H$.

Now consider the conic $C = \cal{Z}(x_0x_2-x_1^2) \sub \proj{2}$. It is $G$-stable under the action induced from $S^2\bbk^2$ by \thref{rnc}, and hence the divisors $C \times \proj{2}$ and $\proj{2} \times C$ on $\proj{2} \times \proj{2}$ are also $G$-stable. Let $E_\infty$ and $E_0$ respectively be the intersections of these divisors with $W$. Their union is the complement of $G/H$ in $W$ and their intersection is a $G$-stable curve $Z = (C \times C) \inter W$. The equations of $W$ also force $Z = \diag{(\proj{2}\times\proj{2})} \inter W$.

Thus the $G$-germs of $W$ are exactly $Z, E_0$ and $E_\infty$, with the latter two containing the former, which is minimal. 

\subsubsection*{(Semi-) Invariant Functions}

For $G/H = \sl{2}/\tilde{D}_2$, the weight lattice is $\Lambda = \bb{Z}(2\alpha)$, $\cal{Q} = \Lambda^*$ is identified with $\frac{1}{2}\bb{Z}$ and the field of invariants is generated by $f_f^2/f_v^2$. In $W$, the equations defining $E_0$ and $E_\infty$ are invariant, and $x_2$, $y_2$ are $B$-semi-invariant of weight $2\alpha$. Hence $f = y_2^2(x_0x_2-x_1^2)/x_2^2(y_0y_2-y_1^2)$ is invariant, and one can check that it does indeed correspond to $f_f^2/f_v^2$ on the open orbit.

Now $f$ defines a $B$-quotient $\pi \colon W \dashrightarrow \proj{1}$, $P \mapsto [y_2^2(x_0x_2-x_1^2):x_2^2(y_0y_2-y_1^2)]$. The pullback of $p=[\alpha:\beta] \in \proj{1}$ is $\cal{Z}(\beta y_2^2(x_0x_2-x_1^2)-\alpha x_2^2(y_0y_2-y_1^2))$ and defines a regular colour for all $p$ except for the following: \[p=[1:0] = 0: \pi^*(p) = \cal{Z}(x_2^2(y_0y_2-y_1^2)) = \cal{Z}(x_2^2) \union \cal{Z}(y_0y_2-y_1^2) = D_0 \union E_0\]\[ p=[0:1]=\infty: \pi^*(p) = \cal{Z}(y_2^2(x_0x_2-x_1^2)) = \cal{Z}(y_2^2) \union \cal{Z}(x_0x_2-x_1^2) = D_\infty \union E_\infty\]\[ p = [-1:1] = -1: \pi^*(p) = \cal{Z}(-(x_1y_2-x_2y_1)^2) = D_{-1}.\]

Here we see three subregular colours ($D_0,D_\infty,D_{-1}$) of multiplicity 2 corresponding to the three subregular semi-invariants on $G/H$, and the two $G$-divisors $E_0,E_\infty$ defining the minimal $G$-germ $Z$. One can check that every colour except $D_0,D_\infty$ contains $Z$.

The function $x_2y_2/(x_1y_2-x_2y_1)$ is semi-invariant of weight $2\alpha$ and corresponds to $f_ef_v/f_f$ in the homogeneous space, so we choose this as our splitting $e_{2\alpha}$. Its divisor is $D_0 + D_\infty - D_{-1}$, so these points are distinguished by it.

\subsubsection*{Hyperfan Before the Blow-up}

We recall that the $G$-germs of $W$ are $E_0, E_\infty$ and $Z$, with the latter being minimal. The coloured data are thus $\cal{V}_Z = \{\nu_{E_0},\nu_{E_\infty}\}$, $\cal{D}^B_Z = \{D_p \mid p\neq 0,\infty\}$ and $\cal{V}_{E_i} = \{\nu_{E_i}\}$, $\cal{D}^B_{E_i} = \emptyset$ for $i=0,\infty$. Thus $Z$ defines a supported coloured hypercone of type II in $\cal{H}$.

From our choice of invariant and splitting semi-invariant, the $G$-divisors and colours map to the following points in hyperspace: $E_0 \mapsto (0,0,1)$, $D_0 \mapsto (0,1,2)$, $E_\infty \mapsto (\infty,0,1)$, $D_\infty \mapsto (\infty,1,2)$, $D_{-1} \mapsto (-1,-1,2)$ and $D_p \mapsto (p,0,1)$ for $p \neq 0,\infty,-1$. Therefore the polytope defined by $Z$ is given by $\cal{P}_{-1} = \{-1/2\}$, $\cal{P}_p = \{0\}$ for $p\neq -1$, so $\cal{P} = \{-1/2\}$. Hence the coloured hyperfan of $W$ looks like:

\begin{center}\begin{tikzpicture}
	\draw[ultra thick] (-2.5,0) -- (0,0);
	\draw[semithick][->] (0,0) -- (2.5,0);
	\draw[ultra thick][->] (0,0) -- (0,3.5);
	\draw[dashed][domain = -2.5:0] plot (\x, {-\x}); 
	\node[below] at (2.5,0) {$\ell$};
	\node[above] at (0,3.5) {$h$};
	\node[right] at (0.1,1) {$D_p$};	
	\draw[fill=black] (-1,0) circle [color=black,radius=0.05];
	\node[below] at (-1,0) {$-1$};
	\node[below] at (0,-0.5) {$p \neq p_f,p_v,p_e$};
	\draw (0,1) circle [radius=0.15];
	\begin{scope}[on background layer]
	\fill[pattern=my north east lines](-2.5,3.5) to (-2.5,0) to (0,0) to (0,3.5);
	\end{scope}
	\end{tikzpicture}
	\begin{tikzpicture}
	\draw[ultra thick] (-2.5,0) -- (0,0);
	\draw[semithick][->] (0,0) -- (2.5,0);
	\draw[semithick][->] (0,0) -- (0,3.5);
	\draw[ultra thick][domain=-1.75:0] plot (\x, {-2*\x});
	\draw[dashed][domain = -2.5:0] plot (\x, {-\x}); 
	\draw[fill=black] (0,2) circle [color=black,radius=0.05];
	\node[right] at (0,2) {$2$};
	\draw[fill=black] (-1,0) circle [color=black,radius=0.05];
	\node[below] at (-1,0) {$-1$};
	\draw[fill=black] (-1,0) circle [color=black,radius=0.05];
	\node[below] at (2.5,0) {$\ell$};
	\node[above] at (0,3.5) {$h$};
	\node[below] at (0,-0.5) {$p_f = -1$};
	\draw (-1,2) circle [radius=0.15];
	\node[right] at (-0.9,2) {$D_{-1}$};
	\begin{scope}[on background layer]
	\fill[pattern=my north east lines](-2.5,0) to (-2.5,3.5) to (-1.75,3.5) to (0,0);
	\end{scope}
	\end{tikzpicture}\end{center}

\begin{center}\begin{tikzpicture}
	\draw[semithick][->] (0,0) -- (2.5,0);
	\draw[ultra thick] (-2.5,0) -- (0,0);
	\draw[ultra thick][->] (0,0) -- (0,3.5);
	\node[below] at (2.5,0) {$\ell$};
	\node[above] at (0,3.5) {$h$};
	\draw[fill=black] (1,0) circle [color=black,radius=0.05];
	\node[below] at (1,0) {$1$};
	\draw[fill=black] (0,2) circle [color=black,radius=0.05];
	\node[right] at (0,2) {$2$};
	\node[below] at (0,-0.5) {$p_e = 0$};
	\draw (1,2) circle [radius=0.15];
	\node[right] at (1.1,2) {$D_0$};
	\draw[fill=black] (0,1) circle [color=black,radius=0.15];
	\node[right] at (0.1,1) {$E_0$};
	\begin{scope}[on background layer]
	\fill[pattern=my north west lines](-2.5,0) to (-2.5,3.5) to (0,3.5) to (0,0);
	\end{scope}
	\end{tikzpicture}
	\begin{tikzpicture}
	\draw[semithick][->] (0,0) -- (2.5,0);
	\draw[ultra thick] (-2.5,0) -- (0,0);
	\draw[ultra thick][->] (0,0) -- (0,3.5);
	\node[below] at (2.5,0) {$\ell$};
	\node[above] at (0,3.5) {$h$};
	\draw[fill=black] (1,0) circle [color=black,radius=0.05];
	\node[below] at (1,0) {$1$};
	\draw[fill=black] (0,2) circle [color=black,radius=0.05];
	\node[right] at (0,2) {$2$};
	\node[below] at (0,-0.5) {$p_v = \infty$};
	\draw (1,2) circle [radius=0.15];
	\node[right] at (1.1,2) {$D_\infty$};
	\draw[fill=black] (0,1) circle [color=black,radius=0.15];
	\node[right] at (0.1,1) {$E_\infty$};
	\begin{scope}[on background layer]
	\fill[pattern=my north west lines](-2.5,0) to (-2.5,3.5) to (0,3.5) to (0,0);
	\end{scope}
	\end{tikzpicture}\end{center}
\begingroup

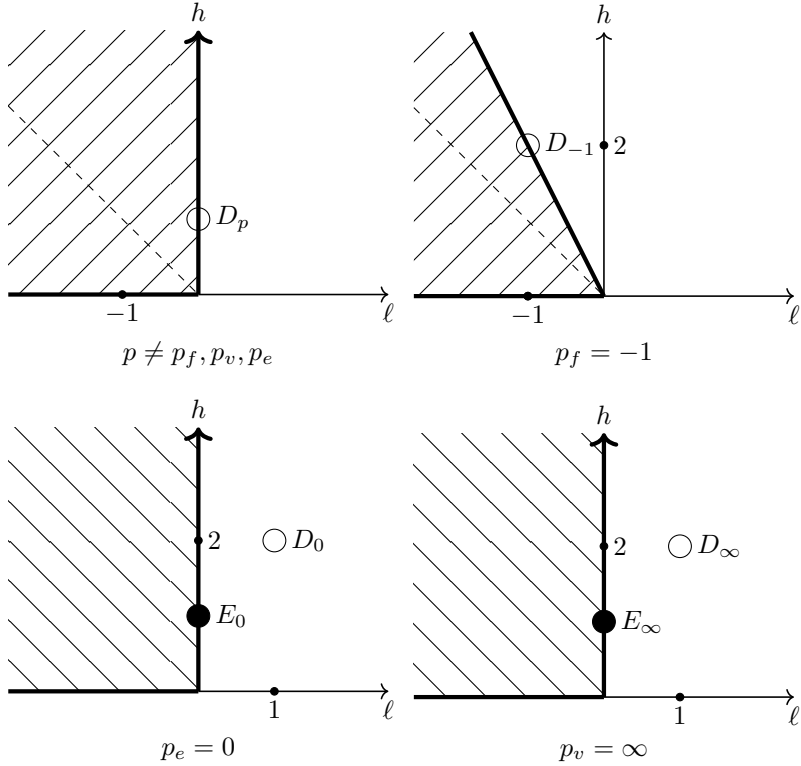
\captionof{figure}{Coloured hyperfan of the divisor $W$ on $\proj{2} \times \proj{2}$}
\endgroup

\subsubsection*{Blow-up}

To obtain the variety we want, we blow up $W$ along $Z$. Since $Z$ defines a hypercone of type II, it has a minimal $B$-chart $U = W\setminus{(D_0 \union D_\infty)}$. We will simplify matters by blowing up this chart instead.

Hence consider $U$ as an affine chart of $W$, i.e. we set $x_2 = y_2 = 1$ in $W$ to obtain $W \inter U = \cal{Z}(x_0-2x_1y_1+y_0) \sub \aff{4}$. Eliminate $x_0 = 2x_1y_1-y_0$ so that $W \inter U = \Spec{\bbk[x_1,y_0,y_1]} = \aff{3}$. Then $E_0 \inter U = \cal{Z}(y_0-y_1^2) \sub \aff{3}$, $E_\infty \inter U = \cal{Z}(2x_1y_1-y_0-x_1^2) \sub \aff{3}$ and $Z \inter U = \cal{Z}(y_0-y_1^2,2x_1y_1-y_0-x_1^2) = \cal{Z}(y_1-x_1,y_0-y_1^2) \sub \aff{3}$.

Now take $X = \bl_{U \inter Z}(W \inter Z) = \cal{Z}(z_0(y_0-y_1^2) - z_1(y_1-x_1)) \sub \aff{3} \times \proj{1}$. The exceptional divisor is $E = \cal{Z}(y_0-y_1^2,y_1-x_1)$, and we have strict transforms $\tilde{E}_0 = \cal{Z}(y_0-y_1^2,z_1)$ and $\tilde{E}_{\infty} = \cal{Z}(z_0(x_1-y_1)-z_1,2x_1y_1-y_0-x_1^2)$ of the $G$-divisors from downstairs. Any two of these three $G$-divisors intersect in the curve $Y = \cal{Z}(y_0-y_1^2,y_1-x_1,z_1)$, which is hence the unique minimal $G$-germ of the blow-up. 

The invariant rational function on $W$ becomes $f = (2x_1y_1-y_0-x_1^2)/(y_0-y_1^2)$ on $U \inter W$ and hence also on $X$. Under the induced $B$-quotient $\pi$ to $\proj{1}$ we see that $\pi^*([1:-1]) = \cal{Z}(x_1-y_1,y_0-y_1^2) \union \cal{Z}(x_1-y_1,z_0) = E \union \tilde{D}_{-1}$. Hence $E$ sits in the slice of hyperspace corresponding to $-1 \in \proj{1}$, and the colour $\tilde{D}_{-1}$ does not contain the minimal $G$-germ $Y$, as $D_{-1}$ did before the blow-up. 

Choosing a uniformising element $\delta = (2x_1y_1-y_0-x_1^2+(y_0-y_1^2))/(y_0-y_1^2) = -(y_1-x_1)^2/(y_0-y_1^2)$ of the DVR corresponding to $-1$ and taking an affine chart $z_0 = 1$ of $X$, a simple calculation shows that $h_E = \nu_E(\delta) = 1$.

Likewise, the splitting semi-invariant $e_{2\alpha}$ from above becomes $1/(x_1-y_1)$ on $X$, giving $\ell_E = \nu_E(e_{2\alpha}) = -1$. Hence $E \mapsto (-1,-1,1) \in \cal{H}$. The positions in hyperspace of all other $G$-divisors and colours remain as always unchanged by the blow-up.

Thus the curve $Y$ has coloured data $\cal{V}_Y = \{\nu_E,\nu_{\tilde{E}_0},\nu_{\tilde{E}_{\infty}}\}$, $\cal{D}^B_Y = \{D_p \mid p \neq 0,\infty,-1\}$. It defines a supported coloured hypercone of type II in $\cal{H}$ with associated polytope $\cal{P} = \cal{P}_{-1} = \{-1\}$. Hence $X$ (and therefore the variety $\bl_Z{W}$) has the following coloured hyperfan:

\begin{center}\begin{tikzpicture}
	\draw[ultra thick] (-2.5,0) -- (0,0);
	\draw[semithick][->] (0,0) -- (2.5,0);
	\draw[ultra thick][->] (0,0) -- (0,3.5);
	\draw[dashed][domain = -2.5:0] plot (\x, {-\x}); 
	\node[below] at (2.5,0) {$\ell$};
	\node[above] at (0,3.5) {$h$};
	\node[right] at (0.1,1) {$\tilde{D}_p$};	
	\draw[fill=black] (-1,0) circle [color=black,radius=0.05];
	\node[below] at (-1,0) {$-1$};
	\node[below] at (0,-0.5) {$p \neq p_f,p_v,p_e$};
	\draw (0,1) circle [radius=0.15];
	\begin{scope}[on background layer]
	\fill[pattern=my north east lines](-2.5,3.5) to (-2.5,0) to (0,0) to (0,3.5);
	\end{scope}
	\end{tikzpicture}
	\begin{tikzpicture}
	\draw[ultra thick] (-2.5,0) -- (0,0);
	\draw[semithick][->] (0,0) -- (2.5,0);
	\draw[semithick][->] (0,0) -- (0,3.5);
	\draw[ultra thick][domain = -2.5:0] plot (\x, {-\x}); 
	\draw[fill=black] (0,2) circle [color=black,radius=0.05];
	\node[right] at (0,2) {$2$};
	\draw[fill=black] (-1,0) circle [color=black,radius=0.05];
	\node[below] at (-1,0) {$-1$};
	\draw[fill=black] (-1,0) circle [color=black,radius=0.05];
	\node[below] at (2.5,0) {$\ell$};
	\node[above] at (0,3.5) {$h$};
	\node[below] at (0,-0.5) {$p_f = -1$};
	\draw[fill=black] (-1,1) circle [color=black,radius=0.15];
	\node[right] at (-0.9,1) {$E$};
	\draw (-1,2) circle [radius=0.15];
	\node[right] at (-0.9,2) {$\tilde{D}_{-1}$};
	\begin{scope}[on background layer]
	\fill[pattern=my north east lines](-2.5,0) to (-2.5,2.5) to (0,0);
	\end{scope}
	\end{tikzpicture}\end{center}

\begin{center}\begin{tikzpicture}
	\draw[semithick][->] (0,0) -- (2.5,0);
	\draw[ultra thick] (-2.5,0) -- (0,0);
	\draw[ultra thick][->] (0,0) -- (0,3.5);
	\node[below] at (2.5,0) {$\ell$};
	\node[above] at (0,3.5) {$h$};
	\draw[fill=black] (1,0) circle [color=black,radius=0.05];
	\node[below] at (1,0) {$1$};
	\draw[fill=black] (0,2) circle [color=black,radius=0.05];
	\node[right] at (0,2) {$2$};
	\node[below] at (0,-0.5) {$p_e = 0$};
	\draw (1,2) circle [radius=0.15];
	\node[right] at (1.1,2) {$\tilde{D}_0$};
	\draw[fill=black] (0,1) circle [color=black,radius=0.15];
	\node[right] at (0.1,1) {$\tilde{E}_0$};
	\begin{scope}[on background layer]
	\fill[pattern=my north west lines](-2.5,0) to (-2.5,3.5) to (0,3.5) to (0,0);
	\end{scope}
	\end{tikzpicture}
	\begin{tikzpicture}
	\draw[semithick][->] (0,0) -- (2.5,0);
	\draw[ultra thick] (-2.5,0) -- (0,0);
	\draw[ultra thick][->] (0,0) -- (0,3.5);
	\node[below] at (2.5,0) {$\ell$};
	\node[above] at (0,3.5) {$h$};
	\draw[fill=black] (1,0) circle [color=black,radius=0.05];
	\node[below] at (1,0) {$1$};
	\draw[fill=black] (0,2) circle [color=black,radius=0.05];
	\node[right] at (0,2) {$2$};
	\node[below] at (0,-0.5) {$p_v = \infty$};
	\draw (1,2) circle [radius=0.15];
	\node[right] at (1.1,2) {$\tilde{D}_\infty$};
	\draw[fill=black] (0,1) circle [color=black,radius=0.15];
	\node[right] at (0.1,1) {$\tilde{E}_\infty$};
	\begin{scope}[on background layer]
	\fill[pattern=my north west lines](-2.5,0) to (-2.5,3.5) to (0,3.5) to (0,0);
	\end{scope}
	\end{tikzpicture}\end{center}
\begingroup

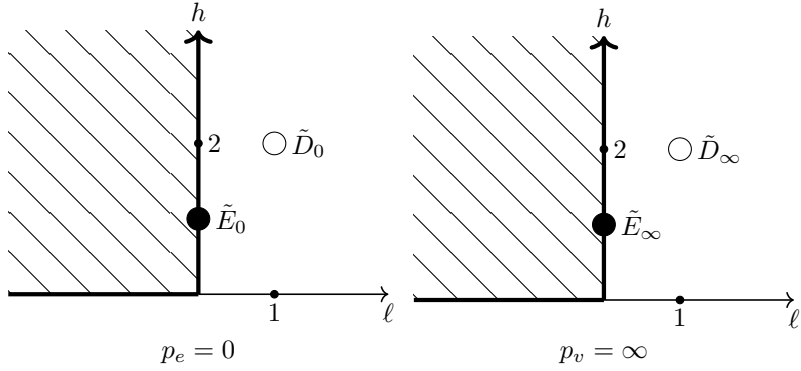
\captionof{figure}{Coloured hyperfan of the blow-up of $W$ along $Z$}
\endgroup

\subsection{Blow-up of $\proj{3}$ Along the Twisted Cubic (2.27)}\label{227}

\subsubsection*{Homogeneous Space and Structure}

Let $G=\sl{2}$ act on $\proj{3} = \bb{P}(S^3\bbk^2)$ as in \Cref{symaction}. The point $P = [1:0:0:1]$ has as its $G$-stabiliser the binary dihedral group $H = \tilde{D}_3$ of degree 3 and order 12. The same point has $B$-stabiliser equal to the group of sixth roots of unity, so $\dim{B\cdot P} = 2$. Hence $G\cdot P$ is the homogeneous space $G/H$, realising $\proj{3}$ (in a different way to previous examples) as a quasihomogeneous complexity-one $G$-variety.

Let $Z$ be the rational normal curve of degree 3 in $\proj{3}$, i.e. $Z = \cal{Z}(x_0x_2-x_1^2,x_0x_3-x_1x_2,x_1x_3-x_2^2)$. Then $Z$ is a closed orbit in $\proj{3}$ by \thref{rnc}. We will see that $Z$ is the unique minimal $G$-germ of $\proj{3}$, and is contained in its unique $G$-divisor.

\subsubsection*{(Semi-) Invariant Functions}

By \Cref{dihedral}, there are semi-invariant functions $f_v, f_e, f_f$ in $\bbk[G]^{(B\times H)}$ of respective biweights $(3\alpha,(-1,3)), (3\alpha,(-1,1))$ and $(2\alpha,(1,2))$. 

In $\proj{3}$, $x_3$ is semi-invariant of $B$-weight $3\alpha$, and $x_1x_3-x_2^2$ is semi-invariant of $B$-weight $2\alpha$ by \thref{semi}. Hence the latter corresponds to $f_f$. On $G/H$, we see that, acting with $\begin{psmallmatrix} 0 & i \\ i & 0\end{psmallmatrix} \in H$ on the right, $x_3$ has $H$-weight $(-1,3)$, so $x_3$ corresponds to $f_v$ Finally, $2x_2^3-3x_1x_2x_3+x_0x_3^2$ has $B$-weight $3\alpha$ and thus by process of elimination it corresponds to $f_e$.

Now the function $f_f^3/f_e^2$ is $B$-invariant, so gives the $B$-quotient map $\pi$ to $\proj{1}$, i.e. $\pi(P) = [f_f^3(P):f_e^2(P)]$ for $P \in \proj{3}$. This defines a family of regular colours $D_p = \pi^*(p)$ in $\proj{3}$ for all $p=[\alpha:\beta] \in \proj{1}$ except: \[p = 0: \pi^*(p) = \cal{Z}(f_e^2) = D_0,\]\[p = \infty: \pi^*(p) = \cal{Z}(f_f^3) = D_\infty,\]\[p = -4: \pi^*(p) = \cal{Z}(f_v^2) \union \cal{Z}(3x_1^2x_2^2-4x_1^3x_3-x_0^2x_3^2-4x_0x_2^3+6x_0x_1x_2x_3) = D_{-4} \union F\] where $F$ is a $G$-divisor. The subregular colours $D_0, D_\infty$ and $D_{-4}$ have multiplicities 2, 3 and 2, respectively, in accordance with the pictures above.

Finally, we choose a splitting $e_{2\alpha} = f_vf_e/f_f^2 \in K^{(B)}_{2\alpha}$.

\subsubsection*{Coloured Data and Hyperfan of $\proj{3}$}

The minimal $G$-germ is contained in $F$ and in every colour except $D_{-4}$, so has coloured data $\cal{V}_Z = \{\nu_F\}, D^B_Z = \{D_p \mid p\neq -4\}$. It therefore defines a supported coloured hypercone of type II in $\cal{H}$.

By our choice of splitting we see that $D_0 \mapsto (\ell,h) = (1,2)$ in hyperspace, $D_\infty \mapsto (-2,3)$, $D_{-4} \mapsto (1,2)$, $F \mapsto (0,1)$ and as always the regular colours $D_p \mapsto (0,1)$ for $p \neq 0,\infty,-4$. 

Therefore the polytope defined by $Z$ is given by $\cal{P} = \cal{P}_0 + \cal{P}_\infty + \cal{P}_{-4} = \{1/2 - 1/3 + 0\} = \{-1/6\}$.

Hence the coloured hyperfan of $\proj{3}$ looks like:

\begin{center}\begin{tikzpicture}
	\draw[ultra thick](-2.5,0) -- (0,0);
	\draw[semithick][->] (0,0) -- (2.5,0);
	\draw[semithick][->] (0,0) -- (0,3.5);
	\draw[ultra thick](0,0) -- (-7/3,3.5);
	\draw[dashed](0,0) -- (-2.5,2.5);
	\draw[fill=black] (0,3) circle [color=black,radius=0.05];
	\node[right] at (0,3) {$3$};
	\draw[fill=black] (-2,0) circle [color=black,radius=0.05];
	\node[below] at (-2,0) {$-2$};
	\node[below] at (2.5,0) {$\ell$};
	\node[above] at (0,3.5) {$h$};
	\node[below] at (0,-0.5) {$p_f=\infty$};
	\draw (-2,3) circle [radius=0.15];
	\node[right] at (-2,3) {$D_{\infty}$};
	\begin{scope}[on background layer]
	\fill[pattern=my north east lines](-2.5,3.5) to (-7/3,3.5) to (0,0) to (-2.5,0);
	\end{scope}
	\end{tikzpicture}
	\begin{tikzpicture}
	\draw[ultra thick](-2.5,0) -- (0,0);
	\draw[semithick][->] (0,0) -- (2.5,0);
	\draw[ultra thick][->] (0,0) -- (0,3.5);
	\node[below] at (2.5,0) {$\ell$};
	\node[above] at (0,3.5) {$h$};
	\draw[fill=black] (0,2) circle [color=black,radius=0.05];
	\node[right] at (0,2) {$2$};
	\draw[fill=black] (1,0) circle [color=black,radius=0.05];
	\node[below] at (1,0) {$1$};
	\node[below] at (0,-0.5) {$p_v=-4$};
	\draw (1,2) circle [radius=0.15];
	\node[right] at (1.1,2) {$D_{-4}$};
	\draw[fill=black] (0,1) circle [color=black,radius=0.15];
	\node[right] at (0,1) {$F$};
	\begin{scope}[on background layer]
	\fill[pattern=my north west lines](-2.5,0) to (-2.5,3.5) to (0,3.5) to (0,0);
	\end{scope}
	\end{tikzpicture}\end{center}

\begin{center}\begin{tikzpicture}
	\draw[ultra thick](-2.5,0) -- (0,0);
	\draw[semithick][->] (0,0) -- (2.5,0);
	\draw[semithick][->] (0,0) -- (0,3.5);
	\draw[ultra thick] (0,0) -- (1.75,3.5);
	\node[below] at (2.5,0) {$\ell$};
	\node[above] at (0,3.5) {$h$};
	\draw[fill=black] (0,2) circle [color=black,radius=0.05];
	\node[right] at (0,2) {$2$};
	\draw[fill=black] (1,0) circle [color=black,radius=0.05];
	\node[below] at (1,0) {$1$};
	\node[below] at (0,-0.5) {$p_e=0$};
	\draw (1,2) circle [radius=0.15];
	\node[right] at (1.1,2) {$D_0$};
	\begin{scope}[on background layer]
	\fill[pattern=my north west lines](-2.5,0) to (-2.5,3.5) to (1.75,3.5) to (0,0);
	\end{scope}
	\end{tikzpicture}
	\begin{tikzpicture}
	\draw[ultra thick] (-2.5,0) -- (0,0);
	\draw[thick][->] (0,0) -- (2.5,0);
	\draw[ultra thick][->] (0,0) -- (0,3.5);
	\draw[dashed][domain = -2.5:0] plot (\x, {-\x}); 
	\node[below] at (2.5,0) {$\ell$};
	\node[above] at (0,3.5) {$h$};
	\node[right] at (0.1,1) {$D_p$};
	\draw[fill=black] (-1,0) circle [color=black,radius=0.05];
	\node[below] at (-1,0) {$-1$};
	\node[below] at (0,-0.5) {$p \neq p_f,p_v,p_e$};
	\draw (0,1) circle [radius=0.15];
	\begin{scope}[on background layer]
	\fill[pattern=my north east lines](-2.5,3.5) to (0,3.5) to (0,0) to (-2.5,0);
	\end{scope}
	\end{tikzpicture}\end{center}
\begingroup
\captionof{figure}{Coloured hyperfan of $\proj{3}$ (cubic action)}
\endgroup

\subsubsection*{Blow-up}

Now to get the variety we need, we blow up $Z$. To simplify what happens, we will take affine charts. Take the minimal $B$-chart for $Z$, which is $U_Z = \proj{3}\setminus{\cal{Z}(x_3)}$. By setting $x_3 =1$, we get $U_Z \inter \proj{3} = \aff{3} = \Spec{\bbk[x_0,x_1,x_2]}$. Then $Z \inter U_Z$ becomes $\cal{Z}(x_0-x_1x_2,x_1-x_2^2)$. 

Now $X = \bl_{Z \inter U_Z}(\aff{3}) = \cal{Z}(z_0(x_1-x_2^2)-z_1(x_0-x_1x_2)) \sub \aff{3} \times \proj{1}$. The exceptional divisor is $E= \cal{Z}(x_1-x_2^2,x_0-x_1x_2)$. Take another affine chart $V$ defined by $z_1 =1$. Using the equation for $X$ we can eliminate $x_0$ to obtain $X \inter V = \aff{3} = \Spec{\bbk[x_1,x_2,z_0]}$. Now $E \inter V = \cal{Z}(x_1-x_2^2)$.

On $V \inter X$, the $B$-invariant above becomes $(x_2^2-x_1)/(2x_2(1+z_0))^2$, and the splitting semi-invariant becomes $2x_2(1+z_0)/(x_1-x_2^2)$. Hence $E$ sits over $\infty$ in hyperspace and is mapped to $(-1,1) \in \cal{H}_\infty$.

Since the blow-up is an isomorphism away from $Z$, nothing else in hyperspace moves from its previous position. There is a new minimal $G$-germ $Y = E \inter \tilde{F}$ with coloured data $\cal{V}_Y = \{\nu_E,\nu_{\tilde{F}}\}, \cal{D}^B_Y = \{D_p \mid p\neq \infty,-4\}$. The coloured hyperfan for $X$ is thus:

\begin{center}\begin{tikzpicture}
	\draw[ultra thick](-2.5,0) -- (0,0);
	\draw[semithick][->] (0,0) -- (2.5,0);
	\draw[semithick][->] (0,0) -- (0,3.5);
	\draw[ultra thick](0,0) -- (-2.5,2.5);
	\draw[fill=black] (0,3) circle [color=black,radius=0.05];
	\node[right] at (0,3) {$3$};
	\draw[fill=black] (-2,0) circle [color=black,radius=0.05];
	\node[below] at (-2,0) {$-2$};
	\node[below] at (2.5,0) {$\ell$};
	\node[above] at (0,3.5) {$h$};
	\node[below] at (0,-0.5) {$p_f=\infty$};
	\draw[fill=black] (-1,1) circle [color=black,radius=0.15];
	\node[right] at (-1,1) {$E$};
	\draw (-2,3) circle [radius=0.15];	
	\node[right] at (-2,3) {$\tilde{D}_{\infty}$};
	\begin{scope}[on background layer]
	\fill[pattern=my north east lines](-2.5,0) to (-2.5,2.5) to (0,0);
	\end{scope}
	\end{tikzpicture}
	\begin{tikzpicture}
	\draw[ultra thick](-2.5,0) -- (0,0);
	\draw[semithick][->] (0,0) -- (2.5,0);
	\draw[ultra thick][->] (0,0) -- (0,3.5);
	\node[below] at (2.5,0) {$\ell$};
	\node[above] at (0,3.5) {$h$};
	\draw[fill=black] (0,2) circle [color=black,radius=0.05];
	\node[right] at (0,2) {$2$};
	\draw[fill=black] (1,0) circle [color=black,radius=0.05];
	\node[below] at (1,0) {$1$};
	\node[below] at (0,-0.5) {$p_v=-4$};
	\draw (1,2) circle [radius=0.15];
	\node[right] at (1.1,2) {$\tilde{D}_{-4}$};
	\draw[fill=black] (0,1) circle [color=black,radius=0.15];
	\node[right] at (0,1) {$\tilde{F}$};
	\begin{scope}[on background layer]
	\fill[pattern=my north west lines](-2.5,0) to (-2.5,3.5) to (0,3.5) to (0,0);
	\end{scope}
	\end{tikzpicture}\end{center}

\begin{center}
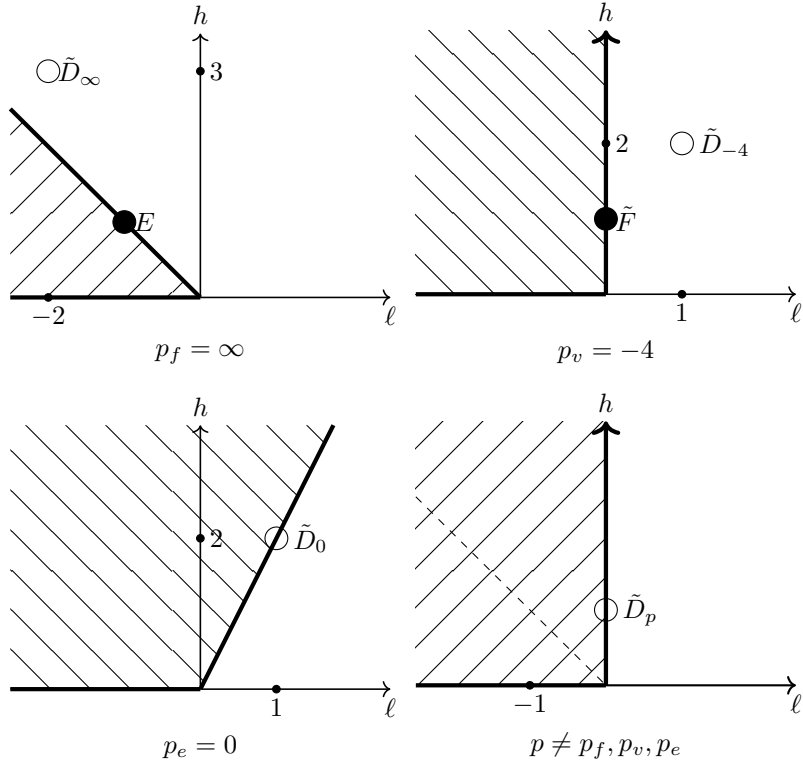
\begin{tikzpicture}
	\draw[ultra thick](-2.5,0) -- (0,0);
	\draw[semithick][->] (0,0) -- (2.5,0);
	\draw[semithick][->] (0,0) -- (0,3.5);
	\draw[ultra thick] (0,0) -- (1.75,3.5);
	\node[below] at (2.5,0) {$\ell$};
	\node[above] at (0,3.5) {$h$};
	\draw[fill=black] (0,2) circle [color=black,radius=0.05];
	\node[right] at (0,2) {$2$};
	\draw[fill=black] (1,0) circle [color=black,radius=0.05];
	\node[below] at (1,0) {$1$};
	\node[below] at (0,-0.5) {$p_e=0$};
	\draw (1,2) circle [radius=0.15];
	\node[right] at (1.1,2) {$\tilde{D}_0$};
	\begin{scope}[on background layer]
	\fill[pattern=my north west lines](-2.5,0) to (-2.5,3.5) to (1.75,3.5) to (0,0);
	\end{scope}
	\end{tikzpicture}
	\begin{tikzpicture}
	\draw[ultra thick] (-2.5,0) -- (0,0);
	\draw[thick][->] (0,0) -- (2.5,0);
	\draw[ultra thick][->] (0,0) -- (0,3.5);
	\draw[dashed][domain = -2.5:0] plot (\x, {-\x}); 
	\node[below] at (2.5,0) {$\ell$};
	\node[above] at (0,3.5) {$h$};
	\node[right] at (0.1,1) {$\tilde{D}_p$};
	\draw[fill=black] (-1,0) circle [color=black,radius=0.05];
	\node[below] at (-1,0) {$-1$};
	\node[below] at (0,-0.5) {$p \neq p_f,p_v,p_e$};
	\draw (0,1) circle [radius=0.15];
	\begin{scope}[on background layer]
	\fill[pattern=my north east lines](-2.5,3.5) to (0,3.5) to (0,0) to (-2.5,0);
	\end{scope}
	\end{tikzpicture}\end{center}
\begingroup
\captionof{figure}{Coloured hyperfan of the blow-up of $\proj{3}$ along the twisted cubic}
\endgroup

\subsection{Blow-up of the Quadric Threefold (2.21)}\label{221}

\subsubsection*{Homogeneous Space and Structure}

Let $G = \sl{2}$ act on $\proj{4} = \bb{P}(S^4\bbk^2)$ as in \Cref{symaction}. Then the quadric hypersurface $Q = \cal{Z}(3x_2^2-4x_1x_3+x_0x_4)$ is a smooth, $G$-stable threefold. The stabiliser $G_P$ of the point $P = [0:1:0:0:1] \in Q$ is the binary tetrahedral group $\widetilde{T}$, hence the orbit $G\cdot P$ is 3 dimensional and thus open. The same point has the group of sixth roots of unity as its $B$-stabiliser, so has a 2-dimensional $B$ orbit, so that $Q$ is a quasihomogeneous complexity one $G$-variety containing the homogeneous space $G/H = \sl{2}/\tilde{T}$.

Let $Z$ be the rational normal curve of degree 4 in $\proj{4}$, i.e. $Z = \cal{Z}(x_0x_2-x_1^2,x_0x_3-x_1x_2,x_1x_4-x_2x_3,x_2x_4-x_3^2)$. Then $Z$ is a closed $G$-orbit in $Q$ by \thref{rnc}. We will see that $Z$ is the unique minimal $G$-germ in $Q$, and is contained in a unique $G$-divisor.

\subsubsection*{(Semi-) Invariant Functions}

We know from \Cref{tetrahedral} that there are semi-invariant regular functions $f_e, f_f, f_v$ in $\bbk[G]^{(B \times H)}$ of respective biweights $(6\alpha,1), (4\alpha, \epsilon^{-1})$ and $(4\alpha,\epsilon)$, where $\epsilon$ is a primitive cube root of unity.

On $Q$, $x_4$ and $x_2x_4-x_3^2$ have $B$-weight $4\alpha$ by \thref{semi}, and checking on the homogeneous space $G/H = G\cdot[0:1:0:0:1]$ we see that acting by $\begin{psmallmatrix}\epsilon & 0 \\ 0 & \epsilon^{-1}\end{psmallmatrix}$ on the right, they have $H$-weights $\epsilon^{-1}$ and $\epsilon$ respectively. Hence $x_4 = f_f$, $x_2x_4-x_3^2 = f_v$. Finally, the function $2x_3^3+x_1x_4^2-3x_2x_3x_4$ is $B$-semi-invariant of weight $6\alpha$ and $H$-invariant, so this is $f_e$.

Now $f_v^3/f_e^2$ is a $B$-invariant rational function on $Q$, so the $B$-quotient map $\pi$ is given by $\pi(P) = [f_v^3(P):f_e^2(P)]$ for $P \in Q$, and defines a family of regular colours $D_p = \pi^*(p)$ for all $p=[\alpha:\beta] \in \proj{1}$ except: \[p = 0: \pi^*(p) = \cal{Z}(f_e^2) = D_0,\]\[p = \infty: \pi^*(p) = \cal{Z}(f_v^3) = D_\infty,\]\[p = -4: \pi^*(p) = \cal{Z}(f_f^3)\union\cal{Z}(4x_2^3+x_1^2x_4+x_0x_3^2-6x_1x_2x_3) = D_{-4} \union F\] where $F$ is a $G$-invariant divisor. Note that the subregular colours $D_0, D_\infty$ and $D_{-4}$ have multiplicities 2, 3 and 3, respectively, as we should expect from the hyperspace of $G/H$.

Finally, we choose as a splitting semi-invariant $e_{2\alpha} = f_vf_f/f_e \in K^{(B)}_{2\alpha}$.

\subsubsection*{Coloured Data and Hyperfan of Q}

The minimal $G$-germ $Z$ is contained in $F$ and in every colour except $D_{-4}$, so has coloured data $\cal{V}_Z = \{\nu_F\}$, $\cal{D}^B_Z = \{D_p \mid p\neq -4\}$. Hence it defines a supported coloured hypercone of type II in $\cal{H}$. 

Our choice of invariant and splitting functions mean that $D_0$ sits at $(\ell,h) = (-1,2)$ in hyperspace, $D_{-4}, D_\infty \mapsto (1,3)$, $F \mapsto (0,1)$ and as always for regular colours we have $D_p \mapsto (0,1)$ for $p\neq 0,\infty,-4$. 

Hence the polytope defined by $Z$ is given by $\cal{P} = \cal{P}_0 + \cal{P}_\infty + \cal{P}_{-4} = \{-1/2 + 1/3 + 0\} = \{-1/6\}$. Therefore the full coloured hyperfan defined by $Q$ is as follows:

\begin{center}\begin{tikzpicture}
	\draw[ultra thick](-2.5,0) -- (0,0);
	\draw[semithick][->] (0,0) -- (2.5,0);
	\draw[ultra thick][->] (0,0) -- (0,3.5);
	\draw[dashed][domain = -2.5:0] plot (\x, {-\x}); 
	\node[below] at (2.5,0) {$\ell$};
	\node[above] at (0,3.5) {$h$};
	\node[right] at (0.2,1) {$D_p$};
	\draw[fill=black] (-1,0) circle [color=black,radius=0.05];
	\node[below] at (-1,0) {$-1$};
	\node[below] at (0,-0.5) {$p \neq p_f,p_v,p_e$};
	\draw (0,1) circle [radius=0.15];
	\begin{scope}[on background layer]
	\fill[pattern=my north east lines](-2.5,3.5) to (-2.5,0) to (0,0) to (0,3.5);
	\end{scope}
	\end{tikzpicture}
	\begin{tikzpicture}
	\draw[ultra thick](-2.5,0) -- (0,0);
	\draw[semithick][->] (0,0) -- (2.5,0);
	\draw[semithick][->] (0,0) -- (0,3.5);
	\draw[dashed][domain = -2.5:0] plot (\x, {-\x}); 
	\draw[ultra thick](0,0) -- (-1.75,3.5);
	\draw[fill=black] (0,2) circle [color=black,radius=0.05];
	\node[right] at (0,2) {$2$};
	\draw[fill=black] (-1,0) circle [color=black,radius=0.05];
	\node[below] at (-1,0) {$-1$};
	\draw[fill=black] (-1,0) circle [color=black,radius=0.05];
	\node[below] at (2.5,0) {$\ell$};
	\node[above] at (0,3.5) {$h$};
	\node[below] at (0,-0.5) {$p_e=0$};
	\draw (-1,2) circle [radius=0.15];
	\node[right] at (-1,2) {$D_0$};
	\begin{scope}[on background layer]
	\fill[pattern=my north east lines](-2.5,0) to (-2.5,3.5) to (-1.75,3.5) to (0,0);
	\end{scope}
	\end{tikzpicture}\end{center}

\begin{center}
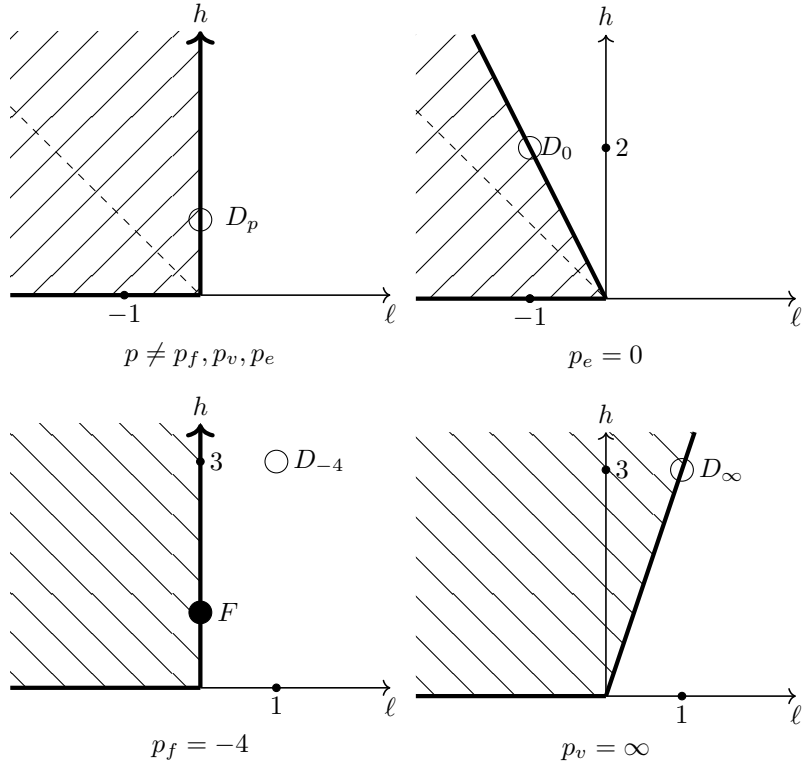
\begin{tikzpicture}
	\draw[ultra thick] (-2.5,0) -- (0,0);
	\draw[semithick][->] (0,0) -- (2.5,0);
	\draw[ultra thick][->] (0,0) -- (0,3.5);
	\node[below] at (2.5,0) {$\ell$};
	\node[above] at (0,3.5) {$h$};
	\draw[fill=black] (1,0) circle [color=black,radius=0.05];
	\node[below] at (1,0) {$1$};
	\draw[fill=black] (0,3) circle [color=black,radius=0.05];
	\node[right] at (0,3) {$3$};
	\draw[fill=black] (0,1) circle [color=black,radius=0.15];
	\node[right] at (0.1,1) {$F$};
	\node[below] at (0,-0.5) {$p_f=-4$};
	\draw (1,3) circle [radius=0.15];
	\node[right] at (1.1,3) {$D_{-4}$};
	\begin{scope}[on background layer]
	\fill[pattern=my north west lines](-2.5,0) to (-2.5,3.5) to (0,3.5) to (0,0);
	\end{scope}
	\end{tikzpicture}
	\begin{tikzpicture}
	\draw[ultra thick] (-2.5,0) -- (0,0);
	\draw[semithick][->] (0,0) -- (2.5,0);
	\draw[semithick][->] (0,0) -- (0,3.5);
	\draw[ultra thick] (0,0) -- (3.5/3,3.5);
	\node[below] at (2.5,0) {$\ell$};
	\node[above] at (0,3.5) {$h$};
	\draw[fill=black] (1,0) circle [color=black,radius=0.05];
	\node[below] at (1,0) {$1$};
	\draw[fill=black] (0,3) circle [color=black,radius=0.05];
	\node[right] at (0,3) {$3$};
	\node[below] at (0,-0.5) {$p_v=\infty$};
	\draw (1,3) circle [radius=0.15];
	\node[right] at (1.1,3) {$D_\infty$};
	\begin{scope}[on background layer]
	\fill[pattern=my north west lines](-2.5,0) to (-2.5,3.5) to (3.5/3,3.5) to (0,0);
	\end{scope}
	\end{tikzpicture}\end{center}
\begingroup
\captionof{figure}{Coloured hyperfan of the quadric threefold $Q$}
\endgroup

\subsubsection*{Blow-up}

The variety we want is obtained by blowing up $Z$. We calculate the effect on the hyperfan by blowing up in an affine chart. Since $Z$ defines a coloured hypercone of type II, it has a minimal $B$-chart $U_Z$ with the same coloured data. Indeed $Z$ is contained in every colour except $D_{-4} = \cal{Z}(x_4)$, so $U_Z = \proj{4}\setminus{\cal{Z}(x_4)}$. Setting $x_4 = 1$ allows us to eliminate $x_0$ using the equation for $Q$, so $U_Z \inter Q \cong \aff{3} = \Spec{\bbk[x_1,x_2,x_3]}$. Then $U_Z \inter Z = \cal{Z}(x_1-x_2x_3,x_2-x_3^2)$.

Now $X = \bl_{U_Z \inter Z}(\aff{3}) = \cal{Z}(z_0(x_2-x_3^2)-z_1(x_1-x_2x_3)) \sub \aff{3} \times \proj{1}$. The exceptional divisor $E$ is given by $\cal{Z}(x_1-x_2x_3,x_2-x_3^2)$. Now we take another affine chart $V$ by setting $z_1=1$, which allows us to eliminate $x_1$ using the equation for $X$. Thus $V \inter X = \aff{3} = \Spec{\bbk[x_2,x_3,z_0]}$, and $E \inter V = \cal{Z}(x_2-x_3^2)$. 

On $V \inter X$, the invariant function $f_v^3/f_e^2$ becomes $(x_2-x_3^2)/(z_0-2x_3)^2$, and the splitting semi-invariant is $1/(z_0-2x_3)$, so we see that $E \mapsto (p,\ell,h) = (\infty,0,1)$ in hyperspace.

Since the blow-up is an isomorphism away from $Z$, all other colours and $G$-divisors lie at the same points in hyperspace as before. The blow-up introduces a new minimal $G$-germ $Y = E \inter \tilde{F}$ which must have coloured data $\cal{V}_Y = \{\nu_E,\nu_{\tilde{F}}\}$, $\cal{D}^B_Y = \{D_p \mid p\neq \infty,-4\}$. Hence the coloured hyperfan for $X$ looks like:

\begin{center}\begin{tikzpicture}
	\draw[ultra thick](-2.5,0) -- (0,0);
	\draw[semithick][->] (0,0) -- (2.5,0);
	\draw[ultra thick][->] (0,0) -- (0,3.5);
	\draw[dashed][domain = -2.5:0] plot (\x, {-\x}); 
	\node[below] at (2.5,0) {$\ell$};
	\node[above] at (0,3.5) {$h$};
	\node[right] at (0.2,1) {$\tilde{D}_p$};
	\draw[fill=black] (-1,0) circle [color=black,radius=0.05];
	\node[below] at (-1,0) {$-1$};
	\node[below] at (0,-0.5) {$p \neq p_f,p_v,p_e$};
	\draw (0,1) circle [radius=0.15];
	\begin{scope}[on background layer]
	\fill[pattern=my north east lines](-2.5,3.5) to (-2.5,0) to (0,0) to (0,3.5);
	\end{scope}
	\end{tikzpicture}
	\begin{tikzpicture}
	\draw[ultra thick](-2.5,0) -- (0,0);
	\draw[semithick][->] (0,0) -- (2.5,0);
	\draw[semithick][->] (0,0) -- (0,3.5);
	\draw[dashed][domain = -2.5:0] plot (\x, {-\x}); 
	\draw[ultra thick](0,0) -- (-1.75,3.5);
	\draw[fill=black] (0,2) circle [color=black,radius=0.05];
	\node[right] at (0,2) {$2$};
	\draw[fill=black] (-1,0) circle [color=black,radius=0.05];
	\node[below] at (-1,0) {$-1$};
	\draw[fill=black] (-1,0) circle [color=black,radius=0.05];
	\node[below] at (2.5,0) {$\ell$};
	\node[above] at (0,3.5) {$h$};
	\node[below] at (0,-0.5) {$p_e=0$};
	\draw (-1,2) circle [radius=0.15];
	\node[right] at (-1,2) {$\tilde{D}_0$};
	\begin{scope}[on background layer]
	\fill[pattern=my north east lines](-2.5,0) to (-2.5,3.5) to (-1.75,3.5) to (0,0);
	\end{scope}
	\end{tikzpicture}\end{center}

\begin{center}\begin{tikzpicture}
	\draw[ultra thick] (-2.5,0) -- (0,0);
	\draw[semithick][->] (0,0) -- (2.5,0);
	\draw[ultra thick][->] (0,0) -- (0,3.5);
	\node[below] at (2.5,0) {$\ell$};
	\node[above] at (0,3.5) {$h$};
	\draw[fill=black] (1,0) circle [color=black,radius=0.05];
	\node[below] at (1,0) {$1$};
	\draw[fill=black] (0,3) circle [color=black,radius=0.05];
	\node[right] at (0,3) {$3$};
	\draw[fill=black] (0,1) circle [color=black,radius=0.15];
	\node[right] at (0.1,1) {$\tilde{F}$};
	\node[below] at (0,-0.5) {$p_f=-4$};
	\draw (1,3) circle [radius=0.15];
	\node[right] at (1.1,3) {$\tilde{D}_{-4}$};
	\begin{scope}[on background layer]
	\fill[pattern=my north west lines](-2.5,0) to (-2.5,3.5) to (0,3.5) to (0,0);
	\end{scope}
	\end{tikzpicture}
	\begin{tikzpicture}
	\draw[ultra thick] (-2.5,0) -- (0,0);
	\draw[semithick][->] (0,0) -- (2.5,0);
	\draw[ultra thick][->] (0,0) -- (0,3.5);
	\node[below] at (2.5,0) {$\ell$};
	\node[above] at (0,3.5) {$h$};
	\draw[fill=black] (1,0) circle [color=black,radius=0.05];
	\node[below] at (1,0) {$1$};
	\draw[fill=black] (0,3) circle [color=black,radius=0.05];
	\node[right] at (0,3) {$3$};
	\draw[fill=black] (0,1) circle [color=black,radius=0.15];
	\node[right] at (0.1,1) {$E$};
	\node[below] at (0,-0.5) {$p_v=\infty$};
	\draw (1,3) circle [radius=0.15];
	\node[right] at (1.1,3) {$\tilde{D}_\infty$};
	\begin{scope}[on background layer]
	\fill[pattern=my north west lines](-2.5,0) to (-2.5,3.5) to (0,3.5) to (0,0);
	\end{scope}
	\end{tikzpicture}\end{center}
\begingroup

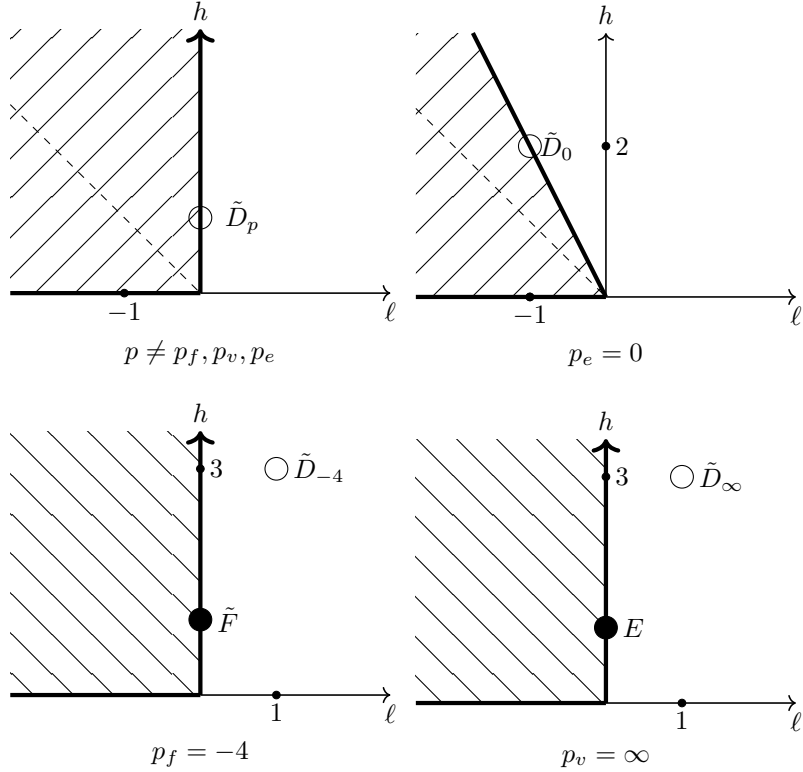
\captionof{figure}{Coloured hyperfan of the blow-up of $Q$ along $Z$}
\endgroup

\subsection{$V_5$ (1.15)}\label{115}

\subsubsection*{Description and Homogeneous Space}

Let $G=\sl{2}$ act on $\bbk^2$ with the standard linear action, and hence on $\proj{6} = \bb{P}(S^6\bbk^2)$. By \cite{furu}, we can realise $V_5$ as the closure in $\proj{6}$ of the $G$-orbit of the point $P = [0:1:0:0:0:-1:0]$. This explicitly shows that $V_5$ is a quasihomogeneous $G$-variety of complexity one. The stabiliser of $P$ in $G$ contains the matrices $y, r$ and $\omega$ which, in \Cref{cubic}, were shown to generate the binary cubic group $H = \tilde{C}$. Since no other finite subgroup of $G$ contains $\tilde{C}$, we see that $G\cdot P \cong G/H$. Hence there are three subregular semi-invariants $f_v$, $f_e$ and $f_f$ of respective $(B \times H)$ biweights $(8\alpha,1), (12\alpha,-1)$ and $(6\alpha,-1)$ and multiplicities $3, 2$ and $4$. 

The description of $V_5$ in \cite{furu} realises it as the subvariety of $\proj{6}$ defined by the equations \begin{align*} x_0x_4-4x_1x_3+3x_2^2 & = 0, \\ x_0x_5-3x_1x_4+2x_2x_3 & = 0, \\ x_0x_6-9x_2x_4+8x_3^2 & = 0, \\ x_1x_6-3x_2x_5+2x_3x_4 & = 0, \\ x_2x_6-4x_3x_5+3x_4^2 & = 0.\end{align*} It is clear from the above then that the rational normal curve $Z$ of degree 6 defined as the image of the Veronese embedding $\nu_6 \colon \proj{1} \to \proj{6}$ lies inside $V_5$ as a minimal $G$-germ. We will show later that $Z$ is in fact the unique minimal $G$-germ of $V_5$, contained in a unique $G$-stable divisor.

\subsubsection*{(Semi)-Invariant Functions}

One can find semi-invariants of the correct $B$-weights by using a torus action, under which each co-ordinate function has a given weight. In this case $x_k$ has weight $-6+2k$. It is easy to see immediately that $x_6$ is $B$ semi-invariant of weight 6, so must be the subregular semi-invariant $f_f$. Likewise $x_4x_6-x_5^2$ is $B$ semi-invariant of weight 8, so represents $f_v$. Now from \Cref{cubic}, we know that $f_vf_f/f_e$ is a rational function, so has degree 0. Hence $f_e$ must have degree 3 and $B$- (hence $T$-) weight $12\alpha$, and therefore must be a linear combination of the monomials $x_3x_6^2, x_4x_5x_6$ and $x_5^3$. A simple check shows that $f_e = x_3x_6^2-3x_4x_5x_6+2x_5^3$ suffices.

Hence $f_v^3/f_e^2$ is an invariant rational function defining a rational $B$-quotient $\pi \colon V_5 \dashrightarrow \proj{1}$, $P \mapsto [f_v^3(P):f_e^2(P)]$. The pullback of $p = [\alpha:\beta] \in \proj{1}$ defines a regular colour for all $p$ except the following: \[p = 0: \pi^*(p) = \cal{Z}(f_e^2) = D_0;\]\[p = \infty: \pi^*(p) = \cal{Z}(f_v^3) = D_\infty;\]\[p=-4: \pi^*(p) = \cal{Z}(x_6^4) \union \cal{Z}(x_1x_5+3x_3^2-4x_2x_3) = D_{-4} \union F;\] where $F$ is then a unique $G$-divisor containing the $G$-germ $Z$. Likewise it is straightforward that $Z$ is contained in every colour except $D_{-4}$. 

We choose as mentioned the semi-invariant splitting function $e_{2\alpha} = f_vf_f/f_e$ of $B$-weight $2\alpha$, and the colours $D_0, D_\infty$ and $D_{-4}$ are distinguished by it.

\subsubsection*{Coloured Hyperfan}

We already know from previous discussions and our choice of splitting that regular colours $D_p$ for $p \neq 0, \infty -4$ are mapped to points $(p,0,1)$ of hyperspace, and that $D_0 \mapsto (0,-1,2)$, $D_\infty \mapsto (\infty,1,3)$ and $D_{-4} \mapsto (-4,1,4)$. Finally, it is easy to check on the $B$-chart $V_5\setminus{\cal{Z}(x_6)}$ that $F$ is mapped to $(-4,0,1)$.

The coloured data of the minimal $G$-germ $Z$ is $\cal{V}_Z = \{\nu_F\}$, $\cal{D}^B_Z = \{D_p \mid p \neq -4\}$. Hence $Z$ is a $G$-germ of type II and defines a supported coloured hypercone of type II generated by its coloured data and the polytope $\cal{P} = \cal{P}_0 + \cal{P}_\infty + \cal{P}_{-4} = \{-1/2\} + \{1/3\} + \{0\} = \{-1/6\}$. Hence the coloured hyperfan of $V_5$ is as follows:

\begin{center}\begin{tikzpicture}
	\draw[ultra thick] (-2.5,0) -- (0,0);
	\draw[semithick][->] (0,0) -- (2.5,0);
	\draw[semithick][->] (0,0) -- (0,3.5);
	\draw[ultra thick] (0,0) -- (-1.75,3.5);
	\draw[dashed,domain = -2.5:0] plot (\x, {-\x}); 
	\draw[fill=black] (0,2) circle [color=black,radius=0.05];
	\node[right] at (0,2) {$2$};
	\draw[fill=black] (-1,0) circle [color=black,radius=0.05];
	\node[below] at (-1,0) {$-1$};
	\draw[fill=black] (-1,0) circle [color=black,radius=0.05];
	\node[below] at (2.5,0) {$\ell$};
	\node[above] at (0,3.5) {$h$};
	\node[below] at (0,-0.5) {$0 = p_e$};
	\draw (-1,2) circle [radius=0.15];
	\node[right] at (-0.9,2) {$D_0$};
	\begin{scope}[on background layer]
	\fill[pattern=my north east lines](-2.5,0) to (-2.5,3.5) to (-1.75,3.5) to (0,0);
	\end{scope}
	\end{tikzpicture}
	\begin{tikzpicture}
	\draw[ultra thick] (-2.5,0) -- (0,0);
	\draw[semithick][->] (0,0) -- (2.5,0);
	\draw[ultra thick][->] (0,0) -- (0,3.5);
	\node[below] at (2.5,0) {$\ell$};
	\node[above] at (0,3.5) {$h$};
	\draw[fill=black] (1,0) circle [color=black,radius=0.05];
	\node[below] at (1,0) {$1$};
	\draw[fill=black] (0,3) circle [color=black,radius=0.05];
	\node[right] at (0,3) {$4$};
	\node[below] at (0,-0.5) {$-4 = p_f$};
	\draw[fill=black] (0,1) circle [color=black,radius=0.15];
	\node[right] at (0.1,1) {$F$};
	\draw (1,3) circle [radius=0.15];
	\node[right] at (1.1,3) {$D_{-4}$};
	\begin{scope}[on background layer]
	\fill[pattern=my north west lines](-2.5,0) to (-2.5,3.5) to (0,3.5) to (0,0);
	\end{scope}
	\end{tikzpicture}\end{center}

\begin{center}
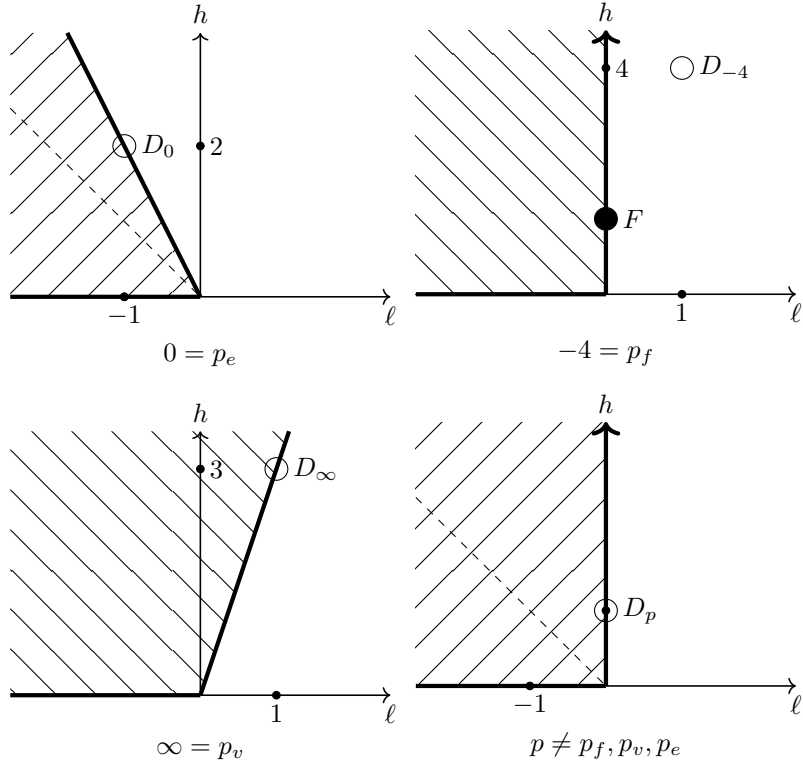
\begin{tikzpicture}
	\draw[ultra thick] (-2.5,0) -- (0,0);
	\draw[semithick][->] (0,0) -- (2.5,0);
	\draw[semithick][->] (0,0) -- (0,3.5);
	\draw[ultra thick] (0,0) -- (7/6,3.5);	
	\node[below] at (2.5,0) {$\ell$};
	\node[above] at (0,3.5) {$h$};
	\draw[fill=black] (1,0) circle [color=black,radius=0.05];
	\node[below] at (1,0) {$1$};
	\draw[fill=black] (0,3) circle [color=black,radius=0.05];
	\node[right] at (0,3) {$3$};
	\node[below] at (0,-0.5) {$\infty = p_v$};
	\draw (1,3) circle [radius=0.15];
	\node[right] at (1.1,3) {$D_\infty$};
	\begin{scope}[on background layer]
	\fill[pattern=my north west lines](-2.5,0) to (-2.5,3.5) to (7/6,3.5) to (0,0);
	\end{scope}
	\end{tikzpicture}
	\begin{tikzpicture}
	\draw[ultra thick] (-2.5,0) -- (0,0);
	\draw[semithick][->] (0,0) -- (2.5,0);
	\draw[ultra thick][->] (0,0) -- (0,3.5);
	\draw[dashed,domain = -2.5:0] plot (\x, {-\x}); 
	\node[below] at (2.5,0) {$\ell$};
	\node[above] at (0,3.5) {$h$};
	\draw[fill=black] (0,1) circle [color=black,radius=0.05];
	\draw[fill=black] (-1,0) circle [color=black,radius=0.05];
	\node[below] at (-1,0) {$-1$};
	\node[below] at (0,-0.5) {$p \neq p_f,p_v,p_e$};
	\draw (0,1) circle [radius=0.15];
	\node[right] at (0.1,1) {$D_p$};	
	\begin{scope}[on background layer]
	\fill[pattern=my north east lines](-2.5,0) to (-2.5,3.5) to (0,3.5) to (0,0);
	\end{scope}
	\end{tikzpicture}\end{center}
\begingroup
\captionof{figure}{Coloured hyperfan of $V_5$}
\endgroup

\subsection{$V_{22}$ (1.10)}\label{110}

\subsubsection*{Description and Homogeneous Space}

Let $G=\sl{2}$ act on $\bbk^2$ with the standard linear action, and hence on $\proj{12} = \bb{P}(S^{12}\bbk^2)$. By \cite{furu}, we can realise $V_{22}$ as the closure in $\proj{12}$ of the $G$-orbit of the point \[P = [0:1:0:0:0:0:11:0:0:0:0:1:0].\] This explicitly shows that $V_{22}$ is a quasihomogeneous $G$-variety of complexity one. The stabiliser of $P$ in $G$ contains $\begin{psmallmatrix}\epsilon & 0 \\ 0 & \epsilon^{-1}\end{psmallmatrix}$ for $\epsilon$ a primitive tenth root of unity. Furthermore, the function $x_{12}$ on $V_{22}$ is a $B$-eigenfunction of weight $12\alpha$. The only finite subgroup $H$ of $\sl{2}$ containing an element of order 10 and such that $\sl{2}/H$ has a semi-invariant of weight $12\alpha$ is $\tilde{I}$, the binary icosahedral group. Hence $V_{22}$ is an embedding of $\sl{2}/\tilde{I}$.

Then by \ref{icosahedral} there are three subregular semi-invariants $f_v$, $f_e$ and $f_f$ of respective $B$-weights $12\alpha, 30\alpha$ and $20\alpha$ and multiplicities $5, 2$ and $3$. Note that $\tilde{I}$ has no weights.

The description of $V_{22}$ in \cite{furu} realises it as the subvariety of $\proj{12}$ defined by the equations \[\sum_{\lambda = 0}^{\rho}{{8 \choose \lambda}{8\choose \rho-\lambda}(x_{\lambda}x_{\rho+4-\lambda} -4x_{\lambda + 1}x_{\rho+3-\lambda} + 3x_{\lambda + 2}x_{\rho+2-\lambda})} = 0\] for $0 \leq \rho \leq 16$. It is easy to check using these equations that the rational normal curve $Z$ of degree 12, defined as the image of the Veronese embedding $\nu_{12} \colon \proj{1} \to \proj{12}$, lies inside $V_{22}$ as a minimal $G$-germ. We will see that $Z$ is in fact the unique minimal $G$-germ of $V_{22}$, contained in a unique $G$-stable divisor.

\subsubsection*{(Semi)-Invariant Functions}

One can find semi-invariants of the correct $B$-weights by using a torus action, under which each co-ordinate function has a given weight. In this case $x_k$ has weight $(-12+2k)\alpha$. We have mentioned that $x_{12}$ is $B$ semi-invariant of weight $12\alpha$, so must be the subregular semi-invariant $f_v$. Likewise $x_{10}x_{12}-x_{11}^2$ is $B$ semi-invariant of weight $20\alpha$, so represents $f_f$. Now from \Cref{icosahedral}, we know that $f_vf_f/f_e$ is a rational function, so has degree 0. Hence $f_e$ must have degree 3 and $B$- (hence $T$-) weight $30\alpha$, and therefore must be a linear combination of the monomials $x_{10}x_{11}x_{12}, x_{11}^3$ and $x_9x_{12}^2$. A simple check shows that $f_e = 3x_{10}x_{11}x_{12} - 2x_{11}^3 - x_9x_{12}^2$ suffices.

Hence $f_f^3/f_e^2$ is an invariant rational function defining a rational $B$-quotient $\pi \colon V_{22} \dashrightarrow \proj{1}$, $P \mapsto [f_f^3(P):f_e^2(P)]$. The pullback of $p = [\alpha:\beta] \in \proj{1}$ defines a regular colour for all $p$ except the following: \[p = 0: \pi^*(p) = \cal{Z}(f_e^2) = D_0;\]\[p = \infty: \pi^*(p) = \cal{Z}(f_v^3) = D_\infty.\] We also see that the subregular colour $\cal{Z}(f_v)$ lies over $p = -4 = [1:-4]$, so we write $D_{-4} = \cal{Z}(f_v)$, recalling that this colour has multiplicity 5.

We choose as mentioned the semi-invariant splitting function $e_{2\alpha} = f_vf_f/f_e$ of $B$-weight $2\alpha$, and the colours $D_0, D_\infty$ and $D_{-4}$ are distinguished by it.

\subsubsection*{Coloured Hyperfan}

We already know from previous discussions and our choice of splitting that regular colours $D_p$ for $p \neq 0, \infty -4$ are mapped to points $(p,0,1)$ of hyperspace, and that $D_0 \mapsto (0,-1,2)$, $D_\infty \mapsto (\infty,1,3)$ and $D_{-4} \mapsto (-4,1,5)$.

Consider the coloured data of the minimal $G$-germ $Z$: we know that $\cal{D}^B_Z = \{D_p \mid p \neq -4\}$. This is infinite, so $Z$ is a $G$-germ of type II and defines a coloured hypercone generated by its coloured data and the polytope $\cal{P}$. However, the slice $\cal{H}_{-4}$ does not contain a non-central element of $\cal{D}^B_Z$, and hence must contain a non-central element of $\cal{V}_Z$ since $Z$ is of type II. Hence there must be a $G$-divisor $F \sub V_{22}$ containing $Z$ and lying over $-4 \in \proj{1}$. In the slice $\cal{H}_{-4}$, $\cal{C}_Z$ is then generated by  $\cal{P}$, which is central, and $F$, and by completeness it must cover the valuation cone. Hence we must have $F \mapsto (-4,0,h)$ for some $h \geq 0$, and the hyperfan is the same for any such $h$, so take $h=1$. Then $\cal{P} = \cal{P}_0 + \cal{P}_\infty + \cal{P}_{-4} = \{-1/2\} + \{1/3\} + \{0\} = \{-1/6\}$. Hence the coloured hyperfan of $V_{22}$ is as follows:

\begin{center}\begin{tikzpicture}
	\draw[ultra thick] (-2.5,0) -- (0,0);
	\draw[semithick][->] (0,0) -- (2.5,0);
	\draw[semithick][->] (0,0) -- (0,3.5);
	\draw[ultra thick] (0,0) -- (-1.75,3.5);
	\draw[dashed,domain = -2.5:0] plot (\x, {-\x}); 
	\draw[fill=black] (0,2) circle [color=black,radius=0.05];
	\node[right] at (0,2) {$2$};
	\draw[fill=black] (-1,0) circle [color=black,radius=0.05];
	\node[below] at (-1,0) {$-1$};
	\draw[fill=black] (-1,0) circle [color=black,radius=0.05];
	\node[below] at (2.5,0) {$\ell$};
	\node[above] at (0,3.5) {$h$};
	\node[below] at (0,-0.5) {$0 = p_e$};
	\draw (-1,2) circle [radius=0.15];
	\node[right] at (-0.9,2) {$D_0$};
	\begin{scope}[on background layer]
	\fill[pattern=my north east lines](-2.5,0) to (-2.5,3.5) to (-1.75,3.5) to (0,0);
	\end{scope}
	\end{tikzpicture}
	\begin{tikzpicture}
	\draw[ultra thick] (-2.5,0) -- (0,0);
	\draw[semithick][->] (0,0) -- (2.5,0);
	\draw[ultra thick][->] (0,0) -- (0,3.5);
	\node[below] at (2.5,0) {$\ell$};
	\node[above] at (0,3.5) {$h$};
	\draw[fill=black] (1,0) circle [color=black,radius=0.05];
	\node[below] at (1,0) {$1$};
	\draw[fill=black] (0,3) circle [color=black,radius=0.05];
	\node[right] at (0,3) {$5$};
	\node[below] at (0,-0.5) {$-4 = p_v$};
	\draw[fill=black] (0,1) circle [color=black,radius=0.15];
	\node[right] at (0.1,1) {$F$};
	\draw (1,3) circle [radius=0.15];
	\node[right] at (1.1,3) {$D_{-4}$};
	\begin{scope}[on background layer]
	\fill[pattern=my north west lines](-2.5,0) to (-2.5,3.5) to (0,3.5) to (0,0);
	\end{scope}
	\end{tikzpicture}\end{center}

\begin{center}
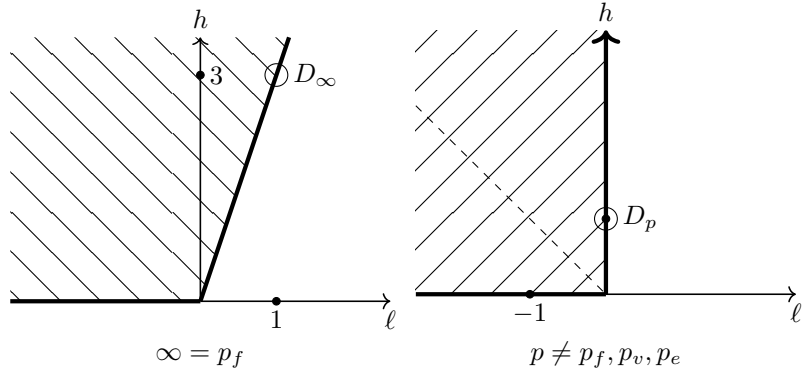
\begin{tikzpicture}
	\draw[ultra thick] (-2.5,0) -- (0,0);
	\draw[semithick][->] (0,0) -- (2.5,0);
	\draw[semithick][->] (0,0) -- (0,3.5);
	\draw[ultra thick] (0,0) -- (7/6,3.5);	
	\node[below] at (2.5,0) {$\ell$};
	\node[above] at (0,3.5) {$h$};
	\draw[fill=black] (1,0) circle [color=black,radius=0.05];
	\node[below] at (1,0) {$1$};
	\draw[fill=black] (0,3) circle [color=black,radius=0.05];
	\node[right] at (0,3) {$3$};
	\node[below] at (0,-0.5) {$\infty = p_f$};
	\draw (1,3) circle [radius=0.15];
	\node[right] at (1.1,3) {$D_\infty$};
	\begin{scope}[on background layer]
	\fill[pattern=my north west lines](-2.5,0) to (-2.5,3.5) to (7/6,3.5) to (0,0);
	\end{scope}
	\end{tikzpicture}
	\begin{tikzpicture}
	\draw[ultra thick] (-2.5,0) -- (0,0);
	\draw[semithick][->] (0,0) -- (2.5,0);
	\draw[ultra thick][->] (0,0) -- (0,3.5);
	\draw[dashed,domain = -2.5:0] plot (\x, {-\x}); 
	\node[below] at (2.5,0) {$\ell$};
	\node[above] at (0,3.5) {$h$};
	\draw[fill=black] (0,1) circle [color=black,radius=0.05];
	\draw[fill=black] (-1,0) circle [color=black,radius=0.05];
	\node[below] at (-1,0) {$-1$};
	\node[below] at (0,-0.5) {$p \neq p_f,p_v,p_e$};
	\draw (0,1) circle [radius=0.15];
	\node[right] at (0.1,1) {$D_p$};	
	\begin{scope}[on background layer]
	\fill[pattern=my north east lines](-2.5,0) to (-2.5,3.5) to (0,3.5) to (0,0);
	\end{scope}
	\end{tikzpicture}\end{center}
\begingroup
\captionof{figure}{Coloured hyperfan of $V_{22}$}
\endgroup

\section{Applying \thref{Kstablecentral}}\label{apply}
	
In this section we show that \thref{Kstablecentral} applies to the varieties whose combinatorial data we have just calculated.

\subsection{Action with no Fixed Points}

The first case of \thref{Kstablecentral} applies to $\proj{3}$ (1.17), the blow-up of $\proj{3}$ in two lines (3.25) and the blow-up of $\proj{3}$ in three lines (4.6).

Realise $\proj{1}$ as the unit sphere and consider the action of the symmetric group $A = S_3$ on $\proj{1}$ consisting of rotations permuting the vertices of an equilateral triangle inscribed in the equator. This action has no fixed points, one orbit of order 2 consisting of the north and south poles, and various other orbits of orders 3 and 6.

If $X = \proj{3}$, then $S_3 \sub \aut{X} = \pgl{4}$. Recall that the slices of the coloured hyperfan are all identical to each other except for that of the distinguished point, which lies two units to the left on the $\ell$-axis relative to the other slices. Hence we can shift the slice of the distinguished point $p_0$ one unit to the right and shift the slice of some other point $p_1$ one unit to the left. Now $\cal{H}_{p_0}$ and $\cal{H}_{p_1}$ look the same, so we identify these two points with the north and south poles of the sphere. It follows that the $A$-action preserves the coloured hyperfan of $X$, in the sense that for $a \in A$ and $p \in \proj{1}$, we permute the slices of $\cal{H}$ via $\cal{H}_{p} \to \cal{H}_{a\cdot p}$ and the coloured data within each slice are invariant with respect to these permutations. This means that the action of $A$ on $X$ is such that the $B$-quotient is $A$-invariant and \thref{Kstablecentral} applies.

If $X$ is the blow-up of $\proj{3}$ along two lines, say $Y_q$ and $Y_r$, then let $q$ and $r$ be the north and south poles of the sphere $\proj{1}$. Shift their slices of the hyperspace two units to the left each, then choose two other non-distinguished points $p_1$ and $p_2$ and shift their slices two units to the right each. Then the slices corresponding to $p_1$, $p_2$ and the distinguished point $p_0$ align, and we can choose these to be the vertices of the equilateral triangle acted on by $S_3$. Again, $S_3$ acts on $X$ and preserves the coloured hyperfan up to balanced integral shifts, so \thref{Kstablecentral} applies.

If $X$ is the blow-up of $\proj{3}$ along three lines, $Y_q, Y_r$ and $Y_s$, the method is the same as when $X = \proj{3}$, only making sure to choose $q, r$ and $s$ to be the vertices of the triangle. Again, \thref{Kstablecentral} applies.

As an aside, it is worth mentioning that the blow-up of $\proj{3}$ along \emph{one} line (2.33) has non-reductive automorphism group and is thus not $K$-polystable. It is easy to see that the method above does not work in this case, since the slice of hyperspace corresponding to the blown-up line is fundamentally distinct from all other slices for this variety, so no action on $\proj{1}$ which does not fix the corresponding point could ever preserve the coloured hyperfan.

\subsection{Action Interchanging Two Points}

The second case of \thref{Kstablecentral} applies to the blow-up of $\proj{1} \times \proj{2}$ along a curve of bidegree $(1,1)$ (3.17).

Consider the $\bb{Z}_2$-symmetry of $\proj{1}$ given by $[\alpha : \beta] \mapsto [\beta:\alpha]$. This interchanges $0$ and $\infty$, fixes $1$ and $-1$, and puts every other point in an orbit of order 2. Recall that the blow-up of $\proj{1} \times \proj{2}$ has subregular colours lying over 0 and $\infty$, and has a distinguished point $-1$. Since the slices of the coloured hyperfan over $0$ and $\infty$ are identical to each other, the interchange of these points by $\bb{Z}_2$ leaves the hyperfan invariant, hence the $\bb{Z}_2$ action on $X$ respects the $B$-quotient map and \thref{Kstablecentral} applies.

Oddly, this method seems not to apply to $\proj{1} \times \proj{2}$ itself, even though this variety is known to be $K$-polystable, since in this case, one of the two slices with subregular colours contains a $G$-divisor while the other does not, and the $\bb{Z}_2$-symmetry therefore does not extend. This may require some further exploration.

\subsection{Three or More Subregular Colours}

We have seen that any smooth Fano $\sl{2}$-threefold whose stabiliser subgroup $H$ is one of $\{\tilde{D}_m, \tilde{T}, \tilde{C}, \tilde{I}\}$ has 3 subregular colours, all lying over distinct points in $\proj{1}$. Then in particular, the third case of \thref{Kstablecentral} applies to $V_{22}$ (1.10), $V_5$ (1.15), $Q$ (1.16), the blow-up of $Q$ along a twisted quartic (2.21), the blow-up of $\proj{3}$ along a twisted cubic (2.27), $W$ (2.32) and the blow-up of $W$ along a curve of bidegree $(2,2)$ (3.13).

It remains to verify that each of these varieties satisfies the hypotheses of \thref{nonint}, which we now do one by one.

\subsubsection{$V_{22}$ (1.10)}

It is known that for $X = V_{22}$, the Mukai-Umemura threefold, we can take $-K_X$ to be a hyperplane section. The $B$-invariant hyperplane section of $X$ for our action is $D_{-4}$. This variety has subregular colours lying over $0, \infty, -4 \in \proj{1}$. With this representative of $-K_X$, we have \[H_\lambda = -\frac{\lambda}{2}[0] + \frac{\lambda}{3}[\infty] + \min\left\{\frac{\lambda+1}{5},0\right\}[-4].\] Choosing $\lambda = -5$ gives $\deg{H_\lambda} = \frac{1}{30}$ and non-integral coefficients at all three points.

\subsubsection{$V_5$ (1.15)}

This time $-K_X = 2D_{-4}$. We have \[H_\lambda = -\frac{\lambda}{2}[0]+\frac{\lambda}{3}[\infty] + \min\left\{\frac{\lambda+2}{4}, 0\right\}[-4].\] Again, $\lambda = -5$ works, giving $\deg{H_\lambda} = \frac{1}{12}$ and non-integral coefficients at all three points.

\subsubsection{$Q$ (1.16)}

$Q$ is a hypersurface of degree 2 in $\proj{4}$ so its anticanonical divisor is given by its intersection with a divisor of degree 3 in $\proj{4}$. The three subregular colours $D_0$, $D_\infty$ and $D_{-4}$ of $Q$ are sections of prime divisors in $\proj{4}$ of degrees 3, 2 and 1, respectively, so we may take $-K_Q = 3D_{-4}$. Then \[H_\lambda = \frac{-\lambda}{2}[0] + \frac{\lambda}{3}[\infty] + \min\left\{\frac{\lambda+3}{3},0\right\}[-4].\] Choosing $\lambda = -5$ gives non-integral coefficients at each point and $\deg{H_\lambda} = \frac{1}{6}$.

\subsubsection{Blow-up of $Q$ (2.21)}

With the same $-K_Q$ as before, blowing up in the twisted quartic, which is contained in $D_0$ and $D_\infty$ but not in $D_{-4}$, gives $-K_X = 3\tilde{D}_{-4}-E$. Adding $\Div(f_0e_{2\alpha}^{-2})$, where $f_0 \in \bb{C}(\proj{1})$ has divisor $[\infty]-[0]$ gives $-K_X = \tilde{D}_{-4} + \tilde{D}_{\infty}$. Then \[H_\lambda = \frac{-\lambda+2}{2}[0] + \min\left\{\frac{\lambda+1}{3},0\right\}[\infty] + \min\left\{\frac{\lambda+1}{3},0\right\}[-4].\] Taking $\lambda = -3$ gives non-integral coefficients and $\deg{H_\lambda} = \frac{1}{6}$.

\subsubsection{Blow-up of $\proj{3}$ along a twisted cubic (2.27)}

$\proj{3}$ with the cubic $\sl{2}$-action has subregular colours $D_0$, $D_\infty$ and $D_{-4}$ of respective degrees 3, 2 and 1. Hence we may choose $-K_{\proj{3}} = 4D_{-4}$. Then blowing up the twisted cubic gives $-K_X = 4\tilde{D}_{-4} - E$, where $E$ is the exceptional divisor and lies over $\infty$. We add $\Div(f_0e_{2\alpha}^{-2})$ to $-K_X$, where $\Div(f_0) = [0]-[\infty]$, giving $-K_X = 2\tilde{D}_{-4}+\tilde{D}_\infty$. We then have \[H_\lambda = \frac{\lambda}{2}[0] + \min\left\{\frac{1-2\lambda}{3},-\lambda\right\}[\infty] + \min\left\{\frac{\lambda+2}{2},0\right\}[-4].\] Choosing $\lambda = -3$ gives non-integral coefficients at all points and $\deg{H_\lambda} = \frac{1}{3}$.

\subsubsection{$W$ (2.32)}

The divisor $W$ on $\proj{2} \times \proj{2}$ of bidegree $(1,1)$ has anticanonical divisor class given by the intersection with $W$ of a class of bidegree $(2,2)$ on $\proj{2} \times \proj{2}$. The subregular colours $D_0$ and $D_\infty$ have bidegrees $(1,0)$ and $(0,1)$, so we take $-K_W = 2D_0 + 2D_\infty$. This gives \[H_\lambda = \min\left\{\frac{\lambda+2}{2},0\right\}[0] + \min\left\{\frac{\lambda+2}{2},0\right\}[\infty] - \frac{\lambda}{2}[-1].\] Taking $\lambda = -3$ gives non-integral coefficients at all points and $\deg{H_\lambda} = \frac{1}{2}$.

\subsubsection{$\proj{1} \times \proj{2}$ (2.34)}

The anticanonical divisor of $\proj{1} \times \proj{2}$ has bidegree $(2,3)$ which we can obtain as $2D_\infty + 3D_{-1}$. This gives \[H_\lambda = -\frac{\lambda}{2}[0] + \min\left\{\frac{2-\lambda}{2},-\lambda\right\}[\infty] + \min\left\{2\lambda+3,\lambda\right\}[-1].\] Taking $\lambda = -3$ gives non-integral coefficients at the points $0, \infty$ corresponding to subregular colours and $\deg{H_\lambda} = 1$.

\subsubsection{Blow-up of $W$ (3.13)}

We blow up $W$ in the curve of bidegree $(2,2)$ obtaining $-K_X = 2\tilde{D}_0 + 2\tilde{D}_\infty - E$. Adding $\Div(e_{2\alpha}^{-1})$ gives $-K_X = \tilde{D}_0 + \tilde{D}_\infty + \tilde{D}_{-1}$. Then \[H_\lambda = \min\left\{\frac{\lambda+1}{2},0\right\}[0] + \min\left\{\frac{\lambda+1}{2},0\right\}[\infty] + \min\left\{\frac{1-\lambda}{2},-\lambda\right\}[-1].\] Taking $\lambda = -2$ gives non-integral coefficients at all points and $\deg{H_\lambda} = \frac{1}{2}$.

\subsubsection{Blow-up of $\proj{1} \times \proj{2}$ (3.17)}

Blowing up $\proj{1} \times \proj{2}$ along the curve of bidegree $(1,1)$ gives $-K_X = 2\tilde{D}_\infty + 3\tilde{D}_{-1} - E$. We can add $\Div(f_0e_{2\alpha}^{-1})$, where $\Div(f_0) = [-1]-[\infty]$ to get $-K_X = \tilde{D}_\infty + 2\tilde{D}_{-1} + \tilde{D}_0$. Then \[H_\lambda = \min\left\{\frac{1-\lambda}{2},-\lambda\right\}[0] +  \min\left\{\frac{1-\lambda}{2},-\lambda\right\}[\infty] + \min\left\{2\lambda+2,\lambda\right\}[-1].\] Choosing $\lambda = -2$ gives non-integral coefficients at the points $0, \infty$ corresponding to subregular colours and $\deg{H_\lambda} = 1$.

\section{Central Divisors of $\sl{2}$-Threefolds}

Here we prove \thref{threefoldsKstable} by calculating the $\beta$-invariant for the $G$-stable central prime divisors over each of our list of $\sl{2}$-threefolds. By \thref{Kstablecentral}, we need to show that (1): $\beta_X(F) \geq 0$ for central divisors $F$ over each variety and (2): if $\beta_X(F) = 0$ then $F$ corresponds to a product configuration. It will turn out that (2) is never the case. Throughout we will mostly use the notation and results of \Cref{combo} for curves, divisors etc. lying in each variety, with any changes to this notation clearly signalled.

\subsection{Existence and Uniqueness of Central Divisors}

In this subsection we will prove the following:

\begin{theorem}\thlabel{uniquecentral}
	Let $X$ be a smooth Fano $\sl{2}$-threefold. There exists a unique central $G$-divisor over $X$. If $X$ is of type I, then it contains this central divisor. If $X$ is of type II, the central $G$-divisor over $X$ lies on the type I variety over $X$ whose existence is proved in \thref{typeIbirational}.
\end{theorem}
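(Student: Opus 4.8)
The plan is to reduce the statement to an analysis of the central hyperplane $\cal{Z}$ of the hyperspace. Recall that a central $G$-divisor over $X$ is a $G$-invariant prime divisor $F$ (on some $G$-model over $X$) whose associated $G$-valuation $\nu_F$ has vanishing $h$-coordinate, i.e.\ lies in $\cal{Z}$. Since $X$ is an $\sl{2}$-threefold we are in the quasihomogeneous case with rank $r = 1$, so $\Lambda \cong \bb{Z}$ and $\cal{Q} = \Lambda^*$ is one-dimensional; because a central valuation has trivial restriction to $K^B$, it is determined entirely by its functional $\ell \in \cal{Q} \otimes \bb{Q}$. Thus central $G$-divisors over $X$ correspond to divisorial valuations lying in the cone $\cal{K} = \cal{V} \inter \cal{Z} \sub \cal{Q} \otimes \bb{Q}$. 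As $\cal{V}$ depends only on the function field $K = \bbk(\sl{2}/H)$ and not on the chosen model, so does $\cal{K}$.

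First I would show that $\cal{K}$ is a single ray. This can be read off from the homogeneous-space computations of \Cref{sl2}: for each finite subgroup $H \sub \sl{2}$ occurring here, the $h = 0$ edge of the valuation cone $\cal{V}_x$ is the same half-line in $\cal{Q} \otimes \bb{Q}$ for every $x$, so $\cal{K} = \cal{V} \inter \cal{Z}$ is a single ray. (Alternatively, the Fano hypothesis forces the relevant hypercones to be strictly convex, so the one-dimensional cone $\cal{K}$ cannot be all of $\cal{Q} \otimes \bb{Q}$ and must be a ray.) A ray in a one-dimensional space determines a unique valuation ring, and proportional geometric valuations define the same prime divisor; hence there is at most one central $G$-divisor over $X$, which gives uniqueness.

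Next I would establish existence and locate the divisor. The generator of $\cal{K}$ is a genuine $G$-valuation, hence geometric, so it is realised by a $G$-invariant prime divisor on some $G$-model, which after passing to a common resolution is a prime divisor over $X$. To pin down where it lives, I would argue on the coloured hyperfan. If $X$ is of type I, then its hyperfan consists of coloured cones that cover $\cal{V}$ by completeness, and in each slice $\cal{H}_x$ the ray $\cal{K}$ is the extremal $h = 0$ edge of the valuation cone $\cal{V}_x$; any subdivision of $\cal{V}_x$ by the $G$-germ cones must therefore have $\cal{K}$ as one of its rays, generated by a $G$-valuation, i.e.\ by a central $G$-divisor lying on $X$ itself. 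If $X$ is of type II, I would apply \thref{typeIbirational} to obtain a proper birational $\phi \colon \check{X} \to X$ with $\check{X}$ of type I; since $\check{X}$ is again a complete $\sl{2}$-model with the same $\cal{K}$, the type I case produces the central divisor on $\check{X}$, and by the uniqueness above this is the unique central divisor over $X$.

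I expect the main obstacle to be the existence step, specifically justifying rigorously that the central ray $\cal{K}$ is realised as a $G$-divisor ray of a type I model rather than being hidden in the interior of a higher-dimensional (hyper)cone or represented only by a colour. This requires using that $\cal{K}$ is an extremal ray of each valuation cone $\cal{V}_x$, that the relative interiors of the $G$-germ cones are disjoint inside $\cal{V}$, and that the generators of the fan drawn from $\cal{V}$ are exactly the $G$-invariant (not merely $B$-invariant) prime divisors. Checking that this central ray is the one appearing in each explicit hyperfan of \Cref{combo} would then both confirm the abstract argument and identify the divisor in the cases at hand.
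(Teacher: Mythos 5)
Your proposal is correct and follows essentially the same route as the paper: identify central divisors with points of the ray $\cal{K}=\cal{V}\inter\cal{Z}$ in the two-dimensional hyperspace of a rank-one $\sl{2}$-threefold, deduce uniqueness from the injectivity of $\cal{V}\hookrightarrow\cal{H}$ on that one-dimensional ray, get existence on a type I model from completeness of the coloured hyperfan, and reduce the type II case to \thref{typeIbirational}. The only substantive point you flag as a worry — that the central ray genuinely appears as a $G$-divisor ray of the type I fan rather than hiding inside a larger cone — is handled in the paper by the same covering/separation considerations you describe, so no change of strategy is needed.
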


\begin{definition}
	Let $\cal{H}$ be the hyperspace of a $G$-model $X$ of complexity one. The \emph{dimension} of $\cal{H}$ is the common dimension of each half-space $\cal{H}_p$ for $p \in \proj{1}$. If $\cal{C}_p$ is a coloured cone in $\cal{H}_p$, its dimension is the dimension of a minimal affine subspace of $\cal{H}_p$ containing $\cal{C}_p$. If $\cal{C}$ is a coloured hypercone of type II in $\cal{H}$, set $\dim{\cal{C}} = \max_{p \in \proj{1}}{\dim{(\cal{C} \inter \cal{H}_p)}}$.
\end{definition}

\begin{lemma}
	Let $X$ be a $G$-model of complexity one and rank $r$, and let $\cal{H}$ be the hyperspace of $X$. We have $\dim{\cal{H}} = 1 + r$.
\end{lemma}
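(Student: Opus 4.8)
The plan is to compute the dimension of a single slice $\cal{H}_p$ directly from the definition of the hyperspace and then to observe that the answer does not depend on $p$. First I would recall the construction of $\cal{H}$: the slice $\cal{H}_p$ consists of all points $(p,h,\ell)$ with $h \in \bb{Q}_{\geq 0}$ and $\ell \in \Lambda^* = \Hom(\Lambda,\bb{Q})$. Apart from the gluing along the central hyperplane $\cal{Z}$ (the locus $h = 0$), the equivalence relation $\sim$ identifies nothing within a fixed slice, so $\cal{H}_p$ is faithfully the set $\bb{Q}_{\geq 0} \times \Lambda^*$, and the central hyperplane is its common boundary face.

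Next I would fix the ambient vector space in which to measure the dimension. Since the weight lattice $\Lambda \sub \frak{X}(B)$ is free abelian, its dual $\Lambda^* = \Hom(\Lambda,\bb{Q})$ is a $\bb{Q}$-vector space with $\dim_{\bb{Q}} \Lambda^* = \rk{\Lambda}$. By the definition of the rank of a complexity-one $G$-variety in Timashev's theory, this rank is precisely the integer $r$ appearing in the statement, i.e. $r = \rk{\Lambda}$. Thus $\cal{H}_p$ embeds as the half-space $\bb{Q}_{\geq 0} \times \Lambda^*$ inside the $\bb{Q}$-vector space $\bb{Q} \times (\Lambda \tensor \bb{Q})^*$ of dimension $1 + r$.

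Then the decisive (elementary) point is that this half-space is full-dimensional: the coordinate $h$ sweeps out the entire ray $\bb{Q}_{\geq 0}$ and $\ell$ ranges over all of $\Lambda^*$, the two varying independently, so a minimal affine subspace containing $\cal{H}_p$ is all of $\bb{Q} \times (\Lambda \tensor \bb{Q})^*$. Hence $\dim{\cal{H}_p} = 1 + r$, and since this computation is insensitive to the choice of $p \in \proj{1}$, the common value gives $\dim{\cal{H}} = 1 + r$. The statement is in the end almost definitional, so I do not expect a genuine obstacle; the one thing to pin down carefully is the identification $r = \rk{\Lambda}$, which must be read off from the convention for the rank of a complexity-one variety rather than recomputed, together with the (already noted) freeness of $\Lambda$ that converts $\rk{\Lambda}$ into the $\bb{Q}$-dimension of $\Lambda^*$.
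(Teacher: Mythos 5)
Your proof is correct and follows essentially the same route as the paper: identify each slice $\cal{H}_p$ with $\bb{Q}_{\geq 0} \times \Lambda^*_{\bb{Q}}$ and read off the dimension as $1 + \rk{\Lambda} = 1 + r$ from the definition of rank. The extra care you take in checking full-dimensionality of the half-space and independence of $p$ is fine but not needed beyond what the paper records.
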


\begin{proof}
	For each $p \in \proj{1}$, the slice $\cal{H}_p$ of hyperspace corresponding to $p$ is isomorphic to $\Lambda_\bb{Q}^* \times \bb{Q}_{\geq 0}$, where $\Lambda$ is the weight lattice of $X$, which has dimension $1 + \dim{\Lambda_\bb{Q}^*} = 1 + \rk{\Lambda} = 1+r$, by the definition of rank.
\end{proof}

\begin{proposition}
	Let $X$ be a $G$-model and let $Y \sub X$ be a $G$-germ. The dimension of $\cal{C}_Y$ in $\cal{H}$ is equal to the codimension of $Y$ in $X$.
\end{proposition}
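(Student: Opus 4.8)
The plan is to compute $\codim_X Y$ locally, inside a single $B$-chart, and to match it against the number of independent generators of $\cal{C}_Y$. First I would use the Luna--Vust structure theory to pass to a convenient geometric realisation: by the proposition that every $G$-germ $\cal{O}_{X,Y}$ admits a realisation in which $Y$ meets a $B$-chart, I may choose $X_0 \sub X$ with $Y \inter X_0 \neq \emptyset$, taking $X_0$ to be the \emph{minimal} $B$-chart when $Y$ is of type II and using finiteness of $\cal{V}_Y \union \cal{D}^B_Y$ when $Y$ is of type I. Since codimension is a local and $G$-invariant notion, $\codim_X Y = \codim_{X_0}(Y \inter X_0)$, so the entire computation takes place inside $X_0$.

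Next I would identify $Y \inter X_0$ as the common intersection of the $B$-stable prime divisors of $X_0$ that contain it, i.e. the divisors whose valuations constitute the coloured data $(\cal{V}_Y, \cal{D}^B_Y)$, together with the behaviour over the relevant points $p \in \proj{1}$ recorded by the polytope $\cal{P}$ in the type II case. Each such divisor is the zero locus of a $B$-eigenfunction, and the bridge to the combinatorics is that a $B$-eigenfunction $f$ of weight $\lambda$ acts on $\cal{H}$ as the functional $(p,\ell,h) \mapsto h\,\nu_p(f) + \langle \lambda,\ell\rangle$. Thus the divisors through $Y$ impose exactly the linear conditions that cut out $\cal{C}_Y$, and the assertion $\codim_X Y = \dim\cal{C}_Y$ becomes the statement that the codimension of $Y \inter X_0$ equals the number of \emph{independent} such conditions.

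It is cleanest to organise this as an induction on $\dim\cal{C}_Y$ via the face correspondence of \thref{hypergerms}. The base cases anchor the count: the open germ (generic point of $X$) gives the zero cone and has codimension $0$; a $G$-invariant prime divisor corresponds to a ray in $\cal{V}$ and has codimension $1$; and a minimal $G$-germ corresponds to a full-dimensional (hyper)cone, so by the preceding lemma its cone has dimension $\dim\cal{H} = 1 + r$, matching the codimension of the closed orbit. Passing from a germ to one containing it corresponds, by \thref{hypergerms}, to passing from a hypercone to a (hyper)face of one lower dimension, and the inductive step reduces to showing that crossing a \emph{facet} of $\cal{C}_Y$ lowers the codimension of the corresponding germ by exactly $1$.

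I expect this exact-drop statement to be the main obstacle. It amounts to a transversality claim: the $B$-stable divisors through $Y$ must contribute, modulo the relations among their classes already encoded in the shape of the cone, exactly the right number of independent local equations, and one must rule out several distinct $B$-stable divisors collapsing to fewer independent equations along $Y$. The type II case needs particular care, since there the ``vertical'' generators $\epsilon_p$ and the polytope $\cal{P}$ both feed into $\dim\cal{C}_Y = \max_{p \in \proj{1}}\dim(\cal{C}_Y \inter \cal{H}_p)$, and one has to verify that the slice realising this maximum is the one realised by the geometry of $Y$ rather than a smaller slice. Resolving these two points — transversality of the defining divisors and the correct reading of the hypercone dimension — is where the substance of the argument lies; everything else is bookkeeping against the dimension count $\dim\cal{H} = 1 + r$.
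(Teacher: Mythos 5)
Your overall route is the same one the paper takes: the proof in the paper is precisely the two observations you build your induction on, namely that $\cal{C}_X = \{0\}$ for the open germ and that, by \thref{hypergerms}, inclusion of $G$-germs corresponds to the opposite inclusion of their (hyper)cones as (hyper)faces. So the skeleton is right. The problem is that you stop exactly where the content is: you reduce everything to the claim that crossing a single facet of $\cal{C}_Y$ drops the codimension of the corresponding germ by exactly one, announce that this ``exact-drop statement'' is the main obstacle, and then do not prove it. As written, the proposal is a plan with an acknowledged hole at the decisive step, not a proof. The paper treats the dimension-graded nature of the face correspondence as part of what Timashev's classification (\thref{hypergerms} and the surrounding Luna--Vust theory) delivers; if you want to argue it from scratch, you need to actually establish the transversality/saturation claim you describe, e.g.\ that every inclusion of germs of codimensions differing by one refines to a facet relation of cones and conversely, rather than flagging it as the hard part.

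Two smaller issues. First, your identification of $Y \inter X_0$ with the set-theoretic intersection of the $B$-stable prime divisors through it is not justified: the paper's Proposition that $G$-germs are \emph{determined by} their coloured data does not say the germ \emph{equals} that intersection, and in codimension $\geq 2$ this can fail. Your argument does not really need this, so it is best dropped. Second, your ``base case'' that a minimal $G$-germ has a full-dimensional (hyper)cone uses completeness of $X$, which is not assumed in the proposition (it is the content of the \emph{corollary} that follows it, which is deduced from this proposition); invoking it here is both unnecessary and mildly circular. The only anchor you need is $\cal{C}_X = \{0\}$.
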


\begin{proof}
	This follows from the fact that the coloured hypercone corresponding to $X$ itself is $\{0\}$ and that inclusion of coloured hypercones as faces in one another corresponds to the reverse inclusion of the associated $G$-germs.
\end{proof}

\begin{corollary}
	Let $X$ be a complete $G$-model of dimension $d$, rank $r$ and complexity 1. Then minimal $G$-germs in $X$ must have dimension $d-r-1$.
\end{corollary}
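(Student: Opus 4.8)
The plan is to translate the statement entirely into the combinatorics of the hyperspace and then reduce it to the standard fact that a complete fan has full-dimensional maximal cones. By the preceding lemma the hyperspace $\cal{H}$ has dimension $1+r$, and by the preceding proposition the dimension of $\cal{C}_Y$ equals $\codim_X Y = d - \dim Y$ for every $G$-germ $Y$. Hence $\dim Y = d - \dim \cal{C}_Y$, and the assertion $\dim Y = d-r-1$ is equivalent to the claim that the coloured (hyper)cone $\cal{C}_Y$ attached to a minimal $G$-germ is \emph{full-dimensional}, i.e. $\dim \cal{C}_Y = 1+r$.

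First I would observe that minimal $G$-germs correspond exactly to the maximal (hyper)cones of the coloured hyperfan $\cal{F}_X$. This follows from \thref{hypergerms}: inclusion of $G$-germs corresponds to the opposite inclusion of their (hyper)cones as (hyper)faces, so a $G$-germ $Y$ is minimal (contained in no smaller $G$-germ, equivalently a closed orbit) precisely when $\cal{C}_Y$ is not a proper (hyper)face of any other cone of $\cal{F}_X$.

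The core of the proof is then to show that each such maximal cone is full-dimensional, using completeness. Since $X$ is complete, its coloured hyperfan covers $\cal{V}$, and $\cal{V}$ is itself full-dimensional in $\cal{H}$, the valuation cones $\cal{V}_p$ having nonempty interior in each slice $\cal{H}_p$. Suppose for contradiction that a minimal germ $Y$ has $\dim \cal{C}_Y \leq r$. Because $\cal{C}_Y$ is supported I may choose $v \in \relint \cal{C}_Y \inter \cal{V}$; as $\cal{C}_Y$ is lower-dimensional while $\cal{V}$ is full-dimensional, every neighbourhood of $v$ in $\cal{V}$ meets the complement of $\cal{C}_Y$. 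By completeness such points lie in cones of $\cal{F}_X$, and since there are only finitely many of these, one cone $\cal{C}_{Y'}$ contains a sequence converging to $v$; being closed it contains $v$, and as it also contains points off the affine span of $\cal{C}_Y$ near $v$ we get $\dim \cal{C}_{Y'} > \dim \cal{C}_Y$. Now $v$ lies in the relative interior of a unique (hyper)face $F$ of $\cal{C}_{Y'}$, which is supported (it meets $\cal{V}$ at $v$) and hence belongs to $\cal{F}_X$. Since the relative interiors of the members of $\cal{F}_X$ are disjoint inside $\cal{V}$, and both $\relint F$ and $\relint \cal{C}_Y$ contain $v$, we must have $F = \cal{C}_Y$. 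Thus $\cal{C}_Y$ is a proper (hyper)face of $\cal{C}_{Y'}$, contradicting its maximality. Therefore $\dim \cal{C}_Y = 1+r$ and $\dim Y = d-r-1$.

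The main obstacle I anticipate is making this ``complete fan'' argument rigorous in the hyperspace rather than in a genuine vector space. The space $\cal{H}$ is only a union of slices $\cal{H}_p$ glued along the central hyperplane $\cal{Z}$, and for a type II germ $\cal{C}_Y$ is a hypercone with $\dim \cal{C}_Y = \max_{p}\dim(\cal{C}_Y \inter \cal{H}_p)$; I would therefore need to interpret relative interior, (hyper)face, closedness and the limiting step above in the sense of Timashev's theory, and to verify that the disjointness condition and the passage to limits behave correctly across slices and on $\cal{Z}$. For the applications here, where $G=\sl{2}$ forces $r=1$ and each slice $\cal{H}_p$ is just the two-dimensional $(\ell,h)$-plane, these difficulties evaporate, and the statement reduces to the elementary fact that a two-dimensional cone is needed to cover a two-dimensional valuation cone.
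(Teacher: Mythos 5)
Your proof takes essentially the same route as the paper: reduce via the preceding lemma and proposition to showing that the (hyper)cone of a minimal $G$-germ is full-dimensional, and deduce this from completeness together with the full-dimensionality of $\cal{V}$ in $\cal{H}$. The paper simply asserts that last step in one sentence, whereas you spell out the covering/limit argument and correctly flag the hypercone subtleties, so your write-up is a more detailed version of the same proof.
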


\begin{proof}
	Since $\cal{V}$ is full dimensional in $\cal{H}$, it follows from completeness that the coloured (hyper)cone corresponding to a minimal $G$-germ $Y$ in $X$ must have the same dimension as $\cal{H}$, i.e. $1 + r$. Then $\codim_X{Y} = \dim{\cal{C}_Y} = 1 + r$, so $\dim{Y} = d - r - 1$. 
\end{proof}

Now let $G = \sl{2}(\bbk)$ and let $X$ be a complete three dimensional $G$-model of complexity one, i.e. a normal projective threefold with a $G$-action having finite stabilisers. Then $X$ contains an open orbit isomorphic to $G/H$ for $H$ a finite subgroup of $G$.

Let $B$ be the Borel subgroup of $G$ given by the upper triangular matrices. Then $\frak{X}(B)$ is of rank 1, generated by the character $\alpha$ which picks out the upper-left entry. Hence in particular $X$ is a rank 1 variety. The hyperspace $\cal{H}$ of $X$ then has dimension 2, so minimal $G$-germs of $X$ must have codimension 2, i.e. they are curves, and in particular $X$ can contain no $G$-fixed points. The centre $\cal{Z}$ of $\cal{H}$ is a line. Most of the following results arise from this fact. In particular, note that for any finite $H \sub G$, the valuation cone $\cal{V}(G/H)$ intersects $\cal{Z}$ in a ray. 

\begin{proposition}
	$X$ contains at most one central $G$-divisor.	
\end{proposition}

\begin{proof}
	Any central $G$-divisor must be mapped to the intersection $\cal{V} \inter \cal{Z}$ of the valuation cone and the central hyperplane. As noted, this intersection is a ray. If there were two distinct central $G$-divisors, their coloured (hyper)cones would then both be the same ray, a contradiction.
\end{proof}

\begin{proposition}
	$X$ contains a central $G$-divisor if and only if $X$ is a model of type I.
\end{proposition}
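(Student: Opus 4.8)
The plan is to read off the existence of a central divisor from the way the single central ray $\cal{K} = \cal{V} \inter \cal{Z}$ is covered by the coloured hyperfan $\cal{F}_X$. Recall that a central $G$-divisor is precisely a $G$-germ of codimension one whose coloured cone lies in $\cal{Z}$ (colours always have $h$-coordinate at least $1$, so cannot be central); such a cone is one-dimensional, supported, and therefore equals the ray $\cal{K}$. Since $X$ is complete, its hyperfan covers $\cal{V}$ and the relative interiors of its (hyper)cones are pairwise disjoint, so they partition $\cal{V}$. In particular every point of the nonempty open ray $\relint\cal{K} \sub \cal{V}$ lies in the relative interior of a unique $G$-germ cone $\cal{C}_{Y}$, and the whole argument reduces to identifying the type of this germ $Y$.

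The key geometric observation I would use is that a type I cone cannot meet $\cal{Z}$ in its relative interior without lying entirely inside $\cal{Z}$: if $\cal{C}$ is an ordinary cone contained in the half-space $\{h \geq 0\}$ and $z \in \relint\cal{C}$ has $h(z)=0$, then for any $w \in \cal{C}$ the point $z + \epsilon(z-w)$ lies in $\cal{C}$ for small $\epsilon > 0$, yet has $h$-coordinate $-\epsilon\, h(w)$, forcing $h(w)=0$. This yields the direction ``type I $\Rightarrow$ central divisor exists'': if every $G$-germ of $X$ is of type I, then the germ $Y$ whose relative interior contains a chosen point of $\relint\cal{K}$ has an ordinary coloured cone, which by the observation is contained in $\cal{Z}$; being supported and having this point of $\relint\cal{K}$ in its relative interior, it must coincide with the ray $\cal{K}$. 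Hence $Y$ is a $G$-divisor with coloured cone in $\cal{Z}$, that is, a central $G$-divisor.

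For the converse I would argue by contradiction. Suppose $X$ carries a central $G$-divisor $F$, so $\cal{C}_F = \cal{K}$ and $\relint\cal{C}_F = \relint\cal{K}$, and suppose $X$ also has a $G$-germ $Y$ of type II. A type II germ corresponds to a supported coloured hypercone of type II, for which the polytope $\cal{P}$ is nonempty while strict convexity forces $0 \notin \cal{P}$; hence the central face $\cal{C}_Y \inter \cal{Z}$, being the cone over $\cal{P}$, is a nontrivial ray inside $\cal{K}$, and by the definition of the relative interior of a type II hypercone one has $\relint(\cal{C}_Y \inter \cal{Z}) \sub \relint\cal{C}_Y$. Then $\relint\cal{C}_Y$ and $\relint\cal{C}_F$ share the open sub-ray $\relint(\cal{C}_Y \inter \cal{Z})$, contradicting the disjointness of relative interiors in a coloured hyperfan (note $Y \neq F$, since every $G$-divisor is of type I because $\cal{V}_F \union \cal{D}^B_F = \{\nu_F\}$ is finite). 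Therefore no type II germ exists and $X$ is of type I.

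The routine ingredients are the half-space reflection lemma and the bookkeeping with completeness and disjoint relative interiors. The step I expect to require the most care is the claim that a type II germ always contributes a nontrivial central ray to its relative interior: this rests on correctly extracting, from the Luna--Vust/Timashev description, that the defining polytope of a type II hypercone is nonempty and avoids the origin, so that its cone genuinely fills out part of $\cal{K}$. Checking that this central ray indeed lands in $\cal{V} \inter \cal{Z} = \cal{K}$, rather than escaping into a colour region, is where I would be most careful.
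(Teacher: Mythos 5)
Your proof is correct and follows essentially the same route as the paper: completeness forces the ray $\cal{V}\inter\cal{Z}$ to be a cone of the hyperfan when all germs are of type I, and conversely a type II hypercone's central ray would meet the central divisor's ray in their relative interiors, violating the disjointness (separation) axiom. Your version merely spells out the convexity lemma behind ``a type I cone whose relative interior meets $\cal{Z}$ lies in $\cal{Z}$'' and phrases the contradiction via relative interiors rather than supports, which amounts to the same condition.
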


\begin{proof}
	Suppose $X$ is of type I, i.e. every $G$-germ of $X$ corresponds to a coloured cone in some slice of the hyperspace. By completeness, these coloured cones must cover $\cal{V}$. Since $\cal{V}$ intersects the central line $\cal{Z}$ in a ray $\rho$, there must be a $G$-germ of $X$ whose coloured cone is $\rho$, i.e. a central $G$-divisor.
	
	Now suppose $X$ has a central $G$-divisor $D$ and that $Y \sub X$ is a $G$-germ of type II with coloured hypercone $\cal{C}_Y$. We know that $\cal{C}_Y$ must intersect $\cal{Z}$ in $\cal{V}$, i.e. it contains the ray $\rho$ corresponding to $D$. Then $\nu_D$ lies in the relative interior of $\cal{C}_Y$, hence in the support $\cal{S}_Y$ of $Y$. But $\nu_D \in \cal{S}_D$, so the supports of $Y$ and $D$ are not disjoint, contradicting separation of $X$. Hence all $G$-germs of $X$ are of type I, as required.
\end{proof}

\begin{proposition}
	If $X$ is of type II, there exists a central prime divisor over $X$.
\end{proposition}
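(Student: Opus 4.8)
The plan is to exploit the two propositions just proved---that a smooth Fano $\sl{2}$-threefold contains a central $G$-divisor exactly when it is of type I, and that it contains at most one---in combination with \thref{typeIbirational}, which supplies a type I model dominating any given $G$-model. First I would apply \thref{typeIbirational} to obtain a $G$-model $\check{X}$ of type I together with a proper birational, $G$-equivariant morphism $\phi \colon \check{X} \to X$. Since $\phi$ is birational, $\check{X}$ and $X$ have a common function field $K$, and therefore share the same hyperspace $\cal{H}$, the same valuation cone $\cal{V}$, and the same central line $\cal{Z}$; in particular $\cal{V} \inter \cal{Z}$ is the same ray for both models.

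Next, because $\check{X}$ is of type I, the immediately preceding proposition guarantees that $\check{X}$ contains a central $G$-divisor $D$, whose associated $G$-valuation $\nu_D$ is precisely the generator of the ray $\cal{V} \inter \cal{Z} \sub \cal{H}$. I would then observe that $D$, together with the projective birational morphism $\phi \colon \check{X} \to X$ onto the normal model $\check{X}$, is by definition a prime divisor over $X$ in the sense of the definition preceding \thref{primetc}. It remains only to check the two qualifiers. The divisor $D$ is $G$-stable on $\check{X}$; as $\phi$ is $G$-equivariant this is exactly the assertion that $D$ is $G$-invariant as a divisor over $X$. Centrality is a property of the geometric valuation $\nu_D$ regarded as a point of $\cal{H}$, and since $\cal{H}$, $\cal{V}$ and $\cal{Z}$ depend only on $K$ and are unchanged under the birational $\phi$, the valuation $\nu_D$ still lies in $\cal{V} \inter \cal{Z}$ when $D$ is viewed over $X$. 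Hence $D$ is a central $G$-divisor over $X$, as required.

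The argument is essentially bookkeeping, so I do not expect a substantive obstacle. The only point demanding care is the verification that both centrality and $G$-invariance are \emph{intrinsic} to the valuation $\nu_D$ and to the function field $K$, and are not artefacts of the particular model on which $D$ happens to be realised; this is what licenses transporting the divisor across $\phi$ while retaining both properties, and it is exactly the place where the coincidence of hyperspaces $\cal{H}$ for $\check{X}$ and $X$ is used.
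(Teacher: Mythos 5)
Your argument is correct and is essentially the paper's own proof: invoke \thref{typeIbirational} to obtain a type I model $\check{X} \to X$, then apply the preceding proposition to produce a central $G$-divisor on $\check{X}$, which is by definition a central prime divisor over $X$. The extra care you take to verify that centrality and $G$-invariance are intrinsic to the valuation (since $\cal{H}$, $\cal{V}$ and $\cal{Z}$ depend only on the function field) is left implicit in the paper but is exactly the right justification.
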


\begin{proof}
	We know from \thref{typeIbirational} that there exists a projective birational morphism $\nu \colon \check{X} \to X$ where $\check{X}$ is of type I. Then by the above proposition there is a central prime divisor $D \sub \check{X}$.
\end{proof}

\begin{proposition}
	Let $X$ be of type II. Then $X$ has a unique $G$-germ of type II, a curve, which must be a closed $G$-orbit.
\end{proposition}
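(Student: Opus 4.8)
The plan is to establish three properties of an arbitrary type II $G$-germ $Y \sub X$ — that it is a curve, that it is a closed $G$-orbit, and that its support contains the entire central $G$-valuation ray — and then to read off uniqueness from the valuative criterion of separation. Throughout write $\rho = \cal{V} \inter \cal{Z}$ for the central ray, which (as noted above) is the single ray in which the valuation cone meets the shared central hyperplane.

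First I would pin down the shape of $Y$. A type II germ corresponds to a supported coloured hypercone $\cal{C}_Y$ of type II, that is, one spread over every slice $\cal{H}_p$ and carrying a non-empty polytope $\cal{P}$; in particular $\cal{C}_Y$ is not a single ray confined to one slice, so $\dim \cal{C}_Y = 2 = \dim \cal{H}$. By the proposition identifying $\dim \cal{C}_Y$ with $\codim_X Y$, this forces $\codim_X Y = 2$, so $Y$ is a curve. Since $X$ carries no $G$-fixed point (minimal germs have codimension $2$), an irreducible $G$-stable curve can only be a single one-dimensional orbit, necessarily closed and isomorphic to $\proj{1}$.

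For the central ray and for uniqueness I would route through the type I model. By \thref{typeIbirational} there is a proper birational $G$-morphism $\phi \colon \check{X} \to X$ with $\check{X}$ of type I, and by the preceding propositions $\check{X}$ carries a unique central $G$-divisor $D$. Because $X$ is of type II it has no central $G$-divisor, so $D$ cannot have divisorial image: it must be $\phi$-exceptional, whence $Y_0 := \phi(D)$ has dimension $\leq 1$ and, there being no fixed points, is exactly a closed orbit. The valuation $\nu_D$ is central and dominates $\cal{O}_{X,Y_0}$, so $\nu_D$ lies in the support of $Y_0$; as type I germs and $G$-divisorial germs have cones meeting $\cal{Z}$ only along a boundary face (the central hyperplane being a single shared copy across all slices), a germ whose support contains a central valuation must be of type II. Thus $Y_0$ is a type II germ with $\rho \sub \relint \cal{C}_{Y_0}$. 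Uniqueness then follows provided every type II germ supports a central valuation: the central valuations all lie on the single ray $\rho$ and, by separation, have a unique centre, so all type II germs must coincide with $Y_0$.

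The main obstacle is exactly this last claim, namely that an arbitrary type II germ $Y$ has $\cal{C}_Y \inter \cal{Z} = \rho$ (equivalently $\rho \sub \relint \cal{C}_Y$, i.e. $Y$ supports a central $G$-valuation) rather than being a ``degenerate'' type II hypercone whose central cone is trivial and which floats above $\cal{Z}$. I would prove it geometrically: a type II germ admits a minimal $B$-chart, which is itself of type II and hence has no non-constant $B$-invariant regular functions, so the rational $B$-quotient $\pi \colon X \dashrightarrow \proj{1}$ is undefined along $Y$; resolving $\pi$ in a neighbourhood of $Y$ then produces a $G$-invariant exceptional divisor restricting to a central valuation centred on $Y$. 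Equivalently, one argues directly in $\cal{H}$ that strict convexity of $\cal{C}_Y$ together with supportedness, and the fact that $\cal{C}_Y \inter \cal{Z}$ is computed in the single shared copy of $\cal{Z}$, forces this intersection to be all of $\rho$. This compatibility step — ruling out type II hypercones detached from the central hyperplane — is where the rank-one geometry of the $\sl{2}$-action is essential, and is the part demanding the most care.
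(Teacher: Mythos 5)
Your proof is correct in substance and runs on the same engine as the paper's: every type II hypercone contains the central ray $\rho = \cal{V}\inter\cal{Z}$ in its relative interior, so two distinct type II germs would have intersecting supports, violating the valuative criterion of separation. The scaffolding around that engine differs, though. First, the detour through the type I model $\check{X}\to\nobreak X$ is unnecessary: existence of a type II germ is immediate from $X$ being of type II, and what uniqueness actually needs is the statement for \emph{every} type II germ, which your detour does not supply. Second, the paper obtains ``curve, hence closed orbit'' as a consequence of uniqueness rather than by a dimension count: once $Y$ is the unique type II germ, every other germ is of type I, its coloured cone lives in a single slice $\cal{H}_p$ and so cannot admit the hypercone $\cal{C}_Y$ (spread over all slices) as a face; by the order-reversing correspondence of \thref{hypergerms}, $Y$ contains no other germ, is therefore minimal, and has codimension $1+r=2$. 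This sidesteps the jump in your dimension count --- $\dim\cal{C}_Y=2$ does not follow merely from $\cal{C}_Y$ not being a ray confined to one slice; the correct reason is that the non-empty polytope $\cal{P}$ lies in $\cal{K}=\cal{C}_Y\inter\cal{Z}$ and omits $0$, so $\cal{K}$ is a non-trivial ray of the line $\cal{Z}$ and each $\cal{C}_x\supseteq\cal{K}+\bb{Q}_{\geq 0}\epsilon_x$ is two-dimensional. The same observation governs the crux you rightly flag: $\cal{K}$ is forced to be a non-trivial central ray, and ruling out $\cal{K}=-\rho$ is exactly the one-line assertion the paper makes without further comment; your second (convexity/supportedness) sketch is the right way to justify it in rank one, while your first sketch via resolving the $B$-quotient is heavier machinery than the situation requires.
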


\begin{proof}
	We know that $X$ contains at least one $G$-germ of type II. Suppose $Y, Y^\prime \sub X$ are both $G$-germs of type II. Their corresponding coloured hypercones $\cal{C}_Y$, $\cal{C}_{Y^\prime}$ must both intersect the central ray $\rho \sub \cal{V}$, hence their relative interiors must intersect in $\cal{V}$. It follows that $Y = Y^\prime$, and $X$ has exactly one $G$-germ of type II. 
	
	Then all other $G$-germs of $X$ are of type I and so each defines a coloured cone in some $\cal{H}_p$. In particular, these coloured cones cannot contain $\cal{C}_Y$, so $Y$ does not contain any other $G$-germ of $X$. Hence $Y$ is a minimal $G$-germ, so in particular a curve and a closed $G$-orbit. 
\end{proof}

\begin{proposition}
	Let $X$ be of type II and suppose that every $G$-divisor of $X$ maps to the boundary of the valuation cone. Then the unique minimal $G$-germ $Y \sub X$ of type II is contained in every $G$-divisor, and $X$ has no minimal $G$-germs of type I. In particular, $Y$ is the unique closed $G$-orbit of $X$.
\end{proposition}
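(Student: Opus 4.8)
The plan is to work entirely inside the two-dimensional hyperspace $\cal{H}$ and to translate the three assertions into statements about how the coloured hyperfan of $X$ meets the valuation cone $\cal{V}$. I would first collect what the preceding propositions already give: $Y$ is the unique $G$-germ of type II, it is minimal (hence a curve and a closed $G$-orbit), and its coloured hypercone $\cal{C}_Y$ is full-dimensional of dimension $1+r=2$; moreover, since $X$ is of type II it has no central $G$-divisor, so every $G$-divisor of $X$ is non-central, lives in a single slice $\cal{H}_p$, and by hypothesis its valuation lands on the boundary of $\cal{V}$.

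The heart of the argument is to show that for each $p\in\proj{1}$ the valuation cone $\cal{V}_p$ is covered by a single full-dimensional hyperfan cone, and that this cone is $\cal{C}_{Y,p}=\cal{C}_Y\inter\cal{H}_p$. Fixing a slice, I would use completeness (the hyperfan covers $\cal{V}$) to write the two-dimensional cone $\cal{V}_p$ as the union of the germ cones meeting it; by separation their relative interiors are disjoint, so the full-dimensional ones tile $\cal{V}_p$. If the tiling had more than one tile, two adjacent tiles would share a ray $\tau$ in the interior of $\cal{V}_p$; as a supported codimension-one face of two germ cones, $\tau$ is the cone of a $G$-divisor, which would then map into the interior of $\cal{V}$, contradicting the boundary hypothesis. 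Hence $\cal{V}_p$ is a single tile. If that tile were the cone of a type I germ $Y'$ (so $\cal{C}_{Y'}=\cal{V}_p$), then the central boundary ray of $\cal{V}_p$ would be a supported codimension-one face of $\cal{C}_{Y'}$, i.e.\ a central $G$-divisor of $X$, which is impossible for a type II model. Therefore the tile is $\cal{C}_{Y,p}$, giving both $\cal{V}_p=\cal{C}_{Y,p}$ and the absence of any full-dimensional type I germ cone in slice $p$.

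With this established the three conclusions follow quickly. Since type I minimal germs are curves, their cones are full-dimensional of dimension $2$ and live in a single slice; the previous step rules out such cones, so $X$ has no minimal $G$-germs of type I. Together with the uniqueness of the type II germ $Y$, this makes $Y$ the unique minimal $G$-germ, hence the unique closed $G$-orbit of $X$. Finally, every $G$-divisor $D$ is a non-empty closed $G$-stable subset and therefore contains a closed orbit; by uniqueness this orbit is $Y$, so $Y\subseteq D$ for every $G$-divisor $D$.

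The main obstacle I anticipate is the Luna--Vust bookkeeping in the middle step: one must verify that the interior wall $\tau$ and the central boundary ray genuinely correspond to $G$-germs of $X$, i.e.\ that they are supported (their relative interiors meet $\cal{V}$) and are honest (hyper)faces in the sense of the definition, rather than faces excluded because they meet the polytope $\cal{P}$ of $\cal{C}_Y$. Keeping the central/non-central dichotomy straight, and confirming that a supported codimension-one face is always a $G$-divisor while a central such face would contradict the type II hypothesis, is where the argument will need to be made fully precise.
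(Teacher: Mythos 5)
Your argument is correct in outline but is organised quite differently from the paper's. The paper starts from an arbitrary $G$-divisor $F$ on the non-central boundary of $\cal{V}$, observes that $\cal{C}_F$ must be a face of the cone of some minimal germ, shows that this germ must be $Y$ (so $F \supseteq Y$ directly from the face structure), and rules out any further germ $Z \sub F$ of type I by noting that $\cal{C}_Z$ would poke into the interior of $\cal{V}_p$ and hence meet $\relint\cal{C}_Y$ there, violating separation. You instead run a global tiling argument: completeness forces the full-dimensional hyperfan cones to tile each $\cal{V}_p$, an interior wall would be a supported ray and hence a $G$-divisor mapping into $\relint\cal{V}$ (contradicting the hypothesis), so there is one tile, which cannot come from a type I germ; you then recover the containment $Y \sub F$ from the general fact that a closed $G$-stable subset of a complete variety contains a closed orbit, rather than from the combinatorics. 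Both proofs ultimately rest on the same two ingredients (separation of supports and the boundary hypothesis), but your wall-crossing use of the boundary hypothesis and your appeal to closed-orbit existence are genuinely different from the paper's face analysis; your version buys a cleaner global picture of the hyperfan, while the paper's is more economical and stays entirely combinatorial. One step of yours needs patching: when the single tile comes from a type I germ $Y'$, you assert $\cal{C}_{Y'} = \cal{V}_p$ so that the central ray $\rho$ is a face of $\cal{C}_{Y'}$; but $\cal{C}_{Y'}$ could properly contain $\cal{V}_p$ with $\rho \sub \relint\cal{C}_{Y'}$, in which case $\rho$ is not a face and no central $G$-divisor is produced. That case is still contradictory, but for a different reason: $\relint\rho \sub \relint\cal{K}_Y \sub \relint\cal{C}_Y$, so $\relint\cal{C}_{Y'}$ and $\relint\cal{C}_Y$ would meet inside $\cal{V}$ along $\rho$, violating separation. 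With that case added your argument goes through.
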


\begin{proof}
	Let $F \sub X$ be a $G$-divisor mapping to the non-central boundary of $\cal{V}_p = \cal{V} \inter \cal{H}_p$. The ray $\cal{C}_F$ defined by $F$ must then be a face of some coloured cone in $\cal{H}_p$ or coloured hypercone of type II in $\cal{H}$, i.e. $F$ must contain some minimal $G$-germ.
	
	We know that $X$ contains a unique minimal $G$-germ $Y$ of type II, and in particular the coloured hypercone $\cal{C}_Y$ must have $\cal{C}_F$ as a face, i.e. $F$ contains $Y$. Now suppose $F$ contains another $G$-germ $Z$. This must be of type I since $Y$ is the only $G$-germ of type II, so $\cal{C}_Z$ must be a coloured cone in $\cal{H}_p$ with $\cal{C}_F$ as a face. However, $\cal{C}_Z$ must intersect $\cal{V}$ in its relative interior, and since $\cal{C}_F$ is the boundary of $\cal{V}$, it follows that $\cal{C}_Z$ and $\cal{C}_Y$ intersect in their relative interiors, a contradiction.
	
	Hence for each $\cal{H}_p$ containing a $G$-divisor, the only minimal $G$-germ whose coloured (hyper)cone intersects $\cal{H}_p$ is $Y$. If $\cal{H}_p$ does not contain a $G$-divisor, it can also only support one coloured (hyper)cone since it only contains one $B$-divisor, the colour $D_p$. 
	
	Therefore $Y$ is indeed the unique minimal $G$-germ of $X$ and hence also the unique closed orbit.
\end{proof}

In all cases which we will consider here, we are given a smooth $\sl{2}$-threefold $X$, and either $X$ is of type I and has a central divisor, which is unique, or $X$ is of type II and we can obtain the unique central divisor over $X$ by blowing up a finite sequence of $G$-invariant curves. 

Combining \thref{uniquecentral} with \thref{Kstablecentral}, we see that to prove \thref{threefoldsKstable}, we need only find the central divisor over each variety and show that its $\beta$-invariant is positive. In the following subsections we will perform this calculation for each of the examples, thus proving their $K$-polystability.

\subsection{$\proj{3}$ and Blow-Ups Along Two or Three Lines}

\subsubsection{$\proj{3}$ (1.17)}
The anticanonical divisor of $\proj{3} = \bb{P}(M_2(\bb{C}))$ is the class of a divisor of degree 4, which we may take to be $2\Delta$, where $\Delta$ is the $G$-invariant divisor of singular matrices. This is also the unique central divisor on $\proj{3}$.

To calculate $\beta_X(\Delta)$ when $X = \proj{3}$, note that $(-K_X)^3 = 64$ and $A_X(\Delta) = 1$ since $\Delta$ lies on $\proj{3}$. It remains to calculate $\vol(\delta)$, where $\delta = -K_X - x\Delta = (2-x)\Delta$. Note that since $\delta$ is $G$-invariant we have $\lambda_\delta = 0$.

We have $\cal{P}(\delta) = \{\lambda \in \Lambda \mid \langle \lambda,\ell_{\Delta}\rangle \geq -(2-x)\}$. Since $\Lambda = \bb{Z}\alpha \cong \bb{Z}$ and $\ell_\Delta = -1$, we get $\cal{P}(\delta) = \{\lambda \in \bb{Z} \mid \lambda \leq 2-x\}$. 

Consider \[A(\delta,\lambda) = \sum_{p \in \proj{1}}\left({\min_{p_D = p}{\frac{\langle \lambda,\ell_D\rangle + m_D}{h_D}}}\right).\] We have $\ell_D = m_D = 0$ for all colours $D$ other than the distinguished colour, which has $m_D = 0$, $\ell_D = 2$ and hence contributes a value of $2\lambda$ to $A(\delta,\lambda)$.

We therefore have $\cal{P}_+(\delta) = \{\lambda \leq 2-x \mid \lambda \geq 0\} = [0,2-x]$. Hence \[\vol(\delta) = 6\int_{0}^{2-x}{2\lambda\cdot 2\lambda \ \mathrm{d} \lambda} = 8(2-x)^3.\] We therefore have \[\beta(\Delta) = 64 - \int_{0}^{2}{8(2-x)^3 \ \mathrm{d}x} = 64-32 = 32 > 0.\] Hence by \thref{Kstablecentral}, $\proj{3}$ is $K$-polystable.

\subsubsection{Blow-up of $\proj{3}$ Along Two Lines (3.25)}

Starting with $-K_{\proj{3}} = 2\Delta$ as before, the anticanonical divisor after blowing up two lines $Y_q$ and $Y_r$ is $2\tilde{\Delta}+E_q+E_r$. 

This time we have $(-K_X)^3 = 44$ and again $A_X(\Delta) = 1$. We must now compute $\beta(\tilde{\Delta})$ by calculating $\vol(\delta)$ where $\delta = (2-x)\tilde{\Delta}+E_q+E_r$. We still have $\lambda_\delta = 0$.

Likewise, $\cal{P}(\delta) = (-\infty,2-x]$ as before. This time $A(\delta,\lambda)$ receives the same contribution of $2\lambda$ at the distinguished point, and there is no contribution other than from here and from $q$ and $r$. At $q$, we have the exceptional divisor $E_q$ with $\ell = -1$, $m = 1$ and $h = 1$, and the colour with $\ell = m = 0$. Thus there is a contribution to $A(\delta,\lambda)$ of $-\lambda + 1$ when this is less than or equal to 0, and a contribution of 0 otherwise. The same holds for $r$. Hence we have \[A(\delta,\lambda) = \begin{cases} 2 & 1 \leq \lambda \leq 2-x \\ 2\lambda & \lambda < 1.\end{cases}\] Therefore $\cal{P}_+(\delta) = [0,2-x]$, and \[\vol(\delta) = \begin{cases} 6\int_{0}^{1}{4\lambda^2 \ \mathrm{d}\lambda} + 6\int_{1}^{2-x}{4\lambda \ \mathrm{d}\lambda} & 0 \leq x \leq 1 \\ 6\int_{0}^{2-x}{4\lambda^2 \ \mathrm{d}\lambda} & 1 < x \leq 2.\end{cases}\] We thus get \[\beta_X(\tilde{\Delta}) = 44 - \int_{0}^{2}{\vol(\delta) \ \mathrm{d} x} = 44 - 26 = 18 > 0.\] Hence the blow-up of $\proj{3}$ at two lines is $K$-polystable.

\subsubsection{Blow up of $\proj{3}$ Along Three Lines (4.6)}

The anticanonical divisor of the blow-up of $\proj{3}$ along three lines $Y_q, Y_r$ and $Y_s$ is $2\tilde{\Delta} + E_q + E_r + E_s$. We have $(-K_X)^3 = 34$, $A_X(\tilde{\Delta}) = 1$, $\delta = (2-x)\tilde{\Delta}+E_q+E_r+E_s$, $\lambda_\delta = 0$ and $\cal{P}(\delta) = (-\infty,2-x]$. 

The calculation of $A(\delta,\lambda)$ goes much the same as in the previous case, only $A(\delta,\lambda)$ gains an extra contribution of $-\lambda + 1$ from $E_s$ when $\lambda \geq 1$, giving \[A(\delta,\lambda) = \begin{cases} 3-\lambda & 1 \leq \lambda \leq 2-x \\ 2\lambda & \lambda < 1.\end{cases}\] Therefore $\cal{P}_+(\delta) = [0,2-x]$, and \[\vol(\delta) = \begin{cases} 6\int_{0}^{1}{4\lambda^2 \ \mathrm{d}\lambda} + 6\int_{1}^{2-x}{6\lambda-2\lambda^2 \ \mathrm{d}\lambda} & 0 \leq x \leq 1 \\ 6\int_{0}^{2-x}{4\lambda^2 \ \mathrm{d}\lambda} & 1 < x \leq 2.\end{cases}\] We thus get \[\beta_X(\tilde{\Delta}) = 34 - \int_{0}^{2}{\vol(\delta) \ \mathrm{d} x} = 34 - 23 = 11 > 0.\] Hence the blow-up of $\proj{3}$ at three lines is $K$-polystable.

\subsection{Blow-up of $\proj{1} \times \proj{2}$}

\subsubsection{Central Divisor}

Let $X = \cal{Z}(x_0y_0z_2+x_1y_1z_0-x_0y_1z_1-x_1y_0z_1) \sub \proj{1} \times \proj{1} \times \proj{2}$. This variety is the blow up of $\proj{1} \times \proj{2}$ along the $G=\sl{2}$-stable curve $C = \cal{Z}(x_1z_0-x_0z_1,x_0y_1-x_1y_0)$. In $X$ there are $G$-invariant divisors $\Delta = \cal{Z}(x_0y_1-x_1y_0,x_0^2z_2+x_1^2z_0-2x_0x_1z_1)$, $E = \cal{Z}(x_1z_0-x_0z_1,x_0y_1-x_1y_0)$ and $F = \cal{Z}(y_1z_0-y_0z_1,y_0z_2-y_1z_1)$. The curve $Z = F \inter \Delta \inter E$ is $G$-invariant and defined by $\cal{Z}(x_1z_0-x_0z_1,x_1z_1-x_0z_2,x_0y_1-x_1y_0)$. 

Taking $x_1 = y_1 = z_2 = 1$ gives a maximal $B$-chart $U$ of $Z$, given by $U = \cal{Z}(x_0y_0+z_0-x_0z_1-y_0z_1) \sub \aff{4}$, so eliminating $z_0$ gives $U = \Spec{\bbk[x_0,y_0,z_1]} \cong \aff{3}$. We have $\Delta \inter U = \cal{Z}(x_0-y_0)$, $F \inter U = \cal{Z}(y_0-z_1)$, $E \inter U = \cal{Z}(z_1-x_0)$ and $Z \inter U = \cal{Z}(z_1-x_0,x_0-y_0)$. We blow up $U$ in this curve to obtain the variety $\tilde{X} = \cal{Z}(u_1(z_1-x_0)-u_0(x_0-y_0)) \sub \aff{3} \times \proj{1}$. 

The $B$-invariant rational function \[f = \frac{x_1^2(z_0z_2-z_1^2)}{(x_0z_2-x_1z_1)^2}\] on $X$ becomes, on $\tilde{X}$, $f = \frac{z_1-y_0}{x_0-z_1}$. From this one can see that the exceptional divisor $D = \cal{Z}(z_1-y_0,x_0-z_1)$ of the blow-up $\sigma \colon \tilde{X} \to X$ is central.

\subsubsection{$\beta_X(F)$ (3.17)}

We now want to calculate \[\beta(D) = A_X(D)(-K_X)^3 - \int_{0}^{\infty}{\vol_X{(-K_X-xD)} \ \mathrm{d}x}.\] We have $A_X(D) = 2$ since $D$ is the exceptional divisor on a blow-up of $X$, and $(-K_X)^3 = 36$, so \[\beta(F) = 72 - \int_{0}^{\infty}{\vol_X{(-K_X-xD)} \ \mathrm{d}x} = 72 - \int_{0}^{\infty}{\vol_{\tilde{X}}{(\sigma^*(-K_X)-xD)} \ \mathrm{d}x}.\]

To calculate $\sigma^*(-K_X)$, first let $\Delta = \cal{Z}(x_0^2z_2+x_1^2z_0-2x_0x_1z_1)$ and $F = \cal{Z}(z_0z_2-z_1^2)$ in $\proj{1} \times \proj{2}$. The divisors $\Delta$, $F$ above are the strict transforms of these under the blow-up $\mu \colon X \to \proj{1} \times \proj{2}$. Since $-K_{\proj{1} \times \proj{2}}$ is the class of a divisor of bidegree $(2,3)$, we can represent it by $\Delta + F = (2,1) + (0,2)$. 

Then $-K_X = \mu^*(\Delta + F) - E = (\Delta + E) + (F + E) - E = \Delta + F + E$. Hence $\sigma^*(-K_X) = \tilde{\Delta} + \tilde{F} + \tilde{E} + 3D$, and so we need to calculate the volume of $\delta = \tilde{\Delta} + \tilde{F} + \tilde{E} + (3-x)D$. This is given by \[\vol(\delta) = 6\int\limits_{\lambda_\delta + \cal{P}_+(\delta)}{2\lambda A(\delta,\lambda-\lambda_\delta)\, \mathrm{d}\lambda}.\]

Since $\delta$ is $G$-invariant, we have $\lambda_\delta = 0$. We also have \[\cal{P}(\delta) = \{\lambda \in \Lambda \otimes \bb{R} \mid \langle \lambda, \ell_D\rangle \geq x-3\} = \{\lambda \mid \lambda \leq 3-x\}\] and \[\cal{P}_+(\delta) = \{\lambda \in \cal{P}(\delta) \mid A(\delta,\lambda) \geq 0\} = \{\lambda \leq 3-x \mid A(\delta,\lambda) \geq 0\}\] where \[A(\delta,\lambda) = \sum_{p \in \proj{1}}{\min_{p_D=p}{\frac{\langle \lambda,\ell_D\rangle + m_D}{h_D}}}.\]

To calculate $A(\delta,\lambda)$, first note that for $p \neq 0,-1,\infty$, the only $B$-divisors with $p_D = p$ are the colours $D_p$ with $\ell_{D_p} = m_{D_p} = 0$, $h_{D_p} = 1$, so there is no contribution in these cases.

For $p = 0$, the two divisors with $p_D = p$ are $E$, with $\ell = -1$, $m = 1$ and $h = 1$, and $D_0$ with $\ell = -1$, $m = 0$ and $h=2$. Hence there is a contribution to $A(\delta,\lambda)$ of \[\min{\left\{1-\lambda,-\frac{\lambda}{2}\right\}} = \begin{cases} 1-\lambda & \lambda \geq 2 \\ -\frac{\lambda}{2} & \lambda < 2 \end{cases}.\] For $p = \infty$, the contribution is the same. The two divisors with $p_D = -1$ are $\Delta$ with $\ell = 1$, $m = 1$ and $h = 1$, and $D_{-1}$ with $\ell = -2$, $m = 0$ and $h = 1$, so the contribution to $A(\delta,\lambda)$ is \[\min{\left\{1+\lambda, 2\lambda\right\}} = \begin{cases} 1+\lambda & \lambda \geq 1 \\ 2\lambda & \lambda < 1 \end{cases}.\] Hence we have \[A(\delta,\lambda) = \begin{cases} 3-\lambda & \lambda \geq 2 \\ 1 & 1 \leq \lambda \leq 2 \\ \lambda & \lambda < 1\end{cases}.\] It follows that $\cal{P}_+(\delta) = \{\lambda \leq 3-x\mid 0 \leq \lambda \leq 3\}$, and since $x \geq 0$, $3-x \leq 3$ and $\cal{P}_+(\delta)$ is empty if $x > 3$. Hence $\cal{P}_+(\delta) = [0:3-x]$ where $0 \leq x \leq 3$. 

Therefore \[\begin{split} \vol{\delta} & = 6\begin{cases}\int_{0}^{3-x}{2\lambda^2 \ \mathrm{d}\lambda} & 2 \leq x \leq 3 \\ \int_{0}^{1}{2\lambda^2 \ \mathrm{d}\lambda} + \int_{1}^{3-x}{2\lambda \ \mathrm{d}\lambda} & 1 \leq x \leq 2 \\ \int_{0}^{1}{2\lambda^2 \ \mathrm{d}\lambda} + \int_{1}^{2}{2\lambda \ \mathrm{d}\lambda} + \int_{2}^{3-x}{2\lambda(3-\lambda) \ \mathrm{d}\lambda} & 0 \leq x \leq 1 \end{cases} \\ & = \begin{cases} 4(3-x)^3 & 2 \leq x \leq 3 \\ 6x^2-36x+52 & 1 \leq x \leq 2 \\ 4x^3 - 18x^2 + 36 & 0 \leq x \leq 1\end{cases}\end{split}\] giving \[\begin{split} \beta(F) & = 72 - \int_{0}^{3}{ \vol{\delta} \ \mathrm{d}x} \\ & = 72 - \int_{0}^{1}{4x^3-18x^2+36 \ \mathrm{d}x} - \int_{1}^{2}{6x^2-36x+52 \ \mathrm{d}x} - \int_{2}^{3}{4(3-x)^3 \ \mathrm{d}x} \\ & = 72 - 31 - 12 - 1 = 28.\end{split}\] Hence $X$ is $K$-polystable.

\subsection{The Divisor $W$ on $\proj{2} \times \proj{2}$ and Its Blow-Up}

\subsubsection{Central Divisor}

Let $W = \cal{Z}(x_0y_2-2x_1y_1+x_2y_0) \sub \proj{2} \times \proj{2}$ . We know that $W$ has $G$-invariant divisors $E_\infty = \cal{Z}(x_0x_2-x_1^2) \inter W$ and $E_0 = \cal{Z}(y_0y_2-y_1^2) \inter W$ whose intersection is a $G$-stable curve $Z$. We obtain the smooth Fano (3.13) by blowing up $W$ along $Z$. The curve $Z$ has a minimal $B$-chart $U = W\setminus{(\cal{Z}(x_2)\union \cal{Z}(y_2))} = \cal{Z}(x_0-2x_1y_1+y_0) \sub \aff{4}$. We eliminate $x_0$ to obtain $U = \Spec{\bbk[x_1,y_0,y_1]} \cong \aff{3}$. Introducing new co-ordinates $x = x_1-y_1$, $y=y_0$, $z=y_1$, the curve $Z \inter U$ is defined by $x = y-z^2 = 0$. The divisors $E_\infty \inter U$ and $E_0 \inter U$ are defined by $z^2 - y - x^2 = 0$ and $y-z^2 = 0$, respectively.

Hence blowing up $U$ along $Z \inter U$ we obtain $X = \cal{Z}(vx-u(y-z^2)) \sub \aff{3} \times \proj{1}$ with exceptional divisor $E = \cal{Z}(x,y-z^2)$ and strict transforms (abusing notation) $\tilde{E}_\infty = \cal{Z}(ux-v,z^2-x^2-y)$ and $\tilde{E}_0 = \cal{Z}(y-z^2,v)$. The intersection of these three $G$-invariant divisors (in fact any two of them) gives a $G$-invariant curve $Y = \cal{Z}(y-z^2,x,v)$, the unique minimal $G$-germ of $X$. 

Take a chart $u = 1$ to obtain $X = \cal{Z}(vx-y+z^2)$, then eliminate $y$ so that $X = \Spec{\bbk[x,v,z]} \cong \aff{3}$. Then we have  $\tilde{E}_\infty = \cal{Z}(x-v)$, $\tilde{E}_0 = \cal{Z}(v)$, $E = \cal{Z}(x)$ and $Y = \cal{Z}(x,v)$. 

Blowing up this chart along $Y$, we obtain $\tilde{X} = \cal{Z}(wx-sv) \sub \aff{3} \times \proj{1}$. We now have $\tilde{E}_0 = \cal{Z}(v,w)$, $\tilde{E}_\infty = \cal{Z}(x-v,w-s)$, $\tilde{E} = \cal{Z}(x,s)$ and an exceptional divisor $F = \cal{Z}(x,v)$. The $B$-quotient map $W \dashrightarrow \proj{1}$ was originally given by $P \mapsto [y_2^2(x_0x_2-x_1^2):x_2^2(y_0y_2-y_1^2)]$, which on $\tilde{X}$ reduces to $P \mapsto [v-x:v]$. From this one can see that the exceptional divisor $F$ is central. 

Since $F$ is $G$-invariant we must have $\ell_F < 0$ (for it to lie in the valuation cone), and since it is a hyperplane we must then have $\ell_F = -1$. Since the coloured hyperfan of any model of type I must consist of strictly convex coloured cones, and the $G$-invariant valuations map injectively into the hyperspace, $F$ is the unique central $G$-invariant prime divisor over $W$. 

\subsubsection{$\beta_W(F)$ (2.32)}

To calculate $\beta_W(F)$, we first must calculate $-K_W$ and its pullback to the model containing $F$. Since $W$ is a hypersurface in $\proj{2} \times \proj{2}$, the adjunction formula gives $-K_W = (-K_{\proj{2} \times \proj{2}}-W)\vert_W$. The anticanonical class of $\proj{2} \times \proj{2}$ is $(3,3)$ where we identify the divisor class group with $\bb{Z} \oplus \bb{Z}$, and since $W$ has bidegree $(1,1)$ we get $-K_W = (2,2)\vert_W$. Represent the divisor class $(2,2)$ on $\proj{2} \times \proj{2}$ by $\cal{Z}(x_0x_2-x_1^2) + \cal{Z}(y_0y_2-y_1^2)$, so that the restriction to $W$ of this class is represented by $E_\infty + E_0 = -K_W$. 

After the two blow-ups, this class pulls back to $E_0 + E_\infty + 2E + 4F$, and we must calculate $\beta_W(F) = A_W(F)(-K_W)^3 - \int_{0}^{\infty}{\vol(\delta) \ \mathrm{d}x}$ where $\delta = E_0 + E_\infty + 2E + (4-x)F$. We have $(-K_W)^3 = 48$ and $A_W(F) = 3$ since $F$ is the exceptional divisor of the second of two nested blow-ups of $W$.

We have $\lambda_\delta = 0$ and $\cal{P}(\delta) = (-\infty,4-x]$. To calculate $A(\delta,\lambda)$, first note that there is no contribution at points other than $0$, $\infty$ and $-1$. At $p = 0, \infty$, we have divisors $E_p$ with $\ell = 0$, $h=1$ and $m = 1$, and $D_p$ with $\ell = 1$, $h = 2$ and $m = 0$, so the contribution in each case is $\min\{1,\frac{\lambda}{2}\}$. At $p = -1$ we have $E$ with $\ell = -1$, $h = 1$ and $m = 2$, and $D_{-1}$ with $\ell = -1$, $h = 2$ and $m = 0$, so the contribution is $\min\{2-\lambda,-\frac{\lambda}{2}\}$. Overall, we have \[A(\delta,\lambda) = \begin{cases} 4-\lambda & \lambda \geq 0 \\ \frac{\lambda}{2} & \lambda < 2.\end{cases}\] Therefore we have $\cal{P}_+(\delta) = [0,4-x]$, so $0 \leq x \leq 4$, and: \begin{align*}\vol(\delta) & = 6\int_{0}^{2-x}{2\lambda A(\delta,\lambda) \ \mathrm{d}\lambda} \\ & = \begin{cases}6\int_{0}^{2-x}{\lambda^2 \ \mathrm{d}\lambda} & 2 \leq x \leq 4 \\ 6\int_{0}^{2}{\lambda^2 \ \mathrm{d}\lambda} + 6\int_{2}^{2-x}{8\lambda - 2\lambda^2 \ \mathrm{d}\lambda} & 0 \leq x \leq 2\end{cases} \\ & = \begin{cases} 2(4-x)^3 & 2 \leq x \leq 4 \\ 4x^3-24x^2+80 & 0 \leq x \leq 2.\end{cases}\end{align*} Hence \[\beta_W(F) = 3\cdot 48 - \int_{0}^{4}{\vol(\delta)\ \mathrm{d}x} = 144-120 = 24 > 0\] so $W$ is $K$-polystable.

\subsubsection{$\beta_X(F)$ (3.13)}

We now want to calculate \[\beta_X(F) = A_X(F)(-K_X)^3 - \int_{0}^{\infty}{\vol_X{(-K_X-xF)} \ \mathrm{d}x}.\] We have $A_X(F) = 2$ since $F$ is a prime divisor on a blow-up of $X$, and $(-K_X)^3 = 30$. We have $-K_X = \mu^*(-K_W) - E$ where $\mu$ is the blow-up of $W$ in $E_0 \inter E_\infty$, which gives $-K_X = E_0 + E_\infty + E$. Under the next blow-up to the model containing $F$, this pulls back to $E_0 + E_\infty + E + 3F$, so we set $\delta = E_0 + E_\infty + E + (3-x)F$.

We have $\lambda_\delta = 0$ and $\cal{P}(\delta) = (-\infty,3-x]$. To calculate $A(\delta,\lambda)$, first note that for $p \neq 0,-1,\infty$, there is no contribution. For $p = 0, \infty$, the two divisors with $p_D = p$ are $E_p$, with $\ell = 0$, $m = 1$ and $h = 1$, and $D_p$ with $\ell = 1$, $m = 0$ and $h=2$. Hence in each case there is a contribution to $A(\delta,\lambda)$ of \[\min{\left\{1,\frac{\lambda}{2}\right\}} = \begin{cases} 1 & \lambda \geq 2 \\ \frac{\lambda}{2} & \lambda < 2 \end{cases}.\] For $p = -1$, the two divisors with $p_D = p$ are $E$ with $\ell = -1$, $m = 1$ and $h = 1$, and $D_p$ with $\ell = -1$, $m = 0$ and $h = 2$, so the contribution to $A(\delta,\lambda)$ is \[\min{\left\{1-\lambda, -\frac{\lambda}{2}\right\}} = \begin{cases} 1-\lambda & \lambda \geq 2 \\ -\frac{\lambda}{2} & \lambda < 2 \end{cases}.\] Hence we have \[A(\delta,\lambda) = \begin{cases} 3-\lambda & \lambda \geq 2 \\ \frac{\lambda}{2} & \lambda < 2\end{cases}.\] It follows that $\cal{P}_+(\delta) = [0,3-x]$, so $0 \leq x \leq 3$

Therefore \begin{align*}\vol{\delta} & = \begin{cases} 6\int_{0}^{3-x}{\lambda^2 \ \mathrm{d}\lambda} & 1 \leq x \leq 3 \\ 6\int_{0}^{2}{\lambda^2 \ \mathrm{d}\lambda} + 6\int_{2}^{3-x}{6\lambda-2\lambda^2 \ \mathrm{d}\lambda} & 0 \leq x \leq 1\end{cases} \\ & = \begin{cases} 2(3-x)^3 & 1 \leq x \leq 3 \\ 4x^3-18x^2+30 & 0 \leq x \leq 1.\end{cases}\end{align*} Hence \[\beta(F) = 60 - \int_{0}^{3}{\vol(\delta) \ \mathrm{d}x} = 60 - 33 = 27.\] So $X$ is $K$-polystable.

\subsection{Blow up of $\proj{3}$ along the Twisted Cubic}

\subsubsection{Central Divisor}

Let $G=\sl{2}$ act on $\proj{3} = \bb{P}(S^3\bbk^2)$. The twisted cubic curve \[C = \cal{Z}(x_0x_2-x_1^2,x_0x_3-x_1x_2,x_1x_3-x_2^2)\] is $G$-invariant. The prime divisor \[F = \cal{Z}(3x_1^2x_2^2-4x_1^3x_3-x_0^2x_3^2-4x_0x_2^3+6x_0x_1x_2x_3).\] is $G$-invariant and contains $C$ - indeed $F$ is the secant variety to $C$ and $C$ is the singular locus of $F$, contained with multiplicity 2.

The smooth Fano (2.27) is obtained by blowing up $\proj{3}$ along $C$. We first take the open chart given by $x_3 = 1$, which is the minimal $B$-chart of $C$. In this chart, \[C = \cal{Z}(x_0x_2-x_1^2,x_0-x_1x_2,x_1-x_2^2) = \cal{Z}(x_0-x_1x_2,x_1-x_2^2),\] and \[F = \cal{Z}(3x_1^2x_2^2-4x_1^3-x_0^2-4x_0x_2^3+6x_0x_1x_2)\] Now consider the change of co-ordinates \[(x_0, x_1, x_2) \mapsto (x_0+3x_1x_2+x_2^3, x_1+x_2^2, x_2).\] It is easily checked to be an isomorphism, and it sends $F$ to $\cal{Z}(x_0^2-4x_1^3)$ and $C$ to $\cal{Z}(x_0,x_1)$. Hence we see that $F$ is isomorphic to the product of a line ($C$) and a cuspidal cubic plane curve. Performing another transformation $x_1 \mapsto x_1/\sqrt[3]{4}$ gives $F = \cal{Z}(x_0^2-x_1^3)$ and leaves $C$ invariant.

Now we blow up $C$, giving $X = \cal{Z}(y_1x_0-y_0x_1) \sub \aff{3} \times \proj{1}$ with exceptional divisor $E = \cal{Z}(x_0,x_1)$. We have \[\tilde{F} = \cal{Z}(x_0^2-x_1^3,y_1x_0-y_0x_1)\setminus{\cal{Z}(x_0,x_1)}.\] It is easy to check that this gives \[\tilde{F} = \cal{Z}(x_0^2-x_1^3,y_1x_0-y_0x_1,y_0x_0-x_1^2y_1,y_0^2-y_1^2x_1).\] The intersection $\tilde{F}\inter E$ is then given by $\cal{Z}(x_0,x_1,y_0)$. Since we don't yet have a central divisor, we will blow up this curve. 

First, take the chart $y_1 = 1$. Then $X$ becomes $\cal{Z}(x_0-x_1y_0) \cong \Spec{\bbk[x_1,x_2,y_0]}$, $F$ becomes $\cal{Z}(y_0^2-x_1)$, and $E$ becomes $\cal{Z}(x_1)$. Hence we obtain $\tilde{X} = \cal{Z}(z_0x_1-z_1y_0) \sub \aff{3} \times \proj{1}$, with exceptional divisor $D = \cal{Z}(x_1,y_0)$. The strict transforms of $\tilde{F}$ and $E$ are $\cal{Z}(y_0^2-x_1,z_0y_0-z_1)$ and $\cal{Z}(x_1,z_1)$, respectively. It is straightforward to check that $\tilde{E}, \tilde{F}$ and $D$ mutually intersect in the curve $\cal{Z}(x_1,y_0,z_1)$. In particular this shows that $D$ is not central, so we must blow up again.

Take the chart $z_0 = 1$, giving $\tilde{X} = \cal{Z}(x_1-z_1y_0) \cong \Spec{\bbk[x_2,y_0,z_1]}$, $\tilde{F} = \cal{Z}(y_0-z_1)$, $\tilde{E} = \cal{Z}(z_1)$ and $D = \cal{Z}(y_0)$. Blowing up the intersection $\cal{Z}(y_0,z_1)$ of these divisors gives $\tilde{X}^\prime = \cal{Z}(u_1y_0-u_0z_1)$ with exceptional divisor $H = \cal{Z}(y_0,z_1)$. Now the strict transforms $\tilde{E}$, $\tilde{F}$ and $\tilde{D}$ all intersect $H$ in different curves and are disjoint from each other: hence $H$ is a central divisor.

\subsubsection{$\beta(H)$ (2.27)}

We now want to calculate $\beta(H)$. We have $(-K_X)^3 = 38$, and $A_X(H) = 3$ since $H$ is a prime divisor on a variety obtained by two blow-ups of $X$. Hence \[\beta(H) = 114 - \int_{0}^{\infty}{\vol_X{(-K_X-xH)}\ \mathrm{d}x} = 114 - \int_{0}^{\infty}{\vol_{\tilde{X}^\prime}{(\sigma^*(-K_X)-xH)}\ \mathrm{d}x}\] where $\sigma \colon \tilde{X}^\prime \to X$ is the birational morphism given by composing the two blow-ups described above.

To calculate $\sigma^*(-K_X)$, first note that the anticanonical class of $\proj{3}$ is the class of a prime divisor of degree $4$, so we can set $-K_{\proj{3}} = F$. Then, blowing up $C$, which is contained in $F$ with multiplicity 2, gives $-K_X = (\tilde{F}+2E) - E = \tilde{F} + E$. The pullback of this class under the blowing up of $\tilde{F} \inter E$ is then $(\tilde{F} + D) + (\tilde{E} + D) = \tilde{F} + \tilde{E} + 2D$. Finally, the second blow-up gives \[\sigma^*(-K_X) = (\tilde{F}+H) + (\tilde{E} + H) + 2(\tilde{D} + H) = \tilde{F} + \tilde{E} + 2\tilde{D} + 4H.\] 

Our next step is the calculate the volume of the divisor $\delta = \tilde{F} + \tilde{E} + 2\tilde{D} + (4-x)H.$ We have $\lambda_\delta = 0$ and $\cal{P}(\delta) = (-\infty,4-x]$  since we must have $\ell_H = -1$. Now we calculate $A(\delta,\lambda)$. The points $p \neq -4, 0, \infty$ contribute nothing as $p_D = p$ in this case only for colours $D_p$ with $m_D = \ell_D = 0$. 

At $p = -4$, we have two divisors: $\tilde{F}$, with $m = 1, \ell = 0$ and $h = 1$, and $D_{-4}$, with $m = 0$, $\ell = 1$ and $h = 2$. Hence there is a contribution of \[\min\left\{1,\frac{\lambda}{2}\right\} = \begin{cases} 1 & \lambda \geq 2 \\ \frac{\lambda}{2} & \lambda < 2\end{cases}.\]

At $p = 0$, we have two divisors: $\tilde{D}$, with $m = 2$, $\ell = 0$ and $h = 1$, and $D_0$, with $m = 0$, $\ell = 1$ and $h = 2$. Hence there is a contribution of \[\min\left\{2,\frac{\lambda}{2}\right\} = \begin{cases} 2 & \lambda \geq 4 \\ \frac{\lambda}{2} & \lambda < 4\end{cases}.\]

At $p = \infty$, we have two divisors: $\tilde{E}$, with $m = 1$, $\ell = -1$ and $h = 1$, and $D_\infty$, with $m = 0$, $\ell = -2$ and $h = 3$. Hence there is a contribution of \[\min\left\{1-\lambda, -\frac{2\lambda}{3}\right\} = \begin{cases} 1 - \lambda & \lambda \geq 3 \\ -\frac{2\lambda}{3} & \lambda < 3\end{cases}.\]

All in all, we have \[A(\delta,\lambda) = \begin{cases} 4-\lambda & \lambda \geq 4 \\ 2-\frac{\lambda}{2} & 3 \leq \lambda < 4 \\ 1-\frac{\lambda}{6} & 2 \leq \lambda < 3 \\ \frac{\lambda}{3} & \lambda < 2\end{cases}.\] We can then read off $\cal{P}_+(\delta) = [0,4-x]$, so in particular $x \leq 4$. Hence \[\vol(\delta) = 6\int_{0}^{4-x}{2\lambda A(\delta,\lambda) \ \mathrm{d}\lambda}.\] That is, \[\begin{split}\vol(\delta) & = \begin{cases} \int_{0}^{4-x}{4\lambda^2 \ \mathrm{d}\lambda} & 2 \leq x \leq 4 \\ \int_{0}^{2}{4\lambda^2 \ \mathrm{d}\lambda} + \int_{2}^{4-x}{12\lambda - 2\lambda^2 \ \mathrm{d}\lambda} & 1 \leq x < 2 \\ \int_{0}^{2}{4\lambda^2 \ \mathrm{d}\lambda} + \int_{2}^{3}{12\lambda - 2\lambda^2 \ \mathrm{d}\lambda} + \int_{3}^{4-x}{24\lambda - 6\lambda^2 \ \mathrm{d}\lambda} & 0 \leq x < 1\end{cases} \\ & = \begin{cases} -\frac{4}{3}(x-4)^3 & 2 \leq x \leq 4 \\ \frac{32}{3} + \frac{2}{3}(x^3-3x^2-24x+52) & 1 \leq x < 2 \\ 28 + 2(x^3-6x^2+5) & 0 \leq x < 1.\end{cases}\end{split}\] Hence we have \[ \beta(H) = 114 - \int_{0}^{4}{\vol(\delta)} = 114 - 59 = 55.\] So $X$ is $K$-polystable.

\subsection{The Quadric Threefold and Its Blow-Up}

\subsubsection{Central Divisor}
Let $Q = \cal{Z}(3x_2^2-4x_1x_3+x_0x_4) \sub \proj{4}$. The twisted quartic curve \[C = \cal{Z}(x_0x_2-x_1^2, x_0x_3-x_1x_2, x_1x_4-x_2x_3, x_2x_4-x_3^2)\] is $G$-invariant. The prime divisor \[F = \cal{Z}(4x_2^3+x_1^2x_4+x_0x_3^2-6x_1x_2x_3) \inter Q\] is $G$-invariant and contains $C$. 

The smooth Fano (2.21) is obtained by blowing up $Q$ along $C$. The minimal $B$-chart of $C$ is given by taking $x_4 = 1$. In this chart, the equation of $Q$ allows us to eliminate $x_0$, so that $Q \cong \Spec{\bbk[x_1,x_2,x_3]}$. Then $C$ is given by \[C = \cal{Z}(x_1-x_2x_3,x_2-x_3^2)\] and $F$ by \[F = \cal{Z}(4x_2^3 + x_1^2 + 4x_1x_3^3 - 3x_2^2x_3^2 - 6x_1x_2x_3)\] One can read off immediately that there is an isomorphism from this $B$-chart in $Q$ to the $B$-chart in $\proj{3}$ we took in the previous example and that this isomorphism preserves the $G$-invariant subvarieties $F$ and $C$ (again, $F$ is the secant variety of $C$). Hence finding the central divisor is identical in this case to the previous one. Hence keeping the same notation as above, we must blow $Q$ up three times, obtaining exceptional divisors $E$, $D$ and $H$, with the latter being central.

\subsubsection{$\beta_Q(H)$ (1.16)}

To calculate $\beta_Q(H)$, we must first calculate $-K_Q$ and its pullback to the model containing $H$. Since $Q$ is a hypersurface in $\proj{4}$, the adjunction formula gives $-K_Q = (-K_{\proj{4}}-Q)\vert_Q$. The anticanonical class of $\proj{4}$ is the class of a divisor of degree 5. If we represent this by $Q + \cal{Z}(4x_2^3+x_1^2x_4+x_0x_3^2-6x_1x_2x_3)$, we see that $F$ is an anticanonical divisor of $Q$. 

After the three blow-ups described above, this pulls back to $F + 2E + 3D + 6H$, so set $\delta = F + 2E + 3D + (6-x)H$. We have $\lambda_\delta = 0$ and $\cal{P}(\delta) = (\infty,6-x]$. To calculate $A(\delta,\lambda)$, first note that there is no contribution at points $p \neq 0, \infty, -4$. At $p=0$ we have two divisors: $D$ with $\ell = -1$, $m = 3$ and $h = 1$, and $D_0$ with $\ell = -1$, $m = 0$ and $h = 2$. Hence at this point there is a contribution of \[\min\left\{3-\lambda,-\frac{\lambda}{2}\right\} = \begin{cases} 3-\lambda & \lambda \geq 6 \\ -\frac{\lambda}{2} & \lambda < 6.\end{cases}\] At $p = -4$ we have $F$ with $\ell = 0$, $m = 1$, $h = 1$, and $D_{-4}$ with $\ell = 1$, $m = 0$, $h = 3$. Hence the contribution is \[\min\left\{1,\frac{\lambda}{3}\right\} = \begin{cases}1 & \lambda \geq 3 \\ \frac{\lambda}{3} & \lambda < 3.\end{cases}\] Finally, at $p = \infty$ we have $E$ with $\ell = 0$, $m = 2$, $h = 1$, and $D_\infty$ with $\ell = 1$, $m = 0$, $h = 3$. Hence the contribution is \[\min\left\{2,\frac{\lambda}{3}\right\} = \begin{cases} 2 & \lambda \geq 6 \\ \frac{\lambda}{3} & \lambda < 6.\end{cases}\] All in all, we have \[A(\delta,\lambda) = \begin{cases} 6-\lambda & \lambda \geq 6 \\ 1-\frac{\lambda}{6} & 3 \leq \lambda \leq 6 \\ \frac{\lambda}{6} & \lambda \leq 3.\end{cases}\] Hence $A(\delta,\lambda) \geq 0$ for $0 \leq \lambda \leq 6$, so we have $\cal{P}_+(\delta) = [0,6-x]$ and $0 \leq x \leq 6$.

Now we have \begin{align*}\vol(\delta) & = 6\int_{0}^{6-x}{2\lambda A(\delta,\lambda) \ \mathrm{d}\lambda} \\ & = \begin{cases} 6\int_{0}^{6-x}{\frac{\lambda^2}{3} \ \mathrm{d}\lambda} & 3 \leq x \leq 6 \\ 6\int_{0}^{3}{\frac{\lambda^2}{3} \ \mathrm{d}\lambda} + 6\int_{3}^{6-x}{2\lambda-\frac{\lambda^2}{3} \ \mathrm{d}\lambda} & 0 \leq x \leq 3\end{cases}\\ & = \begin{cases} \frac{2}{3}(6-x)^3 & 3 \leq x \leq 6 \\ \frac{2x^3}{3}-6x^2+54 & 0 \leq x \leq 3.\end{cases}\end{align*} We have $(-K_Q)^3 = 54$ and $A_Q(H) = 4$, since we reached $H$ as the final exceptional divisor after 3 nested blow-ups of $Q$. Therefore \[\beta_Q(H) = 216-\int_{0}^{6}{\vol(\delta) \ \mathrm{d}x} = 216-135 = 81 > 0\] and $Q$ is $K$-polystable.
\subsubsection{$\beta_X(H)$ (2.21)}

We now calculate $\beta_X(H)$, where $X$ is the blow-up of $Q$ in the twisted quartic $C$, i.e. the smooth Fano (2.21). We have $(-K_X)^3 = 28$, and $A_X(H) = 3$. 

Since $-K_Q = F$ and the curve $C$ has multiplicity 2 in $F$, we have $-K_X = (F+2E) - E = F + E$. Under thw two subsequent blow-ups to the model containing $H$, this pulls back to $F + E + 2D + 4H$, so we set $\delta = F + E + 2D + (4-x)H$. We have $\lambda_\delta = 0$ and $\cal{P}(\delta) = (\infty,4-x]$. Moving on to calculating $A(\delta,\lambda)$: as before, points $p \neq -4, 0, \infty$ do not contribute. 

At $p = -4$, we have two divisors: $F$, with $m = 1, \ell = 0$, $h = 1$, and $D_{-4}$, with $m = 0$, $\ell = 1$, $h = 3$. Hence there is a contribution of \[\min\left\{1,\frac{\lambda}{3}\right\} = \begin{cases} 1 & \lambda \geq 3 \\ \frac{\lambda}{3} & \lambda < 3\end{cases}.\]

At $p = \infty$, the situation is identical to that at $p = -4$, so we get the same contribution again.

At $p = 0$, we have two divisors: $D$, with $m = 2$, $\ell = -1$ and $h = 1$, and $D_0$, with $m = 0$, $\ell = -1$ and $h = 2$. Hence there is a contribution of \[\min\left\{2-\lambda,-\frac{\lambda}{2}\right\} = \begin{cases} 2-\lambda & \lambda \geq 4 \\ -\frac{\lambda}{2} & \lambda < 4\end{cases}.\]

Hence all things considered, we have \[A(\delta,\lambda) = \begin{cases} 4-\lambda & \lambda \geq 4 \\ 2-\frac{\lambda}{2} & 3 \leq \lambda < 4 \\ \frac{\lambda}{6} & \lambda < 3 \end{cases}.\] Therefore $A(\delta,\lambda) \geq 0$ for $0 \leq \lambda \leq 4$. Hence $\cal{P}_+(\delta) = [0,4-x]$ with $0 \leq x \leq 4$. We thus have \begin{align*}\vol(\delta) & = 6\int_{0}^{4-x}{2\lambda A(\delta,\lambda) \ \mathrm{d}\lambda} \\ & = \begin{cases} \int_{0}^{4-x}{2\lambda^2 \ \mathrm{d}\lambda} & 1 \leq x \leq 4 \\ \int_{0}^{3}{2\lambda^2 \ \mathrm{d}\lambda} + \int_{3}^{4-x}{24\lambda - 6\lambda^2 \ \mathrm{d}\lambda} & 0 \leq x \leq 1\end{cases} \\ & = \begin{cases} \frac{2}{3}(4-x)^3 & 1 \leq x \leq 4 \\ 18 + 2(x^3-6x^2+5) & 0 \leq x \leq 1.\end{cases}\end{align*} Hence \[\begin{split} \beta(H) & = 84 - \int_{0}^{4}{\vol(\delta) \ \mathrm{d}x} \\ & = 84 - \int_{0}^{1}{18 + 2(x^3-6x^2+5) \ \mathrm{d}x} - \int_{1}^{4}{\frac{2}{3}(4-x)^3 \ \mathrm{d}x} \\ & = 84 -\frac{49}{2} - \frac{27}{2} = 46 > 0,\end{split}\] so $X$ is $K$-polystable.

\bibliographystyle{alpha}
\bibliography{biblio} 
\end{document}